\tikzset{>=stealth}
\numberwithin{equation}{section}                                 
\newtheorem{thm}[equation]{Theorem}
\newtheorem{lemma}[equation]{Lemma}
\newtheorem{cor}[equation]{Corollary}
\theoremstyle{definition}
\newtheorem{defn}[equation]{Definition}
\theoremstyle{remark}
\newtheorem{rmk}[equation]{Remark}
\newcommand{\lhk}{\mathbin{\hbox{\vrule height1.4pt width4pt depth-1pt
\vrule height4pt width0.4pt depth-1pt}}} 
\newcommand{\poisson}[1]{\ensuremath{\boldsymbol{\{}#1\boldsymbol{\}}}}
\newcommand{\pair}[1]{\ensuremath{\left\langle #1 \right\rangle}}
\newcommand{\mapsfrom}{\mathrel{\reflectbox{\ensuremath{\mapsto}}}}
\DeclareMathOperator{\coker}{coker}
\DeclareMathOperator{\rank}{rank}
\DeclareMathOperator{\Gr}{Gr}
\DeclareMathOperator{\Pol}{Pol}
\DeclareMathOperator{\Sym}{Sym}
\DeclareMathOperator{\Var}{Var}
\DeclareMathOperator{\B}{B}
\DeclareMathOperator{\Cone}{\mathscr{C}}
\DeclareMathOperator{\Eik}{\mathscr{E}}
\DeclareMathOperator{\Wu}{\mathbf{W}}
\DeclareMathOperator{\End}{End}
\DeclareMathOperator{\Sec}{Sec}
\DeclareMathOperator{\Hom}{Hom}
\DeclareMathOperator{\im}{im}
\newcommand{\taut}{\boldsymbol{\gamma}}
\definecolor{rd}{rgb}{0.0, 0.0, 0.9}
\definecolor{io}{rgb}{1.0, 0.65, 0.0}
\definecolor{lb}{rgb}{0.48, 0.45, 0.8}
\definecolor{c1}{rgb}{0.82, .10, 0.26}
\definecolor{c2}{rgb}{0.6, 0.4, 0.8}
\definecolor{c3}{rgb}{0.0, 0.0, 1.0}
\definecolor{c4}{rgb}{ 0.55, 0.71,0}
\definecolor{darkred}{rgb}{0.56,0,0.11}
\newcommand{\la}{\ensuremath{\textcolor{c1}{c_1}}}
\newcommand{\lb}{\ensuremath{\textcolor{c2}{c_2}}}
\newcommand{\lc}{\ensuremath{\textcolor{c3}{c_3}}}
\newcommand{\ld}{\ensuremath{\textcolor{c4}{c_4}}}
\newcommand{\ma}{\ensuremath{\textcolor{c1}{d_1}}}
\newcommand{\mb}{\ensuremath{\textcolor{c2}{d_2}}}
\newcommand{\mc}{\ensuremath{\textcolor{c3}{d_3}}}
\newcommand{\md}{\ensuremath{\textcolor{c4}{d_4}}}
\def \CP1{
   \draw (-1,0) arc (180:360:1cm and 0.5cm);
    \draw[dashed] (-1,0) arc (180:0:1cm and 0.5cm);
    \draw (0,1) arc (90:270:0.5cm and 1cm);
    \draw[dashed] (0,1) arc (90:-90:0.5cm and 1cm);
    \draw (0,0) circle (1cm);
    \draw (0,0) -- (1.25,0);
    \draw (0.05,0.12) -- (0.05,-0.11) -- (-0.05, -0.13) -- (-0.05, 0.12) -- cycle;
}
\keywords{characteristic variety, Guillemin normal form, eikonal system}
\subjclass[2010]{Primary 58A15, Secondary 35A27, 35A30}
\title[Involutive Tableaux]{Involutive Tableaux, Characteristic Varieties, and Rank-one Varieties in the Geometric Study of PDEs}
\author[A. D. Smith]{Abraham D. Smith}
\address{Department of Mathematics, Statistics and Computer Science\\University of Wisconsin-Stout\\Menomonie, Wisconsin  54751-2506, USA}
\date{\today}
\email{smithabr@uwstout.edu}
\begin{document}

\begin{abstract}
This expository monograph cuts a short path from the common, elementary
background in geometry (linear algebra, vector bundles, and algebraic ideals) to
the most advanced theorems about involutive exterior differential systems:
(1) The incidence correspondence of the characteristic variety,
(2) Guillemin normal form and Quillen's thesis,
(3) The Integrability of Characteristics by Guillemin, Quillen, Sternberg, and 
Gabber, and (4) Yang's Hyperbolicity Criterion.
To do so, the geometric theory of PDEs is reinterpreted as the study of smooth
sub-bundles of the Grassmann bundle, whereby the rank-1 variety is emphasized.
The primary computational tool is an enhanced formulation of Guillemin normal
form that is equivalent to involutivity of tableaux. 
\end{abstract}

\maketitle

\microtypesetup{protrusion=false} 
\tableofcontents 
\microtypesetup{protrusion=true} 

\setcounter{section}{-1}
\section{Introduction and Overview}

Given a system of partial differential equations [PDEs] over a manifold, does the
system of PDEs have any local
solutions to the Cauchy initial-value problem?  That is, given initial conditions on a
locally-defined hypersurface, can we produce a local solution that satisfies
those initial conditions and also satisfies the PDEs? 
More generally, which initial hypersurfaces admit such solutions?  Can we do
this iteratively by solving a sequence of initial-value problems from dimension
0 to 1, 1 to 2, and so on to build solutions through any point? 

These questions are the heart of exterior differential systems [EDS], a
powerful specialist approach to the geometric study of PDEs.  EDS 
typically present as ideals of exterior differential forms over a manifold.

Some deeper questions are: What is the shape of the family of local solutions
obtained in this way?  How can we determine whether two systems of PDEs are
``the same'' up to local coordinate transformations?  Does the space of all
PDEs (up to local coordinate transformation) have any meaningful shape or
structure of its own?

These deeper questions are answered by analyzing the characteristic variety of
an EDS.  The original motivation for the characteristic variety is to see where
the Cauchy initial-value problem becomes ambiguous.  That is, given an initial
condition for our PDEs on a local submanifold of dimension $n{-}1$, when would
the $n$-dimensional solutions for that initial condition fail to be unique?

When analyzing the characteristic variety of various EDS, one discovers that
the characteristic variety is an exquisitely subtle structure that reveals far
more than originally anticipated.  The characteristic variety dictates the
internal geometry of the solutions of the original PDEs, while also controlling
the parameter space of all such solutions.  Under reasonable hypotheses, this
means that EDS or PDEs can be understood up to coordinate equivalence as
``parametrized families of solution manifolds with associated characteristic
geometry.'' 

This is beautiful and important, but it has been a difficult topic for
researchers to access, because the foundations of EDS have not yet entered the
common curriculum of graduate students.  Fluency
with differential ideals remains a relatively rare skill, practiced 
in a handful of schools worldwide.
Indeed, it is common for researchers first encountering the subject to become trapped in
an endless cycle of translating systems from local jet coordinates to
differential forms and back again, without gaining any new geometric insights
and without using the most powerful theoretical ideas in EDS.  In
particular, it can take many years for researchers to appreciate the central
role that the characteristic variety plays in uncovering geometric insights.
However---despite the name---differential forms are not themselves the core
idea behind exterior differential systems.  Differential forms are merely a
concise language. Rather, the core idea is
to recognize that these questions are more geometric than analytic, and that
ideals (that is, conditions defined by functions) and varieties (that is,
shapes cut out by functions) must come into play.  To describe families of
solutions, we need the geometric language of bundles, schemes, and moduli.

Therefore, the goal of this monograph is to cut the shortest-possible
expository path from the common curriculum in geometry (linear algebra, vector
bundles, and algebraic ideals) to several key results regarding the
characteristic variety.  
These key results are
\begin{enumerate}
\item the incidence correspondence of the characteristic and rank-1 varieties,
and its relationship to eigenspace decomposition,
\item Guillemin normal form, its enhancements, and Quillen's thesis, 
\item the Integrability of Characteristics (Guillemin, Quillen, Sternberg,
Gabber), and
\item Yang's Hyperbolicity Criterion.
\end{enumerate}
The required common curriculum is
\begin{enumerate}
\item graduate-level linear algebra
(short-exact sequences, dual spaces, the rank-nullity theorem, tensor products,
generalized eigenspaces, as in Artin's \emph{Algebra} \cite{Artin1991}),
\item the fundamentals of smooth manifolds (tangent spaces, Sard's theorem, bundles, as in Milnor's
\emph{Topology from the Differential Viewpoint} \cite{Milnor1997}), and
\item  basic algebraic geometry (projective space, ideal, variety, scheme, as
in Harris' \emph{Algebraic Geometry, a first course} \cite{Harris1992}).  
\end{enumerate}

To accomplish this, the subject of exterior differential systems is
reinterpreted as the study of smooth sub-bundles of the Grassmann bundle over a
smooth manifold. 
In doing so, the role of exterior differential forms becomes obscured, in favor
of tableaux (vector spaces of homomorphisms) and symbols (varieties of endomorphisms).    
Specifically, Guillemin normal form for tableaux and symbols plays the central
computational role, not differential forms.  This is because most humans---and
their computer algebra systems---are more comfortable with matrices than with
exterior algebra. 
Exterior differential ideals are not introduced until absolutely needed.
This is because many of the essential lemmas depend only on the geometry of the
Grassmann bundle, which is the variety of the trivial exterior differential
system.
This reformulation allows elementary versions of those key results (in fact,
almost all the lemmas are restatements of the rank-nullity theorem), and it
becomes possible to outline how these results could be used to push the
frontiers of the subject.

While the audience is assumed to have a general
interest and cultural awareness of PDEs or EDS in some form, all the required definitions are
provided when needed. 
Even so, it is wise always to have Bryant, Chern, Gardner, Goldschmidt, and Griffiths's \emph{Exterior Differential
Systems} \cite{BCGGG} and Ivey and Landsberg's \emph{Cartan for Beginners}
\cite{Ivey2003} nearby.  They are cited for comparison frequently.  Another
excellent reference is McKay's \emph{Introduction to Exterior Differential
Systems} \cite{McKay2018}.
A note for EDS experts: the results in these pages can be found in
numerous places in the literature in some form or other, and I have indicated my
favorite sources throughout.  The only innovation here is in presentation.
Most notably, in contrast to the vast majority of expositions, 
the central topic is the $C^\infty$ characteristic variety, not the
$C^\omega$ Cartan--K\"ahler theorem.  This is because the key open question is 
``what does the family of all involutive PDEs look like?'' not ``how do I solve
this particular involutive PDE?''

This monograph is organized into four parts, each containing several sections.
Part~\ref{part:back} covers the structure of tableau (subspaces of a space of
homomorphisms) and the differential geometry of the Grassmann
variety.  Because the results are elementary---almost trivially so---they
provide a good foundation for building from the common curriculum to the
central topic.  Part~\ref{part:manifolds} converts those elementary results to
the language of bundles, PDEs and EDS.  That language allows 
an enhanced version of Guillemin normal form that is equivalent to
involutivity.
Part~\ref{part:Xi} achieves the key purpose of this monograph,
as a triumvirate is formed by the characteristic scheme, the
rank-1 cone, and the mutual eigenvector problem for symbols. 
Part~\ref{part:eikonal} examines the integrability of the characteristic
variety in various guises, and offers a general dogma (that the characteristic scheme
knows all coordinate-invariant data about a system of PDEs) 
that suggests possible future developments in the theory of
EDS.

This monograph was developed to support a series of lectures at the Institute
of Mathematics at the Polish Academy of Sciences in September 2016, as part of
a Workshop on the Geometry of Lagrangian Grassmannians and Nonlinear PDEs.

\part{Matrices and Subspaces}
\label{part:back}
\section{Tableaux and Symbols}\label{sec:tableau}
 Tableaux are very simple objects; every undergraduate encounters the example
 ``$r \times n$ matrices form a vector space using the usual matrix
 operations,'' and a tableau is any subspace of that vector space.

Given vector\footnote{When it becomes appropriate to do so, at \eqref{eqn:mybundles} in
Section~\ref{sec:eds}, we switch from vector spaces to
complex projective spaces for algebraic convenience.}
spaces $W$ and $V$ with $\dim W = r$ and $\dim V = n$,  a \emph{tableau} is a
linear subspace of $A \subset \Hom(V,W)$.  We use the notation $W \otimes V^*$ and
$\Hom(V,W)$ interchangeably.

Being a subspace, any tableau $A$ is the kernel of some linear map $\sigma$, called the \emph{symbol}, whose
range is written as $H^1(A)$.  We have a short exact sequence of spaces:
\begin{equation}
0 \to A \to W \otimes V^* \overset{\sigma}{\to} H^1(A) \to 0,
\label{eqn:H1}
\end{equation}
where $H^1(A)$ is just notation for $(W \otimes V^*)/A$.
Let $\dim A = s$ and $\dim H^1(A) = t = nr-s$.

For example, let $W = \mathbb{R}^3$ and $V = \mathbb{R}^3$, and consider the
5-dimensional tableau $A \subset W \otimes V^*$ described in the standard bases by 
\begin{equation}
\left\{ 
\begin{pmatrix}
\alpha_0 & \alpha_1 & \alpha_2 \\
\alpha_1 & \alpha_2 & \alpha_3 \\
\alpha_2 & \alpha_3 & \alpha_4 
\end{pmatrix} ~:~ \alpha_i \in \mathbb{R} \right\}.\label{eqn:example_tab}\end{equation}
If $\pi \in W \otimes V^*$ is a $3 \times 3$ matrix with entries $\pi^a_i$, then 
the symbol $\sigma$ defining $A$ consists of four conditions:
\begin{equation}
\begin{split}
0 &= \pi^2_3 - \pi^3_2,\\
0 &= \pi^1_3 - \pi^3_1,\\
0 &= \pi^2_2 - \pi^3_1,\\
0 &= \pi^1_2 - \pi^2_1.
\end{split}
\label{eqn:example_sym}\end{equation}

\subsection{Rank-one ideal}
The fundamental theorem of linear algebra states that any homomorphism $\pi \in W
\otimes V^*$ has a well-defined rank.  Thus, for any tableau $A
\subset W \otimes V^*$, we could ask how $\rank(\pi)$ varies across $\pi \in A$.
For our purposes, the most interesting\footnote{There is a good reason that the
rank-1 case is most interesting: the varieties of higher-rank matrices 
are determined algebraically by the varieties of lower-rank matrices, so the geometry of
$\rank(\pi)$ across $\pi \in A$ comes down to the rank-1 case.}
case is $\rank(\pi) = 1$.

The space $W \otimes V^*$ admits the \emph{rank-1 ideal}, $\mathscr{R}$,
which is irreducible and generated by all $2\times 2$ minors $\left\{ 0= \pi^a_i\pi^b_j -
\pi^a_j\pi^b_i\right\}$ in any basis.  This is a homogeneous ideal, so we may
consider the rank-1 cone in vector space or the rank-1 variety in
projective space.  (The vertex of the rank-1 cone is the rank-0 matrix.)

For any $A$, consider the ideal $A^\perp + \mathscr{R}$, which 
defines $\Cone \subset A$ as the variety $\Cone = A \cap
\Var(\mathscr{R})$.  The variety $\Cone$ is the set of matrices in $A$ that are also
rank-1; it is a linear section of the rank-1 cone defined by $\mathscr{R}$.

In the example \eqref{eqn:example_tab}, $\Cone$ can be parametrized as matrices of the form
\begin{equation}
\begin{pmatrix}
\kappa^4 & \kappa^3\tau & \kappa^2\tau^2 \\
\kappa^3\tau & \kappa^2\tau^2 & \kappa\tau^3 \\
\kappa^2\tau^2 & \kappa\tau^3 & \tau^4 
\end{pmatrix}= 
\begin{pmatrix}
\kappa^2 \\ \kappa\tau \\ \tau^2
\end{pmatrix}\otimes
\begin{pmatrix}
\kappa^2 & \kappa\tau & \tau^2
\end{pmatrix},
\label{eqn:C}\end{equation}
which can be interpreted as the rational normal Veronese curve\footnote{
For more on Veronese curves and the more general Segre embeddings and determinantal varieties, see
\cite{Harris1992,Shafarevich1994}.},  \begin{equation}
[\kappa^4: \kappa^3\tau : \kappa^2\tau^2: \kappa\tau^3 : \tau^4] 
\cong \mathbb{P}^1 \subset \mathbb{P}^4 \cong \mathbb{P}A.\end{equation}
Moreover, the projection of $\Cone$ to $\mathbb{P}V^*$ is another rational
normal curve,
 \begin{equation}
[\kappa^2: \kappa\tau : \tau^2] 
\cong \mathbb{P}^1 \subset \mathbb{P}^2 \cong \mathbb{P}V^*.\end{equation}
This toy example plays a crucial role in applications for hyperbolic and
hydrodynamically integrable PDEs \cite{Ferapontov2009, Smith2009}.

\subsection{Generic Bases}\label{sec:regular} 
We would like to find a ``good'' basis in which to express a tableau $A$ and
study its properties.

First, an analogy.   When studying a single homomorphism $F:\mathbb{C}^n \to
\mathbb{C}^r$, or $F \in \mathbb{C}^r \otimes \mathbb{C}^{n*}$, there are various ``good'' bases of the domain and co-domain to express $F$.  A basis of
$\mathbb{C}^{n*}$ is ``generic'' for $F$ if the first $\rank(F)$ columns are
independent.  A basis of $\mathbb{F}^{r}$ is ``generic'' for $F$ if the first
$\rank(F)$ rows of $F$ are independent in that basis.  Among the generic bases,
we can construct particularly ``good'' bases for writing $F$. When $F$ is written 
in a ``good'' basis, we say it is in a ``normal form,'' and the normal form
allows us readily to study properties of $F$.  
For example:
\begin{itemize}
\item Use Gaussian elimination\footnote{Algorithmically, this is accomplished
using improved Gram-Schmidt or Householder triangularization.  See
\cite{Trefethen1997} for a discussion of stability of row-reduction.} to place $F$ in reduced row-echelon form.  Then, the
rank, kernel, and image of $F$ are immediately apparent.  The fundamental
theorems in linear algebra depend on this normal form.

\item Apply a polar/unitary decomposition to find the singular-value decomposition of
$F$.  Then, the norm of $F$ and its action with respect to the Hermitian inner
products of $\mathbb{C}^n$ and $\mathbb{C}^r$ are immediately apparent.
Important theorems in metric geometry and multivariate statistics depend on
this normal form.

\item Solve a sequence of eigenvalue problems in the case $n=r$ to find Jordan
normal form.  Then, the eigenspace structure of $F$, and the commutative
algebra of matrices to which it belongs are immediately apparent.  The theory
of Lie groups and Lie algebras depends on this normal form.
\end{itemize} 

Given a tableau $A \subset W \otimes V^*$ with symbol $\sigma$, we are curious
whether we can construct bases that are ``good'' simultaneously for all
homomorphisms in the tableau.  This situation is considerably more complicated
than the situation of a single homomorphism, and it turns out that it is most
important to focus on the symbol maps, but we arrive at a satisfying
answer in Section~\ref{sec:inveig}.  By the above analogy, it is convenient to establish a notion of ``generic'' bases
formulated in terms of independence.  This is done as follows.

In any bases of $V^*$ and $W$, the tableau $A$ is a space of $r \times n$
matrices only $s$ of whose entries are linearly independent.  That is, in a
given basis, we can consider the entries $\pi \mapsto \pi^a_i$ as elements of
$A^*$, just as we think of the components $v \mapsto v^i$ of vectors in $V$ as being linear
functions on $v \in \mathbb{R}^n$, using the dual basis of $V^*$.

Across all bases of $V^*$, there is a maximum number of independent entries
that can occur in column 1; call that number $s_1$.  (In a measure-zero set of
bases of $V^*$, the number of actual independent entries in the first column
may be less than $s_1$.)  Once those independent entries are accounted for,
there is a maximum number $s_2$ of new independent entries that can occur in
the second column. (In a measure-zero set of bases of $V^*$ that achieve $s_1$
in column 1, the number of actual independent entries in columns 1 and 2 may be
less than $s_1+s_2$.)  Once those independent entries are accounted for, there
is a maximum number $s_3$ of new independent entries that can occur in
column 3. (In a measure-zero set of bases of $V^*$ that achieve $s_1+s_2$
in columns 1 and 2, the number of actual independent entries in columns 1, 2,
and 3 may be less than $s_1+s_2+s_3$.) Continuing in this way, we have $s_i$ as
the number of new independent entries in the $i$th column achieved for
almost-all bases of $V^*$.  (In a  measure-zero set of bases of $V^*$ that
achieve $s_1+s_2+\cdots+s_{i-1}$ in columns 1 through $i{-}1$, the number of
actual independent entries in columns 1 through $i$ may be less than
$s_1+\cdots+s_i$.) Eventually, for such a basis, there is a column $\ell$ where
we have reached $s_1+s_2 + \cdots + s_\ell= s$, so there is some maximum column
$\ell \leq n$ such that $s_\ell >0$, where the last independent entry appears.
So, \begin{equation} \begin{split} s &= s_1 + s_2 + \cdots + s_{\ell} +
s_{\ell+1} + \cdots  +s_n\\ &= s_1 + s_2 + \cdots + s_\ell + 0 + \cdots + 0.
\end{split} \end{equation} The index $\ell$ is called the \emph{character} of
$A$, and the number $s_\ell$ is called the \emph{Cartan integer} of $A$.  The
tuple $(s_1, \ldots, s_\ell)$ gives the \emph{Cartan characters} of $A$.  Note
that $s_1 \geq s_2 \geq \cdots \geq s_{\ell}$, since otherwise the maximality
condition would have been violated in an earlier column.

Permanently reserve the index ranges 
\begin{equation}
\begin{split}
i,j &\in \{1, \ldots, \ell, \ell+1, \ldots, n\},\\
\lambda, \mu &\in \{1, \ldots, \ell\phantom{, \ell+1, \ldots, n}\},\\
\varrho, \varsigma &\in \{\phantom{1, \ldots, \ell, }\ell+1, \ldots, n\},\ \text{and}\\
a,b &\in \{1, \ldots, r\}.
\label{eqn:index}
\end{split}
\end{equation}

A basis\footnote{We follow the notation of differential geometry. This notation indicates an ordered basis of co-vectors, not a vector.  Each
$u^i$ is an element of $V^*$.} $(u^i) = (u^1, \ldots, u^n)$ of $V^*$ is called
\emph{generic} if its Cartan characters achieve the lexicographical maximum value
$(s_1, s_2, \ldots, s_n)$.  As seen in the previous paragraph, almost all bases
of $V^*$ are generic in this sense.  Given a basis $(u^i)$ of $V^*$, a
basis\footnote{We follow the notation of differential geometry. This notation indicates an ordered basis of vectors, not a co-vector.  Each
$z_a$ is an element of $W$.} $(z_a) = (z_1, \ldots, z_r)$ of $W$ is called
\emph{generic} if the \emph{first} $s_i$ independent entries in column $i$ are
independent.  

Choose generic a basis $(u^i) = (u^1, \ldots, u^n)$ for $V^*$, and let $(u_i) =
(u_1, \ldots, u_n)$ be its dual basis for $V$.  Choose a generic basis $(z_a) =
(z_1, \ldots, z_r)$ for $W$, and let $(z^a) = (z^1,\ldots, z^n)$ be its dual
basis for $W^*$.   
An element of the tableau is written as \begin{equation}\pi = \pi^a_i (z_a
\otimes u^i) \in W \otimes V^*,\end{equation} and the upper-left entries
$\pi^a_\lambda$ for $a \leq s_\lambda$ form a basis of $A^*$. 

Because the bases are generic, the symbol map $\sigma$ can be written as 
\begin{equation}
\Big\{ 0 = \pi^a_i - B^{a,\lambda}_{i,b} \pi^b_\lambda ~:~ 1 \leq i \leq n,\
s_i < a \leq r\Big\}.
\label{eqn:symrels}
\end{equation}
It is implicit that $B^{a,\lambda}_{i,b} = 0$ if $a \leq s_i$ or $b \geq
s_\lambda$ or $i<\lambda$.
That is, entries to the lower-right (unshaded) are written as linear combinations of the
entries in the upper-left (shaded) using the coefficients $B^{a,\lambda}_{i,b}$, as in Figure~\ref{fig:figtab}.

Consider the example \eqref{eqn:example_sym}, which is not written in generic
bases.  If we exchange columns  $2\leftrightarrow 3$ and rows
$1\leftrightarrow 3$, then it
becomes generic with $(s_1, s_2, s_3) = (3,2,0)$, seen here:
\begin{equation}
\left\{ 
\begin{pmatrix}
\alpha_2  & \alpha_4 & \alpha_3 \\ 
\alpha_1  & \alpha_3 & \alpha_2 \\
\alpha_0  & \alpha_2 & \alpha_1 \\
\end{pmatrix} \right\}
=
\left\{ 
\begin{pmatrix}
\colorbox{io}{$\pi^1_1$}  & \colorbox{io}{$\pi^1_2$} & \pi^2_2 \\
\colorbox{io}{$\pi^2_1$}  & \colorbox{io}{$\pi^2_2$} & \pi^1_1 \\
\colorbox{io}{$\pi^3_1$}  & \pi^1_1 & \pi^2_1 \\
\end{pmatrix} \right\}
.\label{eqn:example_tab_generic}\end{equation}
Equation~\eqref{eqn:symrels} becomes:
\begin{equation}
\begin{split}
0 &= \pi^3_2 - 1 \pi^1_1 - 0 \pi^2_1 - 0 \pi^3_1 - 0 \pi^1_2 - 0 \pi^2_2,\\
0 &= \pi^1_3 - 0 \pi^1_1 - 0 \pi^2_1 - 0 \pi^3_1 - 0 \pi^1_2 - 1 \pi^2_2,\\
0 &= \pi^2_3 - 1 \pi^1_1 - 0 \pi^2_1 - 0 \pi^3_1 - 0 \pi^1_2 - 0 \pi^2_2,\\
0 &= \pi^3_3 - 0 \pi^1_1 - 1 \pi^2_1 - 0 \pi^3_1 - 0 \pi^1_2 - 0 \pi^2_2.\\
\end{split}
\label{eqn:example_sym_generic}\end{equation}

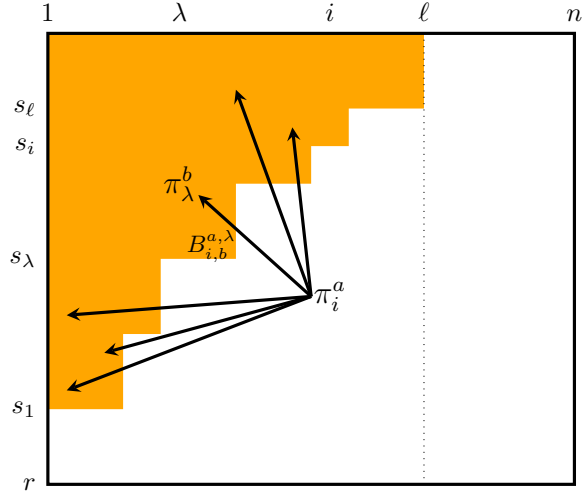
\begin{figure}
\begin{center}
\begin{tikzpicture} [scale=0.5]
\fill[io] (0,0) -- (10, 0) -- (10,-2) -- ( 9,-2) -- ( 9,-2) -- ( 8,-2) -- ( 8,-3) -- ( 7,-3) -- ( 7,-4) -- ( 6,-4) -- ( 6,-4) -- ( 5,-4) -- ( 5,-6) -- ( 4,-6) -- ( 4,-6) -- ( 3,-6) -- ( 3,-8) -- ( 2,-8) -- ( 2,-10) -- ( 1,-10) -- ( 1,-10) -- ( 0,-10) -- cycle;
\draw[very thick]     (0,0) -- (0,-12) -- (14,-12) -- (14,0) -- cycle;
\draw[dotted]     (10,0) -- (10,-12);
\draw (0,-12) node [left=1pt,black] {$r$};
\draw (0,-2) node [left=1pt,black] {$s_\ell$};
\draw (0,-10) node [left=1pt,black] {$s_1$};
\draw (0,-6) node [left=1pt,black] {$s_\lambda$};
\draw (0,-3) node [left=1pt,black] {$s_i$};
\draw (0,0) node [above=1pt,black] {$1$};
\draw (3.5,0) node [above=1pt,black] {$\lambda$};
\draw (7.5,0) node [above=1pt,black] {$i$};
\draw (10,0) node [above=1pt,black] {$\ell$};
\draw (14,0) node [above=1pt,black] {$n$};
\draw  (3.5,-4) node {\Large $\pi^b_\lambda$};
\draw  (7.5,-7) node {\Large $\pi^a_i$};
\draw[very thick, ->,left] (7.0,-7) to node {\small $B^{a,\lambda}_{i,b}~$} (4,-4.3);
\draw[very thick, ->] (7.0,-7) -- (5,-1.5);
\draw[very thick, ->] (7.0,-7) -- (0.5,-7.5);
\draw[very thick, ->] (7.0,-7) -- (1.5,-8.5);
\draw[very thick, ->] (7.0,-7) -- (0.5,-9.5);
\draw[very thick, ->] (7.0,-7) -- (6.5,-2.5);
\end{tikzpicture}
\end{center}
\caption{A tableau in generic bases. Image from \cite{Smith2014a}.}
\label{fig:figtab}
\end{figure}
\begin{figure}
\begin{center}
\begin{tikzpicture} [scale=0.5]
\fill[io] (0,0) -- (10, 0) -- (10,-2) -- ( 9,-2) -- ( 9,-2) -- ( 8,-2) -- ( 8,-3) -- ( 7,-3) -- ( 7,-4) -- ( 6,-4) -- ( 6,-4) -- ( 5,-4) -- ( 5,-6) -- ( 4,-6) -- ( 4,-6) -- ( 3,-6) -- ( 3,-8) -- ( 2,-8) -- ( 2,-10) -- ( 1,-10) -- ( 1,-10) -- ( 0,-10) -- cycle;
\draw[very thick]     (0,0) -- (0,-12) -- (14,-12) -- (14,0) -- cycle;
\draw[dotted]     (10,0) -- (10,-12);
\draw (0,-12) node [left=1pt,black] {$r$};
\draw (0,-2) node [left=1pt,black] {$s_\ell$};
\draw (0,-10) node [left=1pt,black] {$s_1$};
\draw (0,-6) node [left=1pt,black] {$s_\lambda$};
\draw (0,-3) node [left=1pt,black] {$s_i$};
\draw (0,0) node [above=1pt,black] {$1$};
\draw (3.5,0) node [above=1pt,black] {$\lambda$};
\draw (7.5,0) node [above=1pt,black] {$i$};
\draw (10,0) node [above=1pt,black] {$\ell$};
\draw (14,0) node [above=1pt,black] {$n$};
\draw  (3.5,-3.5) node {\small $\Wu^-_\lambda$};
\draw  (7.5,-1.5) node {\small $\Wu^-_i$};
\draw[very thick] (7,0) -- (7,-12) -- (8,-12) -- (8,0) -- cycle;
\draw[very thick] (3,0) -- (3,-6) -- (4,-6) -- (4,0) -- cycle;
\draw[dotted] (7,-3) -- (8,-3);
\draw[very thick, ->] (4.0,-3.5) to node[above] {$0$} (6.9,-2);
\draw[very thick, ->] (4.0,-3.5) to node[below] {$\B^\lambda_i$ } (6.9,-4.5);
\end{tikzpicture}
\end{center}
\caption{The map $\B^\lambda_i$ for a tableau in generic bases.  Image from
\cite{Smith2014a}.}
\label{fig:figB}
\end{figure}
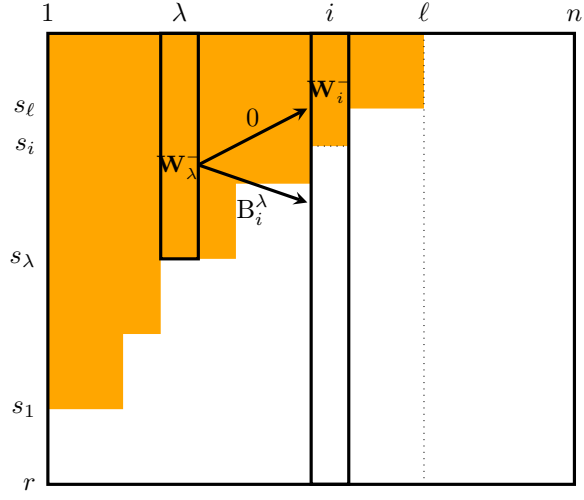

One can take the dual perspective, wherein the symbol coefficients
$B^{a,\lambda}_{i,b}$ define a 
map from the upper-left independent entries to the lower-right entries.
That is, consider the map 
\begin{equation}
\B \in V^* \otimes V \otimes W \otimes W^* \cong \End(V^*)
\otimes \End(W)\end{equation}
defined by 
\begin{equation}
\sum_{a \leq s_i} \delta^\lambda_i \delta^a_b (z_a \otimes z^b) \otimes (u^i
\otimes u_\lambda) +   
\sum_{a>s_i} B^{a,\lambda}_{i,b} (z_a \otimes z^b) \otimes (u^i \otimes
u_\lambda).
\label{eqn:IB}
\end{equation}
Equation~\eqref{eqn:IB} is the formal inclusion $A \to W \otimes V^*$ in the defining
exact sequence~\eqref{eqn:H1}.
By fixing $\varphi \in V^*$ and $v \in V$, we obtain an endomorphism
$\B(\varphi)(v):W \to W$ defined by the column relations of $(\pi^a_i)$, as in Figure~\ref{fig:figB}.
We use the shorthand $\B^\lambda_i$ for $\B(u^\lambda)(u_i)$, but note that
this is not quite the same as $B^{a,\lambda}_{i,b} z_a \otimes z^b$ because of
the identity term in Equation~\eqref{eqn:IB}.
That is, $\B^\lambda_\lambda = \sum_{a \leq s_\lambda} \delta^a_b (z_a \otimes z_b)$ for all $\lambda \leq \ell$.  

For the example
\eqref{eqn:example_tab_generic}--\eqref{eqn:example_sym_generic}, the maps
$\B^\lambda_i :W \to W$ are:
\begin{equation}
\begin{matrix}
  \B^1_1 = \begin{pmatrix} 1 & 0 & 0\\  0 & 1 & 0\\ 0 & 0 & 1 \end{pmatrix}
& \B^1_2 = \begin{pmatrix} 0 & 0 & 0\\  0 & 0 & 0\\ 1 & 0 & 0 \end{pmatrix}
& \B^1_3 = \begin{pmatrix} 0 & 0 & 0\\  1 & 0 & 0\\ 0 & 1 & 0 \end{pmatrix}\\
& \B^2_2 = \begin{pmatrix} 1 & 0 & 0\\  0 & 1 & 0\\ 0 & 0 & 0 \end{pmatrix}
& \B^2_3 = \begin{pmatrix} 0 & 1 & 0\\  0 & 0 & 0\\ 0 & 0 & 0 \end{pmatrix}.
\label{eqn:example_sym_array}
\end{matrix}
\end{equation}
So, if $\varphi = \varphi_i u^i \in V^*$ and $v = v^j u_j \in V$, the endomorphism
$\B(\varphi)(v) :W \to W$ is  
\begin{equation}
\B(\varphi)(v) = 
\begin{pmatrix} \varphi_1v^1 + \varphi_2v^2& \varphi_2v^3 & 0\\  \varphi_1v^3 &
\varphi_1v^1 +\varphi_2v^2& 0\\ \varphi_1v^2 & \varphi_1v^3 & \varphi_1v^1
\end{pmatrix}.
\label{eqn:example_sym_join}
\end{equation}

Using our generic basis $(u_i)$ for $V$ and its dual basis $(u^i)$
 for $V^*$, define decompositions $V = U \oplus Y$ and $V^* =
Y^\perp \oplus U^\perp$ using our index convention \eqref{eqn:index} as follows:
\begin{equation}
\begin{split}
V &= \pair{u_1,\ldots, u_{\ell}, u_{\ell+1}, \ldots,u_n} = \pair{u_i},\\
U &= \pair{u_1,\ldots, u_{\ell}\phantom{, u_{\ell+1}, \ldots,u_n}} =
\pair{u_\lambda},\\
Y &= \pair{\phantom{u_1,\ldots, u_{\ell},} u_{\ell+1}, \ldots,u_n} =
\pair{u_\varrho},
\end{split}
\label{eqn:VUY}
\end{equation}
and
\begin{equation}
\begin{split}
V^* &= \pair{u^1,\ldots, u^{\ell}, u^{\ell+1}, \ldots,u^n} = \pair{u^i},\\
U^* \cong Y^\perp &= \pair{u^1,\ldots, u^{\ell}\phantom{, u^{\ell+1}, \ldots,u^n}} =
\pair{u^\lambda},\\
Y^* \cong U^\perp &= \pair{\phantom{u^1,\ldots, u^{\ell},} u^{\ell+1}, \ldots,u^n} =
\pair{u^\varrho}.
\end{split}
\label{eqn:VUYdual}
\end{equation}
The isomorphisms $U^* \cong Y^\perp$ and $Y^* \cong U^\perp$ depend on the
basis; they are non-canonical but sometimes useful.

It is apparent from \eqref{eqn:IB} that $\B(\varphi) = \B(\tilde{\varphi})$ if
$\varphi - \tilde{\varphi} \in U^\perp$; that is if $\varphi_\varrho =
\tilde{\varphi}_\varrho$ for all $\varrho \geq \ell+1$, so we usually consider $\B(\varphi)$
only for $\varphi \in Y^\perp$.

Thus, in generic bases, we have a collection $\B^\lambda_i$ of endomorphisms of
$W$.
For our purposes of constructing a normal form, a ``good'' basis is one which
makes the endomorphisms $\B^\lambda_i$ as structurally similar as possible.
Section~\ref{sec:endo} imposes additional conditions on the images of these
endomorphisms for this purpose.

\subsection{Endovolutive Tableaux}\label{sec:endo}
Suppose $(u^i)$ and $(z_a)$ are generic bases for $A$.  For any $i$, define a decomposition
$W = \Wu^-_i \oplus \Wu^+_i$ by
\begin{equation}
\begin{split}
W &= \pair{z_1,\ldots, z_{s_i}, z_{s_i+1}, \ldots,z_r} = \pair{z_a}\\
\Wu^-_i &= \pair{z_1,\ldots, z_{s_i}\phantom{, z_{s_i+1}, \ldots,z_r}}\\ 
\Wu^+_i &= \pair{\phantom{z_1,\ldots, z_{s_i},} z_{s_i+1}, \ldots,z_r}
\end{split}
\end{equation}
By \eqref{eqn:IB}, the map $\B^\lambda_i:W \to W$ has support $\Wu^-_\lambda
\subset W$, and its image lies in $\Wu^+_i \subset W$. 

More generally, for any $\varphi \in V^*$, let
$\Wu^-(\varphi) = \Wu^-_{\underline{\lambda}}$ and 
$\Wu^+(\varphi) = \Wu^+_{\underline{\lambda}}$, 
where $\underline{\lambda}$
is the minimum index such that $\varphi_{\underline{\lambda}} \neq 0$.
(For general $\varphi$, we have $\dim \Wu^-(\varphi) = s_1$.)

A tableau $A$ expressed in bases $(u^i)$ and $(z_a)$ is called
\emph{endovolutive}\footnote{The term \emph{endovolutive} was coined in 
\cite{Smith2014a}, but the phenomenon was described previously in \cite[Chapter
IV\S5]{BCGGG}, \cite{Yang1987}, and it is certainly familiar to anyone who has
manipulated tableaux of linear Pfaffian systems.}
if $B^{a,\lambda}_{i,b} = 0$ for all $a > s_\lambda$.  That
is, endovolutive means that $\B^\lambda_i$ is an endomorphism of
$\Wu^-_\lambda \subset W$, as in Figure~\ref{fig:figBend}.

\begin{figure}
\begin{center}
\begin{tikzpicture} [scale=0.5]
\fill[io] (0,0) -- (10, 0) -- (10,-2) -- ( 9,-2) -- ( 9,-2) -- ( 8,-2) -- ( 8,-3) -- ( 7,-3) -- ( 7,-4) -- ( 6,-4) -- ( 6,-4) -- ( 5,-4) -- ( 5,-6) -- ( 4,-6) -- ( 4,-6) -- ( 3,-6) -- ( 3,-8) -- ( 2,-8) -- ( 2,-10) -- ( 1,-10) -- ( 1,-10) -- ( 0,-10) -- cycle;
\draw[very thick]     (0,0) -- (0,-12) -- (14,-12) -- (14,0) -- cycle;
\draw[dotted]     (10,0) -- (10,-12);
\draw (0,-12) node [left=1pt,black] {$r$};
\draw (0,-2) node [left=1pt,black] {$s_\ell$};
\draw (0,-10) node [left=1pt,black] {$s_1$};
\draw (0,-6) node [left=1pt,black] {$s_\lambda$};
\draw (0,-3) node [left=1pt,black] {$s_i$};
\draw (0,0) node [above=1pt,black] {$1$};
\draw (3.5,0) node [above=1pt,black] {$\lambda$};
\draw (7.5,0) node [above=1pt,black] {$i$};
\draw (10,0) node [above=1pt,black] {$\ell$};
\draw (14,0) node [above=1pt,black] {$n$};
\draw  (3.5,-3.5) node {\small $\Wu^-_\lambda$};
\draw  (7.5,-2) node {\small $\Wu^-_i$};
\draw  (8.5,-4.5) node[rotate=90] {\small $\Wu^+_i \cap \Wu^-_\lambda$};
\draw  (8.5,-9.5) node[rotate=90] {\small $\Wu^+_i \cap \Wu^+_\lambda$};
\draw[very thick] (7,-6) -- (7,-12) -- (8,-12) -- (8,-6) -- cycle;
\draw[very thick] (3,0) -- (3,-6) -- (4,-6) -- (4,0) -- cycle;
\draw[very thick] (7,0) -- (7,-6) -- (8,-6) -- (8,0) -- cycle;
\draw[dotted] (7,-3) -- (8,-3);
\draw[very thick, ->] (4.0,-3.5) to node[above] {$0$} (6.9,-2);
\draw[very thick, ->] (4.0,-3.5) to node[below] {$\B^\lambda_i$ } (6.9,-4.5);
\draw[very thick, ->] (4.0,-3.5) to node[below] {$0$} (6.9,-9);
\end{tikzpicture}
\end{center}
\caption{The map $B^\lambda_i$ for an endovolutive tableau.}
\label{fig:figBend}
\end{figure}
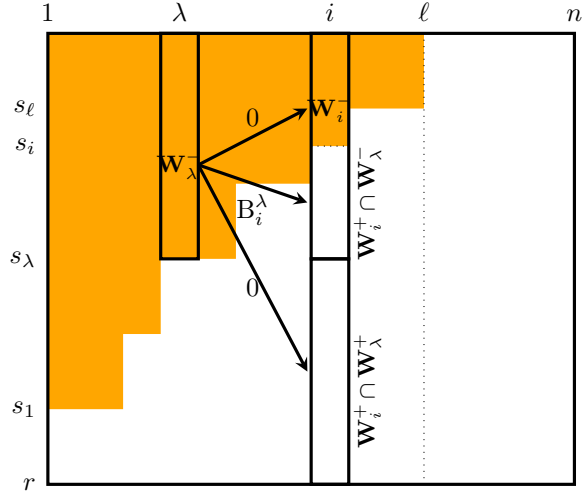

Note that the example \eqref{eqn:example_sym_array} is endovolutive because $s_2=2$ and 
$\B^2_2$ and $\B^2_3$ have non-zero entries only in the upper-left $2\times 2$ part.

In this way, when considering endovolutive tableaux, it useful to arrange the symbol
endomorphisms as an $\ell \times n$ array of $r \times r$ matrices:
\begin{equation}
\begin{bmatrix}
\B^1_1 & \B^1_2  & \B^1_3   & \B^1_4 & \cdots & \B^1_\ell & \cdots & \B^1_n \\
0      & \B^2_2  & \B^2_3   & \B^2_4 & \cdots & \B^2_\ell & \cdots & \B^2_n \\
0      & 0       & \B^3_3   & \B^3_4 & \cdots & \B^3_\ell & \cdots & \B^3_n \\
0      & 0       & 0        & \B^4_4 & \cdots & \B^4_\ell &  \cdots & \B^4_n\\    
       &         &          &        & \ddots &         & \B^\lambda_i & \vdots   \\
0      & 0       & 0        & 0      & 0      & \B^\ell_\ell & \cdots & \B^\ell_n 
\end{bmatrix}.
\label{eqn:bigB}
\end{equation}
Each ``diagonal'' entry $\B^\lambda_\lambda$ is the $r\times r$ matrix for which the
non-zero upper-left part is an $s_\lambda \times s_\lambda$ identity matrix,
$I_{s_\lambda}$.
Endovolutivity means that $\B^\lambda_i$, which is the $r \times r$ matrix in
row $\lambda$ of \eqref{eqn:bigB}, is zero outside the upper-left $s_\lambda
\times s_\lambda$ part.

If a tableau is endovolutive in
certain bases for $W$ and $V^*$, then it is also endovolutive under any
upper-triangular change-of-basis for $u^i \mapsto g^i_j u^j$.  Under such a
basis change, the columns of $(\pi^a_i)$ are linear combinations of the ones to
their right, and the sub-matrices in \eqref{eqn:bigB} change by the
corresponding conjugation.  Endovolutivity is a property of the flag
generated by the basis of $V^*$.

\subsection{Mutual Eigenvectors and Rank}
For endovolutive bases, each $\B^\lambda_i$ is an endomorphism of a particular vector space, 
so it is sensible to consider an eigenvector problem for these maps:
For any $\lambda$, let 
\begin{equation}
\Wu^1(u^\lambda) = \left\{ 
w \in \Wu^-_\lambda ~:~ \B^\lambda_\mu w = \delta^\lambda_\mu w, \quad \forall \mu 
\leq \ell\right\}.
\end{equation}
That is, we want to find the vectors that are preserved by $\B^\lambda_\lambda
= I_{s_\lambda}$
but are annihilated by all $\B^\lambda_\mu$ for $\mu \neq \lambda$.
More generally, let
\begin{equation}
\Wu^1(\varphi) = \left\{ 
w \in \Wu^-(\varphi) ~:~ 
\left( \sum_\lambda \varphi_\lambda \B^\lambda_\mu - \varphi_\mu I  \right) w =
0,\quad 
 \forall \mu
\leq \ell
\right\}.
\label{eqn:Wup}
\end{equation}

Equation~\eqref{eqn:Wup} can be rewritten as a mutual eigenvector
problem on the $\ell$ endomorphisms $\B(\varphi)(u_1)$, \ldots, $\B(\varphi)(u_\ell)$:
\begin{equation}
\Wu^1(\varphi) = \left\{ 
w \in \Wu^-(\varphi) ~:~ 
 \B(\varphi)(u_\mu)\,w = \varphi_\mu w,\quad  \forall \mu
\leq \ell
\right\}.
\label{eqn:Wupeig}
\end{equation}
Alternatively, because $B^\mu_\mu = I_{s_\mu}$, equation~\eqref{eqn:Wup} says that $\B(\varphi)(\cdot)w$ is rank-1 when
restricted to $Y^\perp$, so we can rewrite it as 
\begin{equation}
\Wu^1(\varphi) = \left\{ 
    w \in \Wu^-(\varphi) ~:~
w \otimes \varphi  + J^a_\varrho(z_a \otimes u^\varrho) \in A_e,\quad \exists
J \in W \otimes U^\perp
\right\}.
\label{eqn:Wupinv}
\end{equation}
This space is the focus of \cite{Guillemin1968}, and it plays an important part
in our story.   Unlike $\Wu^-(\varphi)$, its definition
does not depend on the basis; its definition depends only on the splitting $V =
U \oplus Y$.
Its dimension is an important invariant.
\begin{lemma}
Suppose that the tableau $A$ admits endovolutive bases.
For generic $\varphi$, $\dim \Wu^1(\varphi) = s_\ell$.
\label{lem:dimW1}
\end{lemma}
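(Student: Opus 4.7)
The plan is to process the defining conditions $\left(\sum_\lambda \varphi_\lambda \B^\lambda_\mu - \varphi_\mu I\right) w = 0$ inductively in $\mu = 1, 2, \ldots, \ell$, using the upper-triangular structure of the array \eqref{eqn:bigB} (so $\B^\lambda_\mu = 0$ for $\lambda > \mu$) together with the endovolutive constraint that the image of each $\B^\lambda_\mu$ lies in $\Wu^-_\lambda \cap \Wu^+_\mu$. For generic $\varphi$ we have $\varphi_1 \neq 0$, hence $\Wu^-(\varphi) = \Wu^-_1$, and the $\mu = 1$ condition collapses (using $\B^1_1 = I_{s_1}$ on $\Wu^-_1$) to nothing beyond the ambient restriction $w \in \Wu^-_1$.

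The inductive claim is: after imposing $\mu = 1, \ldots, k$, the solution space is the graph of a linear map $L_k : \Wu^-_k \to \Wu^-_1 \cap \Wu^+_k$ depending rationally on $\varphi$, and hence has dimension $s_k$. For the step $k \to k+1$, impose the $\mu = k+1$ condition. Endovolutivity places all nontrivial content of that equation inside $\Wu^-_1 \cap \Wu^+_{k+1}$, which splits as $(\Wu^-_k \cap \Wu^+_{k+1}) \oplus (\Wu^-_1 \cap \Wu^+_k)$. Projecting onto the first summand, the new piece of $w$ (its $\Wu^-_k \cap \Wu^+_{k+1}$-component) is determined by its $\Wu^-_{k+1}$-component through an operator of the form $\varphi_{k+1} I$ plus a $\B$-correction, which is invertible for generic $\varphi$. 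Iterating through $k = 1, \ldots, \ell - 1$ leaves $w$ parametrized by its $\Wu^-_\ell$-component, giving dimension $s_\ell$.

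The hard step is the projection onto the other summand $\Wu^-_1 \cap \Wu^+_k$: a priori it yields extra compatibility conditions with prior determinations that could reduce the dimension below $s_\ell$. I expect these to hold identically as a consequence of $A$ admitting endovolutive bases --- this hypothesis is tantamount to involutivity of $A$ and forces identities among the coefficients $B^{a,\lambda}_{i,b}$ (equivalently, the vanishing of an appropriate Spencer cohomology group) precisely sufficient to trivialize the would-be compatibility obstructions. One could alternatively work globally from \eqref{eqn:Wupinv}: the map $\rho : A \to W \otimes Y^\perp$ is injective in endovolutive bases, so $\Wu^1(\varphi) \cong \rho(A) \cap (W \otimes \varphi)$, whose generic dimension one aims to read off the triangular parametrization of $\rho(A)$; but the non-generic drop from the naive intersection count to $s_\ell$ again rests on the same endovolutive identities.
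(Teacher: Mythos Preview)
Your inductive strategy is sound in outline, and you correctly isolate the crux: once the $\mu \le k$ conditions have determined $[w]^+_k$ from $[w]^-_k$, the projection of the $\mu = k{+}1$ equation onto $\Wu^+_k \cap \Wu^-_1$ is a compatibility condition, not a new determination. But your justification for why these obstructions vanish contains a genuine error. You write that the hypothesis ``$A$ admits endovolutive bases'' is ``tantamount to involutivity of $A$.'' It is not: Theorem~\ref{thm:invcond} says involutivity is equivalent to endovolutivity \emph{together with} the quadratic commutator identities in condition~(ii); endovolutivity alone is strictly weaker, and the paper says so explicitly in the parenthetical remark opening Section~\ref{sec:gnf}. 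So your appeal to ``involutivity-style identities among the $B^{a,\lambda}_{i,b}$'' or to Spencer-cohomological vanishing is not licensed by the stated hypothesis.

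Worse, the compatibility obstructions do \emph{not} vanish under endovolutivity alone. Take $n=r=3$, $(s_1,s_2,s_3)=(3,2,1)$, and the endovolutive tableau with symbol relations $\pi^3_2 = \pi^1_1$, $\pi^2_3 = 0$, $\pi^3_3 = \pi^1_1$. Here $\ell = n$, so $U^\perp = 0$ and $\Wu^1(\varphi) = \ker\sigma_\varphi$. The condition $w\otimes\varphi \in A$ forces $\varphi_1 w^1 = \varphi_2 w^3 = \varphi_3 w^3$ and $\varphi_3 w^2 = 0$, hence $w=0$ for generic $\varphi$; thus $\dim \Wu^1(\varphi)=0 \neq s_\ell = 1$. (This tableau fails Theorem~\ref{thm:invcond}(ii): the third row of $\B^1_2\B^2_3 - \B^1_3\B^2_2$ is nonzero.) So the ``hard step'' you flag is genuinely hard, and endovolutivity by itself cannot resolve it.

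The paper defers its proof to \cite{Smith2014a} as a ``quick rank computation''; where the lemma is actually invoked (Theorem~\ref{thm:degXi}), $A$ is involutive. Under involutivity, the commutator identities of Theorem~\ref{thm:invcond}(ii) are exactly what make your compatibility obstructions vanish---indeed, in your induction the obstruction at step $k{+}1$ in $\Wu^+_k \cap \Wu^-_1$ involves precisely the rows $a > s_k$ of expressions $\B^\lambda_k \B^\mu_{k+1} - \B^\lambda_{k+1}\B^\mu_k$ applied to $w$. To repair your argument, you should either assume involutivity and invoke those identities explicitly, or else give an independent reason (not an appeal to Spencer cohomology, which again encodes involutivity) for the generic rank to be $s_1 - s_\ell$.
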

Lemma~\ref{lem:dimW1} is the result of a quick rank computation 
using \eqref{eqn:Wup}--\eqref{eqn:Wupeig}.  See \cite{Smith2014a}.

Our ``good'' basis and normal form will be built on the requirement that the maps $\B^\lambda_i$ commute on
certain combinations of these spaces (and thus the maps share generalized eigenspaces
and Jordan-block normal form there).  That is, we are aiming for
something like the commutative subalgebras seen in \cite{Gerstenhaber1961} and
\cite{Guralnick2000}.
Endovolutivity allows surprisingly
direct computation of the desired conditions.
For more detail on
endovolutivity, see \cite{Smith2014a} and the references therein.  We return to
this topic in Section~\ref{sec:edsinv}, but before that we must introduce the
geometry of subspaces.

\section{Grassmann and Universal Bundles}\label{sec:grass}
The Grassmann variety is the set $\Gr_n(X)$ of $n$-planes in an
$(n{+}r)$-dimensional vector space $X$.  It is a smooth projective variety and
a smooth manifold of dimension $nr$.  An $n$-plane $e\in \Gr_n(X)$ is called an
\emph{element}.

\subsection{Tangent and Arctangent}\label{sec:tan}
Depending on one's favorite notation, there are sev\-eral ways to see that the
tangent space of $\Gr_n(X)$ at $e$ is $(X/e) \otimes e^*$.

First, for any $e \in \Gr_n(X)$, choose a basis
$(u_i) = (u_1,
\ldots, u_n)$ for $e$, and choose $(z_a) = (z_1, \ldots, z_r)$ so as to complete a
basis of the entire vector space $X$.
Any $n$-plane $\tilde{e}$ near $e$ admits a basis $(\tilde{u}_i) =
(\tilde{u}_1, \ldots, \tilde{u}_n)$ that
we may assume is related by a matrix in reduced column-echelon form:
\begin{equation}
\begin{pmatrix}
\tilde{u}_1 & \ldots & \tilde{u}_n
\end{pmatrix}
=
\begin{pmatrix}
u_1 & \ldots & u_n & z_1 & \cdots & z_r
\end{pmatrix}
\begin{pmatrix}
1 &  \cdots & 0\\
  &  \ddots &  \\
0 & \cdots & 1\\
K^1_1 & \cdots & K^1_n\\
\vdots & \ddots & \vdots\\
K^r_1 & \cdots & K^r_n
\end{pmatrix}.
\end{equation}
More succinctly, using the summation convention:
\begin{equation}
\tilde{u}_i = u_i + z_a K^a_i = u_j \delta^j_i + z_a K^a_i.
\label{eqn:elementbasis}
\end{equation}
That is, $(\tilde{u}_i)$ and $(u_i)$ are related by an $(n+r)\times n$ matrix of rank
$n$ whose image $\pair{\tilde{u}_1, \ldots, \tilde{u}_n} = \tilde{e}$ is determined uniquely by the
$r\times n$ submatrix $(K^a_i)$.  
In this sense, $T_e \Gr_n(X)$ is isomorphic
to the space of $r \times n$ matrices, which is isomorphic to $(X/e)\otimes
e^*$.
This is easy and computational, but this isomorphism is not \emph{canonical} for an
abstract vector space (without metric) because it depends on a choice of
splitting $X = e \oplus (X/e)$ by choosing the complementary basis $(z_a)$.

Alternatively, to see $T_e \Gr_n(X) = (X/e) \otimes e^*$ and avoid splitting, we can use the dual\footnote{Recall that $(X/e)^*$ is canonically
isomorphic to $e^\perp$: if $[v]= \{ u + e \} \in X/e$, then
$\varphi([v]) = \varphi(v) + 0$ is well-defined for all $\varphi \in e^\perp$.} short-exact sequences
\begin{equation}
\begin{split}
0 \to e \to X \to X/e \to 0,\\
0 \to e^\perp \to X^* \to e^* \to 0.
\end{split}
\label{eqn:shortexact}
\end{equation}
Choose any basis $(\theta^a) = (\theta^1, \ldots, \theta^r)$ of the annihilator space
$e^\perp = (X/e)^*$, and let $(z_a) = (z_1, \ldots, z_r)$ be the corresponding dual
basis of $(X/e)$.  Then, we may take the coefficients $K^a_i$ of 
\begin{equation}
K = z_a \otimes
K^a_i(\tilde{e}) = z_a \otimes \theta^a(\tilde{u}_i) \in (X/e) \otimes e^*
\end{equation} as
$nr$ coordinates on $T_e \Gr_n(X)$; that is, $K^a_i$ gives a basis of
$T^*_e\Gr_n(X)$.

More abstractly, an explicit choice of bases $(u_i)$ for $e$ and $(\theta^a)$ for
$e^\perp$ is unnecessary.  Instead, we need only the abstract homomorphism $K
\in (X/e) \otimes e^*$, because the space\footnote{Note that $v+K(v)$ is not
well-defined in $X$ for any particular $v \in e$, but the span over all such $v$ is
well-defined.} 
\begin{equation}
\tilde{e} = \pair{\tilde{u}_i} = \pair{u_i + K(u_i)}
= \pair {v + K(v) ~:~ v\in e}
\end{equation}
is invariant
under $GL(n)$ transformations on $(u_i)$ and $(\tilde{u}_i)$ as well as $GL(r)$
transformations on $\theta$.  That is, $\tilde{e}$ is the ``graph'' of $v
\mapsto v + K(v)$ over all $v \in e$.

As in Figure~\ref{fig:arctan}, the derivative map
$\Gr_n(X) \to (X/e) \otimes e^*$ near $e$ is a multidimensional
generalization of the tangent function, so the inverse map\footnote{The 
map $\arctan_e$ is analogous to exponential map $\exp_p:T_pM \to M$ from
Riemannian geometry or Lie group representation theory, except that 
this description of $\arctan_e$
does not make explicit use of a metric or group structure.}
is written 
\begin{equation}
\arctan_e: (X/e) \otimes e^* \to \Gr_n(X).
\end{equation}

The reader is encouraged to read \cite[\S5]{Milnor1974} and
\cite{Kobayashi1963} and to search for the terms \emph{Pl\"ucker embedding} and
\emph{Stiefel manifold} for more detail on this subject.

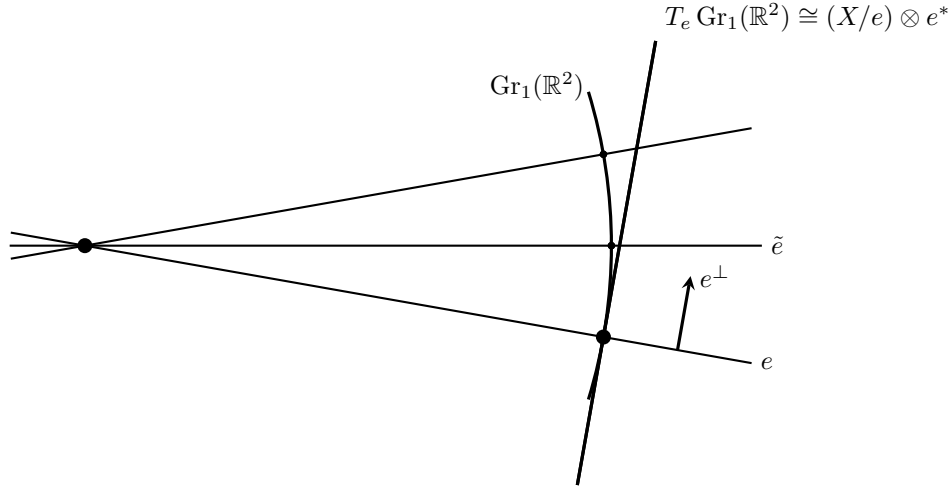
\begin{figure}
\begin{tikzpicture}[scale=2]
\fill (0, 0) circle (0.5mm);
\fill ({3.5*cos(-10)}, {3.5*sin(-10)}) circle (0.5mm);
\fill ({3.5*cos(0)}, {3.5*sin(0)}) circle (0.25mm);
\fill ({3.5*cos(10)}, {3.5*sin(10)}) circle (0.25mm);
\draw[very thick] ({3.5*cos(-10) + 1*sin(-10)}, {3.5*sin(-10)-1*cos(-10)}) -- ({3.5*cos(-10) - 2*sin(-10)}, {3.5*sin(-10) + 2*cos(-10)});
\fill ({3.5*cos(0)}, {3.5*sin(0)}) circle (0.25mm);
\fill ({3.5*cos(10)}, {3.5*sin(10)}) circle (0.25mm);
\draw[very thick]  ({3.5*cos(-17)}, {3.5*sin(-17)}) arc (-17:17:3.5); 
\draw[thick] ({-0.5*cos(-10)}, {-0.5*sin(-10)}) -- ({4.5*cos(-10)}, {4.5*sin(-10)});
\draw[thick] ({-0.5*cos( 10)}, {-0.5*sin( 10)}) -- ({4.5*cos( 10)}, {4.5*sin( 10)});
\draw[thick] (-0.5,0) -- (4.5,0);
\draw[very thick] ({3.5*cos(-10) + 1*sin(-10)}, {3.5*sin(-10)-1*cos(-10)}) -- ({3.5*cos(-10) - 2*sin(-10)}, {3.5*sin(-10) + 2*cos(-10)});
\node[above left] at ({3.5*cos(15)}, {3.5*sin(15)}) {$\Gr_1(\mathbb{R}^2)$};
\node[above right] at ({3.5*cos(-10) - 2*sin(-10)}, {3.5*sin(-10)+2*cos(-10)})
{$T_e \Gr_1(\mathbb{R}^2) \cong (X/e) \otimes e^*$};
\node[right] at ({4.5*cos(-10)}, {4.5*sin(-10)}) {$e$};
\node[right] at (4.5, 0) {$\tilde{e}$};
\draw[very thick, ->] ({4*cos(-10)}, {4*sin(-10)}) -- ({4*cos(-10) - 0.5*sin(-10)}, {4*sin(-10) + 0.5*cos(-10)});
\node[right] at ({4*cos(-10) - 0.5*sin(-10)}, {4*sin(-10) + 0.5*cos(-10)}) {$e^\perp$};
\end{tikzpicture}
\caption{From $e$, identify a nearby line $\tilde{e}$ in $\mathbb{R}^2$ with a relative angle.  The map from $T_e\Gr_1(\mathbb{R}^2) \cong (-\infty, \infty)$ to
the neighborhood $(-\pi/2, \pi/2)$ of $e$ in $Gr_1(\mathbb{R}^2)$ is $\arctan_e$.
Its inverse is $\tan_e$.}\label{fig:arctan}
\end{figure}

\begin{rmk}
Notice that any linear subspace of $(X/e) \otimes e^*$ is a tableau in the sense of
Section~\ref{sec:tableau}.
In some sense, it is the \emph{only} example, as arbitrary $V$ and $W$ could be
studied by setting $X = V \oplus W$ and $e = V+0$.
Moreover, any smooth submanifold $Z \subset \Gr_n(X)$ with tangent space $T_e Z
\subset T_e \Gr(X)$ at $e \in Z$ gives $T_e Z$ as a tableau in $(X/e) \otimes e^*$.
This observation is the heart of the entire subject of exterior differential
systems, and it reappears forcefully in Section~\ref{sec:eds}.
\label{rmk:subspacesaretableaux}
\end{rmk}

\subsection{Polar pairs}\label{sec:polarpairs}
The purpose of this subsection is to establish two results,
Lemmas~\ref{lem:ranknullity} and \ref{lem:connected}, that tie the 
algebraic geometry of intersecting subspaces to the differential geometry of the Grassmannian.
These lemmas are used in
Part~\ref{part:Xi} to demonstrate the correspondence between the characteristic variety (in the Cauchy problem
of a system of PDEs) and the rank-1 variety (of the tableau of an EDS) in
Lemma~\ref{lem:AseesXi}, thus
providing the foundation of the geometric theory of PDEs. 

Suppose that $e, \tilde{e} \in \Gr_n(X)$, and that they intersect along a
hyperplane.
That is, suppose $e' = e \cap \tilde{e}$  and $\dim e' = n-1$.  We call the
pair of $n$-planes
$e$ and $\tilde{e}$ a \emph{polar pair} because they are both polar
extensions\footnote{This is a classical terminology that reappears in
Section~\ref{sec:polarXi}.} of $e'$.  
For any $e \in \Gr_n(X)$, let  
\begin{equation}
\Pol_1(e) = \{ \tilde{e} \in \Gr_n(X) ~:~
\dim(\tilde{e} \cap e) = n-1\}.
\end{equation}
We say $\tilde{e} \in \Pol_1(e)$ is a \emph{polar pair of $e$}.
This relationship is symmetric---hence the unqualified term polar \emph{pair}---as
$\tilde{e} \in \Pol_1(e)$ if and only if $e \in \Pol_1(\tilde{e})$, but this
relationship is not an equivalence relation, as it fails both reflexivity and
transitivity.  

\begin{figure}
\begin{tikzpicture}[scale=0.7]
\draw (-3,0) -- (3,0) -- (4.1, 1.5) --  (1.5,1.5);
\draw (0,-3) -- (0,3) -- (1.5, 4.1) --  (1.5,1.5);
\draw (0,-3) -- (1.5,-1.1) -- (1.5, 0);
\draw (-3,0) -- (-1.1,1.5) -- (0,1.5);
\draw[thick] (-1, -1) -- (2,2);
\draw[->] (0.75,0.75) -- (1.75,0.75);
\draw[->] (0.75,0.75) -- (0.75,1.75);
\node[above] at (1.75,0.75) {\small $v$};
\node[right] at (0.75,1.75) {\small $\tilde{v}$};
\node[above left] at (3,0) {$e$};
\node[below right] at (0,3) {$\tilde{e}$};
\node[left] at  (-1,-1) {$e'$};
\end{tikzpicture}~\includegraphics[width=0.5\textwidth]{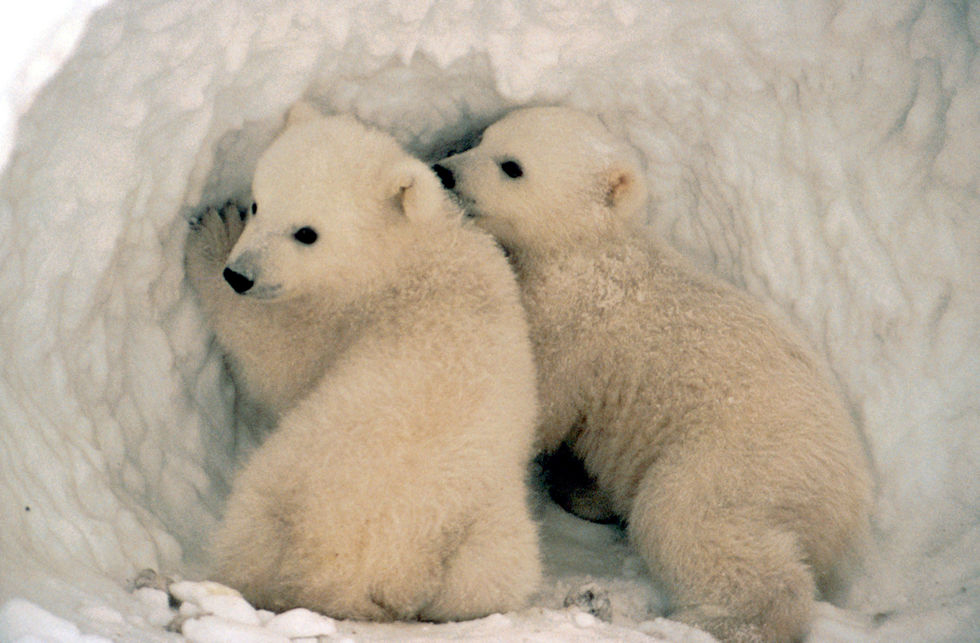}
\caption{Polar pairs.}\label{fig:pp}
\end{figure}

Within the image of $\arctan_e$, 
Lemma~\ref{lem:polarpairs1} ties the notion of polar pairs
to lines in the tangent space $T_e \Gr_n(X)$, 

\begin{lemma}
Suppose $e \in \Gr_n(X)$ and $\tilde{e} = \arctan_e(K)$ for $K \in (X/e)
\otimes e^*$.
Then $\rank(K) = 1$ if and only if $\tilde{e} \in \Pol_1(e)$.
\label{lem:polarpairs1}
\end{lemma}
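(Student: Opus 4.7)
The plan is to reduce the statement to a direct application of the rank--nullity theorem, by identifying $\tilde{e} \cap e$ explicitly with $\ker(K)$.

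First I would unwind the definition of $\arctan_e(K)$ using the explicit description from Section~\ref{sec:tan}. Choose a basis $(u_i)$ of $e$ and a complementary basis $(z_a)$, so that $X = \pair{u_1,\ldots,u_n,z_1,\ldots,z_r}$ and the homomorphism $K\in (X/e)\otimes e^*$ is represented by components $K^a_i$ via $K(u_i) = z_a K^a_i$ (with $z_a$ viewed as a lift of the corresponding class in $X/e$). Then by \eqref{eqn:elementbasis}, $\tilde{e} = \pair{\tilde{u}_i}$ where $\tilde{u}_i = u_i + z_a K^a_i$.

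Next I would compute $\tilde{e}\cap e$ directly. A general element of $\tilde{e}$ has the form $c^i \tilde{u}_i = c^i u_i + (c^i K^a_i) z_a$. Such an element lies in $e = \pair{u_1,\ldots,u_n}$ if and only if the $z_a$-components vanish, i.e.\ $c^i K^a_i = 0$ for every $a$, which is exactly the condition $K(c^i u_i) = 0$. Hence the map $c^i u_i \mapsto c^i \tilde{u}_i$ furnishes a linear isomorphism
\begin{equation}
\tilde{e}\cap e \;\cong\; \ker(K) \subset e.
\end{equation}
Note this identification, though written in a basis, is intrinsic: it does not depend on the chosen splitting, since the span $\tilde{e} = \pair{v + K(v) : v \in e}$ is canonical.

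Finally I would invoke the rank--nullity theorem on $K: e \to X/e$, giving $\dim \ker(K) = n - \rank(K)$. Combining this with the previous step,
\begin{equation}
\dim(\tilde{e}\cap e) = n - \rank(K),
\end{equation}
so $\dim(\tilde{e}\cap e) = n-1$ if and only if $\rank(K) = 1$, which is precisely the statement $\tilde{e}\in\Pol_1(e)$ iff $\rank(K)=1$. There is no real obstacle here; the only subtlety to watch is the mild abuse of notation in writing $v + K(v)$ as an element of $X$, which requires picking a lift $X/e \hookrightarrow X$, but the resulting subspace $\tilde{e}$ and its intersection with $e$ are independent of that choice.
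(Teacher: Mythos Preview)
Your proof is correct and follows essentially the same idea as the paper's: identify $\tilde{e}\cap e$ with $\ker(K)$ and apply rank--nullity. The only cosmetic difference is that the paper treats the two directions separately with adapted bases, whereas you establish the single identity $\dim(\tilde{e}\cap e)=n-\rank(K)$ at once; this in fact already yields the general-$k$ statement of Lemma~\ref{lem:polarpairs} with no extra work.
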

\begin{proof}
Suppose that $\tilde{e} \in \Pol_1(e)$.
Let $e' = e \cap \tilde{e}$, so $\dim e' = n-1$.
Let $(u_1, \ldots, u_{n-1})$ be a basis for $e'$, and
extend that basis to a basis $(u_1, \ldots, u_{n-1}, v)$ for $e$ and to a basis
$(u_1, \ldots, u_{n-1}, \tilde{v})$ for $\tilde{e}$.  Writing
\eqref{eqn:elementbasis} in this case, it is apparent that only the
$n$th column of $\left(K^a_i\right)$ is nonzero.
That is, the tangent homomorphism $K \in (X/e) \otimes e^*$ is rank-1.  (It
cannot be the degenerate rank-0 unless $e = \tilde{e}$.)

Conversely, suppose that $K \in (X/e) \otimes e^*$ is rank-1.  Let $e' = \ker K
\subset e$, which is a subspace of $e$ of dimension $n{-}1$.  Any line in $e'$
is preserved by the map $e \to X$ defined by the matrix in
\eqref{eqn:elementbasis}; hence, the subspace $e'$ is also a subspace of
$\tilde{e}$.  (It cannot be the degenerate case  $e=\tilde{e}$ unless $K$ is
rank-0.)
\end{proof}

The concept of polar pairs generalizes to
co-dimensions $k$ other than $1$. 
For any $e \in \Gr_n(X)$, let
\begin{equation}
\Pol_k(e) = \{ \tilde{e} \in \Gr_n(X) ~:~
\dim(\tilde{e} \cap e) = n-k\}.
\end{equation}
Because $\dim X = n+r$ and $\dim e = n$, the set $\Pol_k(e)$ is
nonempty if and only if $k \leq r$, because $n+k = \dim(e+ \tilde{e}) \leq
n+r$.  The definition is trivial and fairly useless for $k=0$.
Again, the $k$-polar-pair relationship $\tilde{e} \in \Pol_k(e)$ is
symmetric but neither reflexive nor transitive for the interesting case $0<k\leq r$.

One can see immediately that Lemma~\ref{lem:polarpairs1} generalizes by replacing $1$ with any
rank $k$ to give Lemma~\ref{lem:polarpairs}.
\begin{lemma}
Suppose $e \in \Gr_n(X)$ and $\tilde{e} = \arctan_e(K)$ for $K \in (X/e)
\otimes e^*$.
Then $\rank(K) = k$ if and only if $\tilde{e} \in \Pol_k(e)$.
\label{lem:polarpairs}
\end{lemma}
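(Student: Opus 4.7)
The plan is to replicate the proof of Lemma~\ref{lem:polarpairs1} essentially verbatim, but with a $k$-dimensional block of ``extra'' basis vectors playing the role of the single extra vector in the rank-one case. Everything reduces to dimension counts, so this is really just rank-nullity combined with the graph-description of $\tilde{e}$ given by \eqref{eqn:elementbasis}.

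For the forward implication, I would assume $\tilde{e} \in \Pol_k(e)$, so $e' := e \cap \tilde{e}$ has dimension $n-k$. Choose a basis $(u_1, \ldots, u_{n-k})$ for $e'$, and complete it in two ways: to a basis $(u_1, \ldots, u_{n-k}, v_1, \ldots, v_k)$ of $e$ and to a basis $(u_1, \ldots, u_{n-k}, \tilde{v}_1, \ldots, \tilde{v}_k)$ of $\tilde{e}$. Writing \eqref{eqn:elementbasis} in these bases shows that the first $n-k$ columns of the matrix $(K^a_i)$ vanish, so $\rank K \leq k$. The last $k$ columns must in fact be linearly independent, for otherwise a nontrivial linear combination of the $\tilde{v}_j$ would lie in $e$, enlarging $e \cap \tilde{e}$ past dimension $n-k$ and contradicting the hypothesis. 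Hence $\rank K = k$.

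For the converse, I would assume $\rank K = k$ and set $e' := \ker K \subset e$, of dimension $n-k$ by rank-nullity. Every $v \in e'$ is preserved under $v \mapsto v + K(v)$, so $e' \subset \tilde{e}$ and therefore $e' \subset e \cap \tilde{e}$. Conversely, after choosing any splitting $X = e \oplus Y$ so that $\tilde{e}$ is literally the graph $\{v + K(v) : v \in e\}$ with $K(v) \in Y$, any $w \in e \cap \tilde{e}$ forces the $Y$-component $K(v)$ of its preimage to vanish, placing $v \in \ker K = e'$. Thus $e \cap \tilde{e} = e'$ has dimension exactly $n-k$, and $\tilde{e} \in \Pol_k(e)$.

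The main, very mild, obstacle is the need to verify in both directions that the intersection has dimension \emph{exactly} $n-k$ rather than something larger; this is the same transversality observation that appears in Lemma~\ref{lem:polarpairs1} (the image of $K$ sits in a direction complementary to $e$), simply applied to a $k$-dimensional image rather than a line. No new idea beyond rank-nullity and the graph description of $\arctan_e$ is required.
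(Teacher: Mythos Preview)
Your proposal is correct and follows exactly the approach the paper indicates: the paper's entire ``proof'' of Lemma~\ref{lem:polarpairs} is the one-line remark that Lemma~\ref{lem:polarpairs1} generalizes by replacing $1$ with $k$, and your argument is precisely that generalization written out in detail. The only thing you add beyond the paper is the explicit check (in both directions) that the intersection has dimension \emph{exactly} $n-k$ rather than at least $n-k$, which the paper leaves implicit.
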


Next, we can generalize Lemma~\ref{lem:polarpairs} to
Lemma~\ref{lem:ranknullity} by 
dropping the use of $\arctan$.  That is, we can consider a $k$-polar pair
$(e,\tilde{e})$ where $\tilde{e}$ lies outside the open image of $\arctan_e$.
From an algebraic perspective, Lemma~\ref{lem:ranknullity} can be seen as a Grassmannian version of the
rank-nullity theorem.  
Phrased in other ways, it is popular true/false homework question in
undergraduate linear algebra textbooks.

\begin{lemma}
Fix $e \in \Gr_n(X)$ and $\tilde{e} \in \Pol_k(e)$.
The canonical maps $\tilde{e} \mapsto \tilde{e}/e$ and $\tilde{e} \mapsto
(\tilde{e} \cap e)^\perp/e^\perp$ both have rank-$k$ images,  
yielding 
the incidence correspondence  
Figure~\ref{fig:polarincidence}.
\label{lem:ranknullity}
\end{lemma}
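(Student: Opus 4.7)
The plan is to derive both rank statements from the rank–nullity theorem applied to natural constructions involving $e$, $\tilde{e}$, and their intersection, and then to assemble a duality diagram for the incidence correspondence. Throughout I use the hypothesis $\tilde{e}\in\Pol_k(e)$, which says $\dim(\tilde{e}\cap e)=n-k$.

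First, I would interpret the assignment $\tilde{e}\mapsto \tilde{e}/e$ as producing the image of $\tilde{e}$ under the quotient $X\twoheadrightarrow X/e$, namely $(\tilde{e}+e)/e\cong \tilde{e}/(\tilde{e}\cap e)$. The kernel of the composition $\tilde{e}\hookrightarrow X\twoheadrightarrow X/e$ is exactly $\tilde{e}\cap e$, so rank–nullity gives image dimension $n-(n-k)=k$. This is essentially the same calculation already carried out in Lemma~\ref{lem:polarpairs} applied to the homomorphism $K$, now done intrinsically without appealing to $\arctan_e$, so that the argument covers the boundary case where $\tilde{e}$ lies outside the image of $\arctan_e$.

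Next, for $\tilde{e}\mapsto (\tilde{e}\cap e)^\perp/e^\perp$, I would use the contravariance of annihilation under \eqref{eqn:shortexact}. Since $\tilde{e}\cap e\subset e\subset X$, the annihilator reverses the inclusion to give $e^\perp\subset (\tilde{e}\cap e)^\perp$ in $X^*$. Computing dimensions with $\dim X=n+r$ yields $\dim e^\perp=r$ and $\dim(\tilde{e}\cap e)^\perp=(n+r)-(n-k)=r+k$, so the quotient has dimension $k$. More canonically, the same dual sequence exhibits the natural isomorphism $(\tilde{e}\cap e)^\perp/e^\perp\cong\bigl(e/(\tilde{e}\cap e)\bigr)^*$, which both forces the dimension and sets up the pairing needed in the next step.

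Finally, to obtain the incidence correspondence of Figure~\ref{fig:polarincidence}, I would overlay the short exact sequences \eqref{eqn:shortexact} for $e$, $\tilde{e}$, and $e\cap\tilde{e}$ into a single commutative diagram and observe that the two $k$-dimensional quotients $(\tilde{e}+e)/e$ and $(\tilde{e}\cap e)^\perp/e^\perp$ pair nondegenerately under the restriction of the evaluation $X\otimes X^*\to\mathbb{R}$; this is the content of the second canonical isomorphism above. I do not anticipate a serious obstacle: everything reduces to dimension bookkeeping and the standard duality between a subspace inclusion and the annihilator surjection, which is why the author likens it to an undergraduate true/false question. The only mild care is to phrase the statement without privileging a splitting $X=e\oplus (X/e)$, so that the incidence diagram is manifestly symmetric in $e$ and $\tilde{e}$.
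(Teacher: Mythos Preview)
Your proposal is correct and follows essentially the same approach as the paper: both arguments reduce to applying rank--nullity to the short exact sequences \eqref{eqn:shortexact} at the subspace $e' = \tilde{e}\cap e$ of dimension $n-k$. Your treatment is in fact more careful than the paper's terse proof---you spell out the kernel of $\tilde{e}\hookrightarrow X\twoheadrightarrow X/e$ explicitly and supply the canonical isomorphism $(\tilde{e}\cap e)^\perp/e^\perp\cong\bigl(e/(\tilde{e}\cap e)\bigr)^*$, which the paper leaves implicit; the only point you pass over lightly is the surjectivity of the two arrows in Figure~\ref{fig:polarincidence}, which the paper dispatches with the single remark that ``any such subspace can be constructed this way.''
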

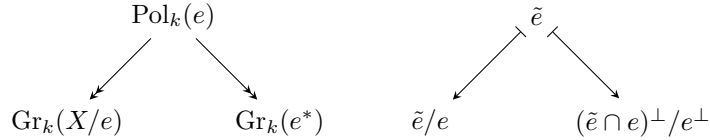
\begin{figure}[ht]
\centering\begin{tikzpicture}[node distance=2cm, auto]
  \node (Pol) {$\Pol_k(e)$};
  \node[below left of=Pol] (Xe) {$\Gr_k(X/e)$};
  \node[below right of=Pol] (ep) {$\Gr_k(e^*)$};
  \draw[->>, above right] (Pol) to node {} (Xe);
  \draw[->>, above left] (Pol) to node {} (ep);
  \node[right of=ep] (Xe1) {$\tilde{e}/e$};
  \node[above right of=Xe1] (Pol1) {$\tilde{e}$};
  \node[below right of=Pol1] (ep1) {$(\tilde{e}\cap e)^\perp/e^\perp$};
  \draw[|->, above right] (Pol1) to node {} (Xe1);
  \draw[|->, above left] (Pol1) to node {} (ep1);
\end{tikzpicture}
\caption{The incidence correspondence of polar pairs $e$ and $\tilde{e}$.}\label{fig:polarincidence}
\end{figure}

\begin{proof}
Let $e' = e \cap \tilde{e}$. 
Consider the short-exact sequences~\eqref{eqn:shortexact}, and apply the
rank-nullity theorem of those maps on $e'$, which has dimension $n-k$.
In the first short-exact sequence, $\tilde{e}/e = e'/e$ has dimension $k$ as a
subspace of $X/e$.
In the second short-exact sequence, 
the space $\left(\tilde{e}\cap e\right)^\perp/e^\perp =
(e')^\perp/e^\perp$ has dimension $k$ as a subspace of $e^*=X^*/e^\perp$.
In both cases, and such subspace can be constructed this way.
\end{proof}

Now, reconsider the case $k=1$ in light of Lemma~\ref{lem:ranknullity}. Then
each $\tilde{e} \in \Pol_1(e)$ yields a hyperplane $e' = \tilde{e} \cap e$. The
right image  $(e')^\perp / e^\perp$ in Figure~\ref{fig:polarincidence} is some
line $[\xi] \in \mathbb{P}e^*$.  The left image $\tilde{e}/e$ is some line $[w]
\in \mathbb{P}(X/e)$.  So, each $\tilde{e} \in \Pol_1(e)$ yields a rank-1
projective homomorphism $[w] \otimes [\xi] = [w \otimes \xi] \in
\mathbb{P}\left( (X/e) \otimes e^*\right)$.  Any element of $\mathbb{P}\left(
(X/e) \otimes e^*\right)$ could be obtained this way by appropriate choice of
$\tilde{e}$.

To see how this generalizes Lemma~\ref{lem:polarpairs1}, 
let us write $[w] \otimes [\xi]$ explicitly. 
Let \[(\omega^1, \ldots \omega^n, \theta^1, \ldots, \theta^r)\] be a basis for
$X^*$ such that $e = \ker \{ \theta^1, \ldots, \theta^r\}$ and 
$e' = \ker \{ \theta^1, \ldots, \theta^r, \xi\}$ for some $\xi = \xi_i
\omega^i$.   Then, $\tilde{e} = \ker\{ \tilde{\theta}^1, \ldots,
\tilde{\theta}^r\}$ for some $\tilde{\theta}^a = J^a_b \theta^b + K^a_i
\omega^i$.  Because $e' \subset \tilde{e}$, it must be that 
\begin{equation}
\begin{split}
\tilde{\theta}^a &\equiv 0\ \mod\{\theta^c, \xi\},\ \text{so}\\
J^a_b \theta^b + K^a_i \omega^i &\equiv 0\ \mod\{ \theta^c, \xi\},\ \text{so}\\
K^a_i \omega^i &\equiv 0\ \mod\{ \theta^c, \xi\},\ \text{so}\\
K^a_i \omega^i &\equiv 0\ \mod\{ \xi\}.
\end{split}
\end{equation}
Hence, each $K^a_i \omega^i$ is a multiple of $\xi$; call it $w^a \xi$.  
(Note
that $w^a=0$ for all $a$ if and only if $\tilde{e}=e$, which contradicts our
assumption $\dim e' = n-1$.)
We can
use this fact to build a rank-1 homomorphism:  Let $(z_a)$ be the basis of $X/e$ dual to $(\theta^a)$.  Let $(\omega^i)$ also denote
the basis of $e^*=X^*/e^\perp$ induced by $\omega^i \in X^*$, so that $\xi \in
e^*$ also
denotes the image of $\xi \in X^*$.
Let $w = w^a z_a$.
Then the induced homomorphism 
\begin{equation}
K = z_a \otimes K^a_i \omega^i = z_a \otimes w^a \xi = w \otimes \xi \in (X/e) \otimes e^*
\end{equation}
is rank-1.   Each of $w$ and $\xi$ is well-defined up to scale, so $K$ is
well-defined up to scale, yielding $[K] = \mathbb{P} \left((X/e)\otimes e^*\right)$. 

It may be that $\tilde{e}$ lies outside the open image of $\arctan_e$.
How then do we interpret $K$? Is there any relationship between $\tilde{e}$ and
$\arctan_e(K)$?  
From a differential geometric perspective, this is reminiscent of the failure of injectivity at large distances
for the exponential map in Riemannian geometry.
Lemma~\ref{lem:connected} shows that for any polar pair $\tilde{e}$ of $e$,
either $\tilde{e}$ lies in the curve $\arctan_e([K])$ or is the limit of the
curve.

\begin{lemma}
Suppose $e \in \Gr_n(X)$ and $\tilde{e} \in \Pol_1(e)$.  Then there is a
continuous path $\{e_\tau : 0 \leq 0 \leq 1\}$ in $\Gr_n(X)$
such that $e_0=e$, $e_1=\tilde{e}$, and 
$e_\tau \cap e = \tilde{e} \cap e = e_\tau \cap
\tilde{e}$ for all $0 < \tau < 1$.  
The rank-1 line $[K]$ induced by $e_\tau$ via from
Lemma~\ref{lem:ranknullity} is constant across $0<\tau\leq 1$.  Moreover, 
$e_\tau \in \arctan_e([K])$ for $0 \leq \tau < 1$.
\label{lem:connected}
\end{lemma}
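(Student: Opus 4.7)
The plan is to exhibit $e_\tau$ as a one-parameter family inside a projective line $\mathbb{P}^1 \subset \Gr_n(X)$ of $n$-planes containing $e' := e \cap \tilde{e}$ and lying in the $(n{+}1)$-plane $e + \tilde{e}$. The conceptual observation driving the proof is that this $\mathbb{P}^1$ coincides with $\arctan_e([K])$ together with a single limit point, so that both the invariance of $[K]$ along the path and the arctan membership become essentially tautological.

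First I would invoke Lemma~\ref{lem:ranknullity} and the construction immediately following it to fix the rank-1 homomorphism $[K] = [w] \otimes [\xi] \in \mathbb{P}\bigl((X/e) \otimes e^*\bigr)$ with $[w] = [\tilde{e}/e]$ and $[\xi] = [(e')^\perp/e^\perp]$. Extending a basis $(u_1, \ldots, u_{n-1})$ of $e'$ to a basis $(u_1, \ldots, u_{n-1}, v)$ of $e$ and lifting $w$ to some $z \in X$, the curve $\gamma(s) := \arctan_e(sK) = \pair{u_1, \ldots, u_{n-1}, v + sz}$ traces out $\arctan_e([K])$, a dense open subset of a projective line in $\Gr_n(X)$; rescaling the $n$-th spanning vector shows that the one missing point is $\lim_{s \to \pm\infty} \gamma(s) = \pair{u_1, \ldots, u_{n-1}, z}$. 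Since every $n$-plane containing $e'$ and lying in $e + \pair{z} = e + \tilde{e}$ is on this $\mathbb{P}^1$, so is $\tilde{e}$.

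Two cases then arise. If $\tilde{e} = \gamma(s_*)$ for a finite $s_* \neq 0$, set $e_\tau := \gamma(\tau s_*)$ on $[0,1]$; if instead $\tilde{e}$ is the missing limit point, set $e_\tau := \gamma\bigl(\tfrac{\tau}{1-\tau}\bigr)$ on $[0, 1)$ and $e_1 := \tilde{e}$, with continuity at $\tau = 1$ inherited from the description of the limit point. In either case $e_0 = e$, $e_1 = \tilde{e}$, the path is continuous, and $e_\tau \in \arctan_e([K])$ for $\tau \in [0, 1)$ by construction. For the intersection claim, each $e_\tau$ contains $e'$, and for $\tau \in (0,1)$ the line $e_\tau/e'$ in the $2$-plane $(e + \tilde{e})/e'$ agrees with neither $[v]$ nor $[\tilde{v}]$, so $e_\tau$ differs from both $e$ and $\tilde{e}$; a standard dimension count then forces $e_\tau \cap e = e_\tau \cap \tilde{e} = e'$. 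Invariance of $[K_\tau]$ follows from Lemma~\ref{lem:ranknullity} applied to the polar pair $(e, e_\tau)$: for $\tau \in (0, 1]$ the intersection $e \cap e_\tau = e'$ is unchanged, so $[\xi_\tau] = [\xi]$; and $e_\tau/e$ is spanned by $[s(\tau) z] = [w]$, so $[w_\tau] = [w]$; hence $[K_\tau] = [w] \otimes [\xi] = [K]$.

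The main conceptual hurdle is the identification of $\arctan_e([K])$ as the complement of a single point inside the $\mathbb{P}^1$ of $n$-planes through $e'$ in $e + \tilde{e}$, which reveals in advance exactly how $\tilde{e}$ can fail to lie in $\arctan_e([K])$: only by being the missing limit point. Once this is recognized, the restriction $0 \le \tau < 1$ in the statement becomes forced, and every other step reduces to routine dimension bookkeeping combined with Lemma~\ref{lem:ranknullity}.
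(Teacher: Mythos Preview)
Your proof is correct and follows essentially the same geometric idea as the paper: both arguments work inside the $\mathbb{P}^1$ of $n$-planes containing $e'$ and lying in $e + \tilde{e}$, and both verify constancy of $[K]$ by checking that $e_\tau \cap e = e'$ and $e_\tau/e = [w]$ throughout. The paper's execution is slightly more direct: it simply writes $e_\tau = e' + \pair{(1-\tau)v + \tau w}$ with $w \in X$ chosen so that $\tilde{e} = e' + \pair{w}$, which amounts to picking your lift $z$ inside $\tilde{e}$ from the start, so that your case split collapses (your ``missing limit point'' case is the only one that occurs, and the reparametrization $s = \tau/(1-\tau)$ recovers the paper's formula exactly). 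Your version trades that brevity for an explicit identification of $\arctan_e([K])$ as an affine line inside the $\mathbb{P}^1$, which makes the role of the restriction $\tau < 1$ more transparent.
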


\begin{proof}
Let $e' = e \cap \tilde{e}$.
For some independent vectors $v, w \in X$, we may write $e = e' + \pair{v}$ and $\tilde{e} = e'+
\pair{w}$ and define\footnote{If preferred, one can reparametrize from a linear interpolation to a circular interpolation by replacing
$\tau$ with $\cos\vartheta$
and $1-\tau$ with $\sin\vartheta$ for some angle $0 \leq \vartheta \leq \pi/2$.}
a curve from $e$ to $\tilde{e}$ in $\Gr_n(X)$ by 
\begin{equation}
e_\tau = e' + \pair{(1-\tau)v +\tau w)},\quad 0 \leq \tau \leq 1.
\label{eqn:etau}
\end{equation}
Note that $e' = e \cap  e_\tau = \tilde{e}\cap e_\tau$ for all $0<\tau<1$.
It is apparent from \eqref{eqn:etau} that 
$e_\lambda/e$ is the line $[\tau w] = [w]$, which is constant versus $\tau$.
It is also apparent from \eqref{eqn:etau}
that 
$(e_\tau \cap e)^\perp/e^\perp = (e')^\perp / e^\perp$ is the  line $[\xi]$,
which 
is constant versus $\tau$. 
Hence, all such $e_\tau$ have the same representative
rank-1 homomorphism, $[w \otimes \xi] = [K]$ in Lemma~\ref{lem:ranknullity}.

It may be that $\tilde{e}=e_1$ lies outside the
open image of $\arctan_e$.  However, comparison of \eqref{eqn:etau} and
\eqref{eqn:elementbasis} implies that  
all $e_\tau$ lie inside the image of $\arctan_e$ for all $\tau < 1$.
So, the image $\arctan_e([w \otimes \xi])$ contains
an open set of $\{e_\tau\}$ where $e_\tau \cap e = e'$.  
\end{proof}

Consider the example summarized in
Figure~\ref{fig:pp}, where
\begin{equation}
e = \pair{ \begin{pmatrix} 1 \\ 0 \\ 0\end{pmatrix}, 
 \begin{pmatrix} 0 \\ 1 \\ 0\end{pmatrix}},\ \text{and}\ 
\tilde{e} = \pair{ \begin{pmatrix} 1 \\ 0 \\ 0\end{pmatrix}, 
 \begin{pmatrix} 0 \\ 0 \\ 1\end{pmatrix}}.
\end{equation}
Note that $\tilde{e}$ is outside the open image of $\arctan_e$ because 
\eqref{eqn:elementbasis} breaks down as written in this basis.
But, $e_\tau$ is the family obtained by rotating from $e$
toward $\tilde{e}$ about the axis $e'$ through an angle
$\arctan(\frac{\tau}{1-\tau})$, which varies from $0$ to $\frac\pi2$.  
For all $0 \leq \tau < 1$, we have 
\begin{equation}
e_\tau = \pair{ \begin{pmatrix} 1 \\ 0 \\ 0\end{pmatrix}, 
 \begin{pmatrix} 0 \\ 1-\tau \\ \tau\end{pmatrix}}
=
\pair{ \begin{pmatrix} 1 \\ 0 \\ 0\end{pmatrix}, 
 \begin{pmatrix} 0 \\1  \\ \frac{\tau}{1-\tau}\end{pmatrix}}.
\end{equation}
Thus, the line of rank-1 matrices $[K]$ in $(X/e)\otimes e^*$ is written as  $[\begin{pmatrix} 0 &
1\end{pmatrix}]$ in this basis.  This line represented by every $e_\tau$ in 
a curve that converges to $\tilde{e}$ as $\tau \to 1$.
Indeed, up to a choice of basis, this is essentially the only example.

Overall, we have learned that any $k=1$ polar pair in $\Gr_n(X)$ is represented by
a line of rank-1 matrices in the tangent space, and vice-versa.
This is sufficient for our purposes, but those seeking a more detailed
understanding of polar pairs are encouraged to investigate \emph{Schubert
varieties}---for example in \cite{Robles2012a}---and the other outgrowths of
Hilbert's 15th problem.

\subsection{The Tautological Bundle}\label{sec:tautbundle}
Soon, we will consider algebraic equations defined on $e^*$.
To facilitate this, for any $e \in \Gr_n(X)$, we consider the complexified
projective space $\mathbb{X} = \mathbb{P}X \otimes \mathbb{C}$ and its subspace
$\mathbb{P}e \otimes \mathbb{C}$.    For standard complex projective space, we
write $\mathbb{P}^d$ for $\mathbb{CP}^{d} = \mathbb{P}(\mathbb{C}^{d+1})$.  That
is, $\mathbb{X} \cong \mathbb{P}^{n+r-1}$, and $\mathbb{P}e \otimes \mathbb{C}
\cong \mathbb{P}^{n-1}$.

If we consider all such spaces across all $e$
simultaneously, we obtain the \emph{tautological bundle}\footnote{
These are also called universal bundles or canonical bundles. They are analogous to
the sheaves $\mathscr{O}(-1)$ and $\mathscr{O}(1)$, respectively, for
varieties in projective space.}  $\taut$ over $\Gr_n(X)$ with fiber
\begin{equation} 
\taut_e  = \mathbb{P}e \otimes \mathbb{C},\quad \forall e \in \Gr_n(X),
\end{equation}
and its dual bundle $\taut^*$ over $\Gr_n(X)$ with fiber 
\begin{equation}
\taut^*_e  = \mathbb{P}e^* \otimes \mathbb{C}, \quad \forall e \in \Gr_n(X),
\end{equation}
and its annihilator bundle $\taut^\perp$ over $\Gr_n(X)$ with fiber
\begin{equation}
\taut^\perp_e  = \mathbb{P}e^\perp \otimes \mathbb{C}, \quad \forall e \in \Gr_n(X),
\end{equation}
and its cokernel bundle $\mathbb{X}/\taut$ over $\Gr_n(X)$ with fiber
\begin{equation}
(\mathbb{X}/\taut)_e  = \mathbb{P}(X/e) \otimes \mathbb{C}, \quad \forall e \in \Gr_n(X).
\end{equation}
See Figure~\ref{fig:tautological}.

There is a dual pair of short exact sequences of projective bundles, analogous to
\eqref{eqn:shortexact}.
\begin{equation}
\begin{split}
0 \to \taut_e \to \mathbb{X} \to (\mathbb{X}/\taut)_e \to 0,\\
0 \to \taut^\perp_e \to \mathbb{X}^* \to \taut^*_e \to 0.
\end{split}
\label{eqn:shortexact2}
\end{equation}
Hence, the complex projectivized tangent bundle $\mathbb{P}T\Gr(X) \otimes \mathbb{C}$ is isomorphic (canonically) to
$(\mathbb{X}/\taut) \otimes \taut^*$.
If we choose a splitting of these sequences, then we can use the dual bases to
establish a (non-canonical) decomposition $\mathbb{P}X \otimes \mathbb{C} \cong \taut_e \oplus
(\mathbb{X}/\taut)_e$ for any $e$.

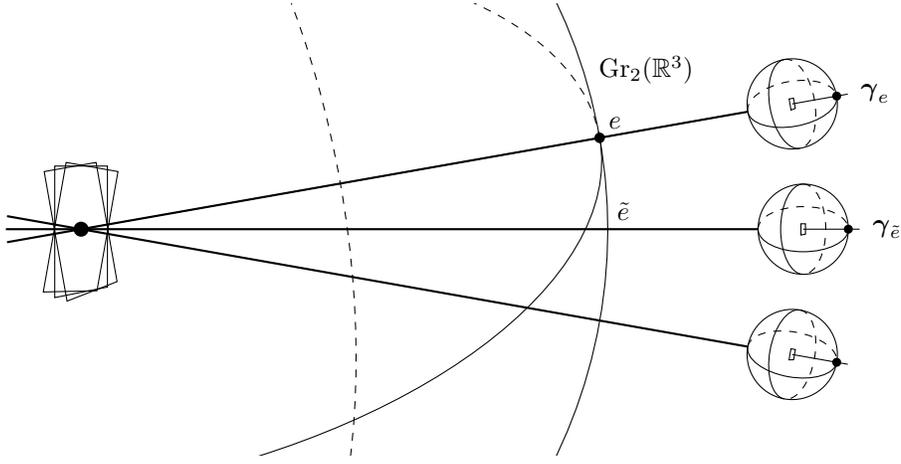
\begin{figure}
\begin{tikzpicture}[scale=2]
\clip (-0.5,1.5) -- (5.75,1.5) -- (5.75,-1.5)-- (-0.5,-1.5) -- cycle;
\begin{scope}[shift={(0,0)}, rotate=0, xscale=3.5,yscale=3.5,opacity=1.0] 
\draw (0.05,0.12) -- (0.05,-0.11) -- (-0.05, -0.13) -- (-0.05, 0.12) -- cycle;
\end{scope}
\begin{scope}[shift={(0,0)}, rotate=-10, xscale=3.5,yscale=3.5,opacity=1.0] 
\draw (0.05,0.12) -- (0.05,-0.11) -- (-0.05, -0.13) -- (-0.05, 0.12) -- cycle;
\end{scope}
\begin{scope}[shift={(0,0)}, rotate=10, xscale=3.5,yscale=3.5,opacity=1.0] 
\CP1
\fill (1,0) circle (0.01cm);
\end{scope}
\fill (0, 0) circle (0.5mm);
\draw[thick] ({-0.5*cos(-10)}, {-0.5*sin(-10)}) -- ({4.5*cos(-10)}, {4.5*sin(-10)});
\draw[thick] ({-0.5*cos( 10)}, {-0.5*sin( 10)}) -- ({4.5*cos( 10)}, {4.5*sin( 10)});
\draw[thick] (-0.5,0) -- (4.5,0);
\node[above right] at ({3.5*cos(15)}, {3.5*sin(15)}) {$\Gr_2(\mathbb{R}^3)$};
\node[above right] at ({3.5*cos(10)}, {3.5*sin(10)}) {$e$};
\node[above right] at (3.5, 0) {$\tilde{e}$};
\begin{scope}[shift={(4.8,0)}, rotate=0, xscale=0.30,yscale=0.30,opacity=1.0] \CP1 
\fill (1,0) circle (0.1cm);
\end{scope}
\begin{scope}[shift={({4.8*cos(10)},{4.8*sin(10)})}, rotate=10, xscale=0.30,yscale=0.30,opacity=1.0] \CP1 
\fill (1,0) circle (0.1cm);
\end{scope}
\begin{scope}[shift={({4.8*cos(-10)},{4.8*sin(-10)})}, rotate=-10, xscale=0.30,yscale=0.30,opacity=1.0] \CP1 
\fill (1,0) circle (0.1cm);
\end{scope}
\node[right] at ({5.2*cos(10)}, {5.2*sin(10)}) {$\taut_e$};
\node[right] at (5.2, 0) {$\taut_{\tilde{e}}$};
\end{tikzpicture}
\caption{A cartoon of the tautological bundle, $\taut$.  Here $e$ is a real 2-plane in
$\mathbb{R}^3$, which can be represented by a line because $Gr_2(\mathbb{R}^3)
\cong \mathbb{P}(\mathbb{R}^{3*})$. 
Each $\taut_e \cong \mathbb{P}(\mathbb{R}^2) \otimes \mathbb{C}
= \mathbb{P}^1$ is a Riemann sphere.
Thus, $\taut$ is depicted as a bundle of 2-spheres over a hemisphere.
}\label{fig:tautological}
\end{figure}

One can also consider the frame\footnote{Some authors might flip the names of
the frame and coframe bundles.  I tend to choose this notation because the frame
bundle is covariant with diffeomorphisms on the base space, and only
contravariant objects get a ``co-'' prefix.  The jargon for duality is always
frustrating.}
bundle $\mathcal{F}_{\taut}$ over $\Gr_n(X)$ associated to
$\taut$, whose fiber is all linear isomorphisms
\begin{equation}
\mathcal{F}_{\taut,e} 
= \{ (u^i): \taut_e \overset{\sim}{\to} \mathbb{P}^{n-1} \} 
= \{ \text{bases of $\taut^*_e$}\} 
\cong PGL(n),
\end{equation}
and the coframe bundle $\mathcal{F}_{\taut^*}$ over $\Gr_n(X)$ associated to
$\taut^*$, whose fiber is all linear isomorphisms
\begin{equation}
\mathcal{F}_{\taut^*,e} = \{ (u_i): \taut^*_e \overset{\sim}{\to} \mathbb{P}^{n-1} \} 
= \{ \text{bases of $\taut_e$}\} 
\cong PGL(n).
\end{equation}
To write homogeneous complex-algebraic ideals on $\taut^*_e$ that vary across $e \in \Gr_n(X)$, 
one can choose any section $(u_i)$ of $\mathcal{F}_{\taut^*}$ to give coordinates, and use the ring
\begin{equation}
S = C^\infty(\Gr_n(X))[u_1, \ldots, u_n].
\label{eqn:XiRing}
\end{equation}

\part{PDEs on Manifolds}\label{part:manifolds}
In this part, we build bundles whose fibers are the structures seen in
Part~\ref{part:back}.  This produces a satisfying language for describing a
system of PDEs on a manifold in Section~\ref{sec:eds}.

\section{Bundles upon Bundles}\label{sec:bunbun}
If $M$ is a smooth manifold of dimension $m=n+r$, then we can form the
smooth bundle $\Gr_n(TM)$ with fiber $\Gr_n(T_pM)$.  Let $\varpi:\Gr_n(TM) \to
M$ denote the bundle projection. 

Because \eqref{eqn:shortexact} holds for $X = T_pM$ at any $p \in M$, any
local section of $\Gr_n(TM)$ can be described by choosing its annihilator
section of $\Gr_r(T^*M)$, and vice-versa.
For every $p \in M$, the Grassmann variety $\Gr_n(T_pM)$ has a tautological
bundle $\taut(p)$ with fiber $\taut_e(p) = \mathbb{P}e \otimes
\mathbb{C}$, a dual bundle, and so on. 

The total space $\Gr_n(TM)$ is a manifold in its own
right, so we may consider $\taut$ as a bundle over the manifold
$\Gr_n(TM)$, which is itself a bundle over $M$.
In other words, we can
reinterpret all of Section~\ref{sec:tautbundle} in terms of bundles over $\Gr_n(TM)$ by using $\mathbb{X}$ denote the projective bundle over
$\Gr_n(TM)$ that has fiber $\mathbb{X}_e = \mathbb{P}T_pM \otimes \mathbb{C}$ at $e$ with $\varpi(e)=p$.
A complete description of some $v \in \taut$ would be $(p,e,v)$ where 
$v \in \mathbb{P}e \otimes \mathbb{C}$, and $e \in \Gr_n(T_pM)$, and $p \in M$.
A complete description of some $\varphi \in \taut^*$ would be $(p,e,\varphi)$ where 
$\varphi \in \mathbb{P}e^* \otimes \mathbb{C}$, and $e \in \Gr_n(T_pM)$, and $p \in M$.
See Figure~\ref{fig:bunbun}.
The same bundle-wise constructions hold for $\taut^\perp$,
$(\mathbb{X}/\taut)$, $\mathcal{F}_{\taut}$, and $\mathcal{F}_{\taut^*}$ from
Section~\ref{sec:tautbundle}.

Extending \eqref{eqn:XiRing} to write homogeneous complex-algebraic ideals on $\taut^*_e$ that vary across $e \in \Gr_n(TM)$, 
one can choose any section $(u_i)$ of $\mathcal{F}_{\taut^*}$ to give coordinates, and use the ring
\begin{equation}
S = C^\infty(\Gr_n(TM))[u_1, \ldots, u_n].
\label{label:XiRingM}
\end{equation}

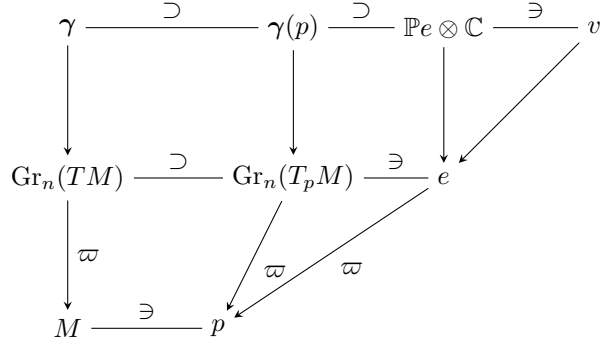
\begin{figure}
\begin{center}
\begin{tikzpicture}[node distance=2cm, auto]
\node (M) {$M$};
\node[right of = M] (p) {$p$};
\node[above of = M] (Gr) {$\Gr_n(TM)$};
\node[right of = Gr, node distance=3cm] (Grp) {$\Gr_n(T_pM)$};
\node[right of = Grp] (e) {$e$};
\node[above of = Gr] (g) {$\taut$};
\node[above of = Grp] (gp) {$\taut(p)$};
\node[above of = e] (pec) {$\mathbb{P}e \otimes \mathbb{C}$};
\node[right of = pec] (v) {$v$};
\draw[-, above] (Grp) to node {$\supset$} (Gr);
\draw[->] (g) to node {} (Gr);
\draw[->] (Gr) to node {$\varpi$} (M);
\draw[-, above] (pec) to node {$\supset$} (gp);
\draw[->] (gp) to node {} (Grp);
\draw[-, above] (gp) to node {$\supset$} (g);
\draw[-, above] (p) to node {$\owns$} (M);
\draw[-, above] (e) to node {$\owns$} (Grp);
\draw[->] (Grp) to node {$\varpi$} (p);
\draw[->] (e) to node {$\varpi$} (p);
\draw[->] (pec) to node {} (e);
\draw[-, above] (v) to node {$\owns$} (pec);
\draw[->] (v) to node { } (e);
\end{tikzpicture}
\end{center}
\caption{Tautological bundles over Grassmann bundles over manifolds. 
Vertical arrows are bundle projections.}\label{fig:bunbun}
\end{figure}

\subsection{The Contact Ideal}\label{sec:contact}
For any $e \in \Gr_n(TM)$, consider its annihilator subspace $e^\perp \subset T^*_pM$.
There is a corresponding subspace $J_e \subset T^*_e \Gr_n(TM)$, defined as 
\begin{equation}
J_e = \pair{ \zeta \circ \varpi_* ~:~ \zeta \in e^\perp} =
e^\perp \circ \varpi_*.
\end{equation}
as in Figure~\ref{fig:contact}.
If $(z^a)$ is a basis of $e^\perp$, then we let $\theta^a = z^a \circ \varpi_*$
for each $a$ to define a basis $(\theta^a)$ of $J_e$.

\begin{figure}[ht]
\begin{center}
\begin{tikzpicture}[scale=1.5]                                                  
\draw (1,0) node (M) {$M$};
\draw (0.4,0) node  {$p$};
\draw (0.7,0) node {$\in$};
\draw (3,0) node (TM) {$T_pM$};
\draw (1,2) node (G) {$\Gr_n(TM)$};
\draw (0,2) node {$e$}; 
\draw (0.3,2) node {$\in$};
\draw (3,2) node (TG) {$T_e\Gr_n(TM)$};
\draw (5,1) node (V) {$\mathbb{R}^{r}$};
\draw[->] (G) to node[left] {$\varpi$} (M);
\draw[->] (TG) to node[left] {$\varpi_*$} (TM);
\draw[->] (TM) to node[below right] {$\pair{(z^a)} = e^\perp$} (V);
\draw[->,dashed] (TG) to node[above right] {$\pair{(\theta^a)} = J_e$} (V);
\end{tikzpicture}
\end{center}
\caption{Contact forms on the Grassmann bundle of
$M$.}\label{fig:contact}
\end{figure}
In the exterior algebra $\Omega^\bullet\left(\Gr_n(TM)\right)$, consider the 
ideal $\mathcal{J}$ that is generated as $\pair{J,\mathrm{d}J} = \pair{\theta^a,
\mathrm{d}\theta^a}$.  This is called the  
\emph{contact ideal}, and it is the first example of an EDS as seen in
Section~\ref{sec:eds}.
Note that, for any (local) section $\epsilon:M \to \Gr_n(TM)$, the contact
ideal satisfies the universal reproducing property 
\begin{equation}
\epsilon^*(J) = \epsilon^*(\epsilon^\perp \circ \varpi_*) =
\epsilon^\perp \circ \varpi_* \circ \epsilon_* =  
\epsilon^\perp. 
\end{equation}
Because this property is universal, the subbundle $J$ is a submodule defined globally across
$\Gr_n(TM)$ even if topology forces any
particular section $\epsilon$ to be defined locally. 

If one were to choose local coordinates $(x^i, y^a)$ for $M$ and local fiber coordinates $(P^a_i)$ for $\Gr_n(TM)$
near a particular $n$-plane $e = \ker \{ \mathrm{d}y^a \}$, then $\mathcal{J}$
is the ideal
typically written as  
\begin{equation}
\begin{cases}
0 = \theta^a = \mathrm{d}y^a - P^a_i \mathrm{d}x^i,\\
0 = \mathrm{d}\theta^a = - \mathrm{d}P^a_i \wedge\mathrm{d}x^i,
\end{cases}
\label{eqn:contactP}
\end{equation}
where the functions $P^a_i$ depend on $\tilde{e}$ in an open neighborhood of $e$ in $\Gr_n(TM)$.

After reading Section~\ref{sec:immersions}, compare this coordinate description
to your favorite definition of jet space, $\mathbb{J}^1(\mathbb{R}^n,
\mathbb{R}^r)$.  Also, compare the local fiber coordinates $P^a_i$ to the tangent
coordinates $K^a_i$ from Section~\ref{sec:tan}; when restricting to the fiber
over a single basepoint $p \in
M$, they are
essentially identical.
For some highly amusing applications of the contact system, see
\cite{Gromov1986}.

\subsection{Immersions and Frame Bundles}\label{sec:immersions}
Fix an immersion $\iota:N \to M$ with $\dim N = n$.
For any $x \in N$ with $\iota(x)=p$, the push-forward derivative has image $\iota_*(T_xN)$, which
is an $n$-dimensional subspace of $T_pM$; hence, $\iota_*(T_xN) \in \Gr_n(TM)$.  
Define the map $\iota^{(1)}: N \to \Gr_n(TM)$ by 
\begin{equation}
\iota^{(1)}(x) = \iota_*(T_xN) = e \in \Gr_n(TM),
\end{equation}
and note that $\iota = \varpi \circ \iota^{(1)}$, so $\iota_* = \varpi_* \circ
\iota^{(1)}_*$.

It is obvious from the definition that $\iota^{(1)}$ is also an immersion.
Therefore, we can use it to pull-back the tautological bundle $\taut^*$ as
defined in Sections~\ref{sec:tautbundle} and \ref{sec:bunbun}.  Let $\taut^*_N =
\iota^{(1)*}\taut^*$, which has fiber 
\begin{equation}
\taut^*_{N,x} = \taut^*_e(p) = \mathbb{P}e^* \otimes \mathbb{C} =
\mathbb{P}\iota_*(T_xN)\otimes \mathbb{C};
\end{equation}
that is, $\taut^*_N$ is identified with $\mathbb{P} T^*N \otimes \mathbb{C}$
via $\iota_*$.  See Figure~\ref{fig:prolongation}.

The immersion $\iota^{(1)}$ is called the \emph{prolongation} of the immersion $\iota$.

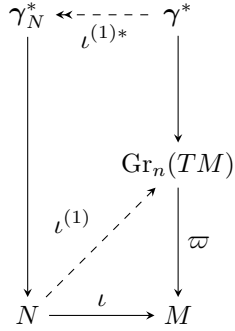
\begin{figure}
\begin{center}
\begin{tikzpicture}[node distance=2cm, auto]
\node (M) {$M$};
\node[above of = M] (Gr) {$\Gr_n(TM)$};
\node[above of = Gr] (g) {$\taut^*$};
\node[left of = M, distance=3cm] (N) {$N$};
\node[left of = g] (gN) {$\taut^*_N$};
\draw[->] (gN) to node {} (N);
\draw[->] (g) to node {} (Gr);
\draw[->] (Gr) to node[right] {$\varpi$} (M);
\draw[->] (N) to node {$\iota$} (M);
\draw[<<-, dashed] (gN) to node[below] {$\iota^{(1)*}$} (g);
\draw[->, dashed] (N) to node {$\iota^{(1)}$} (Gr);
\end{tikzpicture}
\end{center}
\caption{The dual tautological bundle $\taut^*$ pulls back to form a bundle
$\taut^*_N$ over an immersed
submanifold $N$.}\label{fig:prolongation}
\end{figure}

Now, consider the contact forms $(\theta^a) = (z^a \circ \varpi_*)$ forms from Section~\ref{sec:contact}.
For all $x \in N$ and all $v \in T_xN$, we have 
\begin{equation}
\iota^{(1)*}(\theta^a)(v) 
= 
\theta^a(\iota^{(1)}_*(v)) 
= z^a \circ \varpi_* \circ \iota^{(1)}_*(v) 
= z^a(\iota_*(v)) = 0,
\end{equation}
which ultimately gives the following lemma:
\begin{lemma}
If $\iota:N \to M$ is an immersion for $\dim N=n$, then $\iota^{(1)*}(\mathcal{J})=0$.   
Conversely, if $\iota^{\prime}:N \to \Gr_n(TM)$ is an immersion for $\dim N = n$
satisfying $\iota^{\prime *}(\mathcal{J})=0$ and such that
the image $\iota'_*(T_xN)$ is transverse to the fiber $\ker \varpi_*$ for all
$x \in N$, then there is some immersion $\iota:N \to M$ such that $\iota^{(1)}
= \iota^\prime$.
\end{lemma}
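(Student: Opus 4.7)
The plan is to handle the two directions separately: the forward direction follows essentially from the computation already displayed just before the lemma statement, while the converse requires recovering $\iota$ from $\iota'$ and verifying the equality $\iota^{(1)} = \iota'$ pointwise.

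For the forward direction, the calculation immediately preceding the lemma shows $\iota^{(1)*}(\theta^a) = z^a \circ \varpi_* \circ \iota^{(1)}_* = z^a \circ \iota_* = 0$ for any local generators $\theta^a$ of $J$. Since the exterior derivative commutes with pullback, $\iota^{(1)*}(\mathrm{d}\theta^a) = \mathrm{d}(\iota^{(1)*}\theta^a) = 0$. Because $\mathcal{J}$ is generated as a differential ideal by the $\theta^a$ and $\mathrm{d}\theta^a$, and $\iota^{(1)*}$ is a ring homomorphism on forms, we obtain $\iota^{(1)*}(\mathcal{J}) = 0$.

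For the converse, I would set $\iota := \varpi \circ \iota' : N \to M$ and first show that $\iota$ is an immersion. Since $\dim \Gr_n(TM) = (n+r) + nr$ and $\dim \ker \varpi_* = nr$, while $\iota'_*(T_xN)$ is $n$-dimensional, the transversality hypothesis forces $\iota'_*(T_xN) \cap \ker \varpi_* = 0$ by dimension count; hence $\varpi_* \circ \iota'_* = \iota_*$ is injective on each $T_xN$. It remains to check $\iota^{(1)}(x) = \iota'(x)$ for all $x \in N$. Let $p = \iota(x)$ and $e = \iota'(x) \in \Gr_n(T_pM)$. By the definition of the contact ideal at the point $e$, every element of $J_e$ has the form $\zeta \circ \varpi_*$ for some $\zeta \in e^\perp$. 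The assumption $\iota'^*(\mathcal{J}) = 0$ therefore yields, for every $v \in T_xN$ and every $\zeta \in e^\perp$,
\begin{equation*}
0 = (\iota'^*\theta)(v) = \zeta\bigl(\varpi_* \iota'_* v\bigr) = \zeta(\iota_* v),
\end{equation*}
which means $\iota_*(T_xN) \subset (e^\perp)^\perp = e$. Since both subspaces have dimension $n$ (the left because $\iota$ is an immersion, the right by definition), equality holds, giving $\iota^{(1)}(x) = \iota_*(T_xN) = e = \iota'(x)$.

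The main obstacle, and the step worth dwelling on in the write-up, is interpreting the condition $\iota'^*(\mathcal{J}) = 0$ correctly: the generators $\theta^a$ of $J$ depend on a choice of local basis of the varying subspace $e^\perp \subset T^*_p M$, so one must argue invariantly that the vanishing pullback controls $\iota_*(T_xN)$ as a subspace of $T_pM$. Once one observes that the fiber $J_e$ is exactly $e^\perp \circ \varpi_*$, the containment $\iota_*(T_xN) \subset e$ becomes a tautology and the dimension count closes the argument. The transversality hypothesis is exactly what is needed to ensure the dimension count works and $\iota$ is a legitimate immersion on which to prolong.
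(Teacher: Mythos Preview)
Your argument is correct and matches the paper's approach where the paper gives one: the forward direction is exactly the computation $\iota^{(1)*}(\theta^a)(v) = z^a(\iota_* v) = 0$ displayed just before the lemma, extended to the full ideal by naturality of $\mathrm{d}$ and multiplicativity of pullback. The paper does not write out the converse at all, so your completion---setting $\iota = \varpi \circ \iota'$, using the transversality hypothesis to see that $\iota$ is an immersion, and then using $J_e = e^\perp \circ \varpi_*$ together with $\dim \iota_*(T_xN) = n = \dim e$ to conclude $\iota_*(T_xN) = e$---is the intended and natural argument.

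One wording quibble: the phrase ``the transversality hypothesis forces $\iota'_*(T_xN) \cap \ker\varpi_* = 0$ by dimension count'' is slightly misleading. In this setting ``transverse to the fiber'' must already \emph{mean} trivial intersection with $\ker\varpi_*$ (the spanning interpretation is impossible for $r>0$, since $n + nr < (n+r)+nr$), so there is nothing to deduce; the dimension count enters only at the final step $\iota_*(T_xN) = e$.
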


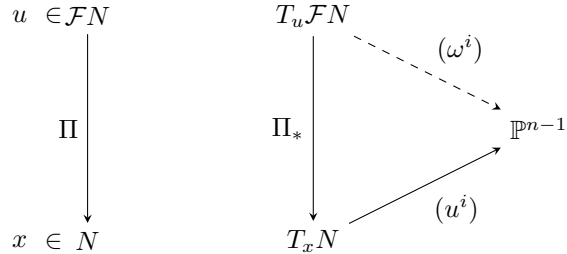
\begin{figure}[ht]
\begin{center}
\begin{tikzpicture}[scale=1.5]                                                  
\draw (1,0) node (N) {$N$};
\draw (0.4,0) node  {$x$};
\draw (0.7,0) node {$\in$};
\draw (3,0) node (TN) {$T_xN$};
\draw (1,2) node (F) {$\mathcal{F}N$};
\draw (0.4,2) node {$u$}; 
\draw (0.7,2) node {$\in$};
\draw (3,2) node (TF) {$T_u\mathcal{F}N$};
\draw (5,1) node (V) {$\mathbb{P}^{n-1}$};
\draw[->] (F) to node[left] {$\Pi$} (N);
\draw[->] (TF) to node[left] {$\Pi_*$} (TN);
\draw[->] (TN) to node[below right] {$(u^i)$} (V);
\draw[->,dashed] (TF) to node[above right] {$(\omega^i)$} (V);
\end{tikzpicture}
\end{center}
\caption{Tautological Form of the frame bundle of a manifold
$N$.}\label{fig:tautform}
\end{figure}

Moreover, recall that any manifold $N$ of dimension $n$ admits a projective frame bundle
$\Pi:\mathcal{F}N \to N$ with fiber 
\begin{equation}
\mathcal{F}_xN = \{ (u^i): \mathbb{P}T_xN \overset{\sim}{\to} \mathbb{P}^{n-1} \} 
= \{ \text{bases of $\mathbb{P}T^*_xN \otimes \mathbb{C}$}\} 
\cong PGL(n),
\end{equation}
The total space $\mathcal{F}N$ admits a tautological\footnote{In various
references, this 1-form is called the canonical, the Hilbert, and
the soldering 1-form.} 1-form 
$\omega:T
\mathcal{F}N \to \mathbb{P}^{n-1}$ defined by $\omega^i_u = u^i \circ \Pi_*$ as
in Figure~\ref{fig:tautform}.  It is characterized by its universal reproducing
property: for any (local) section $\eta:N \to \mathcal{F}N$:
\begin{equation}
\eta^*(\omega^i) = \eta^*(\eta^i \circ \Pi_*) = 
\eta^i \circ \Pi_* \circ \eta_* = \eta^i,
\end{equation}
or, more succinctly, $\eta^*(\omega) = \eta$.

Because this property is universal, the 1-form $\omega$ is defined globally across
$\mathcal{F}N$ even if topology forces any
particular 1-form $\eta$ to be defined locally. 

For any local diffeomorphism $f:N \to \tilde{N}$, there is an induced (covariant) map
on the frame bundles $f^\dagger:\mathcal{F}N \to \mathcal{F}\tilde{N}$ by
$f^\dagger:(u^i)
\mapsto (u^i) \circ (f_*)^{-1}$.  Using the universal property, it is easy to prove this
lemma, which shows that diffeomorphisms are characterized by their preservation
of the tautological
form on the frame bundle:
\begin{lemma}
If $f:N \to \tilde{N}$ is a diffeomorphism, then $(f^\dagger)^*(\tilde{\omega}) =
\omega$.  Conversely, if $F:\mathcal{F}N \to \mathcal{F}\tilde{N}$ is
$PGL(n)$-equivariant
diffeomorphism such that $F^*(\tilde{\omega}) = \omega$, then there exists a
unique diffeomorphism $f:N \to \tilde{N}$ such that $f^\dagger = F$.
\end{lemma}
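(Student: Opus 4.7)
The plan is to prove both implications by direct chain-rule computations, leveraging only the universal reproducing property $\eta^*\omega = \eta$ and the definition $f^\dagger(u) = u \circ (f_*)^{-1}$.

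For the forward direction, the key observation is that $f^\dagger$ covers $f$, i.e., $\tilde\Pi \circ f^\dagger = f \circ \Pi$, and hence $\tilde\Pi_* \circ f^\dagger_* = f_* \circ \Pi_*$. I would evaluate $(f^\dagger)^*\tilde\omega$ at an arbitrary tangent vector $v \in T_u\mathcal{F}N$ and unwind definitions:
\begin{equation*}
\bigl((f^\dagger)^*\tilde\omega\bigr)(v)
= \tilde\omega_{f^\dagger(u)}(f^\dagger_* v)
= \bigl(u \circ (f_*)^{-1}\bigr) \circ \tilde\Pi_* \circ f^\dagger_*(v)
= u \circ (f_*)^{-1} \circ f_* \circ \Pi_*(v)
= u \circ \Pi_*(v)
= \omega_u(v).
\end{equation*}
Thus $(f^\dagger)^*\tilde\omega = \omega$. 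This is short and mechanical.

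For the converse, I would first descend $F$ to a base map. Because $F$ is $PGL(n)$-equivariant, it sends each fiber of $\Pi$ into a single fiber of $\tilde\Pi$, so there is a unique set-theoretic map $f:N\to\tilde N$ with $\tilde\Pi\circ F = f\circ\Pi$. Since $F$ is a diffeomorphism and $\Pi, \tilde\Pi$ are surjective submersions with equivariant $F$, the standard descent lemma shows that $f$ is smooth, and applying the same argument to $F^{-1}$ gives $f^{-1}$, so $f$ is a diffeomorphism. Uniqueness of $f$ is automatic from the formula $f(\Pi(u)) = \tilde\Pi(F(u))$.

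Next, I would identify $F$ with $f^\dagger$ by exploiting $F^*\tilde\omega = \omega$ pointwise. For any $u \in \mathcal{F}_xN$ and $v \in T_u\mathcal{F}N$:
\begin{equation*}
u \circ \Pi_*(v)
= \omega_u(v)
= (F^*\tilde\omega)_u(v)
= \tilde\omega_{F(u)}(F_*v)
= F(u) \circ \tilde\Pi_* \circ F_*(v)
= F(u) \circ f_* \circ \Pi_*(v).
\end{equation*}
Since $\Pi$ is a submersion, $\Pi_*$ is surjective onto $T_xN$, so $u = F(u)\circ f_*$ as linear isomorphisms on $T_xN$; that is, $F(u) = u \circ (f_*)^{-1} = f^\dagger(u)$. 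Combined with the forward direction, this gives $f^\dagger = F$.

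The main obstacle, such as it is, lies in the converse: verifying that $PGL(n)$-equivariance of $F$ is enough to produce a smooth base map $f$ and that $f$ is a diffeomorphism. This is a routine application of the submersion/descent machinery, but it is where one must spend care; the rest is pure definition-chasing built on the universal property of $\omega$.
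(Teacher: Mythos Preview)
Your proof is correct and follows precisely the line the paper suggests: the paper does not actually write out a proof but says only that the lemma is ``easy to prove'' using the universal property, and your argument is a faithful realization of that hint via the explicit formula $\omega_u = u\circ\Pi_*$ and the chain rule. The only thing to note is that the paper's frame bundle is projective, so strictly speaking $u$ and $f_*$ act on projectivized tangent spaces; your computations go through unchanged once this is understood, since all the compositions are of projective-linear isomorphisms.
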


Combining the universal properties of the $\mathcal{J}$ and $\omega$, we obtain
the following theorem telling us what information we can transfer from
$\Gr_n(TM)$ to an immersed submanifold:
\begin{thm}
If $\iota:N \to M$ is a smooth immersion, then
\begin{itemize}
\item $\iota^{(1)*}(\mathcal{J}) = 0$, and
\item $\mathcal{F}N = \iota^{(1)*}(\mathcal{F}_{\taut})$.
\end{itemize}
Conversely, if $\iota^\prime:N \to \Gr_n(TM)$ is a smooth immersion such that
\begin{itemize}
\item $\iota^{\prime*}(\mathcal{J}) = 0$, and
\item $\mathcal{F}N = \iota^{\prime*}(\mathcal{F}_{\taut})$,
\end{itemize}
then there exists a smooth immersion $\iota:N \to M$ such that
$\iota^{(1)}=\iota^\prime$.
\label{thm:equiv}
\end{thm}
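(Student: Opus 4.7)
The forward direction splits naturally in two. The contact-ideal vanishing $\iota^{(1)*}(\mathcal{J})=0$ is exactly the content of the previous lemma (combine $\mathcal{J}=\langle J,\mathrm{d}J\rangle$ with $\iota^{(1)*}(\theta^a)=z^a\circ\varpi_*\circ\iota^{(1)}_*=z^a\circ\iota_*=0$, and differentiate). For the frame bundle identity, I would observe that $\iota$ being an immersion makes $\iota_*:T_xN\to\iota_*(T_xN)=\iota^{(1)}(x)$ a canonical linear isomorphism, so it induces a canonical projective isomorphism $\mathbb{P}T^*_xN\otimes\mathbb{C}\cong\mathbb{P}\iota^{(1)}(x)^*\otimes\mathbb{C}=\taut^*_{\iota^{(1)}(x)}$. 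Pulling back the frames of the right-hand side along this isomorphism identifies $\iota^{(1)*}\mathcal{F}_{\taut}$ with $\mathcal{F}N$ fiber-by-fiber, $PGL(n)$-equivariantly.

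For the converse, my plan is to reduce everything to the earlier lemma by verifying the single missing hypothesis there, namely that $\iota^\prime_*(T_xN)$ is transverse to $\ker\varpi_*$ at every $x\in N$. Once that is established, the earlier lemma produces an immersion $\iota:N\to M$ with $\iota^{(1)}=\iota^\prime$, and we are done.

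The transversality check is where I would spend most of the effort, but it is short. Consider the composite linear map
\begin{equation}
\Phi_x := \varpi_*\circ\iota^\prime_*\colon T_xN \longrightarrow T_{\varpi(\iota^\prime(x))}M.
\end{equation}
The hypothesis $\iota^{\prime*}(\mathcal{J})=0$ means $z^a\circ\Phi_x=0$ for every $z^a\in\iota^\prime(x)^\perp$, so $\Phi_x$ takes values in $\iota^\prime(x)$. The frame bundle hypothesis $\mathcal{F}N=\iota^{\prime*}\mathcal{F}_{\taut}$ says that the canonical (dual) map induced by $\Phi_x$ between bases of $\mathbb{P}\iota^\prime(x)^*\otimes\mathbb{C}$ and bases of $\mathbb{P}T^*_xN\otimes\mathbb{C}$ is an isomorphism of $PGL(n)$-torsors. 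This forces $\Phi_x$ itself to be a linear isomorphism $T_xN\to\iota^\prime(x)$, and in particular injective. Injectivity of $\varpi_*\circ\iota^\prime_*$ is equivalent to $\iota^\prime_*(T_xN)\cap\ker\varpi_*=0$, which is the required transversality.

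The main obstacle is purely a bookkeeping one: making precise in what sense the identity $\mathcal{F}N=\iota^{\prime*}\mathcal{F}_{\taut}$ is to be read, since the pullback bundle's fibers are bases of $\taut^*_{\iota^\prime(x)}$ rather than of $T^*_xN$. The whole proof hinges on insisting that the identification be the canonical one induced by the push-forward $\Phi_x$; interpreted that way, both directions are essentially restatements of the elementary fact that a linear map between $n$-dimensional spaces is an isomorphism iff it carries a basis to a basis.
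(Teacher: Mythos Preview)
Your proposal is correct and matches the paper's intent. The paper does not supply an explicit proof of this theorem; it is stated as following immediately from ``combining the universal properties of $\mathcal{J}$ and $\omega$,'' and your argument is precisely the natural way to cash that out---in particular, your observation that the frame-bundle hypothesis $\mathcal{F}N=\iota^{\prime*}\mathcal{F}_{\taut}$ (read via the canonical map $\Phi_x=\varpi_*\circ\iota^\prime_*$) supplies exactly the transversality condition needed to invoke the converse of the preceding lemma is the only nontrivial step, and you handle it correctly.
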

That is, an immersed submanifold satisfies the contact ideal, which is
generated differentially by some annihilator 1-forms $(\theta^a)$ spanning
$\taut^\perp$, and its frame bundle is equipped with tautological 1-forms
$(\omega^i)$ spanning $\taut^*$.

\begin{rmk}
Note the similarity between the universal property of the contact ideal on the
Grassman bundle and the
universal property of the tautological 1-form on the frame bundle.
Exploitation of this interaction as in Theorem~\ref{thm:equiv} has a long and interesting history.

For example, consider the study of a Lie pseudogroup acting on a manifold $M$.
One option is to differentiate the coordinates of $M$ repeatedly using the
contact ideal until
differential syzygies of the Lie pseudogroup action can be found in prolonged
local coordinates, which are then converted to a coordinate-free description
using the pseudogroup action.  The other option is to work on the frame bundle
of $M$ immediately, where any expression on the tautological
1-form is automatically invariant, then prolong as necessary
to reveal the syzygies.   The latter is used often when the Lie pseudogroup
arises as equivalence of intrinsic $G$-structures, and the former is used often when the Lie
pseudogroup arises from an extrinsic action on some ambient coordinates.
For more on these fascinating and interconnected ideas, I encourage you to read
\cite{Clelland2016},  \cite{Olver1995}, \cite{Valiquette2013}, and
\cite{Gardner1989}---and the collected works of E. Cartan.
\label{rmk:contact}
\end{rmk}

\section{Exterior Differential Systems}\label{sec:eds}
Let $M$ be a smooth manifold of finite dimension $m$.  An \emph{exterior
differential system} [EDS] on $M$ consists of an ideal $\mathcal{I}$ in the
total exterior algebra $\Omega^\bullet(M)$ that is differentially closed and
finitely generated.  \emph{Differentially closed} means that
$\mathrm{d}\mathcal{I} \subset \mathcal{I}$.  \emph{Finitely generated} means
that in each degree $d$, the $d$-forms in the ideal,
$\mathcal{I}_{d}=\mathcal{I}\cap \Omega^d(M)$, form a finitely generated
$C^\infty(M)$-module.  We assume that $\mathcal{I}_0=0$; otherwise, one would
restrict to a subvariety of $M$ defined by those functions.  
A \emph{solution} or \emph{integral manifold} is an immersed manifold $\iota:N \to M$ such that
$\iota^*(\mathcal{I})=0$.
Optionally, we sometimes specify an independence condition as an $n$-form
$\boldsymbol{\omega} \in \Omega^n(M)$ that is not allowed to vanish on
solutions.  When an EDS represents a system of PDEs in local coordinates $x^1,
\ldots, x^n$, then $\boldsymbol{\omega} = \mathrm{d}x^1 \wedge \cdots \wedge
\mathrm{d}x^n$, meaning that we seek solutions $N$ on which those coordinates
are sensible, and $\iota:N \to M$ is a function that gives the dependent variables
in $M$ (those transverse to $\iota(N)\subset M$) as functions of the independent variables in $N$.

\begin{rmk}
Exterior differential systems are defined this way because 
the term ``PDE'' or ``system of PDEs'' is difficult to
pin down with geometric precision.  Colloquially, ``system of PDEs'' usually
means a finite set of (hopefully, smooth) equations on some local jet space.
In Section~\ref{sec:grass}, we explored the geometry of the bundle $\Gr_n(TM)$;
recall that the contact system $\mathcal{J}$ on $\Gr_n(TM)$ provides a
coordinate-invariant notion of jet space.  
So, a system of PDEs can be  thought of as a collection of equations on jet
$\Gr_n(TM)$.  Hopefully, those equations are smooth and respect the bundle structure coming
from the contact system (otherwise, derivatives misbehave).  
By virtue of the Pl\"ucker embedding $\Gr_n(TM) \to \mathbb{P}\wedge^n(TM)$, an
EDS provides precisely the structure to write an ideal whose variety is a
subvariety (in the bundle sense) of $\Gr_n(TM)$.  By taking smooth
subvarieties, we can apply Remark~\ref{rmk:subspacesaretableaux} and apply our
knowledge of tableaux from Part~\ref{part:back} to study EDS.
Even by this definition, an EDS could be rather wild;
however, in many practical applications, it happens that $\mathcal{I}$ is
generated by a finite collection of smooth differential forms of homogeneous degree,
so one obtains a smooth algebraic variety in local fiber coordinates
of $\Gr_n(TM)$.
See \cite{McKay2018} for more examples, additional insight, and historical
context.
\end{rmk}

\subsection{Differential Ideals and Integral Elements}
To be precise, an \emph{integral element} of $\mathcal{I}$ at $p \in M$ is a
linear subspace $e \subset T_pM$ such that $\varphi|_e = 0$ for all $\varphi
\in \mathcal{I}_n$.  That is, the $n$-forms in $\mathcal{I}$ provide a
collection of functions that cut out a variety, $\Var_n(\mathcal{I}) \subset
\Gr_n(TM)$.  These functions vary smoothly in $M$ and are homogeneous in the
fiber variables.    

There is a maximal dimension $n$ for which $\Var_n(\mathcal{I})$ is
locally non-empty, which is the case of interest.  If an independence condition
$\boldsymbol{\omega}$ is specified, we also require $\boldsymbol{\omega}|_e
\neq 0$, which forces $\Var_n(\mathcal{I})$ to lie in the open subset of
$\Gr_n(TM)$ for which that condition holds.  (For example, in the case of
the contact system, the condition
$\boldsymbol{\omega} = \mathrm{d}x^1 \wedge \cdots \wedge\mathrm{d}x^n \neq 0$ holds
in the same neighborhood where \eqref{eqn:contactP} makes sense.)

Because $\mathcal{I}_n$ is finitely generated by smooth functions, Sard's
theorem guarantees an
open, dense subset $\Var_n^o(\mathcal{I})\subset \Var_n(\mathcal{I})$ defined
as the smooth subbundle of $\Gr_n(TM)$ that is cut out smoothly by smooth
functions.  

\begin{defn}[K\"ahler-ordinary]
Integral elements in $\Var_n^o(\mathcal{I})$ are called
\emph{K\"ahler-ordinary}.
\end{defn}

A single connected component of $\Var_n^o(\mathcal{I})$ is denoted $M^{(1)}$. We
allow ourselves to redefine $M$ so that $\varpi:M^{(1)} \to M$ is a smooth
bundle.  

Let $s$ denote the dimension of each fiber of the projection $M^{(1)} \to M$,
so $t=nr-s$ is the corresponding codimension of $T_eM^{(1)}_p$ in
$T_e\Gr_n(T_pM)$.
That is, the projective bundle $A = \ker \varpi_* = TM^{(1)} \subset
T\Gr_n(TM)$ is a
tableau in the sense of Remark~\ref{rmk:subspacesaretableaux}, as
each fiber $A_e = T_e M^{(1)}_p$ is a linear subspace of $T_e \Gr_n(T_pM)$.
Because
$M^{(1)}$ is a smooth manifold, we have
\begin{lemma}
$K \in A_e$ implies $\arctan_e(K) \in M^{(1)}$. 
\end{lemma}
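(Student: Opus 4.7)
The lemma asserts that the tangent tableau $A_e = T_eM^{(1)}_p \subset \ker\varpi_*$ corresponds, via the chart $\arctan_e$ on $\Gr_n(T_pM)$, to a subset of $M^{(1)}$. Since $A_e$ consists of vertical tangent vectors, the claim is really about the fiber $M^{(1)}_p$: locally near $e$, its preimage under $\arctan_e$ should contain the linear subspace $A_e \subset (X/e)\otimes e^*$, where $X = T_pM$.

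The plan is first to reduce to the fiber. Fix $p = \varpi(e)$ and treat $M^{(1)}_p$ as a smooth submanifold of $\Gr_n(T_pM)$ of dimension $s$, whose tangent space at $e$ is $A_e$ by definition of $A$. The map $\arctan_e$ sends $(X/e)\otimes e^* \cong T_e\Gr_n(T_pM)$ diffeomorphically onto an open neighborhood $U$ of $e$ in $\Gr_n(T_pM)$, and in this chart $M^{(1)}_p \cap U$ pulls back to a smooth submanifold $Z$ of $(X/e)\otimes e^*$ passing through $0$ with $T_0 Z = A_e$.

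Second, I would identify $Z$ with $A_e$ locally. The $n$-forms $\varphi \in \mathcal{I}_n$ cut out $\Var_n(\mathcal{I})$; via the Pl\"ucker embedding $\Gr_n(X) \hookrightarrow \mathbb{P}\wedge^nX$, these translate to linear conditions on Pl\"ucker coordinates, and pulling back by $\arctan_e$ gives polynomial conditions on $K \in (X/e)\otimes e^*$ whose common zero locus (intersected with the chart) is $Z$. K\"ahler-ordinarity means we are on the open dense smooth locus of this variety, so by Sard the rank of the Jacobian is constant $= t = nr - s$ throughout a neighborhood of $0$, and $Z$ is a smooth submanifold of dimension $s$. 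Combined with the assumption that $\varpi:M^{(1)} \to M$ is a smooth sub-bundle with fibers of constant dimension $s = \dim A_e$, this forces $Z$ and $A_e$ to agree as germs at $0$, so $\arctan_e(K) \in M^{(1)}$ for $K \in A_e$ small enough to land in $U$.

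The main obstacle is that a smooth submanifold does not in general coincide with its tangent space through a chart, so the third step is doing real work. What makes this go through is precisely the rigidity imposed by $M^{(1)}$ being a smoothly fibered sub-bundle cut out by the $n$-forms in $\mathcal{I}$: the combination of Pl\"ucker-linearity of the defining equations with constant fiber dimension (K\"ahler-ordinarity) is what allows the first-order coincidence $T_0 Z = A_e$ to be promoted to a local set-theoretic coincidence $Z = A_e \cap U'$, rather than merely a tangency.
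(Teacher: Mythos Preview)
Your instinct in the third paragraph is correct: a smooth submanifold need not coincide with its tangent space under a chart, so the paper's one-line justification (``Because $M^{(1)}$ is a smooth manifold'') does not by itself establish the literal statement. But your proposed repair---that Pl\"ucker-linearity of the defining $n$-forms together with K\"ahler-ordinarity forces $Z = A_e$ as a germ---is not valid either, and in fact the literal statement is false.

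Take $M = \mathbb{R}^4$ with coordinates $(x^1,x^2,y^1,y^2)$ and $\mathcal{I} = \langle\, \mathrm{d}y^1 \wedge \mathrm{d}y^2 \,\rangle$, which is differentially closed. The integral $2$-plane $e' = \langle\, \partial_{x^1} + \partial_{y^1},\ \partial_{x^2} \,\rangle$ is K\"ahler-ordinary. Using the complement $(\partial_{y^1},\partial_{y^2})$, the equation of $\Var_2(\mathcal{I})$ in the $\arctan_{e'}$ chart with coordinates $L^a_i$ is $(1+L^1_1)L^2_2 - L^1_2 L^2_1 = 0$, whose linearization at $0$ is $L^2_2=0$; hence $A_{e'} = \{L^2_2 = 0\}$. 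But $L = \left(\begin{smallmatrix}0&t\\t&0\end{smallmatrix}\right)$ lies in $A_{e'}$ for every $t$, while $\arctan_{e'}(L)$ misses the variety by $-t^2$. Linear sections of the Grassmannian are simply not affine in the $\arctan$ chart, so no argument of the shape you outline in your last paragraph can close the gap you identified.

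The paper offers no proof beyond the phrase quoted above, and immediately follows the lemma with ``That is, we have a well-defined vector bundle $A = \ker\varpi_* \subset TM^{(1)}$.'' The statement is therefore best read as an informal paraphrase of that (correct and immediate) fact---equivalently, that $K \in A_e$ characterizes the directions along which $\arctan_e(tK)$ remains in $M^{(1)}$ to first order in $t$---rather than as the set-theoretic claim you are attempting to prove.
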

That is, we have a well-defined vector bundle $A = \ker \varpi_* \subset
TM^{(1)}$ over $M^{(1)}$.
\begin{defn}[K\"ahler-regular]
If $e$ is a K\"ahler-ordinary integral element and the Cartan characters
of each tableau $A$ are constant in an open neighborhood of $e$, then $e$ is
called \emph{K\"ahler-regular}.
\end{defn}
That is, the K\"ahler-regular integral elements form a dense open subset of the
K\"ahler-ordinary integral elements, which form a dense open subset of the
whole variety $\Var_n(\mathcal{I})$ of integral elements.

So that we may apply the results of Section~\ref{sec:tableau} without treating
the Cartan characters of $A_e$ as functions of $e$, we redefine $M^{(1)}$ to be a single
connected component of K\"ahler-regular integral elements, and we again allow
ourselves to redefine $M$ so that $\varpi:M^{(1)} \to M$ is a smooth bundle.

Such $M^{(1)}$ is called the \emph{first prolongation} of $(M,\mathcal{I})$,
though it is clear from the definition that there could be multiple first
prolongations, depending on which components of $\Var_n(\mathcal{I})$ are under consideration.

To generalize the notation and results of Part~\ref{part:back} to $M^{(1)}$,
define the restricted tautological bundles 
\begin{equation}
\begin{split}
V &= \taut|_{M^{(1)}} = \{ \mathbb{P}e \otimes \mathbb{C}\}_{e \in M^{(1)}}, \\
V^* &= \taut^*|_{M^{(1)}} = \{ \mathbb{P}e^* \otimes \mathbb{C}\}_{e \in M^{(1)}}, \\
W &= (\mathbb{X}/\taut)|_{M^{(1)}} = \{ \mathbb{P}(T_pM/e) \otimes
\mathbb{C}\}_{e \in M^{(1)}}, \\
V^\perp &= \taut^\perp|_{M^{(1)}} = \{ \mathbb{P}e^\perp \otimes \mathbb{C}\}_{e \in M^{(1)}}
\label{eqn:mybundles}
\end{split}
\end{equation}
\textbf{Warning!}  These are now complex projective bundles, not vector spaces
as in Section~\ref{sec:tableau}! Sometimes, it is convenient to think of $A  =
\ker \varpi^* \subset TM^{(1)}$ as being a complex projective bundle, too, in
which case we consider it to be a subbundle of the projective bundle $W \otimes
V^*$.  Of course, the notation has been developed to be consistent regardless.

An \emph{integral manifold} of $\mathcal{I}$ is an immersion $\iota:N \to M$
such that $\iota^*(\varphi)=0$ for all $\varphi \in \mathcal{I}$.  (If an
independence condition $\boldsymbol{\omega}$ is specified, we require that
$\iota^*(\boldsymbol{\omega}) \neq 0$, too.)  When we are considering a
particular K\"ahler-regular component $M^{(1)} \subset \Var_n(\mathcal{I})$ as above, we say $N$ is an
\emph{ordinary} integral manifold provided that $\iota_*(TN) \subset M^{(1)}$.
All of the observations from Section~\ref{sec:immersions} apply, but
$\iota^{(1)}(N)$ lies in the submanifold $M^{(1)}$, and $\iota^{(1)}_*(TN)$
lies in the subbundle $A$.   The overall goal is to construct all ordinary
integral manifolds of $(M,\mathcal{I})$ through the careful study of the
geometry of a K\"ahler-regular first prolongation $M^{(1)}$.

\subsection{Prolongation and Spencer Cohomology}\label{sec:spencer}
Suppose that $\iota: N\to M$ is an ordinary integral manifold of $\mathcal{I}$.
By Theorem~\ref{thm:equiv}, the 1-forms $\theta^a$ spanning $J_e$ must vanish
for each $e \in \iota^{(1)}(N)$.  The tautological form $(\omega^i)$ on 
$\mathcal{F}_{\taut}$ pulls back to a nondegenerate frame $(\eta^i)$ on
$N$, since $\iota^{(1)}$ is an immersion.

Therefore, if $\iota^{(1)}:N \to M^{(1)}$ actually exists, we have
\begin{equation}
\begin{split}
\iota^{(1)*}(\theta^a) &= 0,\\
\iota^{(1)*}(\mathrm{d}\theta^a) &= 0,\\
\iota^{(1)*}(\omega^1 \wedge \cdots \wedge \omega^n)  &=
\eta^1 \wedge \cdots \wedge \eta^n \neq 0. 
\end{split}
\end{equation}
However, working on the frame bundle of $M^{(1)}$, these forms satisfy a more
general equation, called \emph{Cartan's structure equation}:
\begin{equation}
\mathrm{d}\theta^a \equiv \pi^a_i \wedge \omega^i + \frac12 T^a_{i,j}\,\omega^i
\wedge \omega^j,\ \mod \{\theta^b\}.
\label{eqn:cartanstr}
\end{equation}
The derivative of $\theta^a$ must take this form, because $\theta^a$ and
$\omega^i$ are semi-basic with respect to the bundle $\varpi:M^{(1)} \to M$,
whereas $\pi^a_i \in A$ is vertical, so $\mathrm{d}\theta^a$ cannot involve a
totally vertical 2-form. See discussion of connections and principal bundles in
\cite{Kobayashi1963}.

Let us now describe the meaning of each of the terms in \eqref{eqn:cartanstr},
with respect to the ordinary integral manifold $\iota:N \to M$.
Using the dual coframe $z_a \leftrightarrow \theta^a$ for
$W \leftrightarrow V^\perp$, we can see
that $\pi = \pi^a_i(z_a \otimes \omega^i)$ lies in $A$. (Hence, it is called
the \emph{tableau term}.) 
In particular, it must be that 
\begin{equation}
\iota^{(1)*}(\pi^a_i) = P^a_{i,j} \eta^j
\label{eqn:iotapi}
\end{equation}
for some function $P^a_{i,j}$ that must satisfy $P^a_{i,j}\eta^i \wedge \eta^j
    =0$, so $P^a_{i,j} = P^a_{j,i}$. 
That is, the homomorphism $P$ lies in the fiber of $W \otimes (V^* \otimes
V^*)$ over $e$, as 
\begin{equation}
P \in A \otimes V^* \subset (W \otimes V^*) \otimes V^* = W \otimes (V^* \otimes V^*).
\label{eqn:A2}
\end{equation}
Moreover, the existence of an immersion $\iota^{(1)}: N \to M^{(1)}$ requires
that the \emph{torsion term} $w_a T^a_{i,j}\,\omega^i \wedge \omega^j$ can be
removed in \eqref{eqn:cartanstr}; otherwise, it cannot be that
$\iota^{(1)*}\mathrm{d}\theta^a = 0$ as required.
That is, it must be possible to rewrite $\pi^a_i \mapsfrom \pi^a_i + Q^a_{i,j}
\omega^j$ for $Q \in A \otimes V^*$ such that any $T^a_{i,j}$ term is absorbed.
Note that this absorption of torsion is an algebraic property of the tableau
$A$.
In summary, we have Lemma~\ref{lem:spencer2}.
\begin{lemma}
Let $\delta: A \otimes V^* \to W \otimes \wedge^2 V^*$ denote the composition
of skewing $\otimes^2 V^* \to \wedge^2 V^*$ and inclusion $A \to W \otimes V^*$, and write $A^{(1)} = \ker \delta$ and $H^2(A) =
\coker \delta$: 
\begin{equation}
0 \to A^{(1)} \to A \otimes V^* \overset{\delta}{\to} W \otimes \wedge^2V^* \to
H^{2}(A)\to 0.
\end{equation}
For any ordinary integral manifold $N$, the homomorphism $P$ of
\eqref{eqn:iotapi} and \eqref{eqn:A2} lies in 
$A^{(1)}$, and the pullback of torsion $T$ is zero in $H^2(A)$.
\label{lem:spencer2}
\end{lemma}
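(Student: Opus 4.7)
The plan is to pull back Cartan's structure equation \eqref{eqn:cartanstr} along the prolongation $\iota^{(1)}$, use the integrability identity $\iota^{(1)*}(\theta^a)=0$ (which differentiates to $\iota^{(1)*}(\mathrm{d}\theta^a)=0$), and match coefficients against the frame $\{\eta^i\}$ on $N$. Both conclusions of the lemma will fall out of comparing this pullback identity to the tautological skewing map $\delta$ appearing in the statement.

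First I would note that, by Theorem~\ref{thm:equiv}, the contact and tautological forms pull back cleanly: $\iota^{(1)*}(\theta^a)=0$ and $\iota^{(1)*}(\omega^i)=\eta^i$ (working on the frame bundles). The 1-forms $\pi^a_i$ in \eqref{eqn:cartanstr} are vertical for $\varpi\colon M^{(1)}\to M$, so their pullbacks are semibasic for $\mathcal{F}N\to N$ and therefore expressible in the frame $\{\eta^j\}$ as $\iota^{(1)*}(\pi^a_i)=P^a_{i,j}\,\eta^j$. This is exactly the definition of $P$ in \eqref{eqn:iotapi}. Because $\pi=\pi^a_i(z_a\otimes\omega^i)$ is pointwise in $A$, the coefficients $P^a_{i,j}$ realize $P=P^a_{i,j}(z_a\otimes\omega^i)\otimes\omega^j$ as an element of $A\otimes V^*$, which is the inclusion \eqref{eqn:A2}.

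Next, substituting into \eqref{eqn:cartanstr} and invoking $\iota^{(1)*}(\mathrm{d}\theta^a)=0$ produces
\[
0 \;=\; P^a_{i,j}\,\eta^j\wedge\eta^i \;+\; \tfrac{1}{2}T^a_{i,j}\,\eta^i\wedge\eta^j
\]
on $\mathcal{F}N$. Matching coefficients of $\eta^i\wedge\eta^j$ for $i<j$ yields the identity $T^a_{i,j}=P^a_{i,j}-P^a_{j,i}$, which is exactly the coordinate expression of $T=\delta(P)$ in $W\otimes\wedge^2V^*$. Since $P\in A\otimes V^*$, the pulled-back torsion lies in $\delta(A\otimes V^*)$, and therefore $[T]=0$ in $H^2(A)=\coker\delta$. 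That is the second conclusion.

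For the first conclusion, I would invoke the absorption freedom intrinsic to Cartan's structure equation: the forms $\pi^a_i$ are determined only modulo $\omega^j$ (and modulo $\theta^b$), so we may replace $\pi^a_i\mapsto\pi^a_i-Q^a_{i,j}\omega^j$ for any $Q\in A\otimes V^*$. This replacement shifts $P\mapsto P-Q$ and $T\mapsto T-\delta(Q)$. Because the preceding step established $T=\delta(P)$, we may choose $Q$ so that the new torsion vanishes identically along $\iota^{(1)}(N)$, and then the pullback equation above forces $P^a_{i,j}=P^a_{j,i}$, i.e.\ $\delta(P)=0$, which is precisely $P\in A^{(1)}=\ker\delta$. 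The main subtlety here is conceptual rather than computational: both claims are two facets of the single identity $T=\delta(P)$, with $[T]=0$ in $H^2(A)$ the basis-invariant statement and $P\in A^{(1)}$ its concrete realization after a torsion-absorbing normalization of $\pi^a_i$. One must be careful that this normalization is always available, which is guaranteed precisely because the second conclusion supplies $[T]=0$.
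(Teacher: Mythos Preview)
Your argument is correct and follows essentially the same path as the paper's own discussion preceding the lemma: pull back Cartan's structure equation \eqref{eqn:cartanstr} using $\iota^{(1)*}(\mathrm{d}\theta^a)=0$, read off the relation between the skewed part of $P$ and the (pulled-back) torsion, and invoke the absorption freedom $\pi^a_i\mapsto\pi^a_i+Q^a_{i,j}\omega^j$ with $Q\in A\otimes V^*$. If anything, your version is slightly more explicit than the paper's, since you isolate the identity $\iota^{(1)*}T=\delta(P)$ first and derive both conclusions from it, whereas the paper asserts $P^a_{i,j}=P^a_{j,i}$ before discussing absorption; but the content is the same.
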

Writing $\delta$ in a chosen coframe, it is easy to check that 
\begin{equation}
\dim A^{(1)} \leq s_1 + 2s_2 + \cdots + n s_n.
\label{eqn:Cartansineq}
\end{equation}
The case of equality is considered in Section~\ref{sec:edsinv}.

The exterior differential system $\mathcal{I}^{(1)}$ on $M^{(1)}$
generated as 
\begin{equation}
\mathcal{I}^{(1)} = \pair{ \theta^a, \mathrm{d}\theta^a } =
\varpi^*(\mathcal{I}) + \mathcal{J}
\label{eqn:I1}
\end{equation}
is called the (first) \emph{prolongation} of $(M, \mathcal{I})$, and we are
back where we started at the beginning of Section~\ref{sec:eds}.  We can construct $M^{(2)}
\subset \Gr_n(TM^{(1)})$, and repeat the entire process for $E \in
M^{(2)}$ over $e \in M^{(1)}$ that was used for $e \in M^{(1)}$ over $p \in M$.
Lemma~\ref{lem:spencer2} essentially says that $A^{(1)}$ is the tableau 
bundle $T M^{(2)} \subset T\Gr_n(TM^{(1)})$.  
Thus, we can construct $M^{(3)}$ over $M^{(2)}$ and re-apply
Lemma~\ref{lem:spencer2}, and so on.  
By the definition of $M^{(1)}$ and \eqref{eqn:I1}, we have 
\begin{cor}
Every ordinary integral manifold $N$ of $(M^{(1)}, \mathcal{I}^{(1)})$ is also an ordinary
integral manifold of $(M,\mathcal{I})$.  However, the converse might fail,
as the smooth connected locus of $M^{(1)}$ may be a strict subset of
$\Var_n(\mathcal{I})$.
\end{cor}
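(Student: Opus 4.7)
The plan is to prove the forward containment by naturality of pull-back, and to explain the failure of the converse by highlighting how $M^{(1)}$ was chosen as a single K\"ahler-regular component rather than all of $\Var_n(\mathcal{I})$.

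For the forward direction, suppose $\iota': N \to M^{(1)}$ is an ordinary integral manifold of $(M^{(1)}, \mathcal{I}^{(1)})$, and set $\iota = \varpi \circ \iota': N \to M$. Since $\mathcal{I}^{(1)} = \varpi^{*}(\mathcal{I}) + \mathcal{J}$ by~\eqref{eqn:I1}, the hypothesis $(\iota')^{*}(\mathcal{I}^{(1)}) = 0$ immediately gives both $\iota^{*}(\mathcal{I}) = (\iota')^{*}(\varpi^{*}(\mathcal{I})) = 0$ and $(\iota')^{*}(\mathcal{J}) = 0$. Combined with the implicit independence condition that $\iota'_{*}(T_xN)$ be transverse to the fiber $\ker \varpi_{*}$, the second vanishing is precisely the hypothesis of the converse lemma in Section~\ref{sec:immersions}, so I can conclude that $\iota' = \iota^{(1)}$ for some immersion $\iota: N \to M$. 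Since $\iota^{(1)}(N) = \iota'(N) \subset M^{(1)}$, the map $\iota$ meets the ordinariness requirement $\iota_{*}(TN) \subset M^{(1)}$, finishing the forward direction.

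For the converse failure, I would spell out why $M^{(1)}$ is a proper subset of $\Var_n(\mathcal{I})$ in general: the prolongation was defined as a single connected component of the K\"ahler-regular locus, which is an open subset of the K\"ahler-ordinary locus $\Var_n^o(\mathcal{I})$, which is in turn open and dense in $\Var_n(\mathcal{I})$. An integral manifold of $(M, \mathcal{I})$ whose prolongation passes through a different K\"ahler-regular component, or through a point where the Cartan characters drop, or through a singular point of $\Var_n(\mathcal{I})$, remains a perfectly valid integral manifold of $(M, \mathcal{I})$ but cannot be promoted to an ordinary integral manifold of the chosen $(M^{(1)}, \mathcal{I}^{(1)})$. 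One explicit counterexample suffices: if $\Var_n(\mathcal{I})$ decomposes into disconnected K\"ahler-regular components $M^{(1)}_a \sqcup M^{(1)}_b$, any solution whose prolongation lies in $M^{(1)}_b$ witnesses the failure of the converse relative to $M^{(1)}_a$.

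The main obstacle is the careful handling of the transversality clause of the converse lemma in Section~\ref{sec:immersions}. I must check that ordinariness at the $(M^{(1)}, \mathcal{I}^{(1)})$ stage really implies this transversality---via an independence condition on $\mathcal{I}^{(1)}$ that is usually implicit but must be recorded---rather than circularly assuming $\iota$ is an immersion in order to prove it. With that technicality settled, the rest of the forward direction is a one-line pull-back chase, and the converse direction requires only the observation that our definition of $M^{(1)}$ has discarded integral elements that may still carry geometrically valid solutions of $\mathcal{I}$.
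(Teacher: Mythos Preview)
Your proposal is correct and follows exactly the reasoning the paper intends: the paper offers no explicit proof at all, stating only ``By the definition of $M^{(1)}$ and \eqref{eqn:I1}'' before the corollary, and your argument is precisely the unpacking of that sentence via the pull-back $\iota^* = (\iota')^* \circ \varpi^*$ applied to $\mathcal{I}^{(1)} = \varpi^*(\mathcal{I}) + \mathcal{J}$. Your attention to the transversality hypothesis needed to invoke the converse lemma from Section~\ref{sec:immersions} is more careful than the paper itself, which leaves that point implicit in the independence condition carried by $\mathcal{I}^{(1)}$.
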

Overall, we achieve exact sequences that summarize the entire situation of the
tangent spaces of an immersed ordinary integral manifold $N$ of
$\mathcal{I}$, $\mathcal{I}^{(1)}$, $\mathcal{I}^{(2)}$, $\mathcal{I}^{(3)}$, \ldots 
\begin{equation}
\begin{split}
0 \to A &\to W \otimes \wedge^1V^* \to H^{1}(A)\to 0,\\
0 \to A^{(1)} \to A \otimes V^* &\overset{\delta}{\to} W \otimes \wedge^2V^* \to
H^{2}(A)\to 0,\\
0 \to A^{(2)} \to A^{(1)} \otimes V^* &\overset{\delta}{\to} W \otimes \wedge^3V^* \to
H^{3}(A)\to 0,\\
&\vdots\\
0 \to A^{(n-1)} \to A^{(n-2)} \otimes V^* &\overset{\delta}{\to} W \otimes
\wedge^{n}V^* \to H^{n}(A)\to 0.
\end{split}
\label{eqn:spencers}
\end{equation}

The cokernels $H^1(A)$, $H^2(A)$, \ldots, $H^n(A)$ are the \emph{Spencer
cohomology} of the tableau $A$.  Even outside the context of exterior
differential systems, they are defined for formal tableaux $A \subset W \otimes
V^*$ via the exact
sequences \eqref{eqn:spencers} as 
\begin{equation}
H^k(A) =
\left(A \otimes (\otimes^{k-1} V^*)\right) / \left(W \otimes \wedge^{k}
V^*\right).
\end{equation}

Spencer cohomology detects functional obstructions to the solution of the
initial-value problem on $M^{(k)}$ in the form of \emph{torsion}; this is
explained nicely in \cite[Section 5.6]{Ivey2003}, and the reader is urged to
read their presentation.

Spencer cohomology was a major focus of the formal study of partial
differential equations and Lie pseudogroups in the mid-20th century; most
notably, \cite{Spencer1962, Quillen1964, Singer1965, Guillemin1966,
Goldschmidt1967, Gardner1967a, Guillemin1968,  Guillemin1968b, Guillemin1970}.  As it
happens, many of the major results of that era are easy to re-prove
under our regularity assumptions on $M^{(1)}$ and using the endovolutive notation from
Section~\ref{sec:tableau}, particularly when using the involutivity criteria in
Section~\ref{sec:edsinv} that were detailed in \cite{Smith2014a}.  We
demonstrate this in Parts~\ref{part:Xi} and \ref{part:eikonal}.

\section{Involutivity of Exterior Differential Systems}\label{sec:edsinv}
\begin{defn}[Cartan's test]
A tableau $A$ is called \emph{involutive} if equality holds in
Equation~\eqref{eqn:Cartansineq},
\[ s_1 + 2s_2 + \cdots + \ell s_\ell = \dim A^{(1)}\]
\end{defn}
\begin{defn}
A tableau $A$ is called \emph{formally integrable} if $H^k(A)=0$ for all $k
\geq 2$.
\end{defn}
Cartan's test comes from the following consequence of the Cartan--K\"ahler
theorem.\footnote{See \cite[Chapter III]{BCGGG} or \cite{Ivey2003} for more background on the
Cartan--K\"ahler theorem; it is not our focus here.}
\begin{thm}
Suppose that $(M,\mathcal{I})$ is an analytic exterior differential
differential system, that $M^{(1)}$ is a smooth sub-bundle, and that the
tableau bundle $A$ of $r \times n$ homomorphisms has constant\footnote{That is,
$M^{(1)}$ is K\"ahler-regular.}
Cartan characters $(s_1, s_2, \ldots, s_\ell)$ over $M^{(1)}$.
If $A$ is involutive and formally integrable, then through any point in $M$,
there is an analytic ordinary integral manifold $\iota:N \to M$.
Moreover, such $N$ are parametrized locally by $r$ constants, $s_1$ functions
of 1 variable, $s_2$ functions of 2 variables, \ldots, $s_\ell$ functions of
$\ell$ variables.
\label{thm:CK}
\end{thm}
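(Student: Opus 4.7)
The plan is to construct the analytic integral manifold inductively via a nested sequence of Cauchy problems, following the classical Cartan--K\"ahler strategy adapted to the endovolutive framework of Section~\ref{sec:tableau}. First I would fix a K\"ahler-regular integral element $e \in M^{(1)}$ over $p \in M$, choose generic endovolutive bases $(u^i)$ of $V^*$ and $(z_a)$ of $W$, and select local coordinates $(x^i, y^a)$ on $M$ adapted to the resulting flag $U_\lambda = \pair{u_\mu : \mu \leq \lambda}$, so that $e$ is the graph of $\mathrm{d}y^a = 0$ at $p$ and the tableau relations \eqref{eqn:symrels} become explicit PDE constraints on the partial derivatives $P^a_i = \partial y^a/\partial x^i$.

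Then I would induct on $k = 0, 1, \ldots, \ell$, producing at each stage an analytic ordinary integral manifold $N_k$ of dimension $k$ through $p$, contained in the slice $\{x^{k+1} = \cdots = x^n = 0\}$. At $k=0$, take $N_0 = \{p\}$; here the $r$ free constants are the values $y^a(p)$, after normalizing $x^i(p) = 0$. For the inductive step $N_k \to N_{k+1}$, freely prescribe $s_{k+1}$ of the $y^a$ as analytic functions of $(x^1, \ldots, x^{k+1})$ --- these are the functional degrees of freedom corresponding to the new independent tableau entries appearing in column $k+1$ --- and apply the Cauchy--Kowalevski theorem to solve the remaining analytic PDEs for the other $y^a$, using $\partial / \partial x^{k+1}$ as the evolution direction and $N_k$ as initial data on $\{x^{k+1}=0\}$. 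The endovolutive block structure \eqref{eqn:bigB} guarantees that the symbol of $\partial/\partial x^{k+1}$ on the remaining $y^a$ contains the identity block $\B^{k+1}_{k+1} = I_{s_{k+1}}$, so $\{x^{k+1}=0\}$ is non-characteristic. For $k \geq \ell$ one has $s_{k+1}=0$ and no new functional data is introduced; Cauchy--Kowalevski still applies and yields a forced unique extension up to the full $n$-dimensional analytic solution $N = N_n$.

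The two standing hypotheses come in exactly where expected. Involutivity (Cartan's equality) is precisely the statement that the free functions prescribed at each stage, together with the preceding integral data, account for all of $\dim A^{(1)}$, so the Cauchy--Kowalevski subsystem at each step is exactly determined, with no redundant or missing equations. Formal integrability $H^k(A) = 0$ for $k \geq 2$ is used inductively to show that the torsion term $T^a_{i,j}\,\omega^i\wedge\omega^j$ appearing in Cartan's structure equation \eqref{eqn:cartanstr} can be absorbed into the tableau term via a shift $\pi^a_i \mapsto \pi^a_i + Q^a_{i,j}\omega^j$ with $Q \in A \otimes V^*$, per Lemma~\ref{lem:spencer2}; without this absorption, the Cauchy problem for the next stage would be inconsistent, so the two hypotheses are genuinely complementary.

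The main obstacle is the clean identification of Cauchy--Kowalevski normal form at each inductive step, which is essentially a rank-and-symbol computation on the endomorphisms $\B^\lambda_i$ together with their behaviour under the successive torsion absorptions. The endovolutive normalization makes the basic non-characteristic check immediate from \eqref{eqn:bigB}, but one must also verify that the shifts $Q^a_{i,j}$ performed at earlier stages preserve the block-triangular structure required at later stages, which is a finite commutativity argument that ultimately rests on constancy of the Cartan characters across $M^{(1)}$. Analyticity is essential throughout, as Cauchy--Kowalevski generally fails in the $C^\infty$ category; this is precisely why this theorem is restricted to analytic EDS, and why the $C^\infty$ characteristic variety featured elsewhere in the monograph must be studied by entirely separate techniques.
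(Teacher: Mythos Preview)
The paper does not actually prove this theorem. Theorem~\ref{thm:CK} is stated without proof; the footnote immediately preceding it explicitly says ``See \cite[Chapter III]{BCGGG} or \cite{Ivey2003} for more background on the Cartan--K\"ahler theorem; it is not our focus here.'' The monograph's stated purpose is to reach the characteristic-variety results quickly, and it deliberately treats Cartan--K\"ahler as an imported black box.

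Your sketch is the classical Cartan--K\"ahler argument and is essentially what one finds in the cited references: build a regular flag, extend inductively from dimension $k$ to $k{+}1$ by Cauchy--Kowalevski, with $s_{k+1}$ free functions of $k{+}1$ variables entering at each step, and use involutivity plus torsion absorption to guarantee that each Cauchy problem is determined and consistent. One small overreach: the standard proof does not require the endovolutive refinement of Section~\ref{sec:endo}; genericity of the flag (Section~\ref{sec:regular}) suffices to make each Cauchy--Kowalevski step non-characteristic. Endovolutivity is a later convenience in this paper, introduced for the Guillemin-normal-form results in Part~\ref{part:Xi}, not a prerequisite for Cartan--K\"ahler itself.
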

Somewhat confusingly, the situation in Theorem~\ref{thm:CK} is called
\emph{involutivity of $(M,\mathcal{I})$};  that is, an EDS might fail to be
involutive even if its tableau is involutive, because there may be
nonzero \emph{torsion} in $H^k(A)$, meaning that $\mathcal{I}$ fails to be
formally integrable.  This means essentially that the ideal $\mathcal{I}$ is
being studied on the wrong manifold.

For a beautiful interpretation of Cartan's test that is relevant to the later
Sections of this course, read the introduction of \cite{Yang1987}.  In summary,
ordinary integral manifolds are constructed by decomposing the Cauchy
problem into a sequence of steps, each of which is determined and has solutions
using the Cauchy--Kowalevski theorem.

For fixed spaces $W$ and $V^*$, involutivity is a closed algebraic condition on
tableaux in $W \otimes V^*$.  Because the conditions come from Cartan's test,
which involves $W \otimes \wedge^2V^*$, it is not surprising that these
conditions are quadratic; however, writing down the precise ideal is a lengthy
argument.  Doing so was suggested in \cite[Chapter IV\S5]{BCGGG} and
accomplished for general tableaux in \cite{Smith2014a} following the outline in
\cite{Yang1987}.

\begin{thm}[Involutivity Criteria]
Suppose a tableau is given in generic bases as in \eqref{eqn:IB}.  The
tableau is \emph{involutive} if and only if there exists a basis of $W$ such that 
\begin{enumerate}
\item $\B^\lambda_i$ is endovolutive in that basis, and 

\item $\left( \B^\lambda_l \B^\mu_k - \B^\lambda_k \B^\mu_l\right)^a_b=0$ for all
$\lambda < l < k$ and $\lambda \leq \mu < k$ and all $a > s_l$.
\end{enumerate}
\label{thm:invcond}
\end{thm}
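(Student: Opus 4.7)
The plan is to prove the equivalence by computing $\dim A^{(1)} = \dim \ker \delta$ directly in the generic bases and identifying exactly when Cartan's inequality
$$\dim A^{(1)} \le s_1 + 2s_2 + \cdots + \ell s_\ell$$
saturates. An element of $A \otimes V^*$ is a tensor $P = P^a_{i,j}\, z_a \otimes u^i \otimes u^j$ whose $j$-slice lies in $A$ for every $j$, so $P^a_{i,j} = B^{a,\mu}_{i,b} P^b_{\mu,j}$ whenever $a > s_i$. Lying in $\ker \delta$ is the symmetry $P^a_{i,j} = P^a_{j,i}$, and combining these two yields the core compatibility equation
\begin{equation}
B^{a,\mu}_{i,b}\, P^b_{\mu,j} \;=\; B^{a,\mu}_{j,b}\, P^b_{\mu,i},
\label{eqn:compat}
\end{equation}
which is the substantive constraint whenever $a$ exceeds both $s_i$ and $s_j$ (with the boundary cases reducing similarly). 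The proof reduces to pinning down when \eqref{eqn:compat} admits exactly an $s_1 + 2s_2 + \cdots + \ell s_\ell$-dimensional space of solutions.

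For the ``if'' direction, I would parametrize $A^{(1)}$ by the entries $P^b_{\mu,\nu}$ with $\mu \le \nu \le \ell$ and $b \le s_\nu$, whose count is $\sum_{\nu=1}^\ell \nu\, s_\nu$. The task is to show that every remaining entry is uniquely and consistently determined from these, using endovolutivity $B^{a,\mu}_{i,b} = 0$ for $a > s_\mu$ together with hypothesis (ii) to dispose of all non-trivial instances of \eqref{eqn:compat}. Splitting the index $j$ into $\mu \le \ell$ versus $\varrho > \ell$, and case-analyzing whether $a$ lies in $\Wu^-_\mu$, $\Wu^-_\lambda \setminus \Wu^-_\mu$, or $\Wu^+_\lambda$, endovolutivity collapses all identities except those of the form $\B^\lambda_l \B^\mu_k = \B^\lambda_k \B^\mu_l$ acting on $\Wu^-_\lambda$. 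These are precisely the commutator relations of (ii), and the asymmetric ranges $\lambda < l < k$ and $\lambda \le \mu < k$ are dictated by which slots lie in the shaded region of Figure~\ref{fig:figBend}. Every choice of free parameters then extends uniquely to a solution of \eqref{eqn:compat}, forcing equality in Cartan's inequality.

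For the ``only if'' direction, assume $A$ is involutive. Starting from any generic basis $(z_a)$ of $W$, the plan is to refine it so that $(z_1, \ldots, z_{s_\lambda})$ spans $\Wu^-_\lambda$ for every $\lambda$; genericity of $(u^i)$ then forces $B^{a,\lambda}_{i,b} = 0$ for $a > s_\lambda$, establishing endovolutivity. With endovolutivity in place, the parametrization argument of the previous paragraph runs in reverse: any failure of a commutator identity from (ii) would introduce an independent linear constraint among the otherwise free entries $P^b_{\mu,\nu}$, strictly reducing $\dim A^{(1)}$ below $\sum_\nu \nu\, s_\nu$ and contradicting involutivity.

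The main obstacle will be the index bookkeeping underpinning the case analysis of \eqref{eqn:compat}: the asymmetric bounds $\lambda < l < k$, $\lambda \le \mu < k$, and $a > s_l$ are dictated entirely by the stairstep geometry of the shaded region in Figure~\ref{fig:figBend}, and disentangling which commutator instances are forced versus redundant requires a careful tally. The detailed version of this computation was carried out in \cite{Smith2014a} following the outline in \cite{Yang1987}, and the sketch above matches that argument.
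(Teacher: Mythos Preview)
The paper does not prove this theorem; it states the result and defers the argument entirely to \cite{Smith2014a} (following the outline in \cite{Yang1987}). Your sketch matches the strategy described there, and the ``if'' direction is essentially right: the paper later confirms (in the proof of Corollary~\ref{cor:guilA}) that the independent generators of $A^{(1)}$ are exactly the $Z^a_{\mu,\lambda}$ with $a \le s_\mu$ and $\lambda \le \mu \le \ell$, which is your parametrization, and the commutator identities in (ii) are precisely what make the remaining compatibility equations \eqref{eqn:compat} consistent.

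There is, however, a genuine gap in your ``only if'' direction. You write that the plan is to ``refine'' a generic basis $(z_a)$ of $W$ so that $(z_1,\ldots,z_{s_\lambda})$ spans $\Wu^-_\lambda$, and that genericity of $(u^i)$ then forces $B^{a,\lambda}_{i,b}=0$ for $a>s_\lambda$. But $\Wu^-_\lambda$ is \emph{defined} as $\pair{z_1,\ldots,z_{s_\lambda}}$ for whatever basis you have chosen, so that span condition is tautological; and genericity of $(u^i)$ alone does not force endovolutivity---a generic basis of $W$ in the sense of Section~\ref{sec:regular} only guarantees that the independent entries sit in the upper rows, not that the dependent entries in column $i$ are expressible using only the first $s_\lambda$ rows for each contributing $\lambda$. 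Proving that involutivity implies the \emph{existence} of an endovolutive basis is the substantive step here, and it requires its own argument (in \cite{Smith2014a} this is done by an inductive construction exploiting Cartan's test column by column). Your reverse-direction argument---that failure of (ii) would cut $\dim A^{(1)}$ strictly below $\sum_\nu \nu s_\nu$---is correct once endovolutivity is in hand, but you have not yet earned endovolutivity.
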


This theorem is our main computational tool in Part~\ref{part:Xi}.

\subsection{Moduli of Involutive Tableaux}\label{sec:moduli}
While it seems like a trivial (if lengthy) computation, consider carefully the
meaning of Theorem~\ref{thm:invcond}:  We can fix $r$, $n$, and Cartan
characters $s_1, \ldots, s_n$ and then write down an explicit ideal in
coordinates whose variety is all of the involutive tableaux with those
Cartan characters.  Hence, we can use computer algebra systems such as Macaulay2,
Magma, and Sage to decompose and analyze that ideal using Gr\"obner basis
techniques.  With
enough computer memory, we can answer the question ``What is the moduli of involutive
tableaux?''  By virtue of Theorem~\ref{thm:CK}, this is fairly close to answering
the question ``What is the moduli of involutive PDEs?''

For example, fix $r=n=3$ and $(s_1, s_2, s_3) = (3,2,0)$.  For some 
coefficients $x_0, \ldots, x_{15}$ in the ring $S$, an endovolutive
tableau must be of the form
\begin{equation}
(\pi^a_i) = 
\begin{pmatrix}
\alpha_0 & \alpha_3 & x_3 \alpha_0 + x_6 \alpha_1 + x_9 \alpha_2 + x_{12} \alpha_3 + x_{14}\alpha_4 \\
\alpha_1 & \alpha_4 & x_4 \alpha_0 + x_7 \alpha_1 + x_{10}\alpha_2 + x_{13}\alpha_3 + x_{15} \alpha_4\\
\alpha_2 & x_0\alpha_0 + x_1\alpha_1 +x_2\alpha_2 & x_5 \alpha_0 + x_8 \alpha_1 + x_{11} \alpha_2
\end{pmatrix},
\end{equation}
or in block form like \eqref{eqn:bigB},
\begin{equation}
(\B^\lambda_i) =
\begin{bmatrix}
\begin{pmatrix} 1 & 0 & 0\\  0 & 1 & 0\\ 0 & 0 & 1 \end{pmatrix}
&
\begin{pmatrix} 0 & 0 & 0\\  0 & 0 & 0\\ x_0 & x_1 & x_2 \end{pmatrix}
&
\begin{pmatrix} x_3 & x_6 & x_9\\  x_4 & x_7 & x_{10}\\ x_5 & x_8 & x_{11}
\end{pmatrix}\\
\begin{pmatrix} 0 & 0 & 0\\  0 & 0 & 0\\ 0 & 0 & 0 \end{pmatrix}
&
\begin{pmatrix} 1 & 0 & 0\\  0 & 1 & 0\\ 0 & 0 & 0 \end{pmatrix}
&
\begin{pmatrix} x_{12} & x_{14} & 0\\  x_{13} & x_{15} & 0\\ 0 & 0 & 0
\end{pmatrix}
\end{bmatrix}.
\label{eqn:moduliblock}
\end{equation}
Involutivity is an affine quadratic ideal $\mathscr{G}$ on $\mathbb{C}(x_0, \ldots, x_{15})$ generated by
the last rows of $\B^1_2\B^1_3 - \B^1_3\B^1_2$  
and $\B^1_2\B^2_3 - \B^1_3\B^2_2$, so:
\begin{equation}
\mathscr{G} = 
\begin{cases}
x_{0} x_{3} + x_{1} x_{4} + x_{2} x_{5} -  x_{0} x_{11},\\
x_{0} x_{6} + x_{1} x_{7} + x_{2} x_{8} -  x_{1} x_{11},\\
x_{0} x_{9} + x_{1} x_{10},\\
x_{0} x_{12} + x_{1} x_{13} -  x_{5},\\
x_{0} x_{14} + x_{1} x_{15} -  x_{8}.
\end{cases}
\end{equation}
The complete primary decomposition of this ideal reveals two components.  The
maximal component has dimension 12, and it is described by the fairly boring
prime ideal $\{x_0, x_1, x_5, x_8\}$.  The other component has dimension 11 and
its prime ideal is generated by 27 polynomials.  
See \url{http://goo.gl/jGTnMU} for how to compute this in SageMathCell.

Many of your favorite involutive second-order scalar PDEs in three independent
variables live somewhere in this variety; see \eqref{eqn:example_sym_array} and
Section~\ref{sec:exwave}.
Up to some notion of equivalence, this
is essentially the moduli space of such equations.  As seen in
Part~\ref{part:Xi}, their characteristic varieties are obtained by combining
$\mathscr{G}$ with the rank-1 ideal $\mathscr{R}$ on $\mathbb{C}[x_0, \ldots,
x_{15}, a_0, \ldots, a_{4}]$.  

However, there is still some ambiguity to be resolved, as it may be that a
given abstract tableau admits several endovolutive bases with apparently
distinct coordinate descriptions.

\subsection{Cauchy retractions}\label{sec:cauchy}
Before proceeding to Part~\ref{part:Xi}, it is worthwhile to mention Cauchy
retractions, which are much simpler than---and quite distinct from---elements of the
characteristic variety.
To confuse matters, many references call these ``Cauchy
characteristics.''
For any differentially closed ideal $\mathcal{I}\subset \Omega^\bullet M$,
the Cauchy retractions are the vectors that preserve $\mathcal{I}$; that is, 
$\mathfrak{g} = \{ v \in TM : v \lhk \mathcal{I} \subset \mathcal{I}\}
$. 
Because $\mathcal{I}$ is differentially closed, 
the annihilator bundle $\mathfrak{g}^\perp \subset T^*M$ is the smallest
Frobenius ideal in $\Omega^\bullet(M)$ that contains $\mathcal{I}$.  Then, for
any integral manifold $\iota:N \to M$,  the subspaces $\mathfrak{g}\cap
\iota^{(1)}(N)$ form an integrable distribution; that is,
$\mathfrak{g}^\perp_N$ is Frobenius as well \cite{Gardner1967a}.

Because
$\mathfrak{g}^\perp$ is a Frobenius system---a system of ODEs---it is common to 
redefine $(M,\mathcal{I})$ so that it is free of Cauchy retractions before
proceeding to study its integral manifolds.
The distinction between $\mathfrak{g}^\perp$ and the characteristic variety
$\Xi$ is explored further in
\cite{Smith2014b}.

\part{Characteristic and Rank-one Varieties}\label{part:Xi}
Thank you for taking the time to read the enormous amount of background in
Parts~\ref{part:back} and \ref{part:manifolds}.  We are ready to define and
deconstruct a fascinating mathematical object that lies at the heart of PDE
theory.

Here we stand:  We have an exterior differential system $\mathcal{I}$ on $M$.
Perhaps this EDS arose from a system of PDEs on $M$ and is equipped with 
an independence condition $\boldsymbol{\omega}$.
The EDS yields a smooth K\"ahler-regular subbundle $M^{(1)} \subset
\Gr_n(TM)$, where any $e \in M^{(1)}$ is an integral element of the original
EDS.  As a manifold in its own right, $M^{(1)}$ is equipped with tautological
bundles $V$, $V^*$, $W$, and $A$ from \eqref{eqn:mybundles}.
Moreover, $A$ is a subbundle of $W \otimes V^*$, so it is a
tableau bundle.  Its symbol $\sigma$ gives a short-exact sequence of bundles,
\begin{equation}
0 \to A \to W \otimes V^* \overset{\sigma}\to H^1(A) \to 0.
\label{eqn:tableausequence}
\end{equation}
An integral manifold is an immersion $\iota:N \to M$ such that $\iota_*(T_xN)
\in M^{(1)}_{\iota(x)}$ for all $x \in N$.  Let $\iota^{(1)}:N \to M^{(1)}$ denote the
map $x \mapsto e = \iota_*(T_xN)$.

As you read this part, compare it to \cite[Section 4.6]{Ivey2003} and
\cite[Chapter V]{BCGGG}.
The reader will note that we do not assume that $\mathcal{I}$ is a linear
Pfaffian system, nor do we build a prolonged EDS $\mathcal{I}^{(1)}$ using the
contact system.  Instead we are working with the tautological bundles per
Remark~\ref{rmk:contact}.

\section{The Characteristic Variety}
The original motivation for the characteristic variety is to see where the
initial-value problem becomes ambiguous.  That is, given an initial condition
for our PDE on a local submanifold of dimension $n{-}1$, when would the
$n$-dimensional solutions for that initial condition fail to be unique?
We express this condition in terms of integral elements.

\subsection{via Polar Extension}\label{sec:polarXi}
For an integral element $e' \in \Var_{n-1}(\mathcal{I})$, we consider its
space\footnote{The polar space is a vector space thanks to the assumption that
$\mathcal{I}_n$ is a finitely-generated $C^\infty(M)$-module, because that
assumption implies that the polar equations over $p \in M$ are a linear subspace of
$T_p^*M$.}
of integral extensions, called the \emph{polar space}, 
\begin{equation}
H(e') = \{ v ~:~ e=e' + \pair{v} \in \Var_{n}(\mathcal{I}) \}\subset TM 
\end{equation}
and the \emph{polar equations} comprise its annihilator,  
\begin{equation}
H^\perp (e') = \{ e'\lhk \varphi~:~ \varphi \in \mathcal{I}_n  \} \subset T^*M.
\end{equation}
The \emph{polar rank} is $r(e') = \dim H(e') - \dim e' -1$.
If $r(e')=-1$, then $e'$ admits no extensions.
If $r(e')=0$, then $e'$ admits a unique extension to some $e \in
\Var_n(\mathcal{I})$.

The case of interest is $r(e')>0$, meaning that $e'$ admits many extensions, so
the initial-value problem from $e'$ to $e = e' + \pair{v}$ is ambiguous.  For any $e \in
M^{(1)}$, we can identify a hyperplane $e' \in \Gr_{n-1}(e)$ with $\xi \in
\mathbb{P}e^*$ via $e' = \ker \xi$.  Because $e \in M^{(1)} \subset \Gr_n(TM)$
where $n$ is the maximal dimension of integral elements of $\mathcal{I}$, the
function $r$ cannot be positive on an open set of $\mathbb{P}e^*$, so the case
$r(e')>0$ is a closed condition.  Moreover, the function $r:\mathbb{P}e^* \to
\mathbb{N}$ is the rank of a linear system of equations, so it defines a
Zariski-closed projective algebraic variety.  We choose to study that algebraic variety
projectively over $\mathbb{C}$.  Hence, the typical definition of the \emph{characteristic variety of $e$} is 
\begin{equation} 
\Xi_e = \{ \xi \in \mathbb{P}e^*\otimes \mathbb{C} : r(\xi^\perp) > 0 \} \subset
V^*_e.
\label{eqn:Xi0}
\end{equation}
This initial definition is refined in Section~\ref{sec:incidence} to produce a
scheme.  To study properly the ambiguity of the initial-value problem, we want to
assign a multiplicity to each $\xi \in \Xi_e$ and decompose $\Xi$ into
irreducible components based on the structure of the space $H(\xi^\perp)$. 

\subsection{via Rank-one Incidence}\label{sec:incidence}
For both computational and theoretical purposes, it would be convenient to 
tie the polar space $H(e')$ to the geometry of the tableau $A_e$ of an
extension $e$ of $e'$. 
The discussion of polar pairs in Section~\ref{sec:polarpairs} links these two
objects, to provide another interpretation of the initial-value problem that is
much more convenient than \eqref{eqn:Xi0}.

Fix $e \in M^{(1)}$, and suppose that both $e$ and $\tilde{e}$ are integral
extensions of $e' = \ker \xi$ for some $\xi \in e^*$.  By the definition of
$H(e')$, it must be that $\tilde{e}$ lies in $\Var_n(\mathcal{I}) \cap
\Pol_1(e)$, 
but we do not know whether $\tilde{e}$ lies in the particular maximal smooth component of
$\Var_n(\mathcal{I})$ that we call $M^{(1)}$.
However, the results of Section~\ref{sec:polarpairs} guarantee that $\tilde{e}$
is detected by $A_e$ even if $\tilde{e}$ is not in $M^{(1)}$, in the following
way.

\begin{lemma}
Fix $e \in M^{(1)}$, and suppose that both $e$ and $\tilde{e}$ are integral
extensions of $e' = \ker \xi$ for some $\xi \in e^*$.  
Let $w$ be such that $\tilde{e} = e' + \pair{w}$.
Then $w \otimes \xi \in A_e$, and there is an open 1-parameter family of
integral extensions of $e'$ near $e$ in $M^{(1)}$ 
that also represent $[w \otimes \xi]$. 
\label{lem:AseesXi}
\end{lemma}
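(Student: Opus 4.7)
The plan is to combine the rank-1 incidence correspondence from Section~\ref{sec:polarpairs} with the linearity of the polar space $H(e')$ and the openness of $M^{(1)}$ inside $\Var_n^o(\mathcal{I})$.

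First I would produce the rank-1 homomorphism intrinsically. Since $e' = e \cap \tilde{e}$ has dimension $n-1$, the pair $(e,\tilde{e})$ is in $\Pol_1(e)$, and the incidence correspondence (Lemma~\ref{lem:ranknullity} together with the explicit construction in the paragraphs following it) assigns to $\tilde{e}$ the rank-1 element $w \otimes \xi \in W_e \otimes V_e^* = (T_pM/e) \otimes e^*$, where $[w] = \tilde{e}/e$ and $[\xi]$ generates $(e')^\perp/e^\perp$. Here one reads $w$ in $W_e$ as the class of the lift $w \in T_pM$ given in the hypothesis. The content of the lemma is that this rank-1 element, which a priori only records an abstract polar pair in $\Gr_n(TM)$, actually belongs to the tableau $A_e \subset W_e \otimes V_e^*$.

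Next I would promote the abstract polar pair to an integral curve inside $M^{(1)}$. Write $e = e' + \pair{v}$ so that $\tilde{e} = e' + \pair{w}$ with both $v,w \in H(e')$. Because $\mathcal{I}_n$ is a finitely generated $C^\infty(M)$-module, $H(e')$ is a linear subspace of $T_pM$, so the entire affine line $\{v + s w : s \in \mathbb{R}\}$ lies in $H(e')$ and hence the family
\begin{equation}
e_s \;:=\; e' + \pair{v + s w}
\end{equation}
consists of integral elements of $\Var_n(\mathcal{I})$. A direct comparison with \eqref{eqn:elementbasis}, adapted to a basis of $e$ whose last vector is $v$ and whose induced dual coordinate is $\xi$, shows that $e_s = \arctan_e(s \cdot w \otimes \xi)$ for all $s$ in the open neighborhood of $0$ where this chart is defined. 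Since $e_0 = e \in M^{(1)}$ and $M^{(1)}$ is an open subset of a K\"ahler-regular component of $\Var_n^o(\mathcal{I})$, there exists $\epsilon > 0$ with $e_s \in M^{(1)}$ for all $s \in (-\epsilon,\epsilon)$; this is the promised open $1$-parameter family, each of whose members is a polar pair of $e$ representing the same projective class $[w\otimes\xi]$ by Lemma~\ref{lem:polarpairs1}.

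Finally, differentiating the smooth curve $s \mapsto e_s$ inside $M^{(1)}$ at $s = 0$ yields a tangent vector in $T_e M^{(1)}$. Since $\varpi(e_s) = p$ for every $s$, this tangent lies in $\ker \varpi_* \cap T_e M^{(1)} = A_e$. Under the identification $T_e \Gr_n(T_pM) \cong (T_pM/e) \otimes e^*$ of Section~\ref{sec:tan}, the derivative of $s \mapsto \arctan_e(s \cdot w \otimes \xi)$ at $s=0$ is $w \otimes \xi$, so $w \otimes \xi \in A_e$, as required.

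The main obstacle is precisely the passage from $\Var_n(\mathcal{I})$ to the smooth tableau bundle $A$: the rank-$1$ line of Lemma~\ref{lem:connected} tells us that $w \otimes \xi$ is tangent to $\Var_n(\mathcal{I})$ as a set, but membership in $A_e = \ker \varpi_* \cap T_e M^{(1)}$ requires that the curve $e_s$ stays inside the particular smooth, K\"ahler-regular component $M^{(1)}$ near $e$. This is exactly where K\"ahler-regularity and the openness of $M^{(1)}$ must be invoked; absent these hypotheses, one obtains only a tangent vector to a possibly singular algebraic subset of $\Gr_n(TM)$.
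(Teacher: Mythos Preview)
Your proof is correct and follows essentially the same approach as the paper's: both use the linearity of $H(e')$ to build a one-parameter family of integral extensions of $e'$ through $e$, invoke the openness of $M^{(1)}$ inside $\Var_n(\mathcal{I})$ to keep the curve in $M^{(1)}$ for small parameter, and differentiate to land $w\otimes\xi$ in $A_e$. The only cosmetic difference is the parametrization---you use $e_s = e' + \pair{v+sw}$ directly in the $\arctan_e$ chart, whereas the paper routes through Lemma~\ref{lem:connected} with $e_\tau = e' + \pair{(1-\tau)v+\tau w}$---but the content is identical.
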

\begin{proof}
Because $\tilde{e} \in \Pol_1(e)$, Lemma~\ref{lem:ranknullity} yields a
particular line $[K]$ of rank-1 homomorphisms in $(T_pM/e) \otimes e^*$ representing
$\tilde{e}$. 
Because $H(e')$ is a vector space\footnote{Here we see again why it is helpful
for an EDS to be finitely generated.} such that $w \in H(e')$ and $w \not\in e$, 
the rank-1 projective homomorphism $[K]$ takes the form of $[w \otimes \xi]$
for some $w \in H(e')/e$.

By Lemma~\ref{lem:connected}, there is a continuous 1-parameter
family of other polar pairs $e_\tau$ of $e$, with $e_\tau \cap e = e'$,
converging to $\tilde{e}$, all of which share the rank-1 projective
homomorphism $[w \otimes \xi]$.

That is, as a line of rank-1 homomorphisms, $[w \otimes \xi]$ is contained in
$(H(e')/e) \otimes e^*$, as a subspace of $(T_pM /e)\otimes e^*$.  
Applying $\arctan_e$, this implies that $e_\tau
\subset H(e')$ for all $\tau$.  By the definition of $H(e')$, this means
$e_\tau \in \Var_n(\mathcal{I})$ for all $\tau$.  But, the $e_\tau$ follow a
continuous curve, and $e_0 = e$ lies in the open subset $M^{(1)}$.  Therefore,
all $e_\tau$ for an open set of sufficiently small $\tau$.  Differentiating, we
see that the line $[w \otimes \xi]$ is contained in the tangent space of the
fiber of $M^{(1)}$ at $e$, namely $A_e$.
\end{proof}

On the other hand, for fixed $e$ and $\xi$, there are various distinct
$\tilde{e}$ corresponding to linearly independent $w$.
With Figure~\ref{fig:polarincidence} in mind, it is easy to see that 
\begin{equation}
\dim \mathbb{P} \{ w \in T_pM/e ~:~ w \otimes \xi \in A_e\} = r(\xi^\perp).
\label{eqn:rxi}
\end{equation}

Recall the rank-1 ideal $\mathscr{R}$ from
Section~\ref{sec:tableau}. Here it applies to vector bundles.  As a set, the rank-1
subvariety of the tableau is
\begin{equation}
\Cone 
=  A \cap \Var \mathscr{R}
= A \cap \{  w \otimes \xi\ :\ w \in W,\ \xi \in V^*  \}. 
\label{eqn:Cone}
\end{equation}
As a set, the \emph{characteristic variety} $\Xi$ is the projection of $\Cone$
to $V^*$.  More precisely, $\Xi$ is the \emph{scheme}\footnote{We must study
$\Xi$ along with its various components and multiplicities, so it is better to
think of it as a scheme than as a simple-minded variety.} defined by
the \emph{characteristic ideal} $\mathscr{M}$ on $V^*$ that is obtained from
the rank-1 ideal $\mathscr{R}$ on $A \subset W \otimes V^*$ in the following way:
For any $\xi \in V^*$, define $\sigma_\xi:W \to H^{1}$ by
$\sigma_\xi(w) = \sigma(w \otimes \xi)$.    Note that $\dim\ker\sigma_\xi =
r(\xi^\perp)$ by \eqref{eqn:rxi} and \eqref{eqn:Cone}, but this does not
account for multiplicity within $\Cone$ itself. Then the scheme $\Cone$ is the incidence correspondence\footnote{
For more background on the utility of incidence correspondences in algebraic
geometry, see the 2013 Columbia Eilenberg lecture series
by Joe Harris, \cite{Harris2013}. A YouTube link is in the bibliography.}
of $\Xi$ for the symbol map $\sigma_\xi$. See
Figure~\ref{fig:incidence}.  
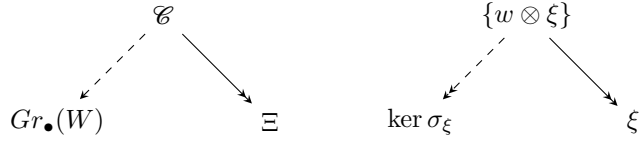
\begin{figure}[ht]
\centering\begin{tikzpicture}[node distance=2cm, auto]
  \node (C) {$\Cone$};
  \node[below left of=C] (G) {$Gr_\bullet(W)$};
  \node[below right of=C] (Xi) {$\Xi$};
  \draw[->>, above right] (C) to node {} (Xi);
  \draw[->, dashed, above left] (C) to node {} (G);
  \node[right of=Xi] (G1) {$\ker \sigma_\xi$};
  \node[above right of=G1] (C1) {$\{ w \otimes \xi \}$};
  \node[below right of=C1] (Xi1) {$\xi$};
  \draw[->>, above right] (C1) to node {} (Xi1);
  \draw[->>, dashed, above left] (C1) to node {} (G1);
\end{tikzpicture}
\caption{The rank-1 variety $\Cone$ is the incidence correspondence for the
characteristic variety $\Xi$, but the scheme multiplicities in $\Xi$ should be obtained as 
in \eqref{eqn:charidealv}.
}\label{fig:incidence}
\end{figure}

This interpretation is \emph{amazing}.  Suddenly, two completely elementary
ideas from Section~\ref{sec:tableau}---tableaux of matrices and rank-1
matrices---come together to give a concise description of the most subtle
structure in PDE theory.  

However, the scheme components and multiplicities are still not obvious from
Figure~\ref{fig:incidence}; they must be obtained by examining the degree of
the equations defining $\ker \sigma_\xi$.  The powerful third interpretation in
Section~\ref{sec:inveig} provides this detail. But first an example.

\subsection{Example: The Wave Equation}\label{sec:exwave}
Consider the PDE $f_{11} + f_{22} = f_{33}$.  To do this, we
consider the manifold $M = \mathbb{R}^{3+1+3+5} \subset
 = \mathbb{R}^{13} = \mathbb{J}^2(\mathbb{R}^3,\mathbb{R})$ with coordinates $x^1$,$x^2$,$x^3$, $f$,
$p_1$,
$p_2$, $p_3$, $p_{11}$, $p_{12}$, $p_{13}$, $p_{22}$, $p_{23}$.
Consider the exterior differential system generated by
\begin{equation}
\begin{split}
\theta^0 &= \mathrm{d}u - p_1 \mathrm{d}x^1 - p_2 \mathrm{d}x^2 - p_3
\mathrm{d}x^3,\\
\theta^1 &=
\mathrm{d}p_1-p_{11}\mathrm{d}x^1-p_{12}\mathrm{d}x^2-p_{13}\mathrm{d}x^3,\\
\theta^2 &=
\mathrm{d}p_2-p_{12}\mathrm{d}x^1-p_{22}\mathrm{d}x^2-p_{23}\mathrm{d}x^3,\\
\theta^3 &= \mathrm{d}p_3-p_{13}\mathrm{d}x^1-p_{23}\mathrm{d}x^2-(p_{11}+p_{22})\mathrm{d}x^3
\end{split}
\end{equation}
Let $\omega^i = \mathrm{d}x^i$ for $i=1,2,3$, so 
the derivatives are computed as 
\begin{equation}
\mathrm{d}
\begin{pmatrix}
\theta^0 \\ \theta^1 \\ \theta^2 \\ \theta^3
\end{pmatrix}
\equiv 
\begin{pmatrix}
0 & 0 & 0\\
\pi^1_1 & \pi^1_2 & \pi^1_3\\
\pi^2_1 & \pi^2_2 & \pi^2_3\\
\pi^3_1 & \pi^3_2 & \pi^3_3
\end{pmatrix}
\wedge
\begin{pmatrix}
\omega^1 \\ \omega^2 \\ \omega^3
\end{pmatrix}
\mod \{ \theta^0, \theta^1, \theta^2, \theta^3\}
\end{equation}
where $\pi^1_2=\pi^2_1$, $\pi^1_3 = \pi^3_1$, $\pi^2_3 = \pi^3_2$, and $\pi^3_3
= \pi^1_1 + \pi^2_2$.

Changing bases, this tableau is equivalent to an endovolutive one of the form
\begin{equation}
(\pi^a_i) = 
\begin{pmatrix}
\alpha_0 & \alpha_3 & \alpha_4 \\
\alpha_1 & \alpha_4 & \alpha_2 + \alpha_3 \\
\alpha_2 & \alpha_0 & \alpha_1 \\
\end{pmatrix}
\end{equation}
Or in block form
\begin{equation}
(\B^\lambda_i) =
\begin{bmatrix}
\begin{pmatrix} 1 & 0 & 0\\  0 & 1 & 0\\ 0 & 0 & 1 \end{pmatrix} &
\begin{pmatrix} 0 & 0 & 0\\  0 & 0 & 0\\ 1 & 0 & 0 \end{pmatrix} &
\begin{pmatrix} 0 & 0 & 0\\  0 & 0 & 1\\ 0 & 1 & 0 \end{pmatrix}\\
\begin{pmatrix} 0 & 0 & 0\\  0 & 0 & 0\\ 0 & 0 & 0 \end{pmatrix} &
\begin{pmatrix} 1 & 0 & 0\\  0 & 1 & 0\\ 0 & 0 & 0 \end{pmatrix} &
\begin{pmatrix} 0 & 1 & 0\\  1 & 0 & 0\\ 0 & 0 & 0 \end{pmatrix}
\end{bmatrix}
\end{equation}
Note that the third row of both 
$\B^1_2\B^1_3 - \B^1_3\B^1_2$ and 
$\B^1_2\B^2_3 - \B^1_3\B^2_2$ are zero, so the tableau is involutive by
Theorem~\ref{thm:invcond}.

The rank-1 condition is
\begin{equation}
\begin{split}
0 &= \alpha_0 \alpha_4 - \alpha_1 \alpha_3, \\ 
0 &= \alpha_0 \alpha_0 - \alpha_2 \alpha_3, \\
0 &= \alpha_0 \alpha_1 - \alpha_2 \alpha_4, \\
0 &= \alpha_1 \alpha_1 - \alpha_2 \alpha_2 - \alpha_2 \alpha_3,\\
0 &= \alpha_3 \alpha_1 - \alpha_0 \alpha_4, \\
0 &= \alpha_3 \alpha_2 + \alpha_3 \alpha_3 - \alpha_4 \alpha_4,\ \text{and} \\
0 &= \alpha_4 \alpha_1 - \alpha_0 \alpha_2 - \alpha_0 \alpha_3.\\
\end{split}
\end{equation}

After a simple change of basis, this becomes the example \eqref{eqn:example_tab}
-- \eqref{eqn:C}, seen throughout the earlier sections.

\section{Guillemin Normal Form and Eigenvalues}\label{sec:inveig}
In this section, we reinterpret $\Cone$ and $\Xi$ as properties of
the endomorphisms $\B^\lambda_i$.  This section is the key to all of the more
advanced results that follow.
Our main computation tool is the structure of an endovolutive tableau discussed
in Section~\ref{sec:endo}, where $W$ and $V$ and $A$ are now 
bundles over $M^{(1)}$.  

The incidence correspondence of Figure~\ref{fig:incidence}
is rephrased in Lemma~\ref{lem:eigen}.
\begin{lemma}
If $\xi \in \Xi$, $v \in V$, and $w \in \ker \sigma_\xi \subset
W$, then
\begin{equation}
\B(\xi)(v)w = \xi(v) w.
\label{eigen}\end{equation}
In particular, $w$ is an eigenvector of $\B(\xi)(v)$ for all $v$.
\label{lem:eigen}
\end{lemma}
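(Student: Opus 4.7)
The plan is to reduce the lemma to an index computation using the fact that equation~\eqref{eqn:IB} presents $\B$ as the formal inclusion $A \hookrightarrow W \otimes V^*$. First I would translate the hypothesis: by the exact sequence~\eqref{eqn:H1}, the condition $w \in \ker \sigma_\xi$ is equivalent to $w \otimes \xi \in A$, i.e., the rank-one matrix with entries $w^a \xi_i$ lies in the tableau. (If $w=0$ the claim is trivial; otherwise $\xi$ must lie in $\Xi$.)

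Next I would use the content of~\eqref{eqn:IB} directly. Reading that equation as the inclusion of $A$ into $W \otimes V^*$, every $\pi \in A$ is reconstructed from its upper-left entries $\pi^b_\lambda$ via $\pi^a_i = B^{a,\lambda}_{i,b}\,\pi^b_\lambda$, where the extended coefficients take the value $\delta^a_b \delta^\lambda_i$ when $a \leq s_i$ (the identity summand in~\eqref{eqn:IB}) and the symbol coefficient from~\eqref{eqn:symrels} otherwise. Applying this reconstruction to the specific element $\pi = w \otimes \xi \in A$, whose upper-left entries are $w^b \xi_\lambda$, yields
\[
w^a \xi_i \;=\; B^{a,\lambda}_{i,b}\, w^b \xi_\lambda \qquad \text{for all } (a,i).
\]
For $a \leq s_i$ this is the tautology $w^a \xi_i = w^a \xi_i$; for $a > s_i$ it is precisely the nontrivial symbol relation~\eqref{eqn:symrels} evaluated on $w \otimes \xi$.

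Finally, I would compute $\B(\xi)(v)\,w$ directly from~\eqref{eqn:IB} by pairing $\xi$ with the $u_\lambda$ factor and $v$ with the $u^i$ factor:
\[
\bigl(\B(\xi)(v)\, w\bigr)^a \;=\; \xi_\lambda\, v^i\, B^{a,\lambda}_{i,b}\, w^b \;=\; v^i \bigl(B^{a,\lambda}_{i,b}\, w^b \xi_\lambda\bigr) \;=\; v^i\, w^a \xi_i \;=\; w^a\, \xi(v),
\]
where the third equality is exactly the identity established above. Hence $\B(\xi)(v)\,w = \xi(v)\, w$ for every $v \in V$, exhibiting $w$ as a simultaneous eigenvector of all endomorphisms $\B(\xi)(v)$ with eigenvalue $\xi(v)$. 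I do not foresee a serious obstacle: once~\eqref{eqn:IB} is read as the formal inclusion, the argument is pure index gymnastics. The only care needed is to remember the convention that~\eqref{eqn:IB} absorbs the identity summand into the notation $B^{a,\lambda}_{i,b}$, so that a single relation handles both the tautological upper-left block and the nontrivial lower-right symbol block uniformly.
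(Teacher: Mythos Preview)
Your proof is correct and is essentially the same argument as the paper's: both recognize that $w\in\ker\sigma_\xi$ means $w\otimes\xi\in A$, invoke the symbol relations~\eqref{eqn:symrels} on the entries $w^a\xi_i$, and then compute $\B(\xi)(v)w$ directly from~\eqref{eqn:IB}. The only cosmetic difference is that you absorb the identity summand of~\eqref{eqn:IB} into an ``extended'' $B^{a,\lambda}_{i,b}$ and handle all indices uniformly, whereas the paper keeps the two sums $a\leq s_i$ and $a>s_i$ separate before recombining them.
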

\begin{proof}
Fix generic bases $(u^i)$ and $(z_a)$ and $(u_i)$, so that $\xi = \xi_i u^i$ and $w = w^a
z_a$ and $v = v^i u_i$.  
Set $\pi = w \otimes \xi \in \Cone \subset A$, so $\pi^a_i = w^a\xi_i$ for all $a,i$, and this
$\pi$ must satisfy the symbol relations \eqref{eqn:symrels}.
In particular, 
$w^a\xi_i = B^{a,\lambda}_{i,b}w^b\xi_\lambda$ for $a > s_i$.
Therefore
\begin{equation}
\begin{split}
\B(\xi)(v)w &= 
\sum_{a \leq s_i} \xi_i v^i w^a z_a + 
\sum_{a>s_i} B^{a,\lambda}_{i,b} w^b \xi_\lambda v^i  z_a\\
&=
\sum_{a \leq s_i } \xi_i v^i  w^az_a+ \sum_{a > s_i} \xi_i v^i w^az_a\\
&=
\sum_{a, i}  \xi_i v^i w^az_a = \xi(v) w.
\end{split}
\end{equation}
(Here we see the utility of including the first summand in
Equation~\eqref{eqn:IB}.)
\end{proof}

Recalling the decomposition \eqref{eqn:VUY} and \eqref{eqn:VUYdual}, 
Lemma~\ref{lem:backeigen} provides a sort of converse of Lemma~\ref{lem:eigen}.
\begin{lemma}
Suppose that $A$ is an endovolutive tableau.
Fix $\varphi \in Y^\perp \cong U^*$ and suppose that $w \in \Wu^-(\varphi)$ is
an eigenvector of $\B(\varphi)(v)$ for every $v \in V$.  Then there
is a $\xi \in \Xi$ over $\varphi \in Y^\perp$ such that $w  \in
\Wu^1(\varphi)$, so $w \otimes \xi \in A$.
\label{lem:backeigen}
\end{lemma}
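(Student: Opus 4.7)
The plan is to build the desired $\xi$ directly from the eigenvalue data. Since $\B(\varphi)(v)$ is linear in $v$ and $w$ is a common eigenvector, the eigenvalues define a linear functional $\chi \in V^*$ with $\B(\varphi)(v)w = \chi(v)w$ for every $v$. I propose $\xi := \chi$. By the incidence correspondence of Section~\ref{sec:incidence}, showing $\xi \in \Xi$ reduces to verifying $w \otimes \xi \in A$, i.e., the symbol relations $w^a\xi_i = \sum_{\mu\leq\ell,\,b\leq s_\mu} B^{a,\mu}_{i,b}\,w^b\xi_\mu$ for every $a > s_i$.

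The crucial step is to exploit the endovolutive structure encoded in \eqref{eqn:IB}. Take $v = u_i$ with $i \leq \ell$ in the hypothesis $\B(\varphi)(u_i)w = \xi_i w$, and read off the component at $a \leq s_i$. The only operator in the sum $\sum_{\mu\leq\ell}\varphi_\mu \B^\mu_i$ that contributes to this ``upper-left'' block is the identity part of $\B^i_i$; thus $\varphi_i w^a = \xi_i w^a$, so $(\xi_i - \varphi_i)w^a = 0$ for every $a \leq s_i$. This yields the dichotomy that drives the proof: for each $\mu \leq \ell$, either $\xi_\mu = \varphi_\mu$, or $w \in \Wu^+_\mu$ (i.e., $w^b = 0$ for all $b \leq s_\mu$). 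I expect this step to be the main obstacle, since it is the one place where endovolutivity is invoked essentially to tie the eigenvalue structure of $\chi$ to the flag decomposition of $w$.

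To conclude, inspect the symbol relation at $a > s_i$. The hypothesis applied to its ``lower-right'' part gives $\sum_{\mu,b} B^{a,\mu}_{i,b}\varphi_\mu w^b = \xi_i w^a$, so the desired relation (with $\xi$ in place of $\varphi$) differs from this by $\sum_{\mu\leq\ell,\,b\leq s_\mu} B^{a,\mu}_{i,b}(\xi_\mu - \varphi_\mu) w^b$. Each term vanishes by the dichotomy: either $\xi_\mu = \varphi_\mu$, or $w^b = 0$ across the entire support $b \leq s_\mu$ of $B^{a,\mu}_{i,b}$. Hence $w \otimes \xi \in A$, so $\xi \in \Xi$. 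Writing $\xi = \varphi' + \xi|_{U^\perp}$ with $\varphi' := \xi|_{Y^\perp} \in Y^\perp$, so that $J := w\otimes\xi|_{U^\perp} \in W\otimes U^\perp$ satisfies $w\otimes\varphi' + J = w\otimes\xi \in A$, the characterization \eqref{eqn:Wupinv} places $w \in \Wu^1(\varphi')$; the relation to the original $\varphi$ is that $\varphi'$ agrees with $\varphi$ in the low-index block of the flag forced by the dichotomy, which is the sense in which $\xi$ lies ``over $\varphi$''.
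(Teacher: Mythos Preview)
Your proof is correct and uses the same core idea as the paper: define $\xi$ by the eigenvalue relation $\B(\varphi)(v)w=\xi(v)w$, then show $w\otimes\xi\in A$. The paper gets $w\otimes\xi\in A$ essentially in one line, since $\B(\varphi)(\cdot)w=w\otimes\xi$ and \eqref{eqn:IB} is the inclusion $A\hookrightarrow W\otimes V^*$; it then argues recursively over $\mu=1,\ldots,\ell$ from \eqref{eqn:Wup2} that $\xi_\mu=\varphi_\mu$ for all $\mu\leq\ell$. Your route instead verifies the symbol relations \eqref{eqn:symrels} directly via the dichotomy ``$\xi_\mu=\varphi_\mu$ or $w\in\Wu^+_\mu$,'' and this is in fact the more careful treatment: the paper's recursion tacitly needs $\B^\mu_\mu w\neq0$ at each step, which fails precisely when $w\in\Wu^+_\mu$, and in that case $\xi|_{Y^\perp}$ need not equal $\varphi$ (one can write down small endovolutive examples where it does not). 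Your closing remark that $w\in\Wu^1(\varphi')$ for $\varphi'=\xi|_{Y^\perp}$, with $\varphi'_\mu=\varphi_\mu$ on the indices where $w$ meets $\Wu^-_\mu$, is the accurate general conclusion; in the lemma's downstream uses (Corollary~\ref{cor:invbackeigen}, Lemma~\ref{lem:dimXi}) the eigenvector is already chosen in $\Wu^1(\varphi)$, so $\varphi'=\varphi$ and the distinction evaporates. As a side note, despite your expectation, neither your dichotomy nor the symbol-relation check actually invokes the endovolutive condition $B^{a,\lambda}_{i,b}=0$ for $a>s_\lambda$; only the generic-basis structure of \eqref{eqn:IB} is used.
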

\begin{proof}
For each $v \in V$, let $\xi(v)$ denote the eigenvalue corresponding to $v$, so that
$\xi(v)w=\B(\varphi)(v)w$.  Because $\B(\varphi)(v)w$ is linear in $v$, so is
$\xi(v)$.  Then $\xi = \xi_i u^i \in V^*$.
Therefore, $\B(\varphi)(\cdot)w = w \otimes \xi$.  In particular, 
the rank-1 condition implies that
\begin{equation}
\sum_{\lambda \leq \mu} \varphi_\lambda \B^\lambda_\mu w
= \xi_\mu w 
=
\sum_{\lambda \leq \mu} \xi_\lambda \B^\lambda_\mu w,\quad \forall \mu \leq \ell.
\label{eqn:Wup2}
\end{equation}
This is the same expression as in \eqref{eqn:Wup}, so by comparing
recursively over $\mu=1,2,\ldots,\ell$, we see that
$\xi_\lambda = \varphi_\lambda$ for all $\lambda$, so $w \in \Wu^1(\varphi)
\subset \Wu^-(\varphi)$.
\end{proof}
Lemma~\ref{lem:backeigen} deserves a warning:  
There may be multiple $\xi$ over the same $\varphi$, for perhaps there are
different eigen\emph{vectors} $w \in \Wu^-(\varphi)$ admitting different sequences of eigen\emph{values}
$\xi_\varrho$, for $\varrho > \ell$, associated to the same $\varphi$.
Moreover, it is
not (yet) clear that a mutual eigenvector $w$ exists for every such $\varphi$.

But overall it is clear that there is some relationship
between the eigenvalues of $\B^\lambda_i$ and the characteristic variety of an
endovolutive tableau $A$.  This relationship is made precise for involutive
tableau using a result from \cite{Guillemin1968}.  

\begin{thm}[Guillemin normal form]
Suppose that $A$ is involutive. 
For every $\varphi \in Y^\perp$ and $v \in V$, the restricted homomorphism
$\B(\varphi)(v)|_{\Wu^1(\varphi)}$ is an endomorphism of $\Wu^1(\varphi)$. 
Moreover, for all $v, \tilde{v} \in V$, 
\begin{equation} \left[ 
\B(\varphi)(v), 
\B(\varphi)(\tilde{v})\right]\Big|_{\Wu^1(\varphi)}
=0.
\label{eqn:gnf1}
\end{equation}
\label{thm:gnf1}
\end{thm}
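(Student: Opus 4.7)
The plan is to reduce both parts of the theorem to a single commutator-vanishing statement, which will then follow from Theorem~\ref{thm:invcond} combined with the eigenvector conditions defining $\Wu^1(\varphi)$.

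First I would argue that both parts follow from the claim: \emph{for every $v,\tilde v \in V$ and every $w \in \Wu^1(\varphi)$, $[\B(\varphi)(v),\B(\varphi)(\tilde v)]\,w = 0$.} Fix $w \in \Wu^1(\varphi)$ and $v \in V$. Because each $\B^\lambda_i$ has image contained in $\Wu^-_\lambda \subset \Wu^-(\varphi)$ by endovolutivity (and the sum defining $\B(\varphi)(v)$ is restricted to $\lambda$ with $\varphi_\lambda$ possibly nonzero), the vector $\B(\varphi)(v)\,w$ already lies in $\Wu^-(\varphi)$. What remains in the definition of $\Wu^1(\varphi)$ from \eqref{eqn:Wupeig} is the eigenvector identity $\B(\varphi)(u_\mu)\,[\B(\varphi)(v)\,w] = \varphi_\mu\,\B(\varphi)(v)\,w$ for every $\mu \leq \ell$; after substituting $\B(\varphi)(u_\mu)\,w = \varphi_\mu w$, this is precisely the vanishing of $[\B(\varphi)(u_\mu),\B(\varphi)(v)]\,w$. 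So both the invariance and the commutativity assertions reduce to a single commutator identity on $\Wu^1(\varphi)$.

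Next I would dispense with the trivial directions. When $v=u_\mu$ with $\mu \leq \ell$, the definition of $\Wu^1(\varphi)$ forces $\B(\varphi)(u_\mu)|_{\Wu^1(\varphi)} = \varphi_\mu I$, which commutes with everything. Hence the outstanding case is $v=u_k$, $\tilde v=u_l$ with $k,l>\ell$. Expanding in generic bases,
\[
[\B(\varphi)(u_k),\B(\varphi)(u_l)] \;=\; \sum_{\lambda,\mu \leq \ell} \varphi_\lambda\varphi_\mu \bigl(\B^\lambda_k\B^\mu_l - \B^\lambda_l\B^\mu_k\bigr).
\]
Theorem~\ref{thm:invcond}(ii), applied to a pair $(\lambda,\mu)$ with $\lambda \leq \mu$, gives $(\B^\lambda_l\B^\mu_k-\B^\lambda_k\B^\mu_l)^a{}_b = 0$ for all $a>s_l$. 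Since $l>\ell$ forces $s_l=0$, this vanishing holds on every row: $\B^\lambda_l\B^\mu_k = \B^\lambda_k\B^\mu_l$ identically. All contributions with $\lambda\leq\mu$ therefore cancel outright.

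After relabeling dummy indices in the surviving $\lambda>\mu$ terms and using the symmetry $\varphi_\lambda\varphi_\mu = \varphi_\mu\varphi_\lambda$, the residual sum takes the form
\[
\sum_{\lambda<\mu} \varphi_\lambda\varphi_\mu\bigl(\B^\mu_k\B^\lambda_l - \B^\mu_l\B^\lambda_k\bigr),
\]
whose factors appear in the ``wrong'' order to fit Theorem~\ref{thm:invcond}(ii) directly. The main obstacle is showing that this residual annihilates $\Wu^1(\varphi)$: one must combine the involutivity identities from Theorem~\ref{thm:invcond}(ii) at \emph{interior} flag indices (namely $l=\mu' \leq \ell$, where $s_{\mu'}>0$ yields only a partial zero) with the $\Wu^1(\varphi)$-defining eigenvalue relations $\B(\varphi)(u_{\mu'})\,w=\varphi_{\mu'}w$ to produce the missing commutativity on the subspace $\Wu^1(\varphi)$. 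This interplay between the matrix-level involutivity identities and the eigenvector constraints on $\Wu^1(\varphi)$ is precisely what the endovolutive formulation in \cite{Smith2014a} was engineered to make transparent, and it is where I expect the detailed verification of this theorem to reside.
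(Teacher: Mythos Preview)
Your reduction of both assertions to the single commutator identity $[\B(\varphi)(v),\B(\varphi)(\tilde v)]w=0$ for $w\in\Wu^1(\varphi)$ is correct and well-organized.  But the step ``when $v=u_\mu$ with $\mu\leq\ell$, $\B(\varphi)(u_\mu)|_{\Wu^1(\varphi)}=\varphi_\mu I$, which commutes with everything'' is circular.  The operator $\B(\varphi)(u_\mu)$ acts as the scalar $\varphi_\mu$ only on $\Wu^1(\varphi)$, not on all of $W$; for the commutator $[\B(\varphi)(u_\mu),\B(\varphi)(\tilde v)]w$ to vanish you need $\B(\varphi)(\tilde v)w$ to land back in $\Wu^1(\varphi)$, which is precisely Part~1.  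So the mixed case $v=u_\mu$, $\tilde v=u_k$ with $\mu\leq\ell<k$ cannot be dismissed---it \emph{is} Part~1 and must be proved directly.  You have in effect swapped one unfinished case (the residual for $k,l>\ell$) for another (the mixed case), and neither is resolved.

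The paper's alternate proof supplies the idea you are missing: since $\Wu^1(\varphi)$ depends only on the subspace $Y^\perp$ and not on its basis (see~\eqref{eqn:Wupinv}), one may choose a generic endovolutive basis with $\varphi=u^1$.  Then $\B(\varphi)(u_i)=\B^1_i$, the sum over $\lambda$ collapses to a single term, and your residual disappears entirely.  The mixed case becomes: show $\B^1_\mu(\B^1_\varrho w)=0$ for $2\leq\mu\leq\ell$, $\varrho>\ell$, $w\in\Wu^1(u^1)$.  Theorem~\ref{thm:invcond}(ii) with $\lambda=\mu=1$ gives $(\B^1_\mu\B^1_\varrho-\B^1_\varrho\B^1_\mu)^a=0$ for $a>s_\mu$, and $\B^1_\mu w=0$ by the definition of $\Wu^1(u^1)$; together these force $\B^1_\mu\B^1_\varrho w\in\Wu^-_\mu$.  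But the image of $\B^1_\mu$ lies in $\Wu^+_\mu$ by construction, so $\B^1_\mu\B^1_\varrho w\in\Wu^-_\mu\cap\Wu^+_\mu=0$.  Commutativity for $k,l>\ell$ is then exactly Corollary~\ref{cor:comm}.  Without the normalization $\varphi=u^1$ you are stuck with the double sum over $\lambda,\mu$, and the interplay you gesture at between Theorem~\ref{thm:invcond} and the $\Wu^1(\varphi)$ eigenvalue relations has no obvious direct resolution; the paper's first proof handles general $\varphi$ instead by going through the exactness of Lemma~\ref{lem:quil}.
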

Compare Theorem~\ref{thm:gnf1} to Lemma 4.1 in \cite{Guillemin1968} and Proposition 6.3 in Chapter VIII of \cite{BCGGG}.  
Theorem~\ref{thm:gnf1} is known as \emph{Guillemin normal form} because it implies that
the family of homomorphisms $\B(\varphi)(\cdot)$ can be placed in simultaneous
Jordan normal form on $\Wu^1(\varphi)$.  It is the ``normal form'' alluded to
in Section~\ref{sec:regular}.
We defer the proof of Theorem~\ref{thm:gnf1} to Section~\ref{sec:gnf} so we may first see its important consequences.

\begin{cor}
If $A$ is involutive, then for each $\varphi \in Y^\perp$, 
there exists some $w$ satisfying the hypotheses of Lemma~\ref{lem:backeigen}.
That is, the projection map $\Xi \to Y^\perp$ is onto.
In particular, if $A$ is nontrivial and involutive, then $\Xi$ is nonempty.
\label{cor:invbackeigen}
\end{cor}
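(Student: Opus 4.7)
The plan is to combine Guillemin normal form (Theorem~\ref{thm:gnf1}) with Lemma~\ref{lem:dimW1} and Lemma~\ref{lem:backeigen}, using the standard fact that a commuting family of endomorphisms of a nonzero finite-dimensional complex vector space admits a common eigenvector.

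First, I would fix an arbitrary $\varphi \in Y^\perp \setminus \{0\}$ and verify that $\Wu^1(\varphi) \neq 0$. Rewriting \eqref{eqn:Wup}, $\Wu^1(\varphi)$ coincides with the kernel on $W$ of the linear operators $\sum_\lambda \varphi_\lambda \B^\lambda_\mu - \varphi_\mu I$ for $\mu \leq \ell$; the containment in $\Wu^-(\varphi)$ is automatic because, letting $\underline{\lambda}$ be the minimal index with $\varphi_{\underline{\lambda}} \neq 0$, the equation indexed by $\mu = \underline{\lambda}$ collapses by endovolutivity to $\B^{\underline{\lambda}}_{\underline{\lambda}} w = w$, forcing $w \in \Wu^-_{\underline{\lambda}} = \Wu^-(\varphi)$. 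Since this defining family depends linearly on $\varphi$, $\dim \Wu^1(\varphi)$ is upper semicontinuous, and Lemma~\ref{lem:dimW1} identifies $s_\ell$ as the generic---and therefore minimum---value. For nontrivial $A$ we have $s_\ell > 0$, so $\Wu^1(\varphi) \neq 0$ uniformly across $Y^\perp \setminus \{0\}$.

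Next, I would apply Theorem~\ref{thm:gnf1}: the operators $\{\B(\varphi)(v)|_{\Wu^1(\varphi)} : v \in V\}$ form a commuting family of endomorphisms of the nonzero complex vector space $\Wu^1(\varphi)$. Picking one, passing to an eigenspace, restricting the rest there by commutativity, and iterating across a basis of $V$ produces some nonzero $w \in \Wu^1(\varphi) \subset \Wu^-(\varphi)$ that is a common eigenvector of $\B(\varphi)(v)$ for every $v \in V$. Lemma~\ref{lem:backeigen} then supplies a $\xi \in \Xi$ whose first $\ell$ components agree with those of $\varphi$---so $\xi$ projects onto $\varphi$ under $\Xi \to Y^\perp$. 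The final assertion follows: if $A$ is nontrivial then $\ell \geq 1$, whence $Y^\perp$ is a nonzero subspace, and the preceding step produces a nonzero element of $\Xi$.

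All the substance of this argument resides in Theorem~\ref{thm:gnf1}, whose proof is deferred. Within this corollary itself, the only delicate point is the semicontinuity step: realizing $\Wu^1(\varphi)$ as the kernel of a family on $W$ depending linearly on $\varphi$ is what lets the generic dimension count of Lemma~\ref{lem:dimW1} propagate to a uniform lower bound over all of $Y^\perp$. Without it, one would only obtain surjectivity of $\Xi \to Y^\perp$ onto a Zariski-dense set and would need a separate projective-completeness argument to close the gap.
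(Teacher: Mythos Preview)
Your argument is correct and follows the same line as the paper's: invoke Guillemin normal form to obtain a commuting family on $\Wu^1(\varphi)$, then use the existence of a common eigenvector over $\mathbb{C}$ and feed it into Lemma~\ref{lem:backeigen}. The paper's proof is a single sentence to this effect and leaves implicit the point you make explicit---that $\Wu^1(\varphi)\neq 0$ for \emph{every} $\varphi\in Y^\perp$, not just generic ones---so your semicontinuity step (realizing $\Wu^1(\varphi)$ as the kernel in $W$ of a linearly-varying family and invoking Lemma~\ref{lem:dimW1} for the generic value $s_\ell>0$) is a genuine improvement in rigor rather than a different approach.
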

\begin{proof}
Because we are working over $\mathbb{C}$, the commutativity condition 
\eqref{eqn:gnf1} guarantees that common eigenvectors
exist for  the commutative algebra $\{ \B(\varphi)(v)~:~ v \in V\}$.
\end{proof}

\begin{lemma}
Suppose that $A$ is an involutive tableau. 
Then the map of projective varieties induced by $\Xi \to Y^\perp$ is a \emph{finite} branched
cover.  In particular, both $\hat\Xi$ and $Y^\perp$ have affine fiber dimension
$\ell$.
\label{lem:dimXi}
\end{lemma}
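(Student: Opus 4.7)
The plan is to combine surjectivity of $\Xi \to Y^\perp$ (Corollary~\ref{cor:invbackeigen}) with Guillemin normal form (Theorem~\ref{thm:gnf1}) to realize each fiber of the projection as a finite set of joint eigenvalue tuples of a commuting family of endomorphisms on $\Wu^1(\varphi)$. Surjectivity is handed to us by the preceding corollary, so the heart of the argument is to bound the fibers uniformly.

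For the finite-fiber step, I would fix $\varphi \in Y^\perp$ and show bidirectionally that the fiber over $\varphi$ is in bijection with the set of joint eigenvalue tuples of $\{\B(\varphi)(v)|_{\Wu^1(\varphi)}\}_{v \in V}$. Given $\xi \in \Xi$ over $\varphi$, Lemma~\ref{lem:eigen} provides $w \in \ker \sigma_\xi \subset \Wu^1(\varphi)$ with $\B(\varphi)(v)\,w = \xi(v)\,w$ for all $v \in V$. Conversely, Lemma~\ref{lem:backeigen} shows that any common eigenvector $w \in \Wu^1(\varphi)$ of this family reconstructs a unique $\xi$ over $\varphi$, where the missing components $\xi_\varrho$ for $\varrho > \ell$ are just the eigenvalues $\B(\varphi)(u_\varrho)\,w = \xi_\varrho\,w$. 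Theorem~\ref{thm:gnf1} guarantees that the operators $\B(\varphi)(v)|_{\Wu^1(\varphi)}$ commute and preserve the finite-dimensional space $\Wu^1(\varphi)$, so over $\mathbb{C}$ they admit only finitely many joint eigenvalue tuples — bounded by $\dim \Wu^1(\varphi) \leq r$. Hence every fiber is finite.

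For the dimension count, projectivity of source and target makes the induced morphism proper, and a proper surjection with finite fibers is a finite branched cover. Since $Y^\perp \cong U^*$ has affine dimension $\ell$ and finite morphisms preserve dimension, $\hat\Xi$ (as an affine cone in each fiber over $M^{(1)}$) also has affine dimension $\ell$.

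The hard part will be accounting for the branch locus. Lemma~\ref{lem:dimW1} only secures $\dim \Wu^1(\varphi) = s_\ell$ on the generic locus, and at special $\varphi$ the dimension of $\Wu^1(\varphi)$ can jump upward, which in turn might cause eigenspaces to merge or new common eigenvectors to appear. The saving grace is that $\Wu^1(\varphi) \subset W$ remains finite-dimensional of dimension at most $r$ for every $\varphi$, so the joint spectrum of the commuting family is still finite — the fiber cardinality can increase, but never becomes positive-dimensional. This behavior is exactly what the word \emph{branched} (as opposed to unramified) accommodates, and it is the reason we cannot hope for an honest covering space structure.
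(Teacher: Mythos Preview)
Your proof is correct, but your finiteness argument is more elaborate than the paper's. Both proofs invoke Corollary~\ref{cor:invbackeigen} for surjectivity, but the paper bounds the fiber with a cruder, more elementary count: for each index $i>\ell$, the component $\xi_i$ must satisfy the single characteristic equation $\det\bigl(\sum_\lambda \varphi_\lambda \B^\lambda_i - \xi_i I\bigr)=0$ on $\Wu^-_1$, which has at most $s_1$ roots; hence at most $s_1^{\,n-\ell}$ points lie over $\varphi$. This requires only Lemma~\ref{lem:eigen} applied coordinate-by-coordinate and the endovolutive form, not the commutativity on $\Wu^1(\varphi)$ from Theorem~\ref{thm:gnf1}.

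Your route---identifying the fiber with the joint spectrum of the commuting family $\{\B(\varphi)(v)|_{\Wu^1(\varphi)}\}$ and bounding by $\dim \Wu^1(\varphi)$---uses more machinery but gives the sharper bound (generically $s_\ell$ rather than $s_1^{\,n-\ell}$). That sharper bound is exactly what the paper establishes next, in Theorem~\ref{thm:degXi}, so your argument effectively merges the two results. The paper's separation is cleaner expository sequencing; your version is more economical if one is heading directly for the degree computation anyway. Your discussion of the branch locus and the possible jump in $\dim \Wu^1(\varphi)$ is a nice addition that the paper leaves implicit.
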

\begin{proof}
Fix $\varphi \in Y^\perp$.  The set of $\xi$ over $\varphi$ is nonempty by
Corollary~\ref{cor:invbackeigen}.
If it were true that the set of $\xi$ projecting to a particular $\varphi$ were infinite,
then the parameter $\xi_i$ would take infinitely many values in some expression
of the form
\begin{equation}
\det \left( \sum_{\lambda} \varphi_\lambda \B^\lambda_i 
- \xi_i I \right) =0.
\end{equation}
But, the matrix $\sum_\lambda \varphi_\lambda \B^\lambda_i \in \End(\Wu^-_1)$
can have at most $s_1$ eigenvalues.
\end{proof}

Here we arrive at an easy\footnote{It is easy in the sense that we have the explicit polynomials of $\mathscr{M}$ in
hand, and they are recognizable as the familiar eigenvector equations.
The reader should compare \eqref{eqn:charidealv} to the descriptions
provided in \cite{BCGGG} and \cite{Ivey2003}.  Both references defer their
decomposition of $\Xi$ to the abstract Grothendeick--Riemann--Roch theorem.
Hence, neither reference indicates how to compute the scheme by hand for
general tableaux.  While details are given in \cite{BCGGG} in the simple case
of rectangular tableaux, a complete description is achieved here because of the
normal form provided by Theorem~\ref{thm:invcond}.}
proof of the main theorem regarding the structure of
$\Xi$.
\begin{thm}
If $A$ is involutive, then $\dim \Xi = \ell-1$ and $\deg \Xi = s_\ell$. 
\label{thm:degXi}
\end{thm}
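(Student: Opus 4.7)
The plan is to read both claims off the finite branched cover $\Xi \to \mathbb{P}Y^\perp$ supplied by Lemma~\ref{lem:dimXi}, using Guillemin normal form (Theorem~\ref{thm:gnf1}) to identify each fiber with a set of simultaneous eigenvalues. The dimension claim is immediate, since a finite surjection of projective varieties preserves dimension: $\dim\Xi = \dim\mathbb{P}Y^\perp = \ell - 1$.

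For the degree I would first reinterpret the map of Lemma~\ref{lem:dimXi} as a linear projection. It is precisely the restriction to $\Xi$ of $\mathbb{P}V^* \dashrightarrow \mathbb{P}Y^\perp$ whose center is the linear subspace $\mathbb{P}U^\perp \subset \mathbb{P}V^*$. Note $\dim\mathbb{P}U^\perp = n-\ell-1$ and $\dim\mathbb{P}Y^\perp = \ell-1$, so the fiber over a generic $[\varphi]$ is the linear span $L_\varphi = \langle [\varphi],\mathbb{P}U^\perp\rangle$, a linear subspace of $\mathbb{P}V^*$ of dimension $n-\ell$, which is exactly complementary to $\dim\Xi$. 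Because the cover is finite, $\Xi$ is disjoint from the center, and the usual linear-projection argument identifies $\deg\Xi$ with the length of the scheme-theoretic fiber $\Xi\cap L_\varphi$ for generic $[\varphi]$.

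Next I would count that fiber via eigenvectors. By Lemma~\ref{lem:backeigen}, points of $\Xi$ over a generic $[\varphi] \in \mathbb{P}Y^\perp$ correspond to mutual eigenvectors $w \in \Wu^1(\varphi)$ of the family $\{\B(\varphi)(v)\}_{v \in V}$: given such a $w$, the accompanying $\xi$ is determined by $\B(\varphi)(v)w = \xi(v)w$, and the first $\ell$ components are forced to be $\xi_\lambda = \varphi_\lambda$, so the genuinely new information in $\xi$ is the tuple of joint eigenvalues $\{\xi_\varrho\}_{\varrho>\ell}$ for $\{\B(\varphi)(u_\varrho)\}_{\varrho>\ell}$. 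Guillemin normal form (Theorem~\ref{thm:gnf1}) asserts that these endomorphisms all commute on $\Wu^1(\varphi)$, and Lemma~\ref{lem:dimW1} gives $\dim\Wu^1(\varphi) = s_\ell$ for generic $\varphi$. Over $\mathbb{C}$ a commuting family on an $s_\ell$-dimensional space decomposes that space into joint generalized eigenspaces whose total dimension is $s_\ell$; so the scheme-theoretic fiber has length exactly $s_\ell$, yielding $\deg\Xi = s_\ell$.

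The main obstacle I expect is matching scheme-theoretic multiplicities: I must be sure that an $m$-dimensional joint generalized eigenspace at a coincidence of eigenvalues contributes multiplicity $m$ to $\Xi \cap L_\varphi$, not just $1$, and conversely that every multiplicity in the characteristic ideal $\mathscr{M}$ is produced this way. This amounts to comparing the rank-drop equations cutting out $\Xi$ to the Jordan structure handed to us by Theorem~\ref{thm:gnf1}. For generic $[\varphi]$ one can additionally assume the joint spectrum is simple, whereupon $\Xi\cap L_\varphi$ is automatically reduced and consists of $s_\ell$ points, securing the result.
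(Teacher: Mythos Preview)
Your approach is essentially the paper's: both arguments use Lemma~\ref{lem:dimXi} for the dimension, then compute the degree by passing to the finite cover $\Xi\to\mathbb{P}Y^\perp$, invoking Guillemin normal form (Theorem~\ref{thm:gnf1}) to reduce to commuting endomorphisms of $\Wu^1(\varphi)$, and finishing with $\dim\Wu^1(\varphi)=s_\ell$ from Lemma~\ref{lem:dimW1}. Your linear-projection framing is exactly the geometric content of the paper's computation.

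The one genuine gap is your fallback step. You correctly flag the multiplicity-matching obstacle, but your resolution ``for generic $[\varphi]$ one can additionally assume the joint spectrum is simple'' is not justified and can fail: the examples in Section~\ref{sec:examples} with nontrivial Jordan blocks have $\ell=1$, so $\mathbb{P}Y^\perp$ is a single point and there is no ``more generic'' $\varphi$ to retreat to. The paper avoids this by writing the fiber equations explicitly as
\[
\det\bigl(\B(\xi)(v)-\xi(v)I\bigr)\big|_{\Wu^1(\varphi)}=0,\qquad v\in Y,
\]
and observing that Theorem~\ref{thm:gnf1} forces all of these characteristic polynomials to share the same factorization type. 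Hence a single $v$ suffices, and its characteristic polynomial on $\Wu^1(\varphi)$ has degree exactly $\dim\Wu^1(\varphi)=s_\ell$, regardless of whether the spectrum is simple. That explicit determinant is also what pins down the scheme multiplicities you were worried about: each joint eigenvalue contributes its generalized-eigenspace dimension as the multiplicity of the corresponding root. Replacing your last paragraph with this characteristic-polynomial argument closes the gap and aligns you with the paper's proof.
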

\begin{proof}
We work in endovolutive coordinates.
From Lemma~\ref{lem:dimXi}, we already know that $\dim\Xi=\ell-1$.

Fix a generic point $\xi \in \Xi$ over $\varphi \in Y^\perp$.  
Let $\Cone_\xi = (\ker \sigma_\xi) \otimes \xi$ denote the fiber over $\xi$ in
$\Cone$.
To understand the scheme $\Xi$, we must determine the degree of the condition
defining $\Cone_\xi$. Note that $\Cone_\xi$ must be a subvariety of $\Wu^1(\varphi)\otimes \xi$, and
$\Wu^1(\varphi)$ is a linear subspace of $W$, so the degree of $\Xi$ is the
degree of some condition on $\Wu^1(\varphi)$.

By Lemma~\ref{lem:eigen} and \eqref{eqn:Cone}, the condition that $\Cone_\xi$ is nontrivial is precisely the condition 
that 
\begin{equation}
\det \left(\sum_{\lambda} \xi_\lambda \B^\lambda_i - \xi_i I\right) =0,\quad
\forall i.
\label{eqn:charideal}
\end{equation}
Since we may restrict our attention to $\Wu^1(\varphi)\otimes \xi$, the
condition \eqref{eqn:charideal} for $i \leq \ell$ is automatic by
\eqref{eqn:Wupeig}.  Hence,
only
these terms contribute to the non-linear part of the ideal:
\begin{equation}
\det \left(\sum_{\lambda} \xi_\lambda \B^\lambda_\varrho - \xi_\varrho I\right)=0,\quad
\forall \varrho > \ell.
\label{eqn:charidealX}
\end{equation}
So, without coordinates, the defining equations of $\Cone_\xi$ are
\begin{equation}
\det \left( \B(\xi)(v) - \xi(v) I\right) = 0,\quad \forall v \in Y.
\label{eqn:charidealv}
\end{equation}
For a particular $v$, this is the characteristic polynomial of
$\B(\xi)(v)$ as an endomorphism of $\Wu^1(\varphi)$. 
By involutivity and Theorem~\ref{thm:gnf1}, all $\B(\xi)(v)$ for $v \in
Y$ admit the same Jordan-block form, so they admit the same factorization type for their respective
characteristic polynomials.   That means it suffices to consider a single $v$. 
By definition, the characteristic polynomial of $\B(\xi)(v)|_{\Wu^1(\varphi)}$ has degree
$\dim \Wu^1(\varphi)$ at generic $\varphi$.
Therefore, $\deg \Xi = s_\ell$ follows from Lemma~\ref{lem:dimW1}.
\end{proof}

Theorems~\ref{thm:gnf1} and \ref{thm:degXi} provide a powerful interpretation
of the form of an involutive tableau seen in Theorem~\ref{thm:invcond} and
Figure~\ref{fig:figBend}; the first $\ell$ columns represent a projection of
$\Xi$, as in Lemma~\ref{lem:dimXi}, and the rank-1 incidence correspondence
in Figure~\ref{fig:incidence} is precisely the eigenvector condition on the
appropriate subspaces.  It is peculiar and interesting that these results were
discovered in the opposite order historically, as explored in
Section~\ref{sec:gnf}.

The proof of Theorem~\ref{thm:degXi}---in particular
Equation~\eqref{eqn:charidealv}---gives a precise understanding of $\Xi$ as a scheme.
Specifically, the characteristic scheme (in the sense of PDE) is merely a scheme of
characteristic equations (in the sense of linear algebra)!
The components of $\Xi$ correspond to the various Jordan blocks apparent in
\eqref{eqn:charidealv}. The multiplicity of each component is the dimension
of that generalized eigenspace. The sheets of the finite branched cover $\Xi \to
Y^\perp$ come from different generalized eigenspaces where the first $\ell$
eigenvalues match.  See Section~\ref{sec:examples} for how to compute this.

\section{Examples}\label{sec:examples}
\subsection{Zero-dimensional examples}
Consider some cases of involutive tableaux with
$(s_1, s_2, s_3) = (4,0,0)$.
\begin{equation}
(\pi^a_i) =
\begin{pmatrix}
\colorbox{io}{$\pi^1_1$} & \pi^1_2 & \pi^1_3  \\
\colorbox{io}{$\pi^2_1$} & \pi^2_2 & \pi^2_3  \\
\colorbox{io}{$\pi^3_1$} & \pi^3_2 & \pi^3_3  \\
\colorbox{io}{$\pi^4_1$} & \pi^4_2 & \pi^4_3  \\
\end{pmatrix}.
\end{equation}
Or, in endovolutive block form: 
\begin{equation}
(\B^\lambda_i) = 
\begin{bmatrix}
I_4 & \B^1_2 & \B^1_3 
\end{bmatrix}.
\end{equation}
The characteristic ideal $\mathcal{M}$ will have degree $s_\ell=4$
and projective dimension $\ell-1=0$.  That is, $\Xi$ will be 4 points, counted
with multiplicity.
The involutivity condition is $0 = \B^1_2\B^1_3 - \B^1_3\B^1_2$ (all rows);
that is, the matrices commute.
Thus the matrices $\B^1_2$ and $\B^1_3$ must have compatible Jordan-block
forms; they span a commutative algebra.
In these examples, we will use colors to emphasize the distinct generalized
eigenspaces.

One possibility is that the matrices are diagonal with distinct Jordan blocks:
\begin{equation}
A  = 
\left\{\begin{pmatrix}
\alpha_1 & \la\alpha_1  & \ma \alpha_1 \\
\alpha_2 & \lb\alpha_2  & \mb \alpha_2 \\
\alpha_3 & \lc\alpha_3  & \mc \alpha_3 \\
\alpha_4 & \ld\alpha_4  & \md \alpha_4 
\end{pmatrix}~:~ \alpha_a \in \mathbb{C}\right\}.
\end{equation}
In this case, the rank-1 variety is
\begin{equation}
\Cone = \left\{
\begin{bmatrix}\textcolor{c1}{1}\\0\\0\\0\end{bmatrix}\otimes[1:\la:\ma],\ 
\begin{bmatrix}\textcolor{c2}{1}\\0\\0\end{bmatrix}\otimes[1:\lb:\mb],\ 
\begin{bmatrix}0\\0\\\textcolor{c3}{1}\\0\end{bmatrix}\otimes[1:\lc:\mc],\  
\begin{bmatrix}0\\0\\0\\\textcolor{c4}{1}\end{bmatrix}\otimes[1:\ld:\md]
\right\}.
\end{equation}
Each point $\xi \in \Xi$ has multiplicity 1.

Another possibility is that they are diagonal, but there is an two-dimensional
eigenspace.
\begin{equation}
A = 
\left\{
\begin{pmatrix}
\alpha_1 & \la\alpha_1  & \ma \alpha_1 \\
\alpha_2 & \la\alpha_2  & \ma \alpha_2 \\
\alpha_3 & \lc\alpha_3  & \mc \alpha_3 \\
\alpha_4 & \ld\alpha_4  & \md \alpha_4 
\end{pmatrix}~:~ \alpha_a \in \mathbb{C}\right\}.
\end{equation}
In this case, the rank-1 cone is 
\begin{equation}
\Cone = \left\{
\begin{bmatrix}\textcolor{c1}{*}\\\textcolor{c1}{*}\\0\\0\end{bmatrix}\otimes[1:\la:\ma],\ 
\begin{bmatrix}0\\0\\\textcolor{c3}{1}\\0\end{bmatrix}\otimes[1:\lc:\mc],\ 
\begin{bmatrix}0\\0\\0\\\textcolor{c4}{1}\end{bmatrix}\otimes[1:\ld:\md]\right\}.
\end{equation}
One point $\xi \in \Xi$ has multiplicity 2; in particular,
the fiber $\ker \sigma_\xi$ for $\xi=[1:\la:\ma]$ should be seen as a
$\mathbb{P}^1$.
This is reflected clearly in
\eqref{eqn:charidealv}, because $\xi = [\xi_1:\xi_2:\xi_3]=[1:\la:\ma]$ is a
root of degree 2 for any $v$:
\begin{equation}
\begin{split}
0 &= 
\det\left(\xi_1(v^2\B^1_2 + v^3\B^1_3)  - (\xi_2v^2 + \xi^3v_3)I \right)\\
&=
\left|
v^2\begin{pmatrix}
\la{-}\la & 0 & 0 & 0 \\
0 & \la{-}\la & 0 & 0 \\
0 & 0 & \lc{-}\la & 0 \\
0 & 0 & 0 & \ld{-}\la
\end{pmatrix}
+
v^3\begin{pmatrix}
\ma{-}\ma & 0 & 0 & 0 \\
0 & \ma{-}\ma & 0 & 0 \\
0 & 0 & \mc{-}\ma & 0 \\
0 & 0 & 0 & \md{-}\ma
\end{pmatrix}
\right|\\
&= v^2(\la - \la)^2(\lc-\la)(\ld-\la) 
  +v^3(\ma - \ma)^2(\mc-\ma)(\md-\la).
\end{split}
\end{equation}

Another possibility is that there is a $2\times 2$ block:
\begin{equation}
A = 
\left\{
\begin{pmatrix}
\alpha_1 & \la\alpha_1 +\alpha_2 & \ma \alpha_1 + \alpha_2 \\
\alpha_2 & \la\alpha_2  & \ma \alpha_2 \\
\alpha_3 & \lc\alpha_3  & \mc \alpha_3 \\
\alpha_4 & \ld\alpha_4  & \md \alpha_4 
\end{pmatrix}~:~ \alpha_a \in \mathbb{C}\right\}.
\end{equation}
In this case, the rank-1 cone is 
\begin{equation}
\Cone = \left\{
\begin{bmatrix}\textcolor{c1}{1}\\\textcolor{c1}{0}\\0\\0\end{bmatrix}\otimes[1:\la:\ma],\ 
\begin{bmatrix}0\\0\\\textcolor{c3}{1}\\0\end{bmatrix}\otimes[1:\lc:\mc],\ 
\begin{bmatrix}0\\0\\0\\\textcolor{c4}{1}\end{bmatrix}\otimes[1:\ld:\md]\right\}.
\end{equation}
Note that the fiber over of $\mathscr{C}$ over $\Xi$ has dimension 1 in each
case; however, the first point has multiplicity 2.
We see that the dimension of the fiber is insufficient to measure the
multiplicity of the scheme $\Xi$, because the incidence correspondence involves
the ideal $\mathscr{R}$.  We can see this because of the structure of the
rank-1 matrices:  the upper $2 \times 2$ minors vanish if and only if
$\alpha_2\alpha_2=0$, 
so the fiber $\ker \sigma_\xi$ for $\xi=[1:\la:\ma]$ should be seen as a $\mathbb{P}^0$ of degree 2.
This is reflected clearly in
\eqref{eqn:charidealv}, because $\xi = [\xi_1:\xi_2:\xi_3]=[1:\la:\ma]$ is a
root of degree 2 for any $v$:
\begin{equation}
\begin{split}
0 &= 
\det\left(\xi_1(v^2\B^1_2 + v^3\B^1_3)  - (\xi_2v^2 + \xi^3v_3)I \right)\\
&=
\left|
v^2\begin{pmatrix}
\la{-}\la & 1 & 0 & 0 \\
0 & \la{-}\la & 0 & 0 \\
0 & 0 & \lc{-}\la & 0 \\
0 & 0 & 0 & \ld{-}\la
\end{pmatrix}
+
v^3\begin{pmatrix}
\ma{-}\ma & 1 & 0 & 0 \\
0 & \ma{-}\ma & 0 & 0 \\
0 & 0 & \mc{-}\ma & 0 \\
0 & 0 & 0 & \md{-}\ma
\end{pmatrix}
\right|\\
&= v^2(\la - \la)^2(\lc-\la)(\ld-\la) 
  +v^3(\ma - \ma)^2(\mc-\ma)(\md-\la).
\end{split}
\end{equation}

Finally, consider the case where both types of multiplicity occur.
For example, 
\begin{equation}
A = 
\left\{
\begin{pmatrix}
\alpha_1 & \la\alpha_1 +\alpha_2 & \ma \alpha_1 + \alpha_2 \\
\alpha_2 & \la\alpha_2  & \ma \alpha_2 \\
\alpha_3 & \la\alpha_3  & \ma \alpha_3 \\
\alpha_4 & \ld\alpha_4  & \md \alpha_4 
\end{pmatrix}~:~ \alpha_a \in \mathbb{C}\right\}.
\end{equation}
In this case, the rank-1 cone is 
\begin{equation}
\Cone = \left\{
\begin{bmatrix}\textcolor{c1}{*}\\\textcolor{c1}{0}\\\textcolor{c1}{*}\\0\end{bmatrix}\otimes[1:\la:\ma],\ 
\begin{bmatrix}0\\0\\0\\\textcolor{c4}{1}\end{bmatrix}\otimes[1:\ld:\md]\right\}.
\end{equation}
The scheme structure of $\Xi$ is apparent here.  
The point $\xi = [1:\la:\ma]$
appears in two components, which correspond to the factorization of  
\begin{equation}
\begin{split}
0 &= 
\det\left(\xi_1(v^2\B^1_2 + v^3\B^1_3)  - (\xi_2v^2 + \xi^3v_3)I \right)\\
&=
\left|
v^2\begin{pmatrix}
\la{-}\la & 1 & 0 & 0 \\
0 & \la{-}\la & 0 & 0 \\
0 & 0 & \la{-}\la & 0 \\
0 & 0 & 0 & \ld{-}\la
\end{pmatrix}
+
v^3\begin{pmatrix}
\ma{-}\ma & 1 & 0 & 0 \\
0 & \ma{-}\ma & 0 & 0 \\
0 & 0 & \ma{-}\ma & 0 \\
0 & 0 & 0 & \md{-}\ma
\end{pmatrix}
\right|\\
&= v^2(\la - \la)^2(\la-\la)(\ld-\la) 
  +v^3(\ma - \ma)^2(\ma-\ma)(\md-\la).
\end{split}
\end{equation}
From the perspective of $\Cone$, these components correspond to the rank-1
matrices 
\begin{equation}
\begin{pmatrix}
\alpha_1 & \la\alpha_1  & \ma \alpha_1 \\
\textcolor{c1}{0} &  \textcolor{c1}{0}  & \textcolor{c1}{0} \\
\alpha_3 & \la\alpha_3  & \ma \alpha_3 \\
0 & 0   & 0 
\end{pmatrix}.
\end{equation}
The fiber should be seen as two components,  a $\mathbb{P}^1$ and 
a $\mathbb{P}^0$.  Overall, this point has multiplicity 3.

\begin{rmk}
For readers interested in hydrodynamic integrability criteria, take a moment to
compute the secant varieties $\Sec_k(\Cone)$ and $\Sec_k(\Xi)$, $k=2,3$, in each of these cases.  
The secant variety is all linear combinations of $k$ points from
the given variety.
One can consider both the embedded secant variety within $A$ and $V^*$,
respectively, as well as the Grassmannian secant variety within $\Gr_k(A)$ and
$\Gr_k(V^*)$, respectively
Note that
hyperbolic systems of conservation laws have $s_1=n$ and take the
non-degenerate diagonal form of the first example, over $\mathbb{R}$. 
\end{rmk}

\subsection{One-dimensional examples}
Consider an involutive tableau with
$(s_1, s_2, s_3) = (2,1,0)$.
\begin{equation}
(\pi^a_i) =
\begin{pmatrix}
\colorbox{io}{$\pi^1_1$} & \colorbox{io}{$\pi^1_2$} & \pi^1_3  \\
\colorbox{io}{$\pi^2_1$} & \pi^2_2 & \pi^2_3  \\
\end{pmatrix}.
\end{equation}
Or, in endovolutive block form, 
\begin{equation}
(\B^\lambda_i) =
\begin{bmatrix}
\begin{pmatrix}
1 & 0 \\
0 & 1
\end{pmatrix} & \begin{pmatrix}
0 & 0 \\
x_{0} & x_{1}
\end{pmatrix} &  \begin{pmatrix}
x_{2} & x_{3} \\
x_{4} & x_{5}
\end{pmatrix} \\
\begin{pmatrix}
0 & 0 \\
0 & 0
\end{pmatrix} & \begin{pmatrix}
1 & 0 \\
0 & 0
\end{pmatrix} & \begin{pmatrix}
x_{6} & 0 \\
0 & 0
\end{pmatrix}
\end{bmatrix}.
\end{equation}
The characteristic ideal $\mathcal{M}$ will have degree $s_\ell=1$
and projective dimension $\ell-1=1$.  That is, $\Xi$ will be a single curve.

For the sake of concreteness, let us assume that the coefficients are:
\begin{equation}
(\B^\lambda_i) =
\begin{bmatrix}
\begin{pmatrix}
1 & 0 \\
0 & 1
\end{pmatrix} & \begin{pmatrix}
0 & 0 \\
\frac19 & 0
\end{pmatrix} &  \begin{pmatrix}
5 & 0 \\
1 & 5
\end{pmatrix} \\
\begin{pmatrix}
0 & 0 \\
0 & 0
\end{pmatrix} & \begin{pmatrix}
1 & 0 \\
0 & 0
\end{pmatrix} & \begin{pmatrix}
9 & 0 \\
0 & 0
\end{pmatrix}
\end{bmatrix},
\end{equation}
so that 
\begin{equation}
(\pi^a_i) = 
\begin{pmatrix}
\alpha_{0} & \alpha_{2} & 5 \alpha_{0} + 9 \alpha_{2} \\
\alpha_{1} & \frac{1}{9} \alpha_{0} & \alpha_{0} + 5 \alpha_{1}
\end{pmatrix}.
\end{equation}
The rank-1 ideal is just $\alpha_0\alpha_0 -9 \alpha_1\alpha_2=0$.
Write a generic element of $\Cone$ as $[\alpha_0:\alpha_1:\alpha_2] =
[3\tau:1:\tau^2]$, like so: 
\begin{equation}
\begin{pmatrix}
3\tau & \tau^2 & 15 \tau + 9 \tau^2 \\
1 & \frac{1}{3} \tau & 5 + 3\tau 
\end{pmatrix}.
\end{equation}
Thus, a generic element of $\xi$ is of the form $\xi =
[3:\tau:15+9\tau]$ with
fiber $\begin{bmatrix}3\tau\\1\end{bmatrix}$.

Using \eqref{eqn:charidealv}, the characteristic scheme of $\xi =
[3:\xi_2:\xi_3]$ is generated by
$0 = \det\left( \xi_1 v^3 \B^1_3 + \xi_2 v^3 \B^2_3 - \xi_3v^3 I_2\right)$,
restricted to the space $\Wu^1(\xi)\subset W$, which is 1-dimen\-sional.  Write
$\tau$ for $\xi_2$; so we are trying to find $\xi = [3:\tau:\xi_3]$ over
$\varphi=[3:\tau:0]$ as in Lemma~\ref{lem:backeigen}. The space
$\Wu^1(\varphi)$ is the space spanned by 
$\begin{bmatrix}3\tau\\1\end{bmatrix}$.  Hence, the single linear sheet of the characteristic variety over
$[3:\tau:0]$ is given by $[3:\tau:15+9\tau]$.

\subsection{One-dimensional exercise}
Now is the time go back and re-read the example \eqref{eqn:C} and see how it fits into
Sections~\ref{sec:exwave} and \ref{sec:moduli}. The wave-equation example offers a single $\mathbb{P}^1$ whose fiber is also a
$\mathbb{P}^1$.  By choosing appropriate
coefficients, you should be able to produce
examples with $(s_1,s_2,s_3)=(3,2,0)$ with various other components and multiplicities.

In principle, you can choose any Cartan characters, and choose coefficients subject to Theorem~\ref{thm:invcond} to build examples in this way.
See the Sage code at \url{https://bitbucket.org/curieux/symbol_sage}, which
can generate and analyze any such example (given sufficient memory).

\section{Results of Guillemin and Quillen}\label{sec:gnf}
As in the analogy Section~\ref{sec:regular}, normal forms often
reveal shortcuts to other advanced ideas.

Guillemin's proof of Theorem~\ref{thm:gnf1} made use of two results derived
from Quillen's thesis \cite{Quillen1964}.  In this section, we see how these 
results become easier using Theorem~\ref{thm:invcond}. 
(Note that Theorem~\ref{thm:invcond} and Theorem~\ref{thm:gnf1} are \emph{not}
equivalent.  Theorem~\ref{thm:invcond} is strictly stronger; it is easy to construct endovolutive tableaux that satisfy
the conclusion of \eqref{eqn:gnf1} but are not involutive.  See \cite{Smith2014a}.)

Recall the Spencer cohomology groups from Section~\ref{sec:spencer}.  For any
$\varphi \in V^*$, wedging by $\varphi$ gives a map $W \otimes \wedge^k V^* \to
W \otimes \wedge^{k+1} V^*$.  This induces a map on the quotient spaces,
$H^k(A) \to H^{k+1}(A)$.
\begin{thm}[Quillen's Exactness Theorem]
Suppose $A$ is an involutive tableau, and that $\varphi \not \in \Xi_A$.  
Then the sequence of maps by $\wedge\varphi$,
\[ 
0 \to A \to H^1(A) \to H^2(A) \to \cdots \to H^n(A) \to 0,
\]
is exact.
\label{thm:quillen}
\end{thm}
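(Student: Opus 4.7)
The plan is to reduce the exactness claim to a Koszul-complex computation controlled by the non-characteristic hypothesis $\varphi \notin \Xi_A$ and by the commutativity afforded by Guillemin normal form. The first step is to translate $\varphi \notin \Xi_A$ into the algebraic statement that $\sigma_\varphi: W \to H^1(A)$ is injective: Lemmas~\ref{lem:eigen} and~\ref{lem:backeigen} together say that $w \otimes \varphi \in A$ forces $w=0$ for non-characteristic $\varphi$. This already handles exactness at the first position, since any $\pi \in A$ killed by $\wedge\varphi$ in $W \otimes \wedge^2 V^*$ must equal $w \otimes \varphi$ for some $w \in W$ and hence must vanish.

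For the higher positions I would build a Koszul-style contracting homotopy. The bare complex $(W \otimes \wedge^\bullet V^*, \wedge \varphi)$ is always contractible when $\varphi \neq 0$: choosing $v_* \in V$ with $\varphi(v_*) = 1$ and setting $h_0 = \iota_{v_*}$ yields the elementary Koszul identity $[\wedge\varphi, h_0] = \mathrm{id}$. The task is to modify $h_0$ so that it descends through the Spencer filtration, i.e.\ preserves the images of $\delta: A^{(k-1)} \otimes V^* \to W \otimes \wedge^k V^*$. Working in endovolutive bases (Theorem~\ref{thm:invcond}), the correction should be built from a polynomial in the symbol endomorphisms $\B(\varphi)(v)$. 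By Theorem~\ref{thm:gnf1} these endomorphisms admit a simultaneous Jordan decomposition on $\Wu^1(\varphi)$, and since $\varphi \notin \Xi_A$ the relevant scalar factors (eigenvalue minus $\varphi$-value) are non-singular, making the corrected homotopy invertible where needed. The commutativity conditions in Theorem~\ref{thm:invcond} then ensure that the corrected $h$ is consistent across all Spencer degrees.

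The main obstacle will be the explicit bookkeeping in that modification: one must check that the corrected $h$ still satisfies $[\wedge\varphi, h] \equiv \mathrm{id}$ modulo $\delta$-images at every degree $k$, and that the commutator terms absorbed into the correction are precisely those guaranteed by the involutivity relations of Theorem~\ref{thm:invcond}. A preliminary Euler-characteristic check---using the Cartan equality $\dim A^{(1)} = s_1 + 2s_2 + \cdots + \ell s_\ell$, Lemma~\ref{lem:dimW1}, and Theorem~\ref{thm:degXi}---confirms that the alternating sum of dimensions vanishes, consistent with the claimed exactness and serving as a sanity check that the construction targets the right vector-space identities.
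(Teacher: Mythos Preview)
The paper does not actually give its own proof of Theorem~\ref{thm:quillen}. It states the theorem, attributes the original proof to Quillen's thesis (``enormous commutative diagrams''), and then proves only the simplified Lemma~\ref{lem:quil}. Indeed, at the end of Section~\ref{sec:gnf} the author explicitly lists an elementary proof of Quillen's exactness theorem as an open desideratum: ``the proof of Lemma~\ref{lem:quil} suggests an elementary proof of Quillen's exactness theorem.'' So there is no in-paper argument against which to benchmark your proposal.

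That said, your proposal is a plan rather than a proof, and you flag this yourself (``The main obstacle will be the explicit bookkeeping\ldots''). The Koszul-homotopy idea is natural and genuinely different from Quillen's diagram-chase, but the hard content lies exactly where you stop: showing that a $\B(\varphi)(v)$-corrected contraction $h$ respects the Spencer filtration at every degree. Theorem~\ref{thm:gnf1} gives commutativity only on $\Wu^1(\varphi)$, not on all of $W$, and the involutivity relations of Theorem~\ref{thm:invcond} control only certain rows of the commutators $\B^\lambda_l\B^\mu_k - \B^\lambda_k\B^\mu_l$; you have not indicated why that suffices to make $h$ descend through $\delta(A^{(k-1)}\otimes V^*)$ for general $k$. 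Your first-step argument also slips: the map in the sequence is $A \to H^1(A)$, not $A \to W\otimes\wedge^2 V^*$, so ``$\pi$ killed by $\wedge\varphi$ in $W\otimes\wedge^2 V^*$'' is not the right kernel to analyze. Finally, be careful about circularity: in the historical logic, Guillemin normal form \emph{uses} Quillen's theorem, so if you invoke Theorem~\ref{thm:gnf1} you must rely on the paper's alternate proof via Theorem~\ref{thm:invcond}, not the original one.
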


In \cite{Quillen1964}, this theorem is proven using enormous commutative diagrams.  In our context, with
Theorem~\ref{thm:invcond} in hand, we can prove an easy version of Quillen's
result, in the form of Lemma~\ref{lem:quil}.
Lemma~\ref{lem:quil} is a consequence of Corollary~\ref{cor:guilA}, which for us is an
easy corollary of Theorem~\ref{thm:invcond}.  This corollary is called
Theorem~A in \cite{Guillemin1968}, where it was proved using a large diagram
chase using Quillen's exactness theorem, Theorem~\ref{thm:quillen}. 
\begin{cor}[Quillen, Guillemin]
Consider the subspace $U = \pair{u_1,\ldots, u_{\ell}} \subset V$ for a generic
basis $(u_i)$ of $V$, as in \eqref{eqn:VUY}. 
If $A$ is involutive, then $A|_U$ is involutive, and the natural map
between prolongations $A^{(1)} \to \left(A|_U\right)^{(1)}$ is bijective.
\label{cor:guilA}
\end{cor}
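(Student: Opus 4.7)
The plan is to combine Cartan's test with an elementary injectivity argument. First, working in endovolutive bases for $A$, I would observe that the free parameters of $A$ are the shaded entries $\pi^a_\lambda$ with $a \leq s_\lambda$ and $\lambda \leq \ell$, all of which live in the first $\ell$ columns of the tableau. Consequently the restriction map $A \to A|_U$ induced by $V^* \twoheadrightarrow U^*$ is a linear isomorphism, so $\dim A|_U = s$ and the Cartan characters of $A|_U$ relative to the induced basis $(u^1, \ldots, u^\ell)$ of $U^*$ are exactly $(s_1, \ldots, s_\ell)$. This also makes the natural restriction $\rho \colon A^{(1)} \to (A|_U)^{(1)}$ well defined, as restricting both $V^*$-slots to $U^*$ preserves symmetry and lands in $A|_U$ in the first slot.

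The main step will be injectivity of $\rho$. Suppose $P \in A^{(1)} \subset W \otimes V^* \otimes V^*$ satisfies $P^a_{\lambda,\mu} = 0$ for all $a$ and all $\lambda,\mu \leq \ell$. For each fixed $j$, the slice $(P^a_{i,j})_{a,i}$ lies in $A$, so the symbol relations \eqref{eqn:symrels} express each $P^a_{i,j}$ with $a > s_i$ as
\[
P^a_{i,j} \;=\; \sum_{\lambda \leq \ell,\; b \leq s_\lambda} B^{a,\lambda}_{i,b}\, P^b_{\lambda,j}.
\]
Since $s_\varrho = 0$ for every $\varrho > \ell$, this formula determines \emph{every} entry $P^a_{\varrho,j}$ from the upper-left entries $P^b_{\lambda,j}$ with $\lambda \leq \ell$. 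Taking $j = \mu \leq \ell$ makes the right-hand side vanish by hypothesis, giving $P^a_{\varrho,\mu} = 0$; symmetry of $P$ in its lower indices then yields $P^a_{\mu,\varrho} = 0$. Taking $j = \varsigma > \ell$ and feeding back the vanishing just obtained yields $P^a_{\varrho,\varsigma} = 0$. Hence $P = 0$ and $\rho$ is injective. Note that this argument uses only endovolutivity of $A$, not its involutivity.

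Finally, Cartan's inequality \eqref{eqn:Cartansineq} applied to $A|_U$ (with Cartan characters $(s_1,\ldots,s_\ell)$) gives $\dim (A|_U)^{(1)} \leq s_1 + 2s_2 + \cdots + \ell s_\ell$, while involutivity of $A$ gives $\dim A^{(1)} = s_1 + 2s_2 + \cdots + \ell s_\ell$. Chained with the injection $\rho$,
\[
s_1 + 2s_2 + \cdots + \ell s_\ell \;=\; \dim A^{(1)} \;\leq\; \dim (A|_U)^{(1)} \;\leq\; s_1 + 2s_2 + \cdots + \ell s_\ell,
\]
so all inequalities collapse to equalities. The right-hand equality is Cartan's test for $A|_U$, hence $A|_U$ is involutive; the left-hand equality promotes $\rho$ from injection to bijection. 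The only real obstacle is the bookkeeping in the injectivity step, where the key mechanism is that $s_\varrho = 0$ for $\varrho > \ell$ forces the symbol relations to cascade the vanishing from $U \otimes U$ out to all of $V \otimes V$.
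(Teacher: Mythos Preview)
Your proof is correct, and it takes a genuinely different route from the paper's. The paper argues both parts directly from Theorem~\ref{thm:invcond}: involutivity of $A|_U$ follows because the quadratic commutator conditions on the $\B^\lambda_i$ survive truncation of the index range to $\{1,\ldots,\ell\}$, and bijectivity of the prolongation map follows because the explicit free generators $Z^a_{\mu,\lambda}$ (with $a\le s_\mu$, $\lambda\le\mu\le\ell$) of $A^{(1)}$, as identified in \cite{Smith2014a}, involve only indices $\le\ell$ and hence remain independent after restriction. By contrast, you never invoke Theorem~\ref{thm:invcond}: you prove injectivity of $\rho$ by a direct cascade through the symbol relations and symmetry (an argument that in fact needs only a \emph{generic} basis, not endovolutivity), and then let Cartan's test do double duty via the dimension sandwich, yielding involutivity of $A|_U$ and surjectivity of $\rho$ in one stroke. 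Your route is more self-contained and shows nicely that the result sits at the level of Cartan's inequality rather than requiring the finer quadratic structure; the paper's route is faster given its toolkit and reinforces the monograph's thesis that Theorem~\ref{thm:invcond} is the master computational device. One small gap to patch: you assert that the Cartan characters of $A|_U$ are $(s_1,\ldots,s_\ell)$, but strictly you should check that $(u^1,\ldots,u^\ell)$ is \emph{generic} for $A|_U$; this follows because any basis of $U^*$ extends to a basis of $V^*$ (by appending $u^{\ell+1},\ldots,u^n$), and lex-maximality of $(s_1,\ldots,s_n)$ for $A$ forces lex-maximality of $(s_1,\ldots,s_\ell)$ for $A|_U$.
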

\begin{proof}
The first part is an immediate consequence of Theorem~\ref{thm:invcond}, as the
quadratic condition still holds if the range of indices $\lambda, \mu, i, j$ is
truncated at $\ell$ (or greater).  In particular, the generators $(
\pi^a_\lambda
)_{a \leq s_\lambda}$ of $A$ are preserved.  

The second part is similarly immediate, using the proof of
Theorem~\ref{thm:invcond} given in \cite{Smith2014a}:
the contact relation $\pi^a_\mu = Z^{a}_{\mu,i} u^i$ 
for $a \leq s_\lambda$ gives coordinates $Z^a_{\mu,i}$ to the prolongation $A^{(1)} \subset A \otimes V^*$,
and the $s_1 + 2s_2 + \cdots + \ell s_\ell$ independent generators are
precisely those $Z^a_{\mu, \lambda}$ with $a \leq s_\mu$ and $\lambda \leq \mu
\leq \ell$.  Since they involve no indices $i > \ell$, these generators remain independent when the range of indices is
truncated at $\ell$. 
\end{proof}

Now we come to our simplified version of Theorem~\ref{thm:quillen}.  Compare
Lemma~\ref{lem:quil} to the exact sequence $(3.4)_2$ in  \cite{Guillemin1968}.
\begin{lemma}
Recall that $U^\perp$ is a complement to $Y^\perp \subset V^*$, so that $V^* =
Y^\perp \oplus U^\perp$ as in \eqref{eqn:VUY} and \eqref{eqn:VUYdual}.
For $A$ involutive, the sequence
\[0 \to W \otimes S^2 U^\perp \to H^1 \otimes U^\perp \overset{\delta}{\to} H^2\]
is exact.
\label{lem:quil}
\end{lemma}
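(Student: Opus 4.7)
The plan is to verify exactness at both nodes of the sequence directly, with involutivity entering only through Corollary~\ref{cor:guilA}. Using $V^* = Y^\perp \oplus U^\perp$, the first map $\phi: W \otimes S^2 U^\perp \to H^1 \otimes U^\perp$ is the natural composition
\[ W \otimes S^2 U^\perp \hookrightarrow W \otimes U^\perp \otimes U^\perp \hookrightarrow W \otimes V^* \otimes U^\perp \twoheadrightarrow H^1 \otimes U^\perp, \]
and $\delta$ sends $[\pi] \otimes \varphi$ to the class of $\pi \wedge \varphi$ in $H^2$. The composition $\delta \circ \phi = 0$ is immediate, since a symmetric tensor in the two $V^*$ slots is killed by wedging.

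For injectivity of $\phi$, write $\sigma = \sigma^a_{\varrho,\varsigma}\, z_a \otimes u^\varrho \otimes u^\varsigma$ symmetrically in $\varrho,\varsigma > \ell$. Then $\phi(\sigma) = \sum_\varsigma [\sigma_{\cdot,\varsigma}] \otimes u^\varsigma$ with $\sigma_{\cdot,\varsigma} := \sum_\varrho \sigma^a_{\varrho,\varsigma}\, z_a \otimes u^\varrho \in W \otimes U^\perp$. If $\phi(\sigma) = 0$, then each $\sigma_{\cdot,\varsigma} \in A$; but the definition of generic bases in Section~\ref{sec:regular} places all $s = \dim A$ free entries in the first $\ell$ columns, which is exactly the statement that $A \hookrightarrow W \otimes U^*$ is injective, i.e., $A \cap (W \otimes U^\perp) = 0$. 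Hence $\sigma_{\cdot,\varsigma} = 0$ for every $\varsigma$, and $\sigma = 0$.

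The main step is showing $\ker\delta \subseteq \mathrm{image}(\phi)$. Fix $\eta = \sum_\varrho [\pi_\varrho] \otimes u^\varrho \in \ker\delta$; unwinding $H^2 = W \otimes \wedge^2 V^* / \delta(A \otimes V^*)$ produces $a_k \in A$ with $\sum_\varrho \pi_\varrho \wedge u^\varrho = \sum_k a_k \wedge u^k$. Setting $\tilde\pi_\lambda := -a_\lambda$ for $\lambda \leq \ell$ and $\tilde\pi_\varrho := \pi_\varrho - a_\varrho$ for $\varrho > \ell$ yields $\sum_k \tilde\pi_k \wedge u^k = 0$, so $\tilde\pi \in W \otimes S^2 V^*$ is symmetric. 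Its restriction $\tilde\pi|_U$ lies in $(A|_U)^{(1)}$ because each slice $\tilde\pi_\lambda|_U \in A|_U$. Corollary~\ref{cor:guilA}---and this is where involutivity enters---then supplies $\hat\rho \in A^{(1)}$ with $\hat\rho|_U = \tilde\pi|_U$. Put $\mathscr{S} := \tilde\pi - \hat\rho$, so $\mathscr{S} \in W \otimes S^2 V^*$, $\mathscr{S}|_U = 0$, and $\mathscr{S}_\lambda = \tilde\pi_\lambda - \hat\rho_\lambda \in A$ for each $\lambda \leq \ell$. The \emph{crucial observation} is that $\mathscr{S}_\lambda \in A$ together with $\mathscr{S}_\lambda|_U = 0$ forces $\mathscr{S}_\lambda = 0$, again by injectivity of $A \hookrightarrow W \otimes U^*$. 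By symmetry $\mathscr{S}^a_{\lambda,\varsigma} = \mathscr{S}^a_{\varsigma,\lambda} = 0$, so $\mathscr{S} \in W \otimes S^2 U^\perp$. Since $\mathscr{S}_\varrho \equiv \pi_\varrho \pmod{A}$ for $\varrho > \ell$, we conclude $\phi(\mathscr{S}) = \eta$.

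The hard part is the lift $\tilde\pi|_U \leadsto \hat\rho$ via Corollary~\ref{cor:guilA}: without involutivity the map $A^{(1)} \to (A|_U)^{(1)}$ fails to be surjective and the argument collapses. Everything else is bookkeeping in endovolutive coordinates. The place care is needed is in keeping straight the decomposition $S^2 V^* = S^2 U^* \oplus (U^* \cdot U^\perp) \oplus S^2 U^\perp$ through the argument, and in recognizing the two separate uses of the injectivity of $A \hookrightarrow W \otimes U^*$---once for injectivity of $\phi$, then for forcing $\mathscr{S}_\lambda = 0$.
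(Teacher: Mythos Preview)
Your proof is correct and follows essentially the same route as the paper: both arguments reduce exactness at $H^1\otimes U^\perp$ to the bijectivity $A^{(1)}\to (A|_U)^{(1)}$ of Corollary~\ref{cor:guilA}, and both use the fact that $A\cap(W\otimes U^\perp)=0$ in generic bases. The only cosmetic difference is that the paper fixes the splitting $W\otimes V^*=A\oplus H^1$ upfront and shows directly that $\delta P\equiv 0$ in $H^2$ forces $\delta P=0$ in $W\otimes\wedge^2V^*$ (because the witnessing $T$ must lie in $A^{(1)}$, hence $\delta_\sigma T=0$), whereas you build an explicit preimage $\mathscr{S}$ by subtracting the lift $\hat\rho$; these are the same computation organized in opposite directions.
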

\begin{proof}
This proof is just an explicit description of the maps in a basis and
an application of Corollary~\ref{cor:guilA}.  Let $(u^i)$ be a basis for $V^*$
such that $(u^\lambda)$ is a basis for $Y^\perp$ and $(u^\varrho)$ is a basis for
$U^\perp$, using the index convention \eqref{eqn:index} from Section~\ref{sec:tableau}.

The sequence makes sense because we can split the Spencer sequence \eqref{eqn:spencers} as 
$W \otimes V^* = A \oplus H^1$ by identifying the space $H^1$ with $\{ \sum_{a > s_i} \pi^a_i (z_a
\otimes u^i)\} \subset W\otimes V^*$, which is the space spanned by the unshaded entries in
Figure~\ref{fig:figtab}.
Using this identification, two elements $\sum_{a > s_i} \pi^a_i (z_a \otimes u^i)$ and $\sum_{a > s_i}
\hat{\pi}^a_i(z_a \otimes u^i)$ of $W \otimes V^*$ are equivalent in $H^1$ if 
and only if $\pi^a_i - \hat{\pi}^a_i = \sum_{b \leq
s_\lambda}B^{a,\lambda}_{i,b}z^b_i$ for some $\{ z^a_i: a\leq s_i\}$, the
shaded entries in Figure~\ref{fig:figtab}.  In other words, the projection $W
\otimes V^* \to H^1$ is defined by \eqref{eqn:symrels}, and the projection $W
\otimes V^* \to A$ is defined by the projection onto the orange generator
components in Figure~\ref{fig:figtab}, those  $\pi^a_\lambda$ with $a \leq
s_\lambda$.

Since $s_\varrho =0$ for all $\varrho > \ell$, the inclusion $W \otimes U^\perp
\subset W \otimes V^*$ is an inclusion $W \otimes U^\perp \subset H^1$.   Hence, 
the inclusion is understood as \begin{equation}
W \otimes S^2U^\perp \subset ( W \otimes U^\perp ) \otimes U^\perp \subset H^1 \otimes
U^\perp.
\end{equation}

An element of $H^1\otimes U^\perp$ is written in $W \otimes V^* \otimes U^\perp$ as
\begin{equation}
P=
\sum_{a > s_\lambda} P^a_{\lambda,\varsigma} 
(z_a \otimes u^\lambda \otimes u^\varsigma) 
+
\sum_{a>0} P^a_{\varrho,\varsigma} 
(z_a \otimes u^\varrho \otimes u^\varsigma).
\end{equation}
The image $\delta(H^1 \otimes U^\perp)$ in $H^2$ is 
\begin{equation}
\delta( H^1 \otimes V^*) \subset \delta(W \otimes V^* \otimes V^*) \subset W
\otimes \wedge^2 V^*,
\end{equation}
so $\delta P \in W \otimes \wedge^2 V^*$ is of the form
\begin{equation}
\delta P = \sum_{a > s_\lambda} P^a_{\lambda,\varsigma} 
(z_a \otimes u^\lambda \wedge u^\varsigma) 
+
\sum_{a>0} \frac12 \left( P^a_{\varrho,\varsigma}  -
P^a_{\varsigma,\varrho}\right)
(z_a \otimes u^\varrho \wedge u^\varsigma).
\label{eqn:deltaP}
\end{equation}

Recall that $H^2 = \frac{W \otimes \wedge^2 V^*}{\delta_\sigma(A \otimes
V^*)}$. So, $\delta P  \equiv 0$ in $H^2$ if and only
if there is some $T\in A \otimes V^*$ such that $\delta_\sigma(T) = \delta(P)$
    in $W\otimes \wedge^2V^*$.
Looking at \eqref{eqn:deltaP}, it is apparent that such $T$ must have
$\delta_\sigma(T|_U)=0$, as $\delta(P)$ has no $Y^\perp \wedge Y^\perp$ terms.  By involutivity and Corollary~\ref{cor:guilA}, we
consider the involutive tableau
\begin{equation}
0 \to A|_U \to W \otimes Y^\perp \overset{\sigma|_U}{\to} H^1_U \to 0
\end{equation}
with prolongation
\begin{equation}
0 \to \left(A|_U\right)^{(1)} \to A|_U \otimes Y^\perp \overset{\delta_{\sigma}|_U}\to W \otimes \wedge^2Y^\perp
\to H^2_U \to 0.
\end{equation}
Therefore, $T|_U \in A|_U \otimes Y^\perp$ lies in the kernel of
$\delta_{\sigma}|_U$, so $T|_U \in \left(A|_U\right)^{(1)}$.  Therefore, 
Corollary~\ref{cor:guilA} tells us $T \in A^{(1)}$.  That is, 
$\delta(P)\equiv 0 \in H^2$ if and only if $\delta(P)=\delta_\sigma(T)=0$.

Therefore, $\delta(P)\equiv 0 \in H^2$ if and only if $P^a_{\lambda,\varsigma}=0$ and
$P^a_{\varrho,\varsigma}  = P^a_{\varsigma,\varrho}$ on these index ranges.
This occurs if and only if $P=P^a_{\varrho,\varsigma} (z_a \otimes u^\varrho
\otimes u^\varsigma)$, meaning $P \in W \otimes S^2U^\perp$.
\end{proof}

We are ready to prove Theorem~\ref{thm:gnf1}.
The structure of the proof is identical to the original proof in
\cite{Guillemin1968}.
\begin{proof}[Proof of Theorem~\ref{thm:gnf1}]
Suppose that $w \in \Wu^1(\varphi)$, so that $\pi=\B(\varphi)(\cdot)w = w \otimes
\varphi + J$ for some $J \in W \otimes U^\perp$ with 
$J_\varrho = J^a_\varrho z_a \in \Wu^-(\varphi)$ for all $\varrho$. 
First, we must show that the span of the columns $J_\varrho$ of $J$ lies in $\Wu^1(\varphi)$.

Consider the element $-J \otimes \varphi = -J^a_\varrho \varphi_\lambda (z_a
\otimes u^\lambda \otimes u^\varrho) \in H^1 \otimes U^\perp$.  Because $z \otimes
\varphi + J \in A$, it must be that $z \otimes \varphi \otimes \varphi 
\in W\otimes V^* \otimes V^*$ represents the same point in $H^1 \otimes U^\perp$.
So, we can compute
\begin{equation}
-J^a_\varrho \varphi_\lambda (z_a \otimes u^\lambda \wedge
u^\varrho ) \equiv z \otimes \varphi \wedge \varphi = 0 \in H^2.
\end{equation} 
By Corollary~\ref{cor:guilA}, there exists $Q = Q^a_{\varrho,\varsigma}
(z_a \otimes u^\varsigma \otimes u^\varrho) \in W \otimes S^2 U^\perp$ such that $
-J \otimes \varphi - Q \in A \otimes U^\perp$.  That is, writing $Q_{\varrho} =
Q^a_{\varrho,\varsigma} (z_a \otimes u^\varsigma) \in W \otimes Y^\perp$, we have
 $J_\varrho \otimes \varphi + Q_{\varrho}\in A$ for all $\varrho$, meaning $J_\varrho \in \Wu^1(\varphi)$
for all $\varrho$.
Therefore, for any $v \in V$, we have $\B(\varphi)(v)z = \varphi(v)z +
J(v) \in \Wu^1(\varphi)$.

Now, mapping again, $\B(\varphi)(\cdot)J_{\varrho} = J_\varrho \otimes \varphi +
Q_\varrho$, so $\B(\varphi)(u_\varsigma)J_\varrho = Q_{\varrho,\varsigma}$,
which is already known to be symmetric in $\varrho,\varsigma$.  Therefore,
\begin{equation}
\begin{split}
\B(\varphi)(\tilde{v})\B(\varphi)(v)z 
&=
\B(\varphi)(\tilde{v})\left( \varphi(v)z + J(v) \right) \\
&=
\varphi(v) \B(\varphi)(\tilde{v})z  +
u^\varrho(v)\B(\varphi)(\tilde{v})J_\varrho \\
&=
\varphi(v)\left(\varphi(\tilde{v})z  + J(\tilde{v})\right) +
u^\varrho(v)\left( \varphi(\tilde{v})J_\varrho +
Q_{\varrho}(\tilde{v})\right)\\
&=
\varphi(v)\varphi(\tilde{v})z + 
\varphi(v)J(\tilde{v}) + \varphi(\tilde{v})J(v) +
Q(v,\tilde{v}).
\end{split}
\end{equation}
This is symmetric in $v,\tilde{v}$, giving the commutativity condition
\eqref{eqn:gnf1}
\end{proof}

It is interesting to see the inversion of logic that happened here.  In the
original literature, the overall implications are
\[\ref{thm:quillen} \to \ref{lem:quil} \to \ref{cor:guilA} \to \ref{thm:gnf1}.\]
But, the arguments here give the overall implications \[\ref{thm:invcond} \to
\ref{cor:guilA} \to \ref{lem:quil} \to \ref{thm:gnf1}.\]

However, we can write a shorter proof of Theorem~\ref{thm:gnf1} that relies
Theorem~\ref{thm:invcond} more directly, avoiding the general results of
Quillen.
For motivation, consider the following trivial corollary of
Theorem~\ref{thm:invcond} that is obtained by setting $\lambda=\mu$.
\begin{cor}
Suppose an involutive tableau is given in a generic, endovolutive basis as in \eqref{eqn:IB}, so
that Theorem~\ref{thm:invcond} holds.  Then 
$\B(u^\lambda)(v)$ is an endomorphism of
$\Wu^-(u^\lambda)$ such that for all $v, \tilde{v} \in Y$, 
\[ [\B(u^\lambda)(v), \B(u^\lambda)(\tilde{v})] = 0.\]
\label{cor:comm}
\end{cor}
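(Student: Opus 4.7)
The plan is to derive Corollary~\ref{cor:comm} directly from Theorem~\ref{thm:invcond} by specializing the second condition to $\mu = \lambda$ and then restricting to directions in $Y$. First, the endomorphism claim requires no work: by Section~\ref{sec:endo}, endovolutivity means that each $\B^\lambda_i$ already has support in $\Wu^-_\lambda$ with image in $\Wu^-_\lambda = \Wu^-(u^\lambda)$ (see Figure~\ref{fig:figBend}), so any linear combination $\B(u^\lambda)(v) = v^i \B^\lambda_i$ is again an endomorphism of $\Wu^-(u^\lambda)$.

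Next, I would expand the commutator bilinearly. Writing $v = v^\varrho u_\varrho$ and $\tilde{v} = \tilde{v}^\varsigma u_\varsigma$ with $\varrho, \varsigma > \ell$ (so $v, \tilde{v} \in Y$, using the index convention \eqref{eqn:index}),
\[
[\B(u^\lambda)(v),\, \B(u^\lambda)(\tilde{v})] = \sum_{\varrho,\varsigma > \ell} v^\varrho \tilde{v}^\varsigma\, [\B^\lambda_\varrho,\, \B^\lambda_\varsigma].
\]
By antisymmetry, it suffices to show that $[\B^\lambda_\varrho,\B^\lambda_\varsigma]$ vanishes as a full $r \times r$ matrix for every pair with $\ell < \varrho < \varsigma \leq n$.

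The core step is to apply Theorem~\ref{thm:invcond}(ii) with the substitution $\mu := \lambda$, $l := \varrho$, $k := \varsigma$. The index hypothesis $\lambda < l < k$ holds because $\lambda \leq \ell < \varrho < \varsigma$, and $\lambda \leq \mu < k$ is automatic since $\mu = \lambda$ and $\lambda < \varsigma$. The conclusion reads $(\B^\lambda_\varrho \B^\lambda_\varsigma - \B^\lambda_\varsigma \B^\lambda_\varrho)^a_b = 0$ for all $a > s_l = s_\varrho$. But by the very definition of the character $\ell$, we have $s_\varrho = 0$ for every $\varrho > \ell$, so the restriction $a > 0$ is vacuous. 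Hence each bracket $[\B^\lambda_\varrho,\B^\lambda_\varsigma]$ vanishes identically, and the full commutator vanishes.

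There is essentially no obstacle here; the argument is bookkeeping to confirm that the index ranges for Theorem~\ref{thm:invcond}(ii) permit the specialization $\mu = \lambda$ and that the row restriction $a > s_l$ trivializes precisely when $l$ lies in the $Y$-range. This is what the paper means by calling it a ``trivial corollary obtained by setting $\lambda = \mu$.''
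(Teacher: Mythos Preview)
Your proof is correct and follows exactly the approach the paper indicates: the paper explicitly calls this a ``trivial corollary of Theorem~\ref{thm:invcond} that is obtained by setting $\lambda=\mu$,'' and you have simply written out that specialization in full, checking that the index constraints $\lambda < l < k$, $\lambda \leq \mu < k$ are satisfied and that $s_l = s_\varrho = 0$ trivializes the row restriction.
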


\begin{proof}[Alternate Proof of Theorem~\ref{thm:gnf1}]
Fix $\varphi \in Y^\perp$, and suppose that $w \in \Wu^1(\varphi)$. 
We must verify that all maps $\B(\varphi)(v)$ preserve $\Wu^1(\varphi)$ and
that they commute. 
Note that the definition of $\Wu^1(\varphi)$ in Equation~\ref{eqn:Wupinv}
depends on the choice of subspace $Y^\perp$ but not on its basis, so we may verify
these conditions using any basis we like.

First a trivial case:  if it happens that $\varphi \in \Xi\cap Y^\perp$, then
$\B(\varphi)(v)w = \varphi(v) w \in \Wu^1(\varphi)$ is a rescaling, and it is
immediate that $[\B(\varphi)(v),\B(\varphi)(\tilde{v})]= 0$.

Otherwise, we have $\varphi \not\in \Xi$.  Then we may choose a generic basis of
$V^*$ in which $\varphi = u^1$.  Moreover, we may use that basis to construct
an endovolutive basis of $W$.
By Corollary~\ref{cor:comm}, it suffices to prove in this basis that
$\Wu^1(u^1)$ is preserved by every $\B(u^1)(v)$. 
Write $\B(\varphi)(\cdot)w =  w \otimes u^1 + J$, and
examine \eqref{eqn:Wup} on a column $J_\varrho$ of $J$.  For each
$\mu=1, \ldots, \ell$, we must verify
\begin{equation}
0 = \left( \B^1_\mu - \delta^1_\mu I \right) J_\varrho 
=
\left( \B^1_\mu - \delta^1_\mu I \right) \B^1_\varrho w
=
\left( \B^1_\mu\B^1_\varrho - \delta^1_\mu\B^1_\varrho\right) w.
\end{equation}

If $\mu = 1$, then this is immediate, since $\B^1_1 = I_{s_1}$.

If $\mu \neq 1$, then we are verifying $0=(\B^1_\mu \B^1_\varrho -0)w$.  
Note that $\B^1_\mu w =0$, since $\B(\varphi)(\cdot)w = w \otimes \varphi + J =
w \otimes u^1 + J$.
Moreover, by Theorem~\ref{thm:invcond},  we have
\begin{equation}
0 = \left(\B^1_\mu \B^1_\varrho - \B^1_\varrho \B^1_\mu\right)^a_bw^b 
=\left(\B^1_\mu \B^1_\varrho\right)^a_bw^b 
\end{equation}
for $a > s_\mu$.  Therefore, $\B^1_\mu\B^1_\varrho$ lies in $\Wu^-(\mu)$.
On the other hand, note that the output of $\B^1_\mu$ lies in $\Wu^+_\mu$
by the construction of the maps $\B^\lambda_\mu$ from the reduced symbol in
Section~\ref{sec:endo}.  Combining these, we see that $\B^1_\mu \B^1_\varrho w$
lies in $\Wu^-_\mu \cap \Wu^+_\mu = 0$.

Hence, the space $\Wu^1(\varphi)$ is preserved by $\B(\varphi)(v)$ for all $v$.
By Corollary~\ref{cor:comm}, they commute.
\end{proof}

On the theoretical side, it would be interesting to see how many of the
hard classical theorems in the subject can be re-proven with elementary
techniques.  Specifically, the proof of
Lemma~\ref{lem:quil} suggests an elementary proof of Quillen's exactness
theorem.  The other hard theorem is the integrability of the
characteristic variety, and a proof of that theorem using Guillemin's original
formulation is the subject of \cite{Guillemin1970}.  That result was applied
immediately to study primitive Lie pseudogroups.

\section{Prolongation}\label{sec:prolongation}
How does the characteristic scheme change under prolongation? 
The short answer is that it does not!
This does not depend on endovolutivity or involutivity.

Recall that $A^{(1)}$ is a tableau within $A \otimes V^*$.  An element of
$A^{(1)}$ is $P \in A \otimes V^*$.  Using any bases for $V,W,A$,
we may write $P$ as $P^a_{i,j} z_a \otimes u^i \otimes u^j$, with the
additional condition that $P^a_{i,j} = P^a_{j,i}$ from \eqref{eqn:A2}.
Let $\Cone^{(1)}$ denote the rank-1 elements of $A^{(1)}$, and let
$\Xi^{(1)}$ denote its projection to $V^*$, as in Section~\ref{sec:incidence}.

\begin{thm}
If $\pi \otimes \xi \in \Cone^{(1)}$, then $\pi =
w \otimes \xi \in \Cone$ for some $w \in \ker \sigma_\xi$.  Conversely, if $w
\otimes \xi \in \Cone$, then $(w \otimes \xi) \otimes \xi \in \Cone^{(1)}$.
In particular, $\Xi \cong \Xi^{(1)}$ as schemes.
\label{thm:proXi}
\end{thm}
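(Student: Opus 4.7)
The strategy is to unpack the symmetry condition defining $A^{(1)} = \ker\delta$ and show that it forces any rank-1 element $\pi \otimes \xi \in A \otimes V^*$ to have the very rigid form $(w \otimes \xi) \otimes \xi$, after which both statements are almost immediate.

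For the forward direction, I would fix bases and write the coordinates of $\pi \otimes \xi \in A \otimes V^*$ as $P^a_{i,j} = \pi^a_i \xi_j$. Membership in $A^{(1)} = \ker\delta$ means $P$ is symmetric in its two lower indices, so $\pi^a_i \xi_j = \pi^a_j \xi_i$ for all $a, i, j$. Assuming $\xi \neq 0$, pick $k$ with $\xi_k \neq 0$ and set $w^a = \pi^a_k/\xi_k$; then $\pi^a_i = w^a \xi_i$, i.e.\ $\pi = w \otimes \xi$. But $\pi \in A$ by hypothesis, so $w \otimes \xi \in A$, which is exactly the statement $w \otimes \xi \in \Cone$, or equivalently $w \in \ker \sigma_\xi$ via \eqref{eqn:Cone} and the incidence description in Section~\ref{sec:incidence}.

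For the converse, given $w \otimes \xi \in \Cone$, the element $(w\otimes\xi)\otimes\xi$ has coefficients $w^a \xi_i \xi_j$, which are manifestly symmetric in $i,j$, hence lie in $\ker\delta = A^{(1)}$. The element is evidently rank-1 in $A \otimes V^*$ with $A$-factor $w\otimes\xi\in A$ and $V^*$-factor $\xi$, so it lies in $\Cone^{(1)}$.

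Finally, for the scheme isomorphism $\Xi \cong \Xi^{(1)}$, I would observe that the two directions above give mutually inverse bijections between $\Cone$ and $\Cone^{(1)}$ that commute with projection to $V^*$. More importantly, the fiber of $\Cone^{(1)}$ over $\xi \in V^*$ is $\bigl((\ker\sigma_\xi)\otimes\xi\bigr)\otimes\xi$, which is canonically the fiber of $\Cone$ over $\xi$ tensored with the one-dimensional line spanned by $\xi$. Consequently the symbol map for $A^{(1)}$ at $\xi$ has the same kernel (up to this harmless $\xi$-factor) as $\sigma_\xi$, so the determinantal equations of \eqref{eqn:charidealv} defining the characteristic ideal $\mathscr{M}$ coincide for $A$ and $A^{(1)}$. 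The main (and only mild) obstacle is being careful that this argument preserves scheme multiplicities and not just the underlying point set: this follows because tensoring the fibers by the one-dimensional space $\pair{\xi}$ is a flat operation that leaves the defining ideal of $\Cone_\xi$ unchanged as a subscheme of $\Wu^1(\varphi) \otimes \xi$.
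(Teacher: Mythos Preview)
Your proof is correct and follows essentially the same route as the paper: use the symmetry $\pi^a_i\xi_j=\pi^a_j\xi_i$ from $A^{(1)}=\ker\delta$ to force $\pi=w\otimes\xi$, then note the converse is immediate. You go further than the paper by sketching why the scheme identity $\Xi\cong\Xi^{(1)}$ holds (the paper's proof leaves this as a consequence of the bijection $\Cone\leftrightarrow\Cone^{(1)}$ without further comment); your justification via $\ker\sigma^{(1)}_\xi=(\ker\sigma_\xi)\otimes\xi$ and the flatness of tensoring with a line is sound, if a bit informal.
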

\begin{proof}
Suppose that $\pi \otimes \xi \in \Cone^{(1)}$ for some $\pi \in A$ and $\xi
\in V^*$.  That is, $P \in A^{(1)}$ and $P=\pi \otimes \xi$, so $P^a_{i,j} = \pi^a_i\xi_j$, and $\pi^a_i\xi_j =
\pi^a_j\xi_i$ for all $a,i,j$.   

Let $\underline{\lambda}$ be the minimum index such that
$\xi_{\underline{\lambda}} \neq 0$.
Then $\pi^a_{\underline{\lambda}}\xi_i = \pi^a_i\xi_{\underline{\lambda}}$, so
column $i$ of $(\pi^a_i)$ is a multiple---namely
$\xi_{i}/\xi_{\underline{\lambda}}$---of column $\underline{\lambda}$ for all
$i$.  Therefore, $(\pi^a_i)$ is rank-1, and there is some $w$ with $\pi = w
\otimes \xi$.  The converse is immediate.
\end{proof}
\begin{rmk}
Theorem~\ref{thm:proXi} is used sometimes as a method for computing the characteristic
variety, as follows:  Given a tableau $(\pi^a_i)$ whose entries might depend on $e \in
M^{(1)}$, consider $(\xi_i) \mapsto (\pi^a_i\xi_j - \pi^a_j\xi_i)$ as a map
$V^* \to W \otimes \wedge^2V^*$; that is, a map from $\mathbb{C}^n$
to $\mathbb{C}^{r\binom{n}{2}}$.  For a general point in $\xi \in V^*$,
this map has rank at least 1.  Its rank falls to 0 if and only if $\xi \in \Xi$.
But, this method is inefficient.  If you have $(\pi^a_i)$ in hand and want to compute
$2\times 2$ minors of something, you would save ink by computing the $2 \times 2$ minors of
$(\pi^a_i)$ itself to find $\Cone$.
\end{rmk}

\section{Characteristic Sheaf}
For a single endomorphism, the characteristic polynomial and the Jordan block
decomposition of generalized eigenspaces together reveal all of the information
that is independent of coordinates.

The ultimate conclusion of the preceding sections is that, 
for an abstract tableau $A$, 
the characteristic sheaf $\mathscr{M}$
knows the dimensions $n$, $r$, $(s_1, \ldots, s_n)$, as well as all of the
dimensions and relationships among the mutual eigenspaces of the various symbol
maps.  The rank-1 cone $\Cone$ knows the algebraic relationships among the
sequences of eigenvalues (which we call $\xi$), and it also knows on which
subspaces the symbol maps commute and on which fail to commute.  
In summary, $\mathscr{M}$ and $\Cone$
together know everything about an abstract tableau $A$ that is independent of
coordinates.\footnote{We revealed this fact using special bases, but as with
traditional Jordan normal form, there is an abstract
structure independent of basis that is easiest to see by building an adapted
basis.}
Moreover, they are invariant under prolongation!  

If the abstract tableau $A$ is a smooth projective bundle, then this applies to 
involutive K\"ahler-regular exterior differential systems in the smooth
category. 

If this formal perspective is appealing, then one might as well dispense with
tableaux, symbols, Grassmann bundles, and differential ideals, and instead study
the sheaf $\mathscr{M}$ directly, with modern algebraic tools such as
\cite{Eisenbud2005}.  Consider $\mathscr{M}$ as an ideal in \[C^\infty(M^{(1)})[u_1,
\ldots, u_n],\] and consider its free resolution.  The Hilbert syzygy theorem
states that there is a finite free resolution that is characterized by its
Hilbert polynomial $h_{\mathscr{M}}(d)$.  Of course,
Theorem~\ref{thm:degXi} is reading the leading term of $h_{\mathscr{M}}(d)$!

One might ask how the involutivity of $A$ can be detected as an algebraic
property of $\mathscr{M}$.  The answer is tied to Castelnuovo--Mumford
regularity, which measures the growth of the Hilbert polynomial.  This
computation is equivalent to the Cartan characters in Cartan's test!

While it is not necessarily a useful \emph{computational} tool versus 
differential forms or tableaux, this perspective allows a broader view of the
techniques in PDE analysis, and it suggests that future progress 
in the field will emphasize on invariant algebraic techniques.

For more on this perspective, see \cite{Malgrange2003}, \cite[Chapter
VIII]{BCGGG}, and the notes by Mark Green from the 2013 conference \emph{New
Directions in Exterior Differential Systems} in Estes Park, Colorado, which are
based on the perspective in \cite{Carlson2009}.

\part{Eikonal Systems}\label{part:eikonal}
In Part~\ref{part:Xi}, we studied the characteristic scheme defined over
$M^{(1)} \subset \Gr_n(TM)$.  In this part, we turn our attention to the
characteristic scheme as pulled back to an integral manifold $\iota:N
\to M$.  This is where the meaning of $\Xi$ as ``directions with an ambiguous
initial value problem'' has clear implications for the internal structure of 
solutions of a differential equation, as the eikonal system yields
intrinsic foliations of integral manifolds $N$.

\section{General Eikonal Systems}\label{sec:eikgeneral}
First, let us consider the general notion of ``eikonal equations'' of a
projective variety, without specific regard to the characteristic variety.

Consider a smooth manifold $N$ of dimension $n$.  Here are three ways to
produce a smooth local hypersurface $H \subset N$.

\begin{enumerate}
\item\label{eik1} The implicit function theorem says that a smooth hypersurface $H \subset N$ is defined locally by a smooth
function $f:N \to \mathbb{R}$, where $T_x H = \ker \mathrm{d}f$ for all $x \in H$.

\item\label{eik2} By the Frobenius theorem, this is equivalent to having a local smooth section
$\varphi$ of $T^*N=\Omega^1(N)$ such that $\mathrm{d}\varphi \equiv 0 \mod \varphi$, for
then $\varphi$ is a rescaling of some $\mathrm{d}f$.  

\item\label{eik3} 
We can also look at the Frobenius theorem from the perspective of
Cartan--K\"ahler theory\footnote{Although Theorem~\ref{thm:CK} applies as
stated only in the analytic category, it can be extended to the
smooth category in this case.  This sort of extension is explored in
Section~\ref{sec:yang}.}, as in Theorem~\ref{thm:CK}.
To make a local function $f:N \to \mathbb{R}$ or a local section $\varphi$ of
$T^*N$, consider the jet space $\mathbb{J}^1(N,\mathbb{R})$, which is
isomorphic to the bundle $T^*N \times \mathbb{R}$.  Jet space is an open
neighborhood (or local linearization) of $\Gr_n(N \times \mathbb{R})$ equipped with
local coordinates $(x^i, p_i, y) = (x^1, \ldots, x^n, p_1, \ldots, p_n, y)$ and
a contact system $\mathcal{J}$ generated by $\Upsilon = \mathrm{d}y - p_i
\mathrm{d}x^i$ and $\mathrm{d}\Upsilon$, as in Section~\ref{sec:contact}. 
In these local coordinates, set the independence condition $\boldsymbol{\omega}
=\mathrm{d}x^1\wedge\cdots\wedge\mathrm{d}x^n \neq 0$.
Any
$n$-dimensional integral manifold of the exterior differential system $(T^*N
\times \mathbb{R}, \mathcal{J}, \boldsymbol{\omega})$ corresponds to a function
$y = f(x^1,\ldots,x^n)$ with $p_i = \frac{\partial f}{\partial x^i}$, so we may
take $\varphi = \mathrm{d}f = \frac{\partial f}{\partial x^i}\mathrm{d}x^i$.
It is easy to see that this exterior differential system has no torsion and has
a K\"ahler-regular tableau with Cartan characters $s_1 =
s_2 = \cdots = s_n = 1$.  That is, integral manifolds are parametrized by 1 function of $n$
variables (hardly a surprise). 
\end{enumerate}

Now, consider a projective subbundle $\Sigma_N \subset \mathbb{P}T^*N$,
meaning it is defined smoothly by homogeneous functions in the local fiber
variables $(p_i)$ of $T^*N$.  
We want a test that tells us whether there exist hypersurfaces $H$ for which
$\mathrm{d}f \in \Sigma_N$ everywhere.  Specifically, we want a theorem like
the following.
\begin{thm}
Suppose that the eikonal system (defined below) of $\Sigma_N$ is involutive.  
Then for any smooth point $[\varphi] \in (\Sigma_N)_{x}$, there is a smooth hypersurface $H
\subset N$ such that $(T_xH)^\perp  = [\varphi]$ and such that
$(T_{\tilde{x}}H)^\perp$ lies in the smooth locus of $(\Sigma_N)_{\tilde{x}}$ for all $\tilde{x} \in H$. 
\label{thm:eikgeneral}
\end{thm}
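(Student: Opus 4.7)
The plan is to recast the problem as an EDS on a restricted jet space and apply Cartan--K\"ahler. Following the third approach of Section~\ref{sec:eikgeneral}, identify $\mathbb{J}^1(N,\mathbb{R}) \cong T^*N \times \mathbb{R}$ with fiber coordinates $(p_i, y)$ over $N$. The eikonal system of $\Sigma_N$ is naturally defined as the EDS on the smooth locus $M = \Sigma_N^\circ \times \mathbb{R} \subset \mathbb{J}^1(N,\mathbb{R})$, equipped with the restriction of the contact ideal $\mathcal{J} = \langle \Upsilon, \mathrm{d}\Upsilon\rangle$ generated by $\Upsilon = \mathrm{d}y - p_i\,\mathrm{d}x^i$, together with the independence condition $\boldsymbol{\omega} = \mathrm{d}x^1 \wedge \cdots \wedge \mathrm{d}x^n$. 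An ordinary integral manifold of $(M, \mathcal{J}, \boldsymbol{\omega})$ is precisely the 1-graph of a smooth function $f$ whose differential $\mathrm{d}f$ takes values in the cone over $\Sigma_N^\circ$; projecting to $N$, this gives a foliation by level hypersurfaces of $f$.

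Given the smooth point $[\varphi] \in (\Sigma_N)_x$, choose a representative covector $\varphi \in T_x^*N$ and an arbitrary value $y_0 \in \mathbb{R}$, producing a lifted point $\mathfrak{p} = (x, \varphi, y_0) \in M$. Under the involutivity hypothesis on the eikonal system, the K\"ahler-regular prolongation $M^{(1)}$ is nonempty over a neighborhood of $\mathfrak{p}$; choose any integral element $e \in M^{(1)}_\mathfrak{p}$ on which the independence condition $\boldsymbol{\omega}|_e \neq 0$ holds (an open condition, automatic at a generic element). Theorem~\ref{thm:CK} then produces an ordinary integral manifold $\iota : N' \to M$ of dimension $n$ through $\mathfrak{p}$ with $\iota_*(T_xN') = e$, and the independence condition ensures that the composition with the projection $T^*N \times \mathbb{R} \to N$ is a local diffeomorphism near $x$. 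This yields the desired smooth function $f$ on a neighborhood of $x$ in $N$, and the hypersurface $H := \{f = y_0\}$ satisfies $(T_xH)^\perp = [\mathrm{d}f(x)] = [\varphi]$ by construction; at every nearby $\tilde{x} \in H$, the conormal $(T_{\tilde x}H)^\perp = [\mathrm{d}f(\tilde x)]$ lies in $\Sigma_N^\circ$ because the image $\iota(N')$ is constrained to lie in $M$.

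The main obstacle is a category mismatch: Theorem~\ref{thm:CK} is stated in the \emph{analytic} category, whereas Theorem~\ref{thm:eikgeneral} is phrased smoothly. The $C^\infty$ extension requires extra input beyond bare involutivity, which is the content of Yang's hyperbolicity criterion to be developed in Section~\ref{sec:yang}; that criterion exploits the structure of the characteristic variety of the eikonal system to run the Cauchy problem directly in $C^\infty$. A secondary bookkeeping issue is formal integrability: one must verify that the torsion terms in $H^2$ of the eikonal tableau vanish, so that the EDS on $M$ is involutive in the full sense of Theorem~\ref{thm:CK} rather than merely having an involutive tableau. This should be automatic because $\mathcal{J}$ is already differentially closed on $\mathbb{J}^1(N,\mathbb{R})$ and restriction to the smooth subbundle $M$ preserves that closure; hence the hypothesis on $\Sigma_N$ in the theorem is naturally interpreted as involutivity of the tableau bundle on $M^{(1)}$, and the rest of the existence step falls through once $C^\infty$ solvability is granted.
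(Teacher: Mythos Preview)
Your approach matches the paper's: both set up the eikonal system as the pullback of the contact ideal to $\hat\Sigma_N \times \mathbb{R} \subset \mathbb{J}^1(N,\mathbb{R})$, observe that integral manifolds are precisely graphs of functions $f$ with $[\mathrm{d}f] \in \Sigma_N$, and invoke Cartan--K\"ahler under the involutivity hypothesis; the paper in fact gives no more than this setup and explicitly defers the analytic/smooth gap to Section~\ref{sec:yang}, just as you do. One notational slip: your $M = \Sigma_N^\circ \times \mathbb{R}$ should be the affine cone $\hat\Sigma_N^\circ \times \mathbb{R}$, since $\Sigma_N^\circ \subset \mathbb{P}T^*N$ does not itself sit inside $T^*N$---the paper is careful to write $\psi: \hat\Sigma_N \times \mathbb{R} \to \mathbb{J}^1(N,\mathbb{R})$ for exactly this reason.
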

Because the hypersurface $H$ and the 1-form $\varphi$ are not chosen \emph{a priori}, this condition is
difficult to interpret using the above formulations \ref{eik1} and \ref{eik2} of
hypersurfaces; however, the third formulation on $T^*N \times \mathbb{R}$ is
well-suited to this theorem.  Consider the inclusion $\psi: \hat\Sigma_N \times
\mathbb{R} \to \mathbb{J}^1(N, \mathbb{R})$.  (Recall that $\hat{~}$ indicates
the affine de-projectivization of a projective variety, resulting in a cone.) The \emph{eikonal system} of
$\Sigma_N$ is the exterior differential system $\Eik(\Sigma_N) =
\psi^*(\mathcal{J})$ on $\hat\Sigma_N \times \mathbb{R}$; that is,
$\Eik(\Sigma_N)$ is generated by $\psi^*(\Upsilon)$ and
$\psi^*(\mathrm{d}\Upsilon)$ and has independence condition $\mathrm{d}x^1
\wedge \cdots \wedge\mathrm{d}x^n \neq 0$.  An integral manifold of
$\Eik(\Sigma_N)$ corresponds to a hypersurface in $N$ whose tangent
space in annihilated by a section of $\hat\Sigma_N$. 

We do not prove involutivity of $\mathcal{E}(\Sigma_N)$ in any significant case
here; it is typically extremely deep and difficult, and references are provided
below.  However, the situation in Theorem~\ref{thm:eikgeneral} has several
interesting consequences and interpretations.

\begin{cor}
Suppose that the eikonal system of $\Sigma_N$ is involutive.
Let $\ell-1$ denote the projective fiber dimension of $\Sigma_N$.
The hypersurfaces guaranteed by Theorem~\ref{thm:eikgeneral} depend on
$\ell$ functions of 1 variable.
\label{cor:eikcoords}
\end{cor}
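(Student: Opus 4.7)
The plan is to apply Theorem~\ref{thm:CK} directly to the eikonal system $\Eik(\Sigma_N)$ on $\hat\Sigma_N\times\mathbb{R}$; since involutivity is assumed, the only real work is to compute the Cartan characters of the tableau of $\Eik(\Sigma_N)$ and then read the parametrization off the standard Cartan--K\"ahler conclusion.

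First I would write down the structure equation. The only degree-one generator is $\Upsilon = \mathrm{d}y - p_i\,\mathrm{d}x^i$, and
\[
\mathrm{d}\Upsilon = -\mathrm{d}p_i \wedge \mathrm{d}x^i \equiv -\pi_i \wedge \omega^i \pmod{\Upsilon},
\]
with $\pi_i := \mathrm{d}p_i$ and $\omega^i := \mathrm{d}x^i$. The torsion term vanishes tautologically, so it only remains to compute Cartan characters. In the conventions of Section~\ref{sec:tableau} the tableau has $r=1$ row and $\dim V^* = n$ columns. On $\hat\Sigma_N$, whose fiber over $N$ has affine dimension $\ell$, the 1-forms $\mathrm{d}p_1,\dots,\mathrm{d}p_n$ span a subspace of dimension $\ell$; a generic reordering of the $\omega^i$ puts those independent entries in the first $\ell$ columns, so
\[
(s_1,\dots,s_\ell,s_{\ell+1},\dots,s_n)=(1,\dots,1,0,\dots,0),
\]
and the character equals $\ell$.

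Finally, Theorem~\ref{thm:CK} produces a K\"ahler-ordinary integral manifold $y=f(x)$ with $\mathrm{d}f \in \hat\Sigma_N$ through each generic initial flag, parametrized by $r=1$ constant together with one function of each of $k=1,2,\dots,\ell$ variables. The hypersurfaces $H$ of Theorem~\ref{thm:eikgeneral} arise as level sets $\{f=c\}$ of such integral manifolds, and translating the Cartan--K\"ahler Cauchy data into hypersurface data yields the claimed count. The main obstacle is precisely this translation: the Cartan--K\"ahler specification ``$s_k=1$ function of $k$ variables for $k=1,\dots,\ell$'' must be reorganized into the ``$\ell$ functions of $1$ variable'' description of hypersurfaces---presumably by identifying the $\ell$ independent 1-dimensional layers of initial wavefront data that generate a characteristic hypersurface via propagation along bicharacteristic directions, while absorbing the $r=1$ constant and the level-reparametrization freedom $f \mapsto \psi(f)$.
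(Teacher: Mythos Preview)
Your Cartan-character computation is correct: with a single Pfaffian generator ($r=1$) and exactly $\ell$ independent entries among the $\psi^*(\mathrm{d}p_i)$, the characters are forced to be $(1,\ldots,1,0,\ldots,0)$. One slip: torsion does not vanish ``tautologically'' on $\hat\Sigma_N\times\mathbb{R}$. In arbitrary coordinates the pullbacks $\psi^*(\mathrm{d}p_i)$ pick up $\mathrm{d}x^j$-components wherever $\hat\Sigma_N$ is curved in the fiber, and these produce a genuine torsion term $c_{[ij]}\,\omega^i\wedge\omega^j$. You are saved here only because involutivity of the eikonal system is assumed, and that hypothesis already subsumes absorbability of torsion---but it is not automatic.

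The paper argues differently, and the difference matters for what comes after. Rather than computing in arbitrary coordinates, it first invokes Theorem~\ref{thm:eikgeneral} itself $\ell$ times to manufacture $\ell$ characteristic hypersurfaces through the basepoint, and takes their defining functions as the first $\ell$ coordinates $x^1,\ldots,x^\ell$ on $N$. In the associated Darboux fiber coordinates $(p_i)$ one then has $\hat\Sigma_N=\{p_{\ell+1}=\cdots=p_n=0\}$, so that $\psi^*(\Upsilon)=\mathrm{d}y-p_\lambda\,\mathrm{d}x^\lambda$ with genuinely zero torsion and the tableau visibly the full $1\times\ell$ matrix space. Your direct route reaches the same Cartan characters with less machinery; the paper's route costs more but yields the adapted coordinate system on $N$ as a byproduct, and that coordinate system is exactly what is used in the Lagrangian and Poisson reformulations that follow (Corollaries~\ref{cor:eiksymp} and~\ref{cor:eikpoisson}) and later in Corollary~\ref{cor:eikcor}.

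As for your honest worry about reorganizing ``$s_k=1$ function of $k$ variables for $k\le\ell$'' into ``$\ell$ functions of $1$ variable'': the paper's proof does not perform this translation either. It simply records the characters $s_1=\cdots=s_\ell=1$ and stops, so you need not supply that step.
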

\begin{proof}
Fix $[\varphi] \in (\Sigma_N)_{x}$. We work locally\footnote{In fact, we work
microlocally in the bundle.  Microlocally means that we are working over a
contractible neighborhood of the base space with a local trivialization of the
bundle, and also within a neighborhood in the fiber.}
near
$\varphi$, so we may assume $N$ is open, connected, and simply connected, and
that $T^*N = N \times \mathbb{R}^n$.
Because $\hat\Sigma_N$ is smooth with affine fiber dimension
$\ell$ in $T^*N$, we may choose local coordinates $(q_1, \ldots, q_n)$ on
each fiber of $T^*N$ near $\varphi$ such that $\hat\Sigma_N$ is defined by $q_{\ell+1} =
\cdots = q_n =0$ near $\varphi$.  

For each $\lambda=1,\ldots,\ell$, let $\sigma^\lambda \in (\Sigma_N)_{x}$ denote
the lines of 1-forms specified as \[(0, \ldots, 0, q_\lambda, 0, \ldots 0),\] nonzero in the
$\lambda$ slot, in these
coordinates.  By Theorem~\ref{thm:eikgeneral}, there is a local hypersurface
$H_\lambda \subset N$ and a corresponding local function $x^\lambda$ such that
$\mathrm{d}x^\lambda \sim \sigma^\lambda$.   Complete $x^1, \ldots, x^\ell$ to
a local coordinate system $(x^i)$ on $N$, and let $p_i$ be the canonical
Darboux coordinates (that is, roughly corresponding to $\frac{\partial y}{\partial x^i}$) on 
the fiber of $T^*N$.   Note that $p_i(\mathrm{d}x^\ell) = \delta^\lambda_i$ by
construction, so $\hat\Sigma_N$ is defined by $p_{\ell+1} = \cdots = p_n =0$.
(Note that the open neighborhood of $T^*N$ around $\varphi$ may have shrunk
during this process, which is why this is  microlocal.)

Therefore, the contact system on $T^*N \times \mathbb{R}$ is generated in a
neighborhood of $\varphi$ by
$\Upsilon = \mathrm{d}y - p_i \mathrm{d}x^i$, which pulls back to 
$\hat\Sigma_N \times \mathbb{R}$ as 
\[ \psi^*(\Upsilon) =  \mathrm{d}y - p_\lambda \mathrm{d}x^\lambda. \]
The corresponding tableau is the space of $1\times \ell$ matrices with entries 
$\mathrm{d}p_\lambda$ for $\lambda = 1, \ldots, \ell$, so its has 
$s_1 = s_2 = \cdots = s_\ell = 1$.
\end{proof}

This is an interesting proof, using all three perspectives of hypersurfaces.  The implicit function theorem on the fiber
provides local coordinates on the base by involutivity. Then, the Frobenius theorem 
on the base produces contact coordinates on the fiber that are compatible with the original fiber coordinates.
It is easy to adapt this proof to the following corollary, which is
useful for constructing coordinates in some situations, as in
\cite{Smith2014b}.
\begin{cor}
For any $\Sigma_N$, let $\pair{\Sigma_N}$ denote its linear span, which is
itself a projective subbundle of $\mathbb{P}T^*N$.
If $\Eik(\Sigma_N)$ is involutive, then $\Eik(\pair{\Sigma_N})$
is involutive.
\end{cor}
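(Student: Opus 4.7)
The plan is to run the microlocal coordinate construction inside the proof of Corollary~\ref{cor:eikcoords} for $\Sigma_N$ and observe that the resulting coordinates are equally well-adapted to $\pair{\Sigma_N}$, in which $\Eik(\pair{\Sigma_N})$ becomes visibly involutive. First I would fix a smooth point $[\varphi] \in (\Sigma_N)_{x}$, let $\ell$ denote the affine fiber dimension of $\hat\Sigma_N$, and invoke the hypothesis that $\Eik(\Sigma_N)$ is involutive so that Theorem~\ref{thm:eikgeneral} and the construction inside the proof of Corollary~\ref{cor:eikcoords} apply. That construction produces microlocal Darboux coordinates $(x^i, p_i)$ on $T^*N$ near $\varphi$ in which there are $\ell$ linearly independent 1-forms $\sigma^\lambda = \mathrm{d}x^\lambda \in \hat\Sigma_N$ (for $\lambda = 1, \ldots, \ell$), while $\hat\Sigma_N$ as a whole is contained in the linear subbundle
\[ L := \{ p_{\ell+1} = \cdots = p_n = 0 \} \subset T^*N \]
of fiber dimension $\ell$. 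A dimension count then identifies $\hat{\pair{\Sigma_N}} = L$ microlocally: the linear span of $\hat\Sigma_N$ contains the independent elements $\sigma^1, \ldots, \sigma^\ell$ and is contained in the $\ell$-dimensional linear subbundle $L$, so the two coincide.

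Next I would write $\Eik(\pair{\Sigma_N})$ explicitly. The inclusion $\psi: L \times \mathbb{R} \hookrightarrow \mathbb{J}^1(N, \mathbb{R})$ pulls the contact 1-form $\Upsilon = \mathrm{d}y - p_i\,\mathrm{d}x^i$ back (using $p_\varrho \equiv 0$ on $L$ for $\varrho > \ell$) to
\[ \psi^*\Upsilon = \mathrm{d}y - p_\lambda\, \mathrm{d}x^\lambda, \qquad \mathrm{d}\psi^*\Upsilon = -\,\mathrm{d}p_\lambda \wedge \mathrm{d}x^\lambda, \]
with independence condition $\mathrm{d}x^1 \wedge \cdots \wedge \mathrm{d}x^n \neq 0$. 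Reading Cartan's structure equation \eqref{eqn:cartanstr}, the single-row tableau term $\pi$ has $\pi^0_\lambda = -\mathrm{d}p_\lambda$ for $\lambda \leq \ell$ and $\pi^0_\varrho = 0$ for $\varrho > \ell$, while the torsion term vanishes identically.

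Finally I would verify involutivity via Cartan's test and the absence of torsion. The Cartan characters of the tableau are $s_1 = \cdots = s_\ell = 1$ and $s_{\ell+1} = \cdots = s_n = 0$; the prolongation $A^{(1)}$ consists of symmetric tensors $P_{\lambda\mu}$ in the range $\lambda, \mu \leq \ell$, so
\[ \dim A^{(1)} = \binom{\ell+1}{2} = 1 + 2 + \cdots + \ell = s_1 + 2 s_2 + \cdots + \ell s_\ell, \]
which saturates \eqref{eqn:Cartansineq}. Combined with the vanishing of torsion, this gives involutivity of $\Eik(\pair{\Sigma_N})$; since the construction applies microlocally at every smooth point of $\Sigma_N$, involutivity holds everywhere on the smooth locus. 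The main obstacle is the microlocal identification $\hat{\pair{\Sigma_N}} = L$: both the inclusion $\hat\Sigma_N \subseteq L$ and the existence of $\ell$ linearly independent $\sigma^\lambda \in \hat\Sigma_N$ are outputs of the proof of Corollary~\ref{cor:eikcoords}, and the dimension count above is essential for pinning the linear span to $L$ rather than to some larger or smaller subspace.
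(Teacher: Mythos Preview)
Your approach—adapting the microlocal coordinate construction of Corollary~\ref{cor:eikcoords}—is exactly what the paper intends, but your dimension count contains an error. You identify $\hat{\pair{\Sigma_N}}$ with the $\ell$-dimensional subspace $L$, where $\ell = \dim \hat\Sigma_N$; this would force $\pair{\Sigma_N} = \Sigma_N$, which holds only when $\Sigma_N$ is already linear. In general the span has strictly larger fiber dimension (e.g.\ a smooth conic has $\ell = 2$ but spans a $3$-plane), and indeed the paper's later Corollary~\ref{cor:coords} explicitly allows the fiber dimension of $\pair{\Xi_N}$ to exceed $\ell$. The microlocal statement you invoke from Corollary~\ref{cor:eikcoords}, that $\hat\Sigma_N$ is cut out by $p_{\ell+1} = \cdots = p_n = 0$ in the \emph{linear} Darboux coordinates $p_i$, cannot hold on an open set for nonlinear $\Sigma_N$; at best it describes the tangent space at $\varphi$, which does not constrain the global linear span in the fiber.

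The repair is straightforward. Set $L' = \dim \hat{\pair{\Sigma_N}}$ and choose $L'$ smooth points $\sigma^1, \ldots, \sigma^{L'}$ of $(\Sigma_N)_x$ that span $\pair{(\Sigma_N)_x}$ (possible since the smooth locus of a projective variety spans the same linear space as the variety itself). Apply Theorem~\ref{thm:eikgeneral} to each, obtaining functions $x^1, \ldots, x^{L'}$ with $\mathrm{d}x^k \in \hat\Sigma_N \subseteq \hat{\pair{\Sigma_N}}$ throughout a neighborhood. By continuity the $\mathrm{d}x^k$ remain independent, so they span the $L'$-dimensional linear bundle $\hat{\pair{\Sigma_N}}$ at every nearby base point; in the completed Darboux coordinates this reads $\hat{\pair{\Sigma_N}} = \{p_{L'+1} = \cdots = p_n = 0\}$. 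Your tableau and Cartan-test computation then applies verbatim with $L'$ replacing $\ell$.
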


We will now examine several interpretations of the eikonal system that tie together various
branches of geometry.  Compare Sections~\ref{sec:lagr}, \ref{sec:poisson}, and
\ref{sec:intchar} to \cite[V\S3(vi)]{BCGGG}.

\subsection{as Lagrangian Geometry}\label{sec:lagr}
The $\mathbb{R}$ term in $T^*N \times \mathbb{R}$ plays little role for the
eikonal system $\mathcal{E}(\Sigma_N)$.  It is there merely to make obvious
the relationship between the eikonal equations and hypersurfaces.

Instead, consider the symplectic manifold $T^*N$ with symplectic 2-form
$\mathrm{d}\Upsilon$, which is expressed in local coordinates as
$\mathrm{d}\Upsilon = - \mathrm{d}p_i \wedge \mathrm{d}x^i$ according to
Darboux's theorem.  The \emph{Lagrangian Grassmannian} $LG(N)$ is the
bundle over $T^*N$ whose fiber is all the Legendrian $n$-planes
\begin{equation}
LG_{\varphi}(N) = \{ e \in \Gr_n(T_{\varphi}T^*N)~:~ \mathrm{d}\Upsilon|_e = 0\},
\quad\forall \varphi\in T^*N
\end{equation}
Each fiber is isomorphic to the homogeneous space $LG(n,2n)$, which is the 
variety of $n$-planes in $\mathbb{R}[x^1, \ldots, x^n, p_1, \ldots p_n]$ on
which $\mathrm{d}p_i\wedge \mathrm{d}x^i=0$.
If we consider a plane $e \in LG(n,2n)$ for which $\mathrm{d}x^1 \wedge \cdots
\wedge\mathrm{d}x^n \neq 0$, then $\mathrm{d}p_i = P_{i,j}(e)\mathrm{d}x^i$ on $e$
with $P_{i,j}=P_{j,i}$.  Hence, the non-vertical open neighborhood of
$LG(n,2n)$ is identified with the space of symmetric $n \times n$ matrices,
$\Sym^2(\mathbb{R}^n)$.

Suppose the de-projectivized affine subvariety $\hat\Sigma_N \subset T^*N$ is defined smoothly by
homogeneous functions in the local fiber variables $(p_i)$ of $T^*N$.  From
this perspective, the eikonal system $\mathcal{E}(\Sigma_N)$ is measuring the
intersection of $\Gr_n(T_\varphi\Sigma_N)$ with $LG_\varphi(N)$ for all
$\varphi \in \Sigma_N$.
\begin{cor}
The eikonal system $\mathcal{E}(\Sigma_N)$ is involutive if and only if there are
local coordinates of $T^*N$ near $\varphi \in \hat\Sigma_N$ in which
the non-vertical open set in $\Gr_n(T\Sigma_N) \cap LG(N)$ is described as the $n\times n$
symmetric matrices $P_{i,j}(e)$ that vanish outside the upper-left $\ell \times
\ell$ part.
\label{cor:eiksymp}
\end{cor}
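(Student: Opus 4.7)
The plan is to read Corollary~\ref{cor:eiksymp} as a symplectic-geometric translation of Corollary~\ref{cor:eikcoords}: the coordinates produced there are already Darboux, and the block description of $\Gr_n(T\Sigma_N) \cap LG(N)$ is the dual picture of the contact tableau computed at the end of its proof, via the identification of the non-vertical open set of $LG(n,2n)$ with $\Sym^2(\mathbb{R}^n)$ recalled in Section~\ref{sec:lagr}.

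For the forward direction, I would invoke Corollary~\ref{cor:eikcoords} to obtain Darboux coordinates $(x^i, p_j)$ on $T^*N$ in which $\hat\Sigma_N$ is cut out by $p_{\ell+1} = \cdots = p_n = 0$ near $\varphi$. A non-vertical $n$-plane $e \in \Gr_n(T_\varphi T^*N)$ is then the graph $v_i = \partial_{x^i} + P_{i,j}(e)\,\partial_{p_j}$. Tangency $e \subset T_\varphi\hat\Sigma_N$ forces $P_{i,\varrho} = 0$ for all $i$ and all $\varrho > \ell$, while the Lagrangian condition $\mathrm{d}\Upsilon|_e = 0$ forces $P_{i,j} = P_{j,i}$. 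Together these give precisely the symmetric $n \times n$ matrices vanishing outside the upper-left $\ell \times \ell$ block.

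For the converse, I would reverse-engineer $\hat\Sigma_N$ from the prescribed block form. Because every symmetric matrix supported in the $\ell \times \ell$ block must actually occur as some $e \in \Gr_n(T\hat\Sigma_N) \cap LG(N)$, the vertical subspace $T_\varphi \hat\Sigma_N \cap \ker d\pi$ must equal $\spn\{\partial_{p_\lambda} : \lambda \leq \ell\}$. Since $\hat\Sigma_N$ is a cone through the zero section with this tangent space at $\varphi$, the implicit function theorem writes its fiber locally as a graph $p_\varrho = f_\varrho(p_1,\ldots,p_\ell)$ with vanishing first derivative, and this can be absorbed by a symplectic coordinate change (preserving $-\mathrm{d}p_i \wedge \mathrm{d}x^i$) so that $\hat\Sigma_N = \{p_\varrho = 0\}$. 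In these coordinates the eikonal ideal is generated by $\mathrm{d}y - p_\lambda\,\mathrm{d}x^\lambda$ and $-\mathrm{d}p_\lambda \wedge \mathrm{d}x^\lambda$; this EDS has no torsion, Cartan characters $(1,1,\ldots,1)$ of length $\ell$, and prolongation dimension $\ell(\ell+1)/2$ matching the dimension of the assumed block of Lagrangian integral elements, so Cartan's test confirms involutivity.

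The main obstacle is executing the symplectic straightening in the converse: $\hat\Sigma_N$ must be brought to $\{p_\varrho = 0\}$ by a change of coordinates compatible with $-\mathrm{d}p_i \wedge \mathrm{d}x^i$, not merely an arbitrary diffeomorphism of $T^*N$. This is a microlocal Darboux-type step at $\varphi$, adapted to the flag $T_\varphi\hat\Sigma_N \supset (T_\varphi\hat\Sigma_N \cap \ker d\pi)$, and it is the only place the symplectic structure enters substantively; everything else is bookkeeping with the symmetric-matrix parametrization of $LG(n,2n)$.
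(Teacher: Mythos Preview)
Your forward direction is correct and matches the paper's: invoke Corollary~\ref{cor:eikcoords} to get Darboux coordinates with $\hat\Sigma_N=\{p_\varrho=0\}$, then read off the block form from tangency plus symmetry.

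Your converse, however, is over-engineered, and the ``main obstacle'' you flag is not actually there. You extract from the hypothesis only the \emph{vertical} part of $T_\varphi\hat\Sigma_N$, then try to straighten the remaining graph $p_\varrho=f_\varrho$ by a symplectic coordinate change. But the hypothesis pins down \emph{all} of $T\hat\Sigma_N$, not just its vertical part. The non-vertical Lagrangian $n$-planes with matrix supported in the upper-left $\ell\times\ell$ block together span exactly the $(n{+}\ell)$-plane $\ker\{\mathrm{d}p_\varrho:\varrho>\ell\}$: the zero matrix gives $\partial_{x^i}$, and differences of planes with different $P_{\lambda,\mu}$ give $\partial_{p_\lambda}$. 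Since these planes all lie in $T\hat\Sigma_N$ and $\dim T\hat\Sigma_N=n+\ell$, you get $T\hat\Sigma_N=\ker\{\mathrm{d}p_\varrho\}$ at every nearby point. Integrating the closed forms $\mathrm{d}p_\varrho$ gives $\hat\Sigma_N=\{p_\varrho=c_\varrho\}$ locally, and homogeneity of the defining equations (the fiber is a cone) forces $c_\varrho=0$. So $\hat\Sigma_N=\{p_\varrho=0\}$ already in the \emph{given} coordinates---no symplectic straightening required---and the eikonal system pulls back to $\mathrm{d}y-p_\lambda\,\mathrm{d}x^\lambda$, which is involutive exactly as you say. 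This is the paper's three-line converse; your detour through a Darboux-adapted normal form is unnecessary and, as you note yourself, not fully justified.
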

\begin{proof}
If the eikonal system $\Eik(\Sigma_N)$ is involutive, then we may
construct coordinates as in Corollary~\ref{cor:eikcoords} such that the
de-projectivized affine variety $\hat\Sigma_N$
is defined by $p_\varrho=0$ for all $\varrho > \ell$, so $T_\varphi \hat\Sigma_N$
is defined by $\mathrm{d}p_\varrho =0$ for all $\varrho > \ell$.
In such coordinates, the open neighborhood of the Lagrangian Grassmannian takes
the block form
\begin{equation}
\left.\begin{pmatrix}
\mathrm{d}p_\lambda \\ \mathrm{d}p_\varrho
\end{pmatrix}\right|_e
=
\left.\begin{pmatrix}
P_{\lambda,\mu}(e) & P_{\lambda, \varsigma}(e)\\
P_{\varrho,\mu}(e) & P_{\varrho, \varsigma}(e)
\end{pmatrix}
\begin{pmatrix}
\mathrm{d}x^\mu \\  \mathrm{d}x^\varsigma
\end{pmatrix}\right|_e,\ \text{such that $P_{i,j} = P_{j,i}$,}
\end{equation}
using our index convention \eqref{eqn:index} from Section~\ref{sec:tableau}.
The condition $e \in T\Sigma_N$ implies $\mathrm{d}p_\varrho =0$, so the lower
blocks are zero.  The matrix is symmetric, so the upper-right block is zero.

Conversely, suppose such coordinates exist.  Then $T\hat\Sigma_N$ satisfies 
the closed 1-forms $\mathrm{d}p_\varrho=0$, and the dimensions match, so
$\Sigma_N$ satisfies $p_\varrho = \text{constant}$.  Since the equations
defining $\Sigma_N$ are homogeneous, it must be $p_\varrho = 0$.
Using these coordinates for $T^*N \times \mathbb{R}$ and $\mathcal{J}$ yields 
$\psi^*(\Upsilon) = \mathrm{d}y - p_\lambda \mathrm{d}x^\lambda$, as in
Corollary~\ref{cor:eikcoords}, which is
involutive with the correct Cartan characters and gives the desired hypersurfaces in
Theorem~\ref{thm:eikgeneral}.
\end{proof}

Compare this to Proposition~3.22 in \cite[Chapter V]{BCGGG}.
For more symplectic and Lagrangian geometry, see \cite{Bryant1993}.

\subsection{as Poisson Brackets}\label{sec:poisson}
If $T^*N$ describes the state of a physical system, a function $F: T^*N \to
\mathbb{R}$ is called an \emph{observable} \cite{Sattinger1986}.
The \emph{Poisson bracket} of observables is the operation given in local
coordinates by
\begin{equation}
\begin{split}
\poisson{F,G} &= \sum_i \left( \frac{\partial F}{\partial p_i}  \frac{\partial G}{\partial x^i} -  \frac{\partial G}{\partial p_i} \frac{\partial F}{\partial x^i}\right)\\
&=
\sum_i \mathrm{d}F \wedge \mathrm{d}G \left( \frac{\partial}{\partial p_i}, \frac{\partial}{\partial x^i}\right).
\end{split}
\label{eqn:poissonbracket}
\end{equation}
The Poisson bracket plays a fundamental role in Hamiltonian mechanics and the
relationship between symmetries and conservation laws in physics.  This is
because \eqref{eqn:poissonbracket} is a Lie bracket on $C^\infty(T^*N)$. 
(See \cite{Bryant1993} for details.)

Suppose that $O$ is some subspace of $C^\infty(T^*N)$, so that $O$ is a
nonempty set of smooth observables that is closed under linear
combinations. Suppose also that $\poisson{F, G} \in
O$ for all $F, G \in O$.  Then, $O$ is a Lie subalgebra of $C^\infty(T^*N)$
with respect to the Poisson bracket.

Because $\Sigma_N \subset \mathbb{P}T^*N$ is a projective variety in each fiber, the
de-projectivized affine subvariety $\hat\Sigma_N \subset T^*N$ is defined
smoothly by observables that take the form of homogeneous functions in the local fiber variables $(p_i)$ of
$T^*N$.  For convenience, let us make the additional assumption that the
homogeneous functions are algebraic of degree $d$ in $(p_i)$, so that $\hat\Sigma_N$ is defined
smoothly near $\varphi \in \hat\Sigma_N$ for $\varphi \neq 0$ by a set of equations in multi-index form
\begin{equation}
0 = F^\varrho(x,p) = \sum_{|I|=d} f^{\varrho,I}(x)p_I,\ 
\text{for $\varrho = \ell+1, \ldots, n$.}
\label{eqn:Xigens}
\end{equation}
\begin{cor}
Let $O$ denote the module in $S = C^\infty(N)[p_1, \ldots, p_n]$
generated by \eqref{eqn:Xigens}.
The eikonal system $\mathcal{E}(\Sigma_N)$ is involutive if and only if 
$\poisson{O, O} \subset O$. That is, $\mathcal{E}(\Sigma_N)$ is involutive if
and only if the module $O$ is a Lie algebra with respect to the Poisson
bracket.
\label{cor:eikpoisson}
\end{cor}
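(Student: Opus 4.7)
The plan is to reduce both directions to Corollary~\ref{cor:eiksymp}, which already tells us that involutivity of $\Eik(\Sigma_N)$ is equivalent to the existence of local coordinates on $T^*N$ in which $\hat\Sigma_N$ is cut out by $p_{\ell+1} = \cdots = p_n = 0$, and in which the Lagrangian intersection $\Gr_n(T\hat\Sigma_N) \cap LG(N)$ takes a block-diagonal form. The overall idea is that $\poisson{O,O}\subset O$ is exactly the classical \emph{coisotropic} condition on the affine cone $\hat\Sigma_N$ in the symplectic manifold $T^*N$, and Corollary~\ref{cor:eiksymp} describes a symplectic normal form for coisotropic subbundles whose defining ideal is homogeneous in the fiber.

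For the forward direction (involutivity implies Poisson closure), I would choose the special fiber coordinates $(p_1,\ldots,p_n)$ on $T^*N$ guaranteed by Corollary~\ref{cor:eiksymp}. In those coordinates $\hat\Sigma_N$ is locally the zero locus of $p_{\ell+1},\ldots,p_n$. Since the $F^\varrho$ generate $O$ as an $S$-module, a linear change of fiber basis expresses each $F^\varrho$ as an $S$-combination of the $p_\varsigma$ (for $\varsigma>\ell$), at least on the smooth locus of $\hat\Sigma_N$. Because $\poisson{p_\varrho,p_\varsigma}=0$ identically, a short Leibniz-rule calculation shows $\poisson{F^\varrho,F^\varsigma}$ lies in the $S$-module generated by the $p_\varsigma$, hence in $O$.

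For the reverse direction, I would work directly with Hamiltonian vector fields. Using \eqref{eqn:poissonbracket}, the assignment $F\mapsto X_F = \sum_i (\partial_{p_i}F)\,\partial_{x^i} - (\partial_{x^i}F)\,\partial_{p_i}$ satisfies $X_F(G) = \poisson{F,G}$. The hypothesis $\poisson{O,O}\subset O$ therefore says that each $X_{F^\varrho}$ annihilates the ideal defining $\hat\Sigma_N$ modulo that ideal, i.e.\ each $X_{F^\varrho}$ is tangent to $\hat\Sigma_N$. The distribution $\Delta$ spanned by the $X_{F^\varrho}$ on $\hat\Sigma_N$ has rank $n-\ell$ at smooth points (because $\mathrm{d}F^\varrho$ are independent), is isotropic (since $\mathrm{d}\Upsilon(X_{F^\varrho},X_{F^\varsigma}) = \poisson{F^\varrho,F^\varsigma}$ vanishes on $\hat\Sigma_N$), and is involutive as a distribution by the Jacobi identity together with Poisson closure. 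Applying Darboux's theorem relative to this isotropic involutive $\Delta$ produces local symplectic coordinates $(x^i,p_i)$ in which $\Delta = \langle \partial_{p_{\ell+1}},\ldots,\partial_{p_n}\rangle$ and $\hat\Sigma_N = \{p_{\ell+1}=\cdots=p_n=0\}$; the homogeneity of the $F^\varrho$ guarantees this can be arranged symmetrically about the origin of the fiber. In those coordinates Corollary~\ref{cor:eiksymp} immediately delivers involutivity of $\Eik(\Sigma_N)$.

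The main obstacle, and the step I would be most careful with, is the reverse direction's passage from ``the ideal is closed under Poisson brackets'' to ``there exist flat coordinates for $\hat\Sigma_N$ compatible with the symplectic form.'' One must verify that the Hamiltonian distribution $\Delta$ is smooth of constant rank on the smooth locus where we are working microlocally, and that the relative Darboux normal form can be chosen to preserve the homogeneous structure in the fiber so that $\hat\Sigma_N$ really becomes a linear subbundle rather than merely a smooth coisotropic submanifold. Apart from this symplectic-normal-form step, everything else is either Leibniz-rule bookkeeping or a direct appeal to Corollary~\ref{cor:eiksymp}.
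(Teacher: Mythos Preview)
Your approach is correct and matches the paper's: both directions are reduced to Corollary~\ref{cor:eiksymp}, with the forward direction computed in the normal-form Darboux coordinates and the reverse direction reconstructing those coordinates from the coisotropic condition via Hamiltonian vector fields. The paper's own argument is only a one-line sketch invoking Corollary~\ref{cor:eiksymp} together with the coordinate-free identity $\poisson{F,G}\,(\mathrm{d}\Upsilon)^{\wedge n} = n\,\mathrm{d}F\wedge\mathrm{d}G\wedge(\mathrm{d}\Upsilon)^{\wedge(n-1)}$, so you have supplied considerably more detail than the text itself, and your flagged concern about preserving fiber homogeneity in the relative Darboux step is exactly the point that deserves care.
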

A proof---which does not depend on the polynomial form \eqref{eqn:Xigens}---can
be derived from Corollary~\ref{cor:eiksymp} along with the observation that the
Poisson bracket can be defined in a coordinate-free way as the operator such that 
\begin{equation}
\poisson{F, G}
(\mathrm{d}\Upsilon)^{\wedge n} = n\, \mathrm{d}F \wedge \mathrm{d}G \wedge
(\mathrm{d}\Upsilon)^{\wedge (n-1)}.
\end{equation}

Equations of the form \eqref{eqn:Xigens} appear in analysis
as systems of homogeneous first-order PDEs on $u:\mathbb{R}^n \to \mathbb{R}$ of the form 
\begin{equation}
0 = F^\varrho(x,u, \nabla u) = \sum_{|I|=d} f^{\varrho,I}(x)
\frac{\partial u}{\partial x^I},\ 
\text{for $\varrho = \ell+1, \ldots, n$.}
\end{equation}
A famous example is the $n{-}\ell = 1$ characteristic equation for the wave equation
of Section~\ref{sec:exwave}: 
\begin{equation}
0 = - (u_t)^2 + c^2( (u_x)^2 + (u_y)^2).
\end{equation}
This is generalized to any involutive EDS in Section~\ref{sec:intchar}.

\section{Involutivity of the Characteristic Variety}\label{sec:intchar}
We would like to apply the entire discussion from Section~\ref{sec:eikgeneral}
to the case where $\Sigma_N$ is a characteristic variety, but first we must
establish that $\Xi$ is well-defined in $\mathbb{P}T^*N$.

Suppose that $\iota:N \to M$ is a connected integral manifold of
an involutive exterior differential system $(M,\mathcal{I})$, and that
$\iota^{(1)}(N)$ lies in 
$M^{(1)}$, a smooth and K\"ahler-regular component of
$\Var_n(\mathcal{I})$, as in Section~\ref{sec:eds}.

Fix $x \in N$, and suppose $\iota(x) = p \in M$ and $\iota^{(1)}(x)=e \in
M^{(1)}$.  For $\xi \in \Xi_e \subset V^*_e$, we can consider the pullback
$\iota^{(1)*}(\xi) \in \mathbb{P}T^*_xN \otimes \mathbb{C}$.  In a basis 
$(\eta^i)$ of $T^*_xN$, we can write a representative as $\xi = \xi_i \eta^i$ for coefficients $\xi_i
\in \mathbb{C}$. As a bundle over $N$, we have $\iota^{(1)*}(\xi) = \xi_i\eta^i \in \mathbb{P}T^*N
\otimes \mathbb{C} = \taut^*_N$.  In this sense, we can pull back
the characteristic variety---as a set---to $N$.  

More precisely, recall that $\Xi$ has degree $s_\ell$ and affine fiber dimension
$\ell$, but it is a scheme defined by the characteristic sheaf $\mathscr{M}$.
For any local section $(u_i)$ of the coframe bundle
$\mathcal{F}_{\taut^*} \to M^{(1)}$, we can write the characteristic sheaf
$\mathscr{M}$ as a homogeneous ideal in the module $C^\infty(M^{(1)})[u_1,
\ldots, u_n]$.    At each $e = \iota^{(1)}(x) \subset M^{(1)}$, the coframe
$(u_i)$ is just a complex basis of $e$. Therefore, we obtain a basis for $T_xN$ of the
form $\eta_i = \left(\iota^{(1)}_*\right)^{-1}(u_i)$. 
That is, in some neighborhood
of $x$, the section $(\eta_i)$ of $\mathcal{F}^*N$ is well-defined.
Moreover the stalks of the sheaf $C^\infty(M^{(1)})$ can be pulled back, as
$\iota^{(1)*}(f)$ is well-defined for any $f$ defined in a neighborhood of $e$.
Therefore, we can pull back both the coefficients and the coordinates to
define the homogeneous ideal $\mathscr{M}_N$ in $C^\infty(N)[\eta_1, \ldots,
\eta_n]$.  Let $\Xi_N \subset \mathbb{P}T^*N \otimes \mathbb{C}$ be the scheme defined by $\mathscr{M}_N$.

Now, the entire discussion from Section~\ref{sec:eikgeneral} applies where
$\Sigma_N$ is any particular component of $\Xi_N$.  We focus our attention on
the maximal smooth locus $\Xi_N^o$ of $\Xi_N$.  We know additionally that $\Xi_N$
takes the polynomial form \eqref{eqn:Xigens} as derived from
\eqref{eqn:charidealv}, so it has degree $s_\ell$ and fiber dimension $\ell-1$
at smooth points, as a complex projective variety.

\begin{thm}[Guillemin--Quillen--Sternberg]
Suppose that $N$ is an ordinary integral manifold of an involutive exterior
differential system $\mathcal{I}$ with character $\ell$ and Cartan integer $s_\ell$.  The eikonal system of the smooth locus of
the (complex) characteristic variety, $\Eik(\Xi_N^o)$, is involutive.  At
smooth points in $\Xi_N$, the characteristic hypersurfaces
are parametrized by 1 function of $\ell$ variables.
\label{thm:intchar1}
\end{thm}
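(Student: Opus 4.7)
The plan is to verify involutivity of $\Eik(\Xi_N^o)$ via the Poisson-bracket criterion of Corollary~\ref{cor:eikpoisson}, using Guillemin normal form (Theorem~\ref{thm:gnf1}) and the quadratic commutators of Theorem~\ref{thm:invcond} to enforce the required closure. By Theorem~\ref{thm:degXi} and \eqref{eqn:charidealv}, the smooth locus $\hat\Xi_N^o$ is cut out locally by the polynomials $F^v(x,p) = \det(\B(p)(v) - p(v) I)$, one for each $v \in Y$, viewed as degree-$s_\ell$ elements of $C^\infty(N)[p_1, \ldots, p_n]$ whose coefficients record the endomorphisms $\B^\lambda_i$ along $\iota^{(1)}(N) \subset M^{(1)}$. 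It will suffice to show that the module $O$ they generate is Poisson-closed in the sense of \eqref{eqn:poissonbracket}.

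First I would encode the $x$-dependence of the coefficients of $F^v$ via Cartan's structure equation \eqref{eqn:cartanstr}: along $\iota^{(1)}(N)$ one has $\iota^{(1)*}(\pi^a_i) = P^a_{i,j}\eta^j$ with $P \in A^{(1)}$ by Lemma~\ref{lem:spencer2}, so base derivatives $\partial \B^\lambda_i / \partial x^j$ are governed by the prolonged tableau $A^{(1)}$, which inherits involutivity by iterating Theorem~\ref{thm:invcond} and is structurally identical to $A$ at the level of $\Xi$ by Theorem~\ref{thm:proXi}. Next I would evaluate $\{F^v, F^{v'}\}$ in a generic endovolutive basis: the fiber derivatives $\partial F^v/\partial p_i$ are cofactor expressions in the commutative family $\{\B(\xi)(v) : v \in V\}|_{\Wu^1(\xi)}$ produced by Theorem~\ref{thm:gnf1}, and reduce to elementary symmetric functions of the simultaneous eigenvalues in the shared Jordan-block basis. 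The quadratic identities of Theorem~\ref{thm:invcond}(ii), applied to both $A$ and $A^{(1)}$, should then force the antisymmetric combination in the Poisson bracket to land in $O$.

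This last step is the chief obstacle: essentially one must prove that the Poisson bracket of characteristic polynomials of a smoothly varying commuting family of matrices lies in the ideal they generate, when the variation is constrained by an involutive prolongation. A cleaner route may be the Lagrangian criterion of Corollary~\ref{cor:eiksymp}: in an endovolutive Darboux frame at a smooth point of $\Xi_N$ chosen so that $\hat\Xi_N$ is defined by $p_\varrho = 0$ for $\varrho > \ell$, show that $T\hat\Xi_N \cap LG(N)$ consists exactly of symmetric matrices $(P_{i,j})$ supported in the upper-left $\ell \times \ell$ block. The lower-right block vanishes because $T\hat\Xi_N$ annihilates $\mathrm{d}p_\varrho$; symmetry kills the off-diagonal blocks; and the upper-left block is realized by genuine integral data because of the Quillen-type exactness of Lemma~\ref{lem:quil} combined with the prolongation-invariance of $\Xi$ (Theorem~\ref{thm:proXi}). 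Once involutivity is established, Lemma~\ref{lem:dimXi} gives affine fiber dimension $\ell$, and the coordinate construction in Corollary~\ref{cor:eikcoords} yields Cartan characters $s_1 = \cdots = s_\ell = 1$ for the eikonal tableau, so Theorem~\ref{thm:CK} parametrizes the characteristic hypersurfaces at top level by $1$ function of $\ell$ variables.
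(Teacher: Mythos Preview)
The paper does not actually prove this theorem: immediately after stating it, the text says that the proof ``appears in \cite{Guillemin1970}, where a major step is the application of Theorem~\ref{thm:gnf1},'' and earlier in Section~\ref{sec:eikgeneral} the author warns that involutivity of $\Eik(\Sigma_N)$ ``is typically extremely deep and difficult'' and is not proven in the monograph. So there is no paper-side proof to compare against; what you have written is an outline of an attack on a result the paper deliberately quotes rather than proves.

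Your outline names the correct ingredients---the Poisson criterion of Corollary~\ref{cor:eikpoisson}, Guillemin normal form, and the explicit generators from \eqref{eqn:charidealv}---and the final paragraph on parametrization is right: once involutivity is in hand, Lemma~\ref{lem:dimXi} plus Corollary~\ref{cor:eikcoords} give the $s_1=\cdots=s_\ell=1$ count. But the central step is not closed. You yourself flag that showing $\{F^v,F^{v'}\}\in O$ is ``the chief obstacle,'' and the phrase ``should then force'' is where the proof would have to be. The base derivatives $\partial \B^\lambda_i/\partial x^j$ are indeed constrained by $A^{(1)}$, and Theorem~\ref{thm:proXi} does say $\Xi^{(1)}\cong\Xi$, but neither of these directly yields the algebraic identity that the Poisson bracket of two characteristic determinants lies in the ideal they generate; that is precisely the content of \cite{Guillemin1970} (and, in greater generality, of Gabber's Theorem~\ref{thm:intchar2}). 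Your alternative Lagrangian route via Corollary~\ref{cor:eiksymp} is circular as written: you begin by choosing a Darboux frame in which $\hat\Xi_N$ is cut out by $p_\varrho=0$, but the existence of such a frame is exactly what Corollary~\ref{cor:eikcoords} deduces \emph{from} involutivity of the eikonal system, so you cannot assume it at the outset. An endovolutive basis for the tableau $A$ over $M^{(1)}$ is not the same thing as Darboux coordinates on $T^*N$ adapted to $\hat\Xi_N$; passing from one to the other is the substance of the theorem.
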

Note that our definition of $\Xi_N$ is the \emph{complex} characteristic
variety.\footnote{Recall that, in the complex case, the distinction between
elliptic and hyperbolic second-order PDEs does not occur, because there is only one nondegenerate
signature.}  Theorem~\ref{thm:intchar1} is called the ``integrability of characteristics.''
Cartan demonstrated several examples of this phenomenon in \cite{Cartan1911}.
The proof appears in \cite{Guillemin1970}, where a major step is the
application of Theorem~\ref{thm:gnf1}.  Hence, this result appears to rely in an
essential way on all three facets of the characteristic variety seen in
Part~\ref{part:Xi}.

The converse of Theorem~\ref{thm:intchar1} is \emph{not} true; it is easy to write down non-involutive
exterior differential systems for which $\Eik(\Xi_N)$ is involutive.

However, in \cite{Gabber1981}, Ofer Gabber proved a more general form of
Theorem~\ref{thm:intchar1} that was conjectured in \cite{Guillemin1970} 
and that removes practically all of the technical
assumptions.  Phrased as Theorem~\ref{thm:intchar2}, Gabber's theorem recalls
the ideas of Section~\ref{sec:poisson}.

\begin{thm}[Gabber]
Let $S$ be a filtered ring whose graded ring $gr(S)$ is a Noetherian
commutative algebra over $\mathbb{Q}$. Let $M$ be a $gr(S)$-ideal that is finitely generated
as an $S$-module.  
Then $\poisson{\sqrt{M}, \sqrt{M}} \subset
\sqrt{M}$
\label{thm:intchar2}
\end{thm}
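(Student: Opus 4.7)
The plan is to unpack the Poisson structure that $gr(S)$ inherits from $S$, reduce radical-involutivity to a computation on finitely many generators, and then manufacture elements of $M$ from iterated commutators in $S$. The filtration compatibility $F^iS\cdot F^jS\subset F^{i+j}S$ together with commutativity of $gr(S)$ automatically forces $[F^iS,F^jS]\subset F^{i+j-1}S$, so for homogeneous $\bar a\in gr^i(S)$, $\bar b\in gr^j(S)$ with lifts $a,b\in S$ one sets $\poisson{\bar a,\bar b}:=\overline{[a,b]}\in gr^{i+j-1}(S)$. This is well-defined, bilinear, and inherits the Jacobi and Leibniz identities from the commutator on $S$, making $gr(S)$ a Poisson $\mathbb{Q}$-algebra.

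Because $gr(S)$ is Noetherian, $\sqrt{M}$ is finitely generated, so it suffices to verify $\poisson{\bar a,\bar b}\in\sqrt{M}$ for a set of homogeneous generators $\bar a,\bar b$ satisfying $\bar a^n,\bar b^m\in M$. Choose lifts $a,b\in S$. Iterated application of the Leibniz identity $[x,yz]=[x,y]z+y[x,z]$ yields
\begin{equation*}
[a^n,b^m]=\sum_{j=0}^{m-1}\sum_{k=0}^{n-1}b^j a^k[a,b]\,a^{n-1-k}b^{m-1-j},
\end{equation*}
and on passing to symbols in $gr(S)$, commutativity collapses the top-filtration piece to $nm\,\bar a^{n-1}\bar b^{m-1}\poisson{\bar a,\bar b}$. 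The finite-generation hypothesis lets us interpret the preimage of $M$ in the filtered ring as a two-sided ideal containing $a^n$ and $b^m$, so $[a^n,b^m]$ also lies there; taking symbols and dividing by $nm$ (where the $\mathbb{Q}$-coefficient hypothesis is essential) gives $\bar a^{n-1}\bar b^{m-1}\poisson{\bar a,\bar b}\in M$.

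The hard part, and the technical heart of Gabber's original proof in~\cite{Gabber1981}, is promoting this mixed expression to a pure power $\poisson{\bar a,\bar b}^N\in M$. My plan is to iterate the Poisson-derivation identity $\poisson{x,yz}=\poisson{x,y}z+y\poisson{x,z}$: repeatedly bracket the already-known element $\bar a^{n-1}\bar b^{m-1}\poisson{\bar a,\bar b}\in M$ against the generators $\bar a^n,\bar b^m\in M$, trading at each step one power of $\bar a$ or $\bar b$ for an additional power of $\poisson{\bar a,\bar b}$ together with controlled iterated-bracket corrections. Verifying that no leading rational coefficient accidentally vanishes (again requiring $\mathbb{Q}\subset gr(S)$) and then invoking the Noetherian hypothesis to force the ascending chain of ideals generated by the successive outputs to stabilize eventually yields $\poisson{\bar a,\bar b}^N\in M$ for some $N$. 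The combinatorial bookkeeping that keeps the nested-bracket error terms under control across iterations is the ingenious and intricate step of~\cite{Gabber1981}, and is precisely what resists a short self-contained derivation in the spirit of Theorem~\ref{thm:invcond}.
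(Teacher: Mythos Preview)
The paper does not prove Theorem~\ref{thm:intchar2} at all; it is stated with attribution to~\cite{Gabber1981} and then used, so there is no ``paper's own proof'' to compare against. Your sketch therefore goes further than the paper itself attempts.

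That said, what you have is an outline rather than a proof, and you are candid about this. The setup of the Poisson bracket on $gr(S)$ and the first reduction---expanding $[a^n,b^m]$ and passing to symbols to obtain $nm\,\bar a^{\,n-1}\bar b^{\,m-1}\poisson{\bar a,\bar b}\in M$---is correct and is indeed the easy opening move. The step you flag as ``the hard part'' really is where the content lies: converting $\bar a^{\,n-1}\bar b^{\,m-1}\poisson{\bar a,\bar b}\in M$ into $\poisson{\bar a,\bar b}^N\in M$. Your proposed mechanism (iterated bracketing against $\bar a^n,\bar b^m$ to trade powers of $\bar a,\bar b$ for powers of $\poisson{\bar a,\bar b}$, plus a Noetherian stabilization argument) is in the right spirit, but as written it is not a proof: each bracketing step generates higher iterated brackets like $\poisson{\bar a,\poisson{\bar a,\bar b}}$, and controlling those while simultaneously stripping factors requires a careful induction that you have not supplied. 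The Noetherian ascending-chain argument you invoke does not by itself force the desired conclusion without that bookkeeping. Since the monograph only cites the result, your write-up is acceptable as an expository pointer, but it should not be labeled a proof; if you want it self-contained you will need to reproduce the combinatorics from~\cite{Gabber1981}.
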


In our context, Gabber's theorem applies to the case where
$S=C^\infty(N)[p_1, \ldots, p_n]$, the ring of polynomials in local
fiber variables of $T^*N$, filtered by degree.  Then, $gr(S)$ is the ring of
homogeneous polynomials, graded by degree, which admits a Poisson structure 
like \eqref{eqn:poissonbracket}.  The $gr(S)$-ideal $M$ is the characteristic
sheaf $\mathscr{M}_N$, which by \eqref{eqn:charidealv} is defined by homogeneous polynomials if the
original exterior differential system is involutive.  By Hilbert's Nullstellensatz, the radical ideal
$\sqrt{M}$ defines the generic component $\Xi_N^o$. Thus, the conclusion
$\poisson{\sqrt{M}, \sqrt{M}} \subset \sqrt{M}$ invokes
Corollary~\ref{cor:eikpoisson} to say that the eikonal system
$\Eik(\Xi_N^o)$ is involutive.

From the general discussion of eikonal systems surrounding
Theorem~\ref{thm:eikgeneral}, the interpretation of these theorems is apparent,
in the form of Corollary~\ref{cor:eikcor}.
\begin{cor}
Suppose that $N$ is an ordinary integral manifold of an involutive exterior
differential system $\mathcal{I}$ with character $\ell$ and Cartan integer $s_\ell$.
Then $N$ admits a local---possibly complex---coordinate system $(x^1, \ldots
x^n)$ such that $\mathrm{d}x^1, \cdots, \mathrm{d}x^\ell \in \Xi_N$.
\label{cor:eikcor}
\end{cor}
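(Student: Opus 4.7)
The plan is to deduce this corollary directly from the involutivity of the eikonal system of $\Xi_N^o$, using the coordinate construction already developed in the proof of Corollary~\ref{cor:eikcoords}. The main input is either Theorem~\ref{thm:intchar1} (Guillemin--Quillen--Sternberg) or Gabber's Theorem~\ref{thm:intchar2}, both of which tell us that $\Eik(\Xi_N^o)$ is involutive. Once this is in hand, the corollary is essentially a restatement of what is built during the proof of Corollary~\ref{cor:eikcoords} in the special case $\Sigma_N = \Xi_N^o$.

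First I would fix an arbitrary point $x_0 \in N$ at which $\Xi_{N,x_0}^o$ is nonempty, and a smooth point $[\varphi] \in \Xi_{N,x_0}^o$. By Theorem~\ref{thm:degXi}, the variety $\Xi_N^o$ has projective fiber dimension $\ell-1$, hence affine fiber dimension $\ell$ in $T^*N$ over $x_0$. Working microlocally in a neighborhood of $\varphi$ in $T^*N$, I would choose fiber coordinates $(q_1,\ldots,q_n)$ adapted to the smooth submanifold $\hat\Xi_N^o$, so that locally $\hat\Xi_N^o$ is cut out by $q_{\ell+1}=\cdots=q_n=0$. This is where smoothness of $\Xi_N^o$ is used in an essential way.

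Next, for each $\lambda \in \{1,\ldots,\ell\}$ I would single out the line $\sigma^\lambda \in \Xi_N^o$ corresponding to the coordinate $(0,\ldots,q_\lambda,\ldots,0)$ at $x_0$. Applying Theorem~\ref{thm:eikgeneral}, which is available because $\Eik(\Xi_N^o)$ is involutive, I obtain a local hypersurface $H_\lambda \subset N$ through $x_0$ and a defining function $x^\lambda$ with $\mathrm{d}x^\lambda_{x_0}$ proportional to $\sigma^\lambda$. The $\ell$ points $\sigma^1,\ldots,\sigma^\ell$ span a full linear complement to the vertical of the chart, so the differentials $\mathrm{d}x^1,\ldots,\mathrm{d}x^\ell$ are linearly independent at $x_0$ and hence on a neighborhood. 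I would then complete them to a full local coordinate system $(x^1,\ldots,x^n)$ on $N$, exactly as in the last step of the proof of Corollary~\ref{cor:eikcoords}.

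The main obstacle is really not in this reduction step but in the input: producing $\ell$ functions whose differentials lie in $\Xi_N$ simultaneously (not just one at a time) requires that the eikonal system admit a K\"ahler-regular integral flag through each $\sigma^\lambda$ over the common point $x_0$. This is guaranteed only because involutivity of $\Eik(\Xi_N^o)$ gives the Cartan characters $s_1=\cdots=s_\ell=1$ (rather than merely the existence of one integral hypersurface), so that the Cartan--K\"ahler construction can be iterated $\ell$ times with initial data on nested submanifolds through $x_0$. One technical subtlety worth checking is that, when $\Xi_N$ is genuinely complex (as occurs in the elliptic case), the hypersurfaces, and hence the coordinates, are only complex-valued; this is precisely the ``possibly complex'' qualifier in the statement, and is the reason one must work with $\Xi_N \subset \mathbb{P}T^*N \otimes \mathbb{C}$ throughout.
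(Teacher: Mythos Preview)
Your proposal is correct and matches the paper's approach: the paper does not give a separate proof but simply remarks that Corollary~\ref{cor:eikcor} is apparent from the general discussion of eikonal systems surrounding Theorem~\ref{thm:eikgeneral}, and you have correctly unpacked this by specializing the construction in the proof of Corollary~\ref{cor:eikcoords} to $\Sigma_N = \Xi_N^o$, using the involutivity of $\Eik(\Xi_N^o)$ supplied by Theorem~\ref{thm:intchar1} (or Gabber). Your worry about producing the $\ell$ coordinates simultaneously is slightly overcomplicated---the $\ell$ applications of Theorem~\ref{thm:eikgeneral} to the independent points $\sigma^1,\ldots,\sigma^\ell$ are separate, not a nested Cartan--K\"ahler flag---but this does not affect correctness.
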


In \cite{Smith2014b}, the linear span of the characteristic variety, $\pair{\Xi_N}$ is studied in
comparison to the Cauchy retraction space $\mathfrak{g}^\perp_N =
\iota^*(\mathfrak{g})$, where 
$\mathfrak{g}^\perp$ is the maximum Frobenius system within $\mathcal{I}$, as in Section~\ref{sec:cauchy}.

Suppose that the affine fiber dimension of $\pair{\Xi_N}$ is $L$ and that the affine
fiber dimension of $\mathfrak{g}^\perp_N$ is $\nu$.    These spaces are nested, so $\ell \leq
L \leq \nu \leq n$.
\begin{cor}
Suppose that $N$ is an ordinary integral manifold of an involutive exterior
differential system $\mathcal{I}$ with character $\ell$ and Cartan integer $s_\ell$.
Then $N$ admits a local---possibly complex---coordinate system $(x^1, \ldots,
x^n)$ such that $\mathrm{d}x^1, \ldots, \mathrm{d}x^\ell \in \Xi_N$, such that 
$\mathrm{d}x^{\ell+1}, \ldots, \mathrm{d}x^L \in \pair{\Xi_N}$, and such that 
$\mathrm{d}x^{L+1}, \ldots, \mathrm{d}x^\nu \in \mathfrak{g}^\perp_N$.
\label{cor:coords}
\end{cor}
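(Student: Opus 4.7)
The plan is to build the coordinate system in three nested stages, each producing a block of the eventual coordinates whose differentials lie in one of the filtered subbundles $\Xi_N \subset \pair{\Xi_N} \subset \mathfrak{g}^\perp_N$, and each extending (not replacing) the block built in the previous stage. Throughout, we work microlocally near a chosen smooth point $\varphi \in \Xi_N^o$ over $x \in N$, so that we can freely apply the implicit function theorem in fiber coordinates of $T^*N$.

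Stage 1 is essentially Corollary~\ref{cor:eikcor}: since $\Eik(\Xi_N^o)$ is involutive by Theorem~\ref{thm:intchar1} (or, more generally, by Gabber's Theorem~\ref{thm:intchar2}), Corollary~\ref{cor:eikcoords} produces $\ell$ functions $x^1,\ldots,x^\ell$ with $\mathrm{d}x^\lambda \in \Xi_N^o$ at $x$. The proof of Corollary~\ref{cor:eikcoords} lets us prescribe the starting lines $\sigma^\lambda \in (\Xi_N^o)_x$, which is what makes the stages compatible downstream.

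Stage 2 invokes the unnamed corollary just after Corollary~\ref{cor:eiksymp}, which states that if $\Eik(\Sigma_N)$ is involutive then so is $\Eik(\pair{\Sigma_N})$. Applying this to $\Sigma_N = \Xi_N^o$ and re-running Corollary~\ref{cor:eikcoords} on $\pair{\Xi_N}$, which has affine fiber dimension $L$, produces $L$ functions whose differentials span $\pair{\Xi_N}$ at $x$. The key step is to feed into this second application the same $\sigma^1,\ldots,\sigma^\ell$ used in Stage 1, completed by $L-\ell$ additional sections of $\pair{\Xi_N}$ off $\Xi_N$: the proof of Corollary~\ref{cor:eikcoords} then forces the first $\ell$ of the resulting coordinates to coincide (up to reparametrization) with $x^1,\ldots,x^\ell$, while the remaining $x^{\ell+1},\ldots,x^L$ satisfy $\mathrm{d}x^a \in \pair{\Xi_N}$.

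Stage 3 uses the classical Frobenius theorem directly on $N$. By Section~\ref{sec:cauchy}, $\mathfrak{g}^\perp_N$ is the pullback to $N$ of a Frobenius system on $M$ and is therefore a completely integrable Pfaffian subbundle of $T^*N$ of rank $\nu$. The nesting $\pair{\Xi_N} \subset \mathfrak{g}^\perp_N$ recalled before the corollary statement implies that the $\mathrm{d}x^1,\ldots,\mathrm{d}x^L$ built in Stage 2 already lie in $\mathfrak{g}^\perp_N$; by the standard argument that a Frobenius chart can be extended from any independent initial family, we complete them to $x^1,\ldots,x^\nu$ whose differentials span $\mathfrak{g}^\perp_N$. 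Finally, completing by $n-\nu$ functions transverse to $\mathfrak{g}^\perp_N$ gives the full coordinate system on $N$.

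The hard part is the compatibility between stages: ensuring that Corollary~\ref{cor:eikcoords} (resp.~the Frobenius theorem) can be run with \emph{prescribed} initial sections rather than producing an unrelated set of functions. I expect this to follow by inspection of the proof of Corollary~\ref{cor:eikcoords}: the $q_\lambda$ coordinates on the fibers of $T^*N$ are introduced by the implicit function theorem and can be permuted and partially pre-chosen, since smoothness of $\hat\Xi_N^o$ and of $\pair{\Xi_N}$ at $\varphi$ is preserved under such choices. The genericity needed to pick smooth sections transverse to one another at each stage is immediate from openness of the smooth loci and the strict inequalities $\ell \leq L \leq \nu$.
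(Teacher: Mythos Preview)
Your three-stage approach is sound and, once the ``hard part'' is resolved, correct; but it is a different route from what the paper sketches. The paper does not give a proof in the text: it remarks that the argument ``relies on building a coframe of $N$ in which the nilpotent parts of the commuting symbol maps $\B^\lambda_i$ are identified clearly'' and ``depends in an essential way on Theorems~\ref{thm:intchar1} and \ref{thm:invcond},'' deferring details to \cite{Smith2014b}. That approach is algebraic: the nesting $\Xi_N \subset \pair{\Xi_N} \subset \mathfrak{g}^\perp_N$ is read off the Jordan structure of the symbol endomorphisms (eigenvectors versus generalized eigenvectors versus the full $\Wu^-$ spaces), and the coframe is built directly from that decomposition. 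Your approach instead iterates the general eikonal machinery of Section~\ref{sec:eikgeneral}, which is more conceptual and avoids explicit use of Theorem~\ref{thm:invcond}.

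Regarding the compatibility you flag as the ``hard part'': it is simpler than you suggest. Because $\pair{\Xi_N}$ is \emph{linear} in each fiber, the involutivity of $\Eik(\pair{\Xi_N})$ (from the span corollary you cite) says exactly that the rank-$L$ subbundle $\hat{\pair{\Xi_N}} \subset T^*N$ is Frobenius---see the converse direction in Corollary~\ref{cor:eiksymp}. So Stage~2 does not require re-running Corollary~\ref{cor:eikcoords} with prescribed initial sections; it reduces to the same Frobenius extension argument you use in Stage~3. Concretely: given independent exact forms $\mathrm{d}x^1,\ldots,\mathrm{d}x^\ell$ lying in a Frobenius subbundle $D$ of rank $L$, write $D = \langle \mathrm{d}y^1,\ldots,\mathrm{d}y^L\rangle$ locally; then each $x^\lambda$ is a function of $(y^1,\ldots,y^L)$ with Jacobian of rank $\ell$, so a change of the $y$-coordinates arranges $y^\lambda = x^\lambda$ for $\lambda \leq \ell$. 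The same argument handles the passage from $L$ to $\nu$ in Stage~3. With that simplification your proof is complete.
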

Corollary~\ref{cor:coords} is a simple result, but its proof relies on building 
a coframe of $N$ in which the nilpotent parts of the commuting
symbol maps $\B^\lambda_i$ are identified clearly; that is, it depends in an
essential way on Theorems~\ref{thm:intchar1} and \ref{thm:invcond}.
The key point is that it reinforces the following remark.
\begin{rmk}[General Dogma of the Characteristic Variety]
An exterior differential system $(M,\mathcal{I})$ is a geometric object over
$M$, meaning that its key properties are coordinate-invariant.   On each
K\"ahler-regular component $M^{(1)}$, knowing this
geometry is equivalent to knowing the characteristic scheme and rank-1 variety 
over $M^{(1)}$, which are prolongation-invariant.  Moreover, the geometry of an EDS imposes a geometry on its
solutions, $\iota:N \to M$, and this imposition is also dictated by the
characteristic scheme and rank-1 variety.
Therefore, exterior differential systems can be classified up to coordinate equivalence as 
``parametrized families of manifolds $N$ with associated characteristic
geometry.''
\label{rmk:dogma}
\end{rmk}
Remark~\ref{rmk:dogma} is not a theorem; it is an attitude.

To make this remark robust for a general exterior differential
system, the scheme
separating $\Var_n(\mathcal{I})$ into its components $M^{(1)}$---each component smooth
with its own fixed Cartan characters over some subvariety of $M$---would
have to be studied, and very little progress has been made at that level of
abstraction.  Nonetheless, whenever some property of PDEs is encountered,
Remark~\ref{rmk:dogma} urges us to ask ``is this property really invariant, or
an artifact of my coordinates?'' which is best answered by asking ``can this
property be reinterpreted using the characteristic scheme?''
Sections~\ref{sec:yang} and \ref{sec:open} discuss progress of this type.

\section{Yang's Hyperbolicity Criterion}\label{sec:yang}
One of the great frustrations of the Cartan--K\"ahler theorem is that it relies
on the Cauchy--Kowalevski theorem, so it applies only in the analytic category.
One can see its dramatic failure in the smooth category in \cite{Lewy1957a}.
However, this frustration has been escaped in some special cases by exploiting
the structure\footnote{If we
take the broadest possible interpretation of Remark~\ref{rmk:dogma} to heart,
then \emph{any} possible escape from analyticity ought to arise from the structure of $\Xi$.
However, the reader is cautioned again that a dogma is not a theorem.} 
of $\Xi$.  
For example
\begin{enumerate}
\item ODE systems.  Suppose that $(M,\mathcal{I})$ is involutive over $C^\infty$ and
that $\Xi = \emptyset$.  Then $\ell = 0$, so the tableau $A$ is the trivial
(irrelevant) subspace of $W \otimes V^*$. The prolonged system
$\mathcal{I}^{(1)}$ on $M^{(1)}$ is Frobenius, and $M^{(1)}$ is merely a copy
of $M$ whose fiber is the unique element of an integrable distribution.  That
integrable distribution is the Cauchy retraction space $\mathfrak{g}$ of
$\mathcal{I}$ as in Section~\ref{sec:cauchy},
so it must have been that $\mathcal{I} = \mathfrak{g}^\perp$.  The flow-box
theorem foliates $M$ by solutions in the smooth category.  (Actually, in the
Lipschitz category, by standard ODE theory!) If $N$ is a leaf of this
foliation, then removing Cauchy retractions on the original exterior
differential system $(M, \mathcal{I})$ yields the exterior differential system
$(N, 0)$.

\item Empty systems. Suppose that $(M,\mathcal{I})$ is involutive over
$C^\infty$ and that $\Xi = V^*$ with $(s_1, s_2, \ldots, s_n) = (r,r, \ldots,
r)$. Then, the tableau $A$ is the total space $W \otimes V^*$.  Therefore,
$M^{(1)}$ is an open domain in $\Gr_n(TM)$, so $\mathcal{I} = 0$, and there is
no condition whatsoever\footnote{The most extreme and amusing exploitations of
the flexibility of $\Gr_n(TM)$ come from the homotopy principle
\cite{Gromov1986, Eliashberg2002}.} on integral manifolds $\iota:N \to M$;
however, the prolongation $\iota^{(1)}:N \to M^{(1)}$ would have to satisfy the
contact ideal, forcing some regularity on $N$.  We studied this EDS in
Section~\ref{sec:grass}.

\end{enumerate}

A less trivial special case is presented in \cite{Yang1987}, which is the
subject of this section.\footnote{As it happens, the attempt to understand
\cite{Yang1987} in the context of \cite[Chapter VIII]{BCGGG} was the
inspiration for computing the details shown in \cite{Smith2014a} and the entire
approach of these notes.}

A tableau $A \subset W \otimes V^*$ is called \emph{determined} if $s_1 = s_2 = \cdots = s_{n-1}=r$
and $s_n = 0$. That is, $s = (n-1)r$, so $t=r$, and $H^1(A) \cong W$.  Cartan's test shows that a
determined tableau is always involutive, so we may assume that $A$ is written in
endovolutive form as in Theorem~\ref{thm:invcond}. The only nontrivial
symbol endomorphisms in \eqref{eqn:bigB} are $\B^\lambda_\lambda = I_{r\times
r}$ and $\B^\lambda_n$ for $\lambda = 1, \ldots, n-1$, like this:
\begin{equation}
(\B^\lambda_i) = 
\begin{bmatrix}
I_r & 0 & 0 & \cdots & 0 & \B^1_n\\
 & I_r & 0 & \cdots & 0 & \B^2_n\\
 & & \ddots &  & \vdots  & \vdots\\
 & &  & I_r & 0 & \B^{n-2}_n\\
 & &  & & I_r & \B^{n-1}_n
\end{bmatrix}.
\end{equation}
The quadratic
involutivity condition is trivial, which is why Cartan's test passes
automatically.
\begin{lemma}
Suppose $A$ is determined and written in endovolutive bases. 
Identify $H^1(A)$ with $W$, and use our endovolutive basis of $W$ for
both.  Then for any $\varphi \in V^*$, the symbol map $\sigma_\varphi:w \mapsto
\sigma(w \otimes \varphi)$ from
Section~\ref{sec:incidence} is 
\begin{equation}
\sigma_\varphi = \left(\varphi_\lambda \B^\lambda_n - \varphi_n I\right).
\end{equation}
Then \begin{equation}
\ker \sigma_\varphi = \ker \left(\varphi_\lambda \B^\lambda_n
- \varphi_n I\right),
\end{equation}
and the characteristic ideal $\mathscr{M}$ is generated by 
\begin{equation}
\det \sigma_\varphi = \det \left(\varphi_\lambda \B^\lambda_n
- \varphi_n I\right).
\end{equation}
In particular, $\xi \in \Xi$
if and only if $\xi_n$ is an eigenvalue of $\xi_\lambda \B^\lambda_n$.
\label{lem:determined}
\end{lemma}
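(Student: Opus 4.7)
The plan is to unpack the symbol relations \eqref{eqn:symrels} in the determined case and plug in a rank-one element $\pi = w\otimes\varphi$. In the determined case $s_1=\cdots=s_{n-1}=r$ and $s_n=0$, so the inequality $s_i<a\leq r$ in \eqref{eqn:symrels} is vacuous for $i<n$ and becomes $0<a\leq r$ for $i=n$. Hence the only nontrivial symbol relations are
\[
0 = \pi^a_n - B^{a,\lambda}_{n,b}\,\pi^b_\lambda, \qquad 1\leq a\leq r,
\]
and the quotient $H^1(A) = (W\otimes V^*)/A$ is canonically identified with $W$ via $[\pi]\mapsto(\pi^a_n - B^{a,\lambda}_{n,b}\pi^b_\lambda)z_a$. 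This identification makes sense precisely because the endovolutive generators $\pi^b_\lambda$ $(1\leq\lambda\leq n-1,\ 1\leq b\leq r)$ account for all $(n-1)r$ free parameters of $A$, leaving the $r$ values $\pi^a_n$ as a complete set of quotient coordinates.

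Next I would substitute $\pi = w\otimes\varphi$, i.e.\ $\pi^a_i = w^a\varphi_i$, directly into the formula for $\sigma_\varphi$. This yields
\[
\sigma_\varphi(w)^a \;=\; w^a\varphi_n - B^{a,\lambda}_{n,b}\,w^b\varphi_\lambda
\;=\; \bigl((\varphi_n I - \varphi_\lambda\B^\lambda_n)w\bigr)^a,
\]
which is $-(\varphi_\lambda \B^\lambda_n - \varphi_n I)w$; up to the overall sign (irrelevant for kernels and for the vanishing locus of the determinant) this is the asserted formula. Taking kernels and determinants is then automatic, and the statement that $\xi\in\Xi$ iff $\xi_n$ is an eigenvalue of $\xi_\lambda\B^\lambda_n$ is immediate from the definition $\Xi = \{\xi : \ker\sigma_\xi\neq 0\}$ together with the standard fact that a square matrix is singular iff $0$ is an eigenvalue.

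There is essentially no obstacle: the only thing to check is that the endovolutive identification of $H^1(A)$ with $W$ is the one used in the statement, which is a matter of bookkeeping with the index ranges \eqref{eqn:index}. It is worth noting explicitly that in the determined case the quadratic clause (ii) of Theorem~\ref{thm:invcond} imposes no condition, since it requires indices $\lambda<l<k\leq n$ with $a>s_l$, and $s_l=r$ for $l<n$ forces $a>r$, which is impossible. So involutivity is automatic and one really is free to work in endovolutive bases from the outset, which is the only place the hypothesis is used.
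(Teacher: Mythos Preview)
Your argument is correct and matches the paper's own treatment: both obtain the formula for $\sigma_\varphi$ by simply reading off the symbol relations~\eqref{eqn:symrels} in the determined case (the paper says ``immediate from our block form''), and your handling of the sign ambiguity in identifying $H^1(A)$ with $W$ is appropriate. The only minor difference is in the last assertion: the paper routes through Lemma~\ref{lem:eigen}, writing $\B(\xi)(v)w=\xi(v)w$ and simplifying using $\B^\lambda_\mu=\delta^\lambda_\mu I_r$ for $\mu<n$, whereas you read the eigenvalue condition directly off the formula $\sigma_\xi=\pm(\xi_\lambda\B^\lambda_n-\xi_n I)$ you just derived; your route is marginally more self-contained here, while the paper's reinforces the general eigenvector picture of Section~\ref{sec:inveig}.
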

\begin{proof}
The first two equations are immediate from our block form.
From Part~\ref{part:Xi}, we know that $w \otimes \xi \in A$ if and only if
$\B(\xi)(v)w = \xi(v) w$ for all $v$.  Therefore, we compute in our
endovolutive basis 
\begin{equation}
\begin{split}
\xi(v)w 
&= \B(\xi)(v)w\\
&= \xi_\lambda v^i \B^\lambda_i(w) \\
&= (\xi_\lambda v^\lambda) w + \xi_\lambda v^n \B^\lambda_n w\\
&= ( \xi(v) - \xi_n v^n)w + \xi_\lambda v^n \B^\lambda_n w.
\end{split}
\end{equation}
That is, $\xi_n w = \xi_\lambda\B^\lambda_n  w$.
\end{proof}
\begin{cor}
Consider a determined tableau as in Lemma~\ref{lem:determined}. Fix an integral element $e$.  
Suppose that $e'$ is a \emph{real} hyperplane in $e$ such that $(e')^\perp
\otimes \mathbb{C} = \varphi \in V^*$ and $\varphi \not\in \Xi$.  Then
$\sigma_\varphi:W \to H^1(A)$ is an isomorphism. 
\label{cor:determinedinv}
\end{cor}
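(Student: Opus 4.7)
The plan is to observe that this corollary is essentially a direct unpacking of Lemma~\ref{lem:determined}, together with a dimension count. Since $A$ is determined with Cartan characters $(r, r, \ldots, r, 0)$, we have $s = \dim A = (n-1)r$ and $t = \dim H^1(A) = nr - s = r = \dim W$. Therefore $\sigma_\varphi \colon W \to H^1(A)$ is a map between vector spaces of the same dimension $r$, so it is an isomorphism if and only if it is injective.

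Next, I would invoke the two key formulas from Lemma~\ref{lem:determined}. The first asserts that in endovolutive bases,
\begin{equation*}
\ker \sigma_\varphi = \ker\bigl(\varphi_\lambda \B^\lambda_n - \varphi_n I\bigr),
\end{equation*}
so injectivity of $\sigma_\varphi$ is equivalent to $\varphi_n$ not being an eigenvalue of the $r \times r$ matrix $\varphi_\lambda \B^\lambda_n$. The second formula in the lemma states exactly that $\xi \in \Xi$ if and only if $\xi_n$ is an eigenvalue of $\xi_\lambda \B^\lambda_n$. Applying this characterization to $\xi = \varphi$, the hypothesis $\varphi \notin \Xi$ becomes precisely the condition that $\varphi_\lambda \B^\lambda_n - \varphi_n I$ is invertible, which gives injectivity and hence the desired isomorphism.

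There is essentially no serious obstacle here; the only subtlety worth pointing out is the real-versus-complex interplay in the hypothesis. The statement requires that $e'$ be a \emph{real} hyperplane while $\Xi$ lives in the complex projective space $\mathbb{P}V^* \otimes \mathbb{C}$, so the complexified annihilator $\varphi = (e')^\perp \otimes \mathbb{C}$ should be interpreted as a specific complex point, and the condition $\varphi \notin \Xi$ must be checked at this complex point. Since Lemma~\ref{lem:determined} is proved over $\mathbb{C}$ and the matrix identity defining $\sigma_\varphi$ is algebraic in $\varphi$, everything goes through without modification. No separate realness argument is needed; the real hypothesis on $e'$ only serves to fix how $\varphi$ arises geometrically from a polar-extension problem, and the conclusion about $\sigma_\varphi$ is purely algebraic.

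In summary, the proof is a two-line consequence: reduce to injectivity via the dimension count, then translate $\varphi \notin \Xi$ into non-singularity of $\varphi_\lambda \B^\lambda_n - \varphi_n I$ via Lemma~\ref{lem:determined}. I anticipate the write-up to be only a few sentences long.
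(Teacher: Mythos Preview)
Your proposal is correct and matches the paper's approach exactly: the paper's one-line proof simply states that by Lemma~\ref{lem:determined}, $\ker \sigma_\varphi \neq 0$ if and only if $\varphi \in \Xi$. Your version is slightly more explicit about the dimension count and the real-versus-complex point, but the argument is the same.
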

\begin{proof}
By Lemma~\ref{lem:determined}, we have $\ker \sigma_\varphi \neq 0$ if and only
if $\varphi \in \Xi$.
\end{proof}
\begin{defn}
Suppose $e'$ is a real hyperplane in $e$ corresponding to the real covector 
$\varphi = (e')^\perp \in \mathbb{P}e^*$.  The real hyperplane is called
\emph{space-like} if the following conditions hold.
\begin{enumerate}
\item $\varphi\otimes \mathbb{C} \not \in \Xi_e$.  
\item For any $\eta \in \mathbb{P}e^*$, there is a real basis of $W$ in which  
$(\sigma_\varphi)^{-1}(\sigma_\eta):W \to W$ is real and diagonal.
\item The above choice of basis is a smooth function of $[\eta] \in e^*/\varphi =
(e')^*$.
\end{enumerate}
A determined tableau $A \subset W \otimes V^*$ is called \emph{determined hyperbolic} if $V$ admits a
(real) space-like
hyperplane.
\end{defn}

Here is a simple example using our notation from Lemma~\ref{lem:determined}.
Fix $n=3$.  To meet the first condition, suppose that $\varphi = 1u^1+ 0u^2 + 0u^3$ is
not in $\Xi$.  Then $\sigma_\varphi = \B^1_3$, and $0$ is not an eigenvalue of
$\sigma_\varphi$, which of course implies that $\sigma_\varphi = \B^1_3$ is
invertible.  Say $\eta = 0u^1 + 1u^2 + \tau u^3$, so that $\sigma_\eta = \B^2_3 - \tau
I_r$.  The second condition is that  $(\B^1_3)^{-1}\left(\B^2_3 - \tau
I_r\right)$ is diagonalizable using some change-of-basis $g_\tau$.  The third condition
is that $g_\tau$ is continuous in the projective variable $\tau$.
Suppose moreover that we take our basis such that the basis-change at $\tau=0$
is $g_0=I$.
Then we have the condition that $(\B^1_3)^{-1}\B^2_3$ is a diagonal matrix, $D$.   
This puts restrictions on the possible forms of these matrices.
For example, $\ker \B^2_3 = \ker D$ and $\im \B^2_3 \subset \im \B^1_3$.

\begin{defn}
A tableau $A \subset W\otimes V^*$ is called \emph{hyperbolic} if $V$ admits a flag given by a basis $(u^1, \ldots,
u^n)$ of $V^*$ such that each of the sequential initial value problems from
$\pair{u^i, \ldots, u^n}^\perp$ to
$\pair{u^{i+1}, \ldots, u^n}^\perp$ has a hyperbolic determined tableau.
\end{defn}

\begin{thm}[Yang]
Theorem~\ref{thm:CK} applies in the smooth category, if $A$ is hyperbolic.
\end{thm}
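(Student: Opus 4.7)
The plan is to mirror the inductive structure of the Cartan--K\"ahler proof, but replace each invocation of Cauchy--Kowalevski with an appropriate smooth-category existence theorem for hyperbolic systems. Recall that the Cartan--K\"ahler construction builds an ordinary integral manifold in stages indexed by the flag $(u^1,\ldots,u^n)$: having produced a smooth $(i{-}1)$-dimensional integral manifold $N_{i-1}$ transverse to the polar space associated with $u^{i+1},\ldots,u^n$, one extends it to $N_i$ by solving the polar equations of $(M,\mathcal{I})$ along a one-parameter family of directions. Under the involutivity hypothesis (guaranteed by Theorem~\ref{thm:invcond}), this extension problem is locally a determined system on $N_{i-1} \times [0,T)$ whose tableau at each point is precisely the determined tableau specified in the definition of hyperbolicity. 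In the analytic case these determined problems are solved by Cauchy--Kowalevski; here I will solve them by a smooth hyperbolic existence theorem.

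First I would cast each step of the flag as a quasilinear first-order system in divergence form on $N_{i-1}\times [0,T)$. Using the endovolutive normal form from Section~\ref{sec:tableau} and the description of $\sigma_\varphi$ in Lemma~\ref{lem:determined}, each extension problem has principal symbol $\sigma_\varphi$ associated with the transverse covector, and by Corollary~\ref{cor:determinedinv} this symbol is invertible whenever $\varphi$ is real and non-characteristic. Premultiplying by $\sigma_\varphi^{-1}$ converts the polar equations into an evolution system $\partial_\tau f = \sigma_\varphi^{-1}\sigma_\eta\, \partial_\eta f + (\text{lower order})$, and the space-likeness of the hyperplane is exactly the condition that $\sigma_\varphi^{-1}\sigma_\eta$ admits a smooth real diagonalization in $\eta$. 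This is the strict (equivalently, smoothly symmetrizable) hyperbolicity condition, so Friedrichs--Lax--Kato linear/quasilinear symmetric-hyperbolic theory gives a unique smooth solution on a nontrivial time interval, with smooth dependence on the (smooth) initial data prescribed on $N_{i-1}$.

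Second I would verify that the solutions produced at each stage actually satisfy the full ideal $\mathcal{I}$, not merely the reduced determined subsystem used at that stage. This is the familiar ``propagation of constraints'' argument: involutivity plus the fact that the torsion of $\mathcal{I}^{(1)}$ vanishes on $M^{(1)}$ imply that the Spencer obstructions $H^k(A)$ pull back to zero along the evolution, so any relation of $\mathcal{I}$ that holds on the initial datum is transported along the hyperbolic flow by a homogeneous linear system and therefore remains identically zero. Combined with Theorem~\ref{thm:equiv}, this produces a genuine smooth ordinary integral manifold $N_i$. Iterating through the flag yields a smooth integral manifold $N = N_n$ of dimension $n$ through the chosen basepoint, parametrized by the same Cartan function count $r,s_1,\ldots,s_\ell$ as in Theorem~\ref{thm:CK}.

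The main obstacle will be the propagation-of-constraints step, not the linear hyperbolic solvability. The delicate point is that the Cartan--K\"ahler induction produces at each stage an \emph{overdetermined} system whose compatibility relies on the absorption-of-torsion lemma (Lemma~\ref{lem:spencer2}) for the extended integral element; translating this algebraic statement into an energy estimate that controls the constraint variables in $C^\infty$ (and in particular uniformly up to the boundary of the hyperbolic existence interval) requires choosing a symmetrizer compatible with the endovolutive flag and showing that it extends consistently across all successive stages of the flag. Once this constraint-propagation estimate is in place, the parametrization count and uniqueness both follow exactly as in the analytic proof, because the smooth hyperbolic existence and uniqueness theorem has the same degree of freedom (one free function on the Cauchy hypersurface) as Cauchy--Kowalevski at each step.
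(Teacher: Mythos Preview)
Your proposal is correct and matches the paper's approach: the paper itself does not give a detailed proof but only a one-sentence sketch, stating that one replaces the Cauchy--Kowalevski step in the Cartan--K\"ahler induction by the Cauchy problem for determined first-order quasilinear hyperbolic PDEs, and then refers to \cite{Yang1987} and \cite{Kamran1989} for details. Your outline---inductive flag construction, reduction at each stage to a determined symmetrizable hyperbolic system via $\sigma_\varphi^{-1}\sigma_\eta$, and the propagation-of-constraints argument---is exactly the content of those references, and in fact supplies considerably more detail than the paper does.
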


The proof proceeds by replacing the Cauchy--Kowalevski initial-value problem
with the Cauchy initial-value problem for determined first-order quasilinear
hyperbolic PDEs.  See \cite{Yang1987} and Appendix A of \cite{Kamran1989} for
more details.

Clearly the definition of \emph{hyperbolic} depends on the geometry of $\Xi$
and the symbol maps $\B^\lambda_i$;  however, to the author's knowledge no one has
succeeded in writing down the explicit criteria on $\B^\lambda_i$ or $\Cone$ or $\Xi$ for
general hyperbolicity.  Hence, Yang's condition is not yet
available to computer algebra systems.  If that can be accomplished, it means we can identify a
subvariety of the moduli of involutive tableaux---as in
Section~\ref{sec:moduli}---that admit solutions in the smooth category.

One well-understood special case is when $\ell =1$, so $\Xi_e$ contains
$s_1$ real points (with multiplicity).  If the number of distinct points is
sufficiently large (greater than $n$), then this is the situation for hyperbolic
systems of conservation laws, as in \cite{Tsarev1991}.  The eikonal system is
rigid, so each solution is foliated by $s_1$ characteristic hypersurfaces.
Multiplicity corresponds to nilpotent pieces of the generalized eigenspaces of
the symbol endomorphisms $\B^1_i$.  See again Section~\ref{sec:examples}.

\section{Open Problems and Future Directions}\label{sec:open}
Our perspective here has been simple-minded---focusing on matrices and their
computable properties---to gain intuition of
$\Xi$ and $\Eik(\Xi)$ as rapidly as possible.  
The articles \cite{Smith2014a} and \cite{Smith2014b} are founded on this
perspective, but reveal additional detail in the structures discussed here.
For more modern and sophisticated treatment, please see 
\cite{Malgrange2003}, \cite{Kruglikov2007}, and
\cite{Carlson2009}. Additionally, Chapters V--VIII of \cite{BCGGG} contain
significantly more results than we have summarized here. 

To conclude, here are some interesting questions which---to the author's present
knowledge---are open subjects that represent the major
theoretical gaps in the subject of exterior differential systems.
They are worth serious consideration as research projects, and offer great
opportunities for collaboration between analysts, differential geometers,
algebraic geometers, and scientific programmers.  

\begin{enumerate}
\item \textbf{Variety of involutive tableaux.}  For given $r$, $n$, and Cartan
characters $(s_1, \ldots, s_\ell)$, what is the variety of
involutive tableaux (with fixed coefficients)?  Can we compute its dimension or
degree or Hilbert polynomial?  Section~\ref{sec:moduli} demonstrates a
first step toward understanding the variety of involutive tableaux, as
Theorem~\ref{thm:invcond} gives the ideal in certain bases.  However, to answer
the question completely, one would need to examine how the coefficients in
\eqref{eqn:moduliblock} vary under arbitrary changes of basis in $V^*$ and $W$.

\item \textbf{Special hyperbolic integrability criteria.}  Solution techniques (such as
Lax pairs, inverse scattering, hydrodynamic reduction, and B\"acklund
transformations) play a key r\^ole in the analysis of wave-like PDEs, especially
those coming from physics and geometry.  Given that these techniques are
coordinate-invariant, Remark~\ref{rmk:dogma} suggests that they should all
be expressible as algebraic conditions on $\mathscr{M}$.  Expressing those
conditions in an abstract way over $\Xi$ and $M^{(1)}$ would allow more
systematic geometric approach to many of the \emph{ad hoc} methods in the
analysis of PDEs.\footnote{Indeed, the central theme of the conference for
which these notes were prepared was to express Ferapontov's notion of
hydrodynamic integrability in terms of algebro-geometric structures in the
Lagrangian Grassmannian.  The notion of hydrodynamic integrability is tied
completely to the secant variety of $\Cone$.}

\item \textbf{Elliptic systems.}  Consider the classical results regarding elliptic
regularity of quasilinear elliptic operators.  This is another form of
``special integrability criteria.''  How far can the notion of elliptic
regularity be extended to general exterior differential systems?  Certainly the
conditions of involutivity, $\pair{\Xi}=V^*$, and $\Xi_\mathbb{R} = \emptyset$
are necessary, and one can directly translate the classical theorems to an EDS
written specifically to describe a quasilinear second-order elliptic
operator in local coordinates, but what other technical assumptions can be
dropped?  Some discussion appears in \cite[Chapter X\S3]{BCGGG}.

\item \textbf{Moduli of involutive tableaux.}  Refining the first problem in light of
the second and third problems,  
can we identify invariant sub-varieties of the variety of involutive tableaux?
Dogma~\ref{rmk:dogma} indicates that we should be able to identify
subvarieties, such as hyperbolic tableaux, elliptic tableaux, systems
satisfying special integrability conditions, and so on.  What does it mean when
these sub-varieties intersect?  Lewy showed that there are involutive PDEs with
no solution in the smooth category \cite{Lewy1957a}, which cannot happen in the
analytic category.  
Where do the Lewy examples fall in this
variety?  Are there other subvarieties that have not been observed in
classical equations?  If there is any organizing geometry behind the ``nearly
impenetrable jungle'' of involutive PDEs, this is where we should look.

\item \textbf{Weakness of involutivity of characteristics.}  Note that
Theorem~\ref{thm:intchar2} does not regard the involutivity of an exterior
differential system in any direct way; the assumption of involutivity of
$\mathcal{I}$ enters Theorem~\ref{thm:intchar2} only because we know that
$\mathscr{M}_N$ is an ideal of homogeneous polynomials from
\eqref{eqn:charidealv}.  Thus, we expect that the condition ``$\Xi_N$ is the
characteristic scheme of an exterior differential system $\mathcal{I}$, and
$\Eik(\Xi_N)$ is involutive'' is much weaker than ``$\Xi_N$ is the
characteristic scheme of an exterior differential system $\mathcal{I}$, and
$\mathcal{I}$ is involutive.''  The gap between these two statements is
extremely important to explore, as it goes to the heart of the question about
how involutivity leads to solutions of the initial-value problem for a system
of PDEs.
To put this a different way, can we construct an embedded variety $\Xi_N \subset
\mathbb{P}T^*N$ that is involutive, but for which there is \emph{no} involutive
exterior differential system for which $\Xi$ is the characteristic variety?

\item \textbf{Global integrability of the characteristic variety.}
If $A$ is involutive, then the system $\Eik(\Xi_N^o)$ is involutive on an
ordinary integral manifold, $N$.  However, it is not clear whether 
$\Xi^o$ is involutive as a bundle over $M^{(1)}$ itself in any reasonable way
that considers all $N$ simultaneously.  That is, consider
the EDS on $M^{(1)}$ generated by $\mathcal{I}^{(1)} + \pair{\xi}$ for some
section $\xi$ of $\Xi^o \subset V^* \subset \mathbb{P}TM^{(1)} \otimes
\mathbb{C}$. 
Under what circumstances is this involutive?  Can
Gabber's theorem \ref{thm:intchar2} be adopted to this case?  This has theoretical implications
for special integrability conditions (above), because it would allow one to
count special solutions among all solutions from $M^{(1)}$ directly.  Additionally, given its
algebraic nature, can Gabber's theorem provide solutions for certain types
of PDEs with low regularity, bypassing the Lewy examples with various
additional conditions?

\item \textbf{Prolongation theorems.} Does prolongation always uncover solutions of
an exterior differential system, if we remove
the regularity assumptions on $M^{(1)}$ and consider the many components of
the scheme $\Var_n(\Var_n( \cdots (\mathcal{I})\cdots ) )$?  
As experts are well aware, this is has been
the key open question in the subject for most of a century.  (See
\cite[Chapter~VI]{BCGGG}.)
In the context of this monograph, the question is related to whether the block form of involutive
tableau \eqref{eqn:bigB} and the involutivity conditions of
Theorem~\ref{thm:invcond} can be extended from
$M^{(1)}$ to non-smooth points in $\Var_n(\mathcal{I})$?  Because of the
interaction of Guillemin normal form and involutivity with Spencer cohomology
as in Section~\ref{sec:gnf}, such an extension of the endovolutive block form
could be helpful in an effort to construct (or prove the non-existence of)
counterexamples.

\item \textbf{Representation theory of Lie pseudogroups.}  Lie pseudogroups are
subgroups of the diffeomorphism pseudogroup whose trajectories are the
solutions of involutive PDEs. See \cite{Olver2009}. Just as Jordan form (in the guise of
the Levi decomposition) is the key first step toward understanding the
representation of Lie groups, it is reasonable to expect that the endovolutive
block-form \eqref{eqn:bigB} and  Theorem~\ref{thm:invcond} can serve as the
foundation of a representation theory of Lie pseudogroups.  Any results
regarding the 
``moduli of involutive tableaux'' can be applied to Lie pseudogroups with those
tableaux. Indeed, the first
application of Theorem~\ref{thm:intchar1} was the classification of the
primitive Lie pseudogroups \cite{Guillemin1966}.

\end{enumerate}

\subsection*{Acknowledgments}
Thanks to Robert L. Bryant, Niky Kamran, and
Deane Yang for always humoring my interest in these details,
to Ian Morrison for helping me appreciate the role of incidence
correspondences in algebraic geometry, and
to Giovanni
Moreno for arranging this course and for encouraging me repeatedly to complete
these notes. 
The punny photograph in Figure~\ref{fig:pp} is public domain from the U.S. Fish and Wildlife Service.

~\\


\begin{thebibliography}{BCG{\etalchar{+}}90}

\bibitem[Art91]{Artin1991}
\bgroup\scshape{}M.~Artin\egroup{}, \emph{{Algebra}}, Pearson, 1991.

\bibitem[Bry93]{Bryant1993}
\bgroup\scshape{}R.~L. Bryant\egroup{}, {An Introduction to Lie Groups and
  Symplectic Geometry},  \emph{(lecture notes)} (1993).

\bibitem[BCG{\etalchar{+}}90]{BCGGG}
\bgroup\scshape{}R.~L. Bryant\egroup{}, \bgroup\scshape{}S.-S. Chern\egroup{},
  \bgroup\scshape{}R.~B. Gardner\egroup{}, \bgroup\scshape{}H.~L.
  Goldschmidt\egroup{}, and \bgroup\scshape{}P.~A. Griffiths\egroup{},
  \emph{{Exterior Differential Systems}}, 1 ed., \textbf{18}, Springer-Verlag,
  1990. Available at
  \url{http://library.msri.org/books/Book18/MSRI-v18-Bryant-Chern-et-al.pdf}.

\bibitem[CGG09]{Carlson2009}
\bgroup\scshape{}J.~Carlson\egroup{}, \bgroup\scshape{}M.~Green\egroup{}, and
  \bgroup\scshape{}P.~Griffiths\egroup{}, {Variations of Hodge Structure
  Considered as an Exterior Differential System: Old and New Results},
  \emph{Symmetry, Integrability and Geometry: Methods and Applications} (2009).
  Available at \url{http://www.emis.de/journals/SIGMA/2009/087/}.

\bibitem[Car11]{Cartan1911}
\bgroup\scshape{}{\'{E}}.~Cartan\egroup{}, {Sur les syst{\`{e}}mes en
  involution d'{\'{e}}quations aux d{\'{e}}riv{\'{e}}es partielles du second
  ordre {\`{a}} une fonction invonnue de trois variables ind{\'{e}}pendantes},
  \emph{Bulletin de la Soci{\'{e}}t{\'{e}} Math{\'{e}}matique de France}
  \textbf{39} (1911), 352--443. Available at
  \url{http://www.numdam.org/item?id=BSMF_1911__39__352_1}.

\bibitem[Cle17]{Clelland2016}
\bgroup\scshape{}J.~N. Clelland\egroup{}, \emph{{From Frenet to Cartan: The
  Method of Moving Frames}}, American Mathematical Society, Graduate Texts in
  Mathematics, 2017.

\bibitem[Eis05]{Eisenbud2005}
\bgroup\scshape{}D.~Eisenbud\egroup{}, \emph{{The geometry of syzygies}},
  \emph{Graduate Texts in Mathematics} \textbf{229}, Springer-Verlag, New York,
  2005.

\bibitem[EM02]{Eliashberg2002}
\bgroup\scshape{}Y.~Eliashberg\egroup{} and
  \bgroup\scshape{}N.~Mishachev\egroup{}, \emph{{Introduction to the
  $h$-Principle}}, \emph{Graduate Studies in Mathematics} \textbf{48}, American
  Mathematical Society, Providence, Rhode Island, jun 2002. Available at
  \url{http://www.ams.org/gsm/048}.

\bibitem[FHK09]{Ferapontov2009}
\bgroup\scshape{}E.~V. Ferapontov\egroup{},
  \bgroup\scshape{}L.~Hadjikos\egroup{}, and \bgroup\scshape{}K.~R.
  Khusnutdinova\egroup{}, {Integrable Equations of the Dispersionless Hirota
  type and Hypersurfaces in the Lagrangian Grassmannian},  \emph{International
  Mathematics Research Notices} \textbf{2010} (2009), 496--535. Available at
  \url{http://imrn.oxfordjournals.org/cgi/content/abstract/2010/3/496}.

\bibitem[Gab81]{Gabber1981}
\bgroup\scshape{}O.~Gabber\egroup{}, {The Integrability of the Characteristic
  Variety},  \emph{American Journal of Mathematics} \textbf{103} (1981), 445.
  Available at \url{http://www.jstor.org/stable/2374101}.

\bibitem[Gar67]{Gardner1967a}
\bgroup\scshape{}R.~B. Gardner\egroup{}, {Invariants of Pfaffian systems},
  \emph{Transactions of the American Mathematical Society} \textbf{126} (1967),
  514--514. Available at
  \url{http://www.ams.org/jourcgi/jour-getitem?pii=S0002-9947-1967-0211352-5}.

\bibitem[Gar89]{Gardner1989}
\bysame, \emph{{The Method of Equivalence and Its Applications}}, Society for
  Industrial and Applied Mathematics, jan 1989. Available at
  \url{http://epubs.siam.org/doi/book/10.1137/1.9781611970135}.

\bibitem[Ger61]{Gerstenhaber1961}
\bgroup\scshape{}M.~Gerstenhaber\egroup{}, {On Dominance and Varieties of
  Commuting Matrices},  \emph{The Annals of Mathematics} \textbf{73} (1961),
  324. Available at \url{http://www.jstor.org/stable/1970336?origin=crossref}.

\bibitem[Gol67]{Goldschmidt1967}
\bgroup\scshape{}H.~L. Goldschmidt\egroup{}, {Existence Theorems for Analytic
  Linear Partial Differential Equations},  \emph{The Annals of Mathematics}
  \textbf{86} (1967), 246p. Available at
  \url{http://www.jstor.org/stable/1970689
  http://www.jstor.org/stable/1970689?origin=crossref}.

\bibitem[Gro86]{Gromov1986}
\bgroup\scshape{}M.~Gromov\egroup{}, \emph{{Partial Differential Relations}},
  Springer Berlin Heidelberg, Berlin, Heidelberg, 1986. Available at
  \url{http://link.springer.com/10.1007/978-3-662-02267-2}.

\bibitem[Gui68]{Guillemin1968}
\bgroup\scshape{}V.~Guillemin\egroup{}, {Some algebraic results concerning the
  characteristics of overdetermined partial differential equations},
  \emph{American Journal of Mathematics} (1968), 270--284. Available at
  \url{http://www.jstor.org/stable/10.2307/2373436}.

\bibitem[GQS66]{Guillemin1966}
\bgroup\scshape{}V.~Guillemin\egroup{}, \bgroup\scshape{}D.~Quillen\egroup{},
  and \bgroup\scshape{}S.~Sternberg\egroup{}, {The Classification of the
  Complex Primitive Infinite Pseudogroups},  \emph{Proceedings of the National
  Academy of Sciences of the United States of America} \textbf{55} (1966), 687
  -- 690. Available at \url{http://www.jstor.org/stable/57452}.

\bibitem[GK68]{Guillemin1968b}
\bgroup\scshape{}V.~Guillemin\egroup{} and
  \bgroup\scshape{}M.~Kuranishi\egroup{}, {Some Algebraic Results Concerning
  Involutive Subspaces},  \emph{American Journal of Mathematics} \textbf{90}
  (1968), 1307. Available at
  \url{http://www.jstor.org/stable/2373301?origin=crossref}.

\bibitem[GQS70]{Guillemin1970}
\bgroup\scshape{}V.~W. Guillemin\egroup{},
  \bgroup\scshape{}D.~Quillen\egroup{}, and
  \bgroup\scshape{}S.~Sternberg\egroup{}, {The integrability of
  characteristics},  \emph{Communications on Pure and Applied Mathematics}
  \textbf{23} (1970), 39--77. Available at
  \url{http://doi.wiley.com/10.1002/cpa.3160230103}.

\bibitem[GS00]{Guralnick2000}
\bgroup\scshape{}R.~M. Guralnick\egroup{} and
  \bgroup\scshape{}B.~Sethuraman\egroup{}, {Commuting pairs and triples of
  matrices and related varieties},  \emph{Linear Algebra and its Applications}
  \textbf{310} (2000), 139--148. Available at
  \url{http://linkinghub.elsevier.com/retrieve/pii/S0024379500000653}.

\bibitem[Har92]{Harris1992}
\bgroup\scshape{}J.~Harris\egroup{}, \emph{{Algebraic Geometry: A first
  course}}, Springer Graduate Texts in Mathematics, 1992.

\bibitem[Har13]{Harris2013}
\bysame, {Parameter Spaces in Algebraic Geometry},  in \emph{Samuel Eilenberg
  Lecture Series}, Columbia University, New York, NY, 2013. Available at
  \url{https://www.youtube.com/watch?v=_m-cWF3DY9w&list=PLj6jTBBj-5B_QE35IEQgLkkEct0Dk8GG6}.

\bibitem[IL03]{Ivey2003}
\bgroup\scshape{}T.~A. Ivey\egroup{} and \bgroup\scshape{}J.~M.
  Landsberg\egroup{}, \emph{{Cartan for Beginners: Differential Geometry via
  Moving Frames and Exterior Differential Systems}}, \textbf{35}, American
  Mathematical Society, 2003.

\bibitem[Kam89]{Kamran1989}
\bgroup\scshape{}N.~Kamran\egroup{}, {Contributions to the study of the
  equivalence problem of {\'{E}}lie Cartan and its applications to partial and
  ordinary differential equations},  \emph{Acad. Roy. Belg. Cl. Sci. M{\'{e}}m.
  Collect. 8 (2)} \textbf{45} (1989), 122.

\bibitem[KN63]{Kobayashi1963}
\bgroup\scshape{}S.~Kobayashi\egroup{} and \bgroup\scshape{}K.~Nomizu\egroup{},
  \emph{{Foundations of differential geometry. Vol I.}}, Interscience
  Publishers, a division of John Wiley \& Sons, New York-London, 1963.
  Available at \url{http://www.ams.org/mathscinet-getitem?mr=152974}.

\bibitem[KL07]{Kruglikov2007}
\bgroup\scshape{}B.~Kruglikov\egroup{} and
  \bgroup\scshape{}V.~Lychagin\egroup{}, {Spencer $\delta$-cohomology,
  Restrictions, Characteristics and Involutive Symbolic PDEs},  \emph{Acta
  Applicandae Mathematicae} \textbf{95} (2007), 31--50. Available at
  \url{http://link.springer.com/10.1007/s10440-006-9080-z}.

\bibitem[Lew57]{Lewy1957a}
\bgroup\scshape{}H.~Lewy\egroup{}, {An Example of a Smooth Linear Partial
  Differential Equation Without Solution},  \emph{The Annals of Mathematics}
  \textbf{66} (1957), 155. Available at
  \url{http://www.jstor.org/stable/1970121?origin=crossref}.

\bibitem[Mal03]{Malgrange2003}
\bgroup\scshape{}B.~Malgrange\egroup{}, {Cartan involutiveness = {M}umford
  regularity},  in \emph{Commutative algebra ({G}renoble/{L}yon, 2001)},
  \emph{Contemp. Math.} \textbf{331}, Amer. Math. Soc., Providence, RI, 2003,
  pp.~193--205. Available at \url{http://dx.doi.org/10.1090/conm/331/05911}.

\bibitem[McK18]{McKay2018}
\bgroup\scshape{}B.~McKay\egroup{}, {Introduction to Exterior Differential
Systems}, in \emph{Geometry of Lagrangian Grassmannians and Nonlinear PDEs},
Banach Centre Publications, 2018.

\bibitem[Mil97]{Milnor1997}
\bgroup\scshape{}J.~Milnor\egroup{}, \emph{{Topology from the Differentiable
  Viewpoint}}, revised ed., Princeton University Press Princeton, NJ, 1997.

\bibitem[MS74]{Milnor1974}
\bgroup\scshape{}J.~W. Milnor\egroup{} and \bgroup\scshape{}J.~D.
  Stasheff\egroup{}, \emph{{Characteristic Classes}}, Princeton University
  Press, Annals of Mathematics Studies, Study 76, Princeton, N. J., 1974.

\bibitem[Olv09]{Olver2009}
\bgroup\scshape{}P.~J. Olver\egroup{}, {On the Structure of Lie Pseudo-Groups},
   \emph{Symmetry, Integrability and Geometry: Methods and Applications}
  (2009). Available at \url{http://arxiv.org/abs/0907.4086}.

\bibitem[Olv95]{Olver1995}
\bysame, \emph{{Equivalence, invariants, and
  symmetry}}, Cambridge University Press, 1995.

\bibitem[Qui64]{Quillen1964}
\bgroup\scshape{}D.~G. Quillen\egroup{}, \emph{{Formal properties of
  over-determined systems of partial differential equations}}, Ph.D. thesis,
  Harvard University, 1964, pp.~v+95.

\bibitem[Rob12]{Robles2012a}
\bgroup\scshape{}C.~Robles\egroup{}, {Schubert varieties as variations of hodge
  structure},  (2012), 1--41.

\bibitem[SW86]{Sattinger1986}
\bgroup\scshape{}D.~H. Sattinger\egroup{} and \bgroup\scshape{}O.~L.
  Weaver\egroup{}, \emph{{Lie Groups and Algebras with Applications to Physics,
  Geometry, and Mechanics}}, \emph{Applied Mathematical Sciences} \textbf{61},
  Springer New York, New York, NY, 1986. Available at
  \url{http://link.springer.com/10.1007/978-1-4757-1910-9}.

\bibitem[Sha94]{Shafarevich1994}
\bgroup\scshape{}I.~R. Shafarevich\egroup{}, \emph{{Basic algebraic geometry.
  1}}, second ed., Springer-Verlag, Berlin, 1994.

\bibitem[SS65]{Singer1965}
\bgroup\scshape{}I.~Singer\egroup{} and \bgroup\scshape{}S.~Sternberg\egroup{},
  {The infinite groups of Lie and Cartan Part I,(The transitive groups)},
  \emph{Journal d'Analyse Mathematique} \textbf{15} (1965), 1--114. Available
  at \url{http://www.springerlink.com/index/7832RJ8X11307272.pdf}.

\bibitem[Smi09]{Smith2009}
\bgroup\scshape{}A.~D. Smith\egroup{}, {Integrable GL(2) Geometry and
  Hydrodynamic Partial Differential Equations},  \emph{Communications in
  Analysis and Geometry} \textbf{18} (2009), 743--790. Available at
  \url{http://www.intlpress.com/CAG/2010/18-4/CAG-18-4-A4-smith.pdf}.

\bibitem[Smi14]{Smith2014b}
\bysame, {Degeneracy of the Characteristic
  Variety},  (2014), 35pp. Available at \url{http://arxiv.org/abs/1410.6947}.

\bibitem[Smi15]{Smith2014a}
\bysame, {Constructing Involutive Tableaux with Guillemin Normal Form},
  \emph{Symmetry, Integrability and Geometry: Methods and Applications} (2015),
  20pp. Available at \url{http://arxiv.org/abs/1410.7593
  http://www.emis.de/journals/SIGMA/2015/053/}.

\bibitem[Spe62]{Spencer1962}
\bgroup\scshape{}D.~C. Spencer\egroup{}, {Deformation of Structures on
  Manifolds Defined by Transitive, Continuous Pseudogroups Part II:
  Deformations of Structure},  \emph{The Annals of Mathematics} \textbf{76}
  (1962), 399--445. Available at \url{http://www.jstor.org/stable/1970367}.

\bibitem[TB97]{Trefethen1997}
\bgroup\scshape{}L.~N. Trefethen\egroup{} and \bgroup\scshape{}D.~{Bau
  III}\egroup{}, {Numerical linear algebra},  \emph{Numerical Linear Algebra
  with Applications} \textbf{12} (1997), 361.

\bibitem[Tsa91]{Tsarev1991}
\bgroup\scshape{}S.~P. Tsar{\"{e}}v\egroup{}, {The Geometry of Hamiltonian
  Systems of Hydrodynamic Type. The Generalized Hodograph Method},
  \emph{Mathematics of the USSR-Izvestiya} \textbf{37} (1991), 397--419.
  Available at
  \url{http://stacks.iop.org/0025-5726/37/i=2/a=A07?key=crossref.e4687da3e188cfce1c36cc7822f0b138}.

\bibitem[Val13]{Valiquette2013}
\bgroup\scshape{}F.~Valiquette\egroup{}, {Solving Local Equivalence Problems
  with the Equivariant Moving Frame Method},  \emph{Symmetry, Integrability and
  Geometry: Methods and Applications} (2013). Available at
  \url{http://www.emis.de/journals/SIGMA/2013/029/}.

\bibitem[Yan87]{Yang1987}
\bgroup\scshape{}D.~Yang\egroup{}, {Involutive hyperbolic differential
  systems},  \emph{Memoirs of the American Mathematical Society} \textbf{68}
  (1987), xii+93. Available at
  \url{https://play.google.com/store/books/details/Deane_Yang_Involutive_hyperbolic_differential_syst?id=tzxdNGYQj8IC&feature=null-Deane+Yang}.

\end{thebibliography}
\providecommand{\etalchar}[1]{$^{#1}$}
\providecommand{\bysame}{\leavevmode\hbox to3em{\hrulefill}\thinspace}
\providecommand{\noopsort}[1]{}
\providecommand{\mr}[1]{\href{http://www.ams.org/mathscinet-getitem?mr=#1}{MR~#1}}
\providecommand{\zbl}[1]{\href{http://www.zentralblatt-math.org/zmath/en/search/?q=an:#1}{Zbl~#1}}
\providecommand{\jfm}[1]{\href{http://www.emis.de/cgi-bin/JFM-item?#1}{JFM~#1}}
\providecommand{\arxiv}[1]{\href{http://www.arxiv.org/abs/#1}{arXiv~#1}}
\providecommand{\doi}[1]{\url{http://dx.doi.org/#1}}
\providecommand{\MR}{\relax\ifhmode\unskip\space\fi MR }
\providecommand{\MRhref}[2]{%
  \href{http://www.ams.org/mathscinet-getitem?mr=#1}{#2}
}
\providecommand{\href}[2]{#2}

\end{document}